\documentclass[10pt]{amsart}

\usepackage{subfiles}
\usepackage{xr}
\usepackage{cite}

\usepackage{rotating}

\usepackage{graphicx}
\usepackage{amsmath,amsfonts,amssymb}
\usepackage{color}
\usepackage{textcomp}
\usepackage{mathtools}
\usepackage{ytableau}
\usepackage{stmaryrd}
\usepackage{soul}
\usepackage{tikz-cd}
\usepackage[utf8]{inputenc}
\usepackage{cmap}
\usepackage[T1]{fontenc}
\usepackage{tikz,environ}
\usepackage{float}
\usetikzlibrary{patterns,snakes}
\usetikzlibrary{cd}
\usepackage{ dsfont }
\usetikzlibrary{arrows}
\usepackage{multicol}

\usepackage{etoolbox}

\usepackage{url}

\usepackage[bookmarks=true,
    colorlinks=true,
    linkcolor=blue,
    citecolor=blue,
    filecolor=blue,
    menucolor=blue,
    urlcolor=blue,
    breaklinks=true]{hyperref}

\usepackage{fullpage}

\input xy
\usepackage[all]{xy}
\xyoption{line}
\xyoption{arrow}
\xyoption{color}
\SelectTips{cm}{}

\usepackage{tikz}
\usetikzlibrary{decorations.markings}
\usetikzlibrary{decorations.pathreplacing}
\newcommand{\hackcenter}[1]{
 \xy (0,0)*{#1}; \endxy}

\tikzstyle directed=[postaction={decorate,decoration={markings,
    mark=at position #1 with {\arrow{>}}}}]
\tikzstyle rdirected=[postaction={decorate,decoration={markings,
    mark=at position #1 with {\arrow{<}}}}]

\usetikzlibrary{calc}
\usepackage{relsize}

\tikzset{fontscale/.style = {font=\relsize{#1}}
    }

\usetikzlibrary{decorations.pathmorphing}
\usetikzlibrary{decorations.text}

\tikzset{snake it/.style={decorate, decoration=snake}}

\usepackage{graphicx}
\usepackage{color}


\def\P{\mathsf{P}}
\def\Q{\mathsf{Q}}
\def\C{\mathsf{C}}
\def\B{\mathsf{B}}
\def\K{\mathcal{K}}
\def\d{\mathsf{d}}


\theoremstyle{plain}
\newtheorem{theorem}{Theorem}[section]
\newtheorem{corollary}[theorem]{Corollary}
\newtheorem{proposition}[theorem]{Proposition}
\newtheorem{lemma}[theorem]{Lemma}

\newtheorem{example}[theorem]{Example}

\theoremstyle{definition}
\newtheorem{definition}[theorem]{Definition}

\theoremstyle{remark}
\newtheorem{remark}[theorem]{Remark}

\numberwithin{equation}{section}



\renewcommand{\to}{\rightarrow}

\def\Ind{{\mathrm{Ind}}}


\newcommand\nc{\newcommand}
\nc\rnc{\renewcommand}
\nc\Kar{\operatorname{Kar}}
\nc\End{\operatorname{End}}

\newcommand{\SYT}{\ensuremath\mathrm{SYT}}

\newlength\cellsize \setlength\cellsize{10\unitlength}

\savebox2{%
\begin{picture}(10,10)
\put(0,0){\line(1,0){10}}
\put(0,0){\line(0,1){10}}
\put(10,0){\line(0,1){10}}
\put(0,10){\line(1,0){10}}
\end{picture}}

\newcommand\cellify[1]{\def\thearg{#1}\def\nothing{}%
\ifx\thearg\nothing\vrule width0pt height\cellsize depth0pt%
  \else\hbox to 0pt{\usebox2\hss}\fi%
  \vbox to 10\unitlength{\vss\hbox to 10\unitlength{\hss$#1$\hss}\vss}}

\newcommand\tableau[1]{\vtop{\let\\=\cr
\setlength\baselineskip{-10000pt}
\setlength\lineskiplimit{10000pt}
\setlength\lineskip{0pt}
\halign{&\cellify{##}\cr#1\crcr}}}


%


\let\tilde=\widetilde


\let\theta=\vartheta
\let\epsilon=\varepsilon


\usepackage{bbm}
\def\N{{\mathds N}}

\def\Z{{\mathds Z}}
\def\H{{\mathcal{H}}}
\def\k{{\mathds{k}}}


%
\def\1{\mathsf{1}}%
%
%
%


\allowdisplaybreaks

\usepackage{verbatim}


\newtoggle{details}

\toggletrue{details}   

\iftoggle{details}{%
  \newcommand{\details}[1]{
      \ \\
      {
        \textbf{Details:} #1
      }
      \\
  }
}{%
  \newcommand{\details}[1]{}
}

\begin{document}


\title[Categorical Bernstein Operators and the Boson-Fermion Correspondence]{Categorical Bernstein Operators and the Boson-Fermion Correspondence}  

\author[Gonz\'{a}lez]{Nicolle E. S. Gonz\'{a}lez}
\address{Department of Mathematics, University of Southern California, 3620 S. Vermont Ave., Los Angeles, CA 90089-2532, U.S.A.}
\email{nesandov@usc.edu}


\date{\today}

\begin{abstract}
We prove a conjecture of Cautis and Sussan providing a categorification of the Boson-Fermion correspondence as formulated by Frenkel and Kac. We lift the Bernstein operators to infinite chain complexes in Khovanov's Heisenberg category $\H$ and from them construct categorical analogues of the Kac-Frenkel fermionic vertex operators. These fermionic functors are then shown to satisfy categorical Clifford algebra relations, solving a conjecture of Cautis and Sussan. We also prove another conjecture of Cautis and Sussan demonstrating that the categorical Fock space representation of $\H$ is a direct summand of the regular representation by showing that certain infinite chain complexes are categorical Fock space idempotents. In the process, we enhance the graphical calculus of $\H$ by lifting various Littlewood-Richardson branching isomorphisms to the Karoubian envelope of $\H$.
\end{abstract}

\dedicatory{ A mi pap\'{a} - qui\'{e}n siempre me ha ense\~{n}ado lo m\'{a}s importante... \\ “Hay que inyectarse cada día de fantasía para no morir de realidad”}


\maketitle

\setcounter{tocdepth}{1} 
\tableofcontents 

\section{Introduction}\label{sec:Introduction}

Building upon his ideas to categorify quantum groups and associated TQFTs, Igor Frenkel conjectured that conformal field theory could be categorified as well. As a starting point, he suggested that the Boson-Fermion correspondence should admit a categorification. One of the first breakthroughs came when Khovanov categorified the simplest Heisenberg algebra (see Section \ref{sec:ExtGraphicalCalc},\cite{Kh-H}). Shortly after, Cautis and Licata \cite{CL-Vertex} defined Heisenberg categories associated to every affine Dynkin diagram. Using this framework, they defined categorical vertex operators lifting the homogeneous realization of the basic representation of quantum affine algebras within the Frenkel-Kac-Segal construction \cite{Frenkel-Kac, Segal}. Specifically, they lifted vertex operators to infinite chain complexes in a Heisenberg category, inducing 2-representations of quantum affine algebras. The Boson-Fermion correspondence as formulated by Frenkel \cite{Frenkel-BF} and Kac \cite{Kac1} is analogous to the Frenkel-Kac-Segal construction in that it also defines vertex operators from a Heisenberg algebra. However, in the Boson-Fermion correspondence these vertex operators induce an action of the Clifford algebra instead of the quantum affine algebra. Motivated by this, Cautis and Sussan \cite{CS-BosonFermion} conjectured a categorical formulation of the Boson-Fermion correspondence within the framework of Khovanov's Heisenberg category $\H$ \cite{Kh-H}. One of our main results is the proof of this conjecture (see Theorem \ref{thm:catBF}).

Our proof relies on a family of chain complexes in the homotopy category of $\H$ which lift the \emph{Bernstein operators} introduced by Zelevinsky \cite{Zelevinsky}. These operators create and annihilate Schur functions when acting on the ring of symmetric functions \cite{MacDonald}. We show that these \emph{categorical Bernstein operators} create and annihilate Specht modules when acting on categorical Fock space.  

We also prove a series of Cautis-Sussan conjectures  \cite{CS-BosonFermion} regarding certain unbounded chain complexes analogous to those defined in \cite{CLS-BraidAction}. In that paper, Cautis, Licata, and Sussan prove that these functors satisfy braid group relations, thus showing that integrable 2-representations of the Heisenberg algebra always induce braid group actions. In this article we prove that the analogous complexes defined by Cautis and Sussan are Fock space idempotents (see Section \ref{sec:SigmaComplexes}). This is in line with the decategorified picture where any $\mathfrak{h}$-module generated by highest weight vectors decomposes into a direct sum of infinitely many copies of the trivial module. These functors are categorical analogues of orthogonal projectors onto a summand of Fock space. 

\subsection{The Boson-Fermion correspondence and its related algebras}
\emph{Dirac's sea of electrons} is a theoretical model for describing a system of fermionic quantum particles with infinitely many energy states. In this model energy states are indexed by half integers and are assumed to be entirely occupied below and unoccupied above certain values. By the Pauli exclusion principle no two particles may occupy the same energy state. Mathematically this is captured by the following construction. Let $v_i$ denote a particle in energy level $i$ with $i \in \Z+\frac{1}{2}$ and let $\mathbbm{k}$ be a field of characteristic zero. If a vector $v_{i_m} \wedge v_{i_{m-1}} \wedge \dots$ with $i_m>i_{m-1}>\dots$ denotes a system where only the energy levels $i_m, i_{m-1}, \dots$ are filled, then a \emph{semi-infinite monomial} is a vector of this form satisfying $i_s=i_{s-1}+1$ for all $s\ll 0$. \emph{Fermionic Fock space} is the $\mathbbm{k}$-linear span of all semi-infinite monomials (it is also known as the \emph{semi-infinite wedge} $\Lambda^{\frac{\infty}{2}}$)\cite{Stern,Rios-Zert,Tingley}. This space carries a natural grading by \emph{charge} known as the \emph{principal gradation}. On the other hand, \emph{Bosonic Fock space} $\mathcal{B}$ is defined as a graded sum of infinitely many copies of the ring of symmetric functions $Sym$. Moreover, because Schur functions are a basis for $Sym$ and are indexed by partitions, then $\mathcal{B}$ has a natural basis indexed by charged partitions. As is the case for $\mathcal{F}$, the charge in $\mathcal{B}$ keeps track of the grading. Since to each half infinite monomial there is a canonical way of assigning a charged partition, it can be shown that Fermionic and Bosonic Fock space are isomorphic as graded vector spaces. Thus, Bosonic and Fermionic Fock spaces may be identified \cite{Tingley, Kac1, Rios-Zert}. This isomorphism is the \emph{first part of the Boson-Fermion correspondence }(see Theorem \ref{thm:FockSpaceIso}).

The second part of the Boson-Fermion correspondence concerns the action of the infinite dimensional Clifford and Heisenberg algebras on Fock space. In particular, Fermionic Fock space carries a natural action the infinite dimensional \emph{Clifford algebra} $\mathcal{CL}$ given by a family of creation and annihilation operators $\psi_i$ and $\psi_i^*$ which satisfy the anti-commutation relations
\begin{equation}\label{eq:CliffRelIntro} \psi_i \psi_j^* + \psi_j^* \psi_i = \delta_{i,j} \qquad \psi_i \psi_j + \psi_j \psi_i = 0 \qquad \psi_i^* \psi_j^* + \psi_j^* \psi_1^* = 0.
 \end{equation}
Likewise, Bosonic Fock space has an action of the infinitely generated \emph{Heisenberg algebra} $\mathfrak{h}$, whose generators $\alpha_n$ and $\alpha_{-n}$ with $n \in \Z_{\geq 0}$ satisfy the commutation relations
\[\alpha_n \alpha_{m} - \alpha_{m}\alpha_n = n\delta_{n,-m}.\]
As in \cite{Kh-H, CL-Heis}, we will instead work with alternate generators for $\mathfrak{h}$ denoted by $p^\lambda$ and $q^\lambda$ where $\lambda$ ranges over all row and column partitions. Given the isomorphism between Bosonic and Fermionic Fock space, there is an induced action of the Clifford algebra on $\mathcal{B}$. The \emph{second part of the Boson-Fermion correspondence} states that under this isomorphism the action of the operators $\psi_i$ and $\psi_i^*$ on $\mathcal{B}$ can be expressed in terms of the generators of the Heisenberg algebra \cite{Kac1}. More specifically, identifying $\mathcal{B}$ and $\mathcal{F}$ under this isomorphism and denoting by $\psi_i$ and $\psi_i$ the images of the fermionic operators on $\mathcal{B}$, the formal power series $\psi(z) := \sum_{i \in \Z} \psi_i z^i$ and $\psi_i^*(z):=\sum_{i \in \Z} \psi_i^* z^{-i}$ satisfy the following equalities {\cite[Theorem 14.10]{Kac1}}: 
\begin{align} \psi(z) &= z^{\alpha_0} t \;\; \text{exp}\left( \sum_{m\geq 1} \frac{z^{m}}{m}\alpha_{-m}\right)\text{exp}\left(- \sum_{m\geq 1} \frac{z^{-m}}{m}\alpha_m\right)\label{eq:introPsi}\\
\psi^*(z) &= t^{-1}z^{-\alpha_0} \;\text{exp}\left( -\sum_{m\geq 1} \frac{z^{m}}{m}\alpha_{-m}\right)\text{exp}\left( \sum_{m\geq 1} \frac{z^{-m}}{m}\alpha_m\right)\label{eq:introPsi*}.
\end{align}
In particular, the correspondence allows us to express the induced action of $\mathcal{CL}$ on $\mathcal{B}$ in terms of the action of $\mathfrak{h}$. Operators of the form in \eqref{eq:introPsi} and \eqref{eq:introPsi*} are known as \emph{vertex operators} \cite{Frenkel2000, Frenkel-Kac,Frenkel-BF, Jing-VOA-Spin, Jing-VOA-HL}. They form an integral part in the study of conformal field theories and many other areas of mathematical physics \cite{Miwa-Jimbo-Date, Gab-Intro,Gab-VOA, VOAintro}. The process of constructing the \emph{Fermionic fields} $\psi(z)$ and $\psi^*(z)$ from the \emph{Bosonic generators} $\alpha_n, \alpha_{-n}$ is known as \emph{fermionization}. Since the isomorphism goes both ways, the Heisenberg algebra also acts on $\mathcal{F}$. The reverse process which expresses the action of the Heisenberg algebra on Fermionic Fock space as formal power series in terms of the generators of $\mathcal{CL}$ is known as \emph{bosonization}. It entails the other side of the Boson-Fermion correspondence \cite{Kac1}.

\subsection{The Boson-Fermion Correspondence Categorified}Geissinger \cite{Geissinger} showed that the Hopf algebra of symmetric functions $Sym$ could be categorified by modules over $\mathbbm{k}[S_n]$ for any field $\mathbbm{k}$ of characteristic zero. If $\K_0$ denotes the split Grothendiek ring of the category, then $Sym \cong \bigoplus_{n \geq 0} \K_0(\mathds{k}[S_n]$-{mod}) via the Frobenius character map. In this isomorphism, the algebra and coalgebra structure of $Sym$ correspond to induction and restriction functors on $\mathds{k}[S_n]$-{mod}. 

Building upon these ideas, Khovanov constructed the first categorification of the Heisenberg algebra $\mathfrak{h}$ \cite{Kh-H}. \emph{Khovanov's Heisenberg category} $\H$ is a diagrammatic, additive, monoidal category with generating objects $\P$ and $\Q$ and whose action on $\bigoplus_n \mathbbm{k}[S_n]$-mod is given by induction and restriction functors (see Section \ref{sec:ExtGraphicalCalc}). When taking the Grothendieck group, the generating objects of $\H$ descend to generators of $\mathfrak{h}$ given by $p$ and $q$. Since the Hopf algebra of symmetric functions is categorified by $\bigoplus_n \mathbbm{k}[S_n]$-mod and $\mathcal{B}$ consists of infinitely many copies of $Sym$, it is natural to define \emph{categorical Fock space} $\mathsf{V}_{Fock}$ as the direct sum of copies of $\bigoplus_n \mathbbm{k}[S_n]$-mod indexed by integers. We will encode this in terms of infinite vectors over $\bigoplus_n \mathbbm{k}[S_n]$-mod with finitely many nonzero entries. 

Categorification by nature produces integral structures when decategorified. If an algebra $\mathcal{A}$ arises as the Grothendieck ring of some (reasonable) category, it will usually inherit an integral basis given by the isomorphism classes of indecomposable objects in $\mathcal{C}$. In the Heisenberg algebra, there is an identification between its generators and symmetric functions which precisely exemplifies this fact. When acting on $Sym$, the action of $p$ and $q$ is given by multiplication by elementary and complete symmetric functions, both of which are generators over $\Z$ for $Sym$. However, the action of the generators $\alpha_n$ and $\alpha_{-n}$ is through power sum symmetric functions which generate $Sym$ over $\mathbb{Q}$. This is the fundamental reason behind the use of the $p$ and $q$ generators for $\mathfrak{h}$ in this paper and why, in Khovanov's formulation, these are the generators that lift to indecomposable objects in $\H$. Under this identification, the vertex operators $\psi$ and $\psi^*$ from Kac's formulation in equations \eqref{eq:introPsi} and \eqref{eq:introPsi*} can be rewritten as infinite series in terms of $p$ and $q$. Consequently, the action of the Fermionic fields on $\mathcal{B}$ is given by shifted products of the \emph{Bernstein operators} $B_a$ and $B^*_a$ (see Section \ref{subsec:BFcorrespondence}, \cite{MacDonald, Zelevinsky}).

Cautis and Sussan conjectured a categorical Boson-Fermion correspondence where the generators of the Clifford algebra, when realized as vertex operators, lift to infinite complexes in the homotopy category of Khovanov's Heisenberg category $\K(\H)$ \cite{CS-BosonFermion}. They proposed that these functors satisfy categorical analogues of the Clifford algebra relations \eqref{eq:CliffordRelations} up to homotopy. In this paper we prove a slightly modified but equivalent formulation of their conjecture. 

To this effect, we introduce the \emph{categorical Bernstein operators}, indexed by $a \in \Z$, as the following unbounded complexes in $\K(\H)$ \footnote{The complexes $\B_a, \;\B_a^*$ are equivalent to the complexes $\mathsf{C}_a, \mathsf{C}_a^*$ originally defined by Cautis and Sussan via the involution $a\mapsto -a$ and an internal grading shift of $[a]$.}:
\begin{align*}
\mathsf{B}_a&:= \cdots \to \P^{(x+a)}\Q^{(1^x)}\to \P^{(x+a-1)}\Q^{(1^{x-1})} \to \cdots &\in \K^-(\H)\\
\mathsf{B}^*_a &:= \cdots \to \P^{(1^{y})}\Q^{(y+a)} \to \P^{(1^{y+1})}\Q^{(y+a+1)} \to \cdots &\in \K^+(\H)
\end{align*}

\noindent whose differentials are given by adjunction maps. When taking the Grothendieck ring these functors recover the original Bernstein operators and their properties. In particular, the categorical Bernstein operators have a natural action on $\bigoplus_n \mathbbm{k}[S_n]$-mod induced by the action of $\H$. In Theorems \ref{thm:BernsteinOps1}, \ref{thm:BernsteinOps2}, \ref{thm:CC*=C*C}, and \ref{thm:CC*distingishedTriangle} we show these functors satisfy categorical lifts of the commutation relations fulfilled by the Bernstein operators $B_a$ and $B_a^*$  \cite[Section 1.4]{MacDonald}. In particular, we prove the following theorem (see Theorem \ref{thm:BernsteinSpecht}). 

\begin{theorem}
The categorical Bernstein operators are creation and annihilation functors for Specht modules. That is, for any Specht module $S_\lambda$ associated to partition $\lambda=(\lambda_k, \dots, \lambda_1)\vdash n$ with $\lambda_k\geq \dots \geq \lambda_1 \geq~0$ and $\k$ the trivial module over $\k[S_n]$-mod, then $\B_{\lambda_k}\B_{\lambda_{k-1}}\dots\B_{\lambda_1}(\mathds{k})\simeq S_\lambda$ and $\B^*_{\lambda_1} \dots \B^*_{\lambda_{k-1}}\B^*_{\lambda_k}(S_\lambda)\simeq \mathds{k}$.
\end{theorem}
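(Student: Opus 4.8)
The plan is to prove the statement by induction on $k$, the number of parts of $\lambda$, reducing the full claim to a single-step assertion about how one categorical Bernstein operator acts on a Specht module. The key single-step claim is: if $\mu = (\lambda_{k-1},\dots,\lambda_1)$ is a partition with largest part at most $\lambda_k$, then $\B_{\lambda_k}(S_\mu) \simeq S_\lambda$ in the homotopy category, together with the dual statement $\B^*_{\lambda_k}(S_\lambda) \simeq S_\mu$. Granting this, the first half of the theorem follows by applying $\B_{\lambda_1}$ to $\k = S_{()}$ (the trivial $\k[S_0]$-module, or the trivial module over $\k[S_{\lambda_1}]$ after one step), then $\B_{\lambda_2}$, and so on; the hypothesis $\lambda_k \geq \cdots \geq \lambda_1$ guarantees that at every stage the partition being built has the correct shape, so the single-step claim applies. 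The second half follows symmetrically by peeling off rows from the top using the $\B^*_a$, noting that $\B^*_{\lambda_1}\cdots\B^*_{\lambda_k}$ undoes the construction row by row.

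First I would unwind the definition of $\B_a(S_\mu)$: it is the total complex obtained by applying each term $\P^{(x+a)}\Q^{(1^x)}$ to $S_\mu$ and assembling with the adjunction differentials. Since the action of $\H$ on $\bigoplus_n \k[S_n]\text{-mod}$ is by induction and restriction functors, $\Q^{(1^x)}(S_\mu)$ is (up to the divided-power normalization) a summand of the $x$-fold restriction of $S_\mu$, which by the branching rule for symmetric groups decomposes into Specht modules indexed by partitions obtained from $\mu$ by removing $x$ boxes; the divided power $\Q^{(1^x)}$ (antisymmetrization) picks out a vertical strip, so only the partition $\mu$ with its first column's worth of removable boxes stripped survives in the relevant degree. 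Dually $\P^{(x+a)}$ induces and the divided power picks out a horizontal strip. Combining these, each column of the complex contributes a sum of Specht modules, and the differentials — being the adjunction (counit/unit) maps — implement the standard "straightening" or resolution that, when the bottom row $\lambda_k$ is chosen, realizes $S_\lambda$ as the unique homology. This is exactly the categorical shadow of the Bernstein operator identity $B_a s_\mu = s_{(a,\mu)}$ together with the Jacobi–Trudi-style resolution.

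I would organize the verification so that the hard homological input is isolated: one must show the assembled complex $\B_{\lambda_k}(S_\mu)$ is quasi-isomorphic (homotopy equivalent, since everything lives in the homotopy category of an additive Karoubian category) to the single module $S_\lambda$ concentrated in one degree. The main obstacle is precisely this acyclicity-away-from-one-spot claim: I expect it to follow from Theorems \ref{thm:BernsteinOps1} and \ref{thm:BernsteinOps2} (the categorical commutation relations lifting $[B_a, B_b^*]$-type identities), perhaps combined with \ref{thm:CC*=C*C} and the distinguished triangle of \ref{thm:CC*distingishedTriangle}, which should let one contract the complex inductively — e.g. by showing $\B^*_b \B_a(\k)$ behaves as expected and using adjointness $(\B_a, \B^*_a)$ to detect the homology. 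Concretely, I would first prove $\B_a(\k) \simeq S_{(a)}$ (the single-row Specht module) as a base case by direct inspection of the complex, then bootstrap to general $\mu$ using the commutation relations to move $\B^*$'s past $\B$'s and collapse. The dual statement for $\B^*$ then falls out by taking adjoints, using that $\B^*_a$ is right (or left) adjoint to $\B_a$ and that the homotopy equivalences are preserved. The genuinely new technical content — and thus the part requiring the most care — is checking that the adjunction differentials are the correct ones to produce these contractions, i.e. that no unexpected nonzero maps or failures of the complex condition arise from the divided-power idempotents; this is where the enhanced graphical calculus and the lifted Littlewood–Richardson isomorphisms mentioned in the abstract do the work.
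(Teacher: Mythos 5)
Your overall scaffolding (induction on the number of rows, with the single-step claim $\B_{\lambda_k}(S_\mu)\simeq S_\lambda$, branching rules turning $\Q^{(y)^t}$ and $\P^{(y+a)}$ into vertical- and horizontal-strip sums, and the dual statement by biadjointness) matches the paper. The genuine gap is where you locate the "hard homological input": you propose to obtain the contraction of $\B_{\lambda_k}(S_\mu)$ from Theorems \ref{thm:BernsteinOps1}, \ref{thm:BernsteinOps2} and the $\B\B^*$ distinguished triangles, "using adjointness to detect the homology." Those results cannot supply this step. They are relations among compositions of the Bernstein functors as objects of $\K(\H)$ (or of $\K(\H_n)$, where alone the distinguished triangles hold); they say nothing about the value of a single $\B_a$ on a particular module -- any representation compatible with the Clifford-type relations would satisfy them, so they cannot single out $\B_a\P^\mu(\k)\simeq\P^{(a,\mu)}(\k)$. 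Moreover, "detecting homology" via the adjoint pair presupposes knowing how $\B^*_a$ acts on Specht modules, which is precisely half of the statement being proved. The paper's proof does not invoke those theorems at all (they are established later and independently): it applies Proposition \ref{prop:QPdecomposition} and the branching isomorphisms to rewrite each chain group of $\B_a\P^\lambda(\k)$ as a direct sum of $\P^\gamma(\k)$'s, then computes the components of the adjunction differential diagrammatically, using the idempotent orthogonality of Theorem \ref{thm:CompleteIdemp} to show each component is an isomorphism when the source and target partitions coincide and zero otherwise, and finally collapses the (now bounded) complex to the single term $\P^{(a,\lambda)}(\k)$ by Gaussian elimination (Lemma \ref{lem:gaussian-elimination}). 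That explicit computation is the content you would still have to carry out; your "first prove $\B_a(\k)\simeq S_{(a)}$, then bootstrap via the commutation relations" route does not close it.

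A smaller inaccuracy: $\Q^{(1^x)}(S_\mu)$ is not a single Specht module ("the first column's worth of removable boxes stripped") but the direct sum over \emph{all} ways of removing $x$ boxes from $\mu$ in distinct rows, exactly as in the paper's $\Q^{(y)^t}\P^\lambda(\k)\simeq\bigoplus_{\lambda\setminus\mu=1^y}\P^\mu(\k)$; the cancellation down to one surviving term happens only after the differential analysis and Gaussian elimination, not at the level of individual chain groups.
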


As evidenced by equations \eqref{eq:introPsi} and \eqref{eq:introPsi*}, the action of the vertex operators on $\mathcal{B}$ does not preserve the charge but instead expresses $\psi_i$ and $\psi_i^*$ in terms of Bernstein operators and a charge function $z^{\alpha_0}$. In their original conjecture, Cautis and Sussan account for the charge operator by incorporating an additional variable $q$ to keep track of the principal gradation. The approach taken here is different. 
Denote by Mat$_{\Z\times\Z}(\K(\H))$ the category of unbounded matrices with certain row-finiteness conditions whose entries are chain complexes in $\K(\H)$ (see Definition \ref{def:MatrixCat}). Within this framework, we define the \emph{charge operators} $\mathcal{Q}, \mathcal{Q}^{-1}$ in Mat$_{\Z\times\Z}(\K(\H))$ that act on $\mathsf{V}_{Fock}$ by raising and lowering the charge of each entry in a vector (see Section \ref{subsec:CatBFcorrespondence}). Employing these charge operators and infinite matrices of categorical Bernstein operators we introduce categorical analogues of the Fermionic vertex operators given in \eqref{eq:introPsi} and \eqref{eq:introPsi*}. The \emph{Fermionic functors} $\Psi_i$ and $\Psi_i^*$ are defined as unbounded matrices in Mat$_{\Z\times\Z}(\K(\H))$ whose entries consist of $\B_a$ and $\B^*_a$ for varying values of $a \in \Z$ and whose action on $\mathsf{V}_{Fock}$ is given by matrix multiplication. This differs from the approach taken by Cautis and Sussan since they defined the action of $\Psi$ and $\Psi^*$ pointwise on each $v \in \bigoplus \mathbbm{k}[S_n]$-mod. In Theorem \ref{thm:catBF} we prove these categorical vertex operators $\Psi_i$ and $\Psi_i^*$ satisfy categorical analogues of the Clifford algebra relations \eqref{eq:CliffRelIntro} and thus lift the Kac-Frenkel formulation of the Boson-Fermion correspondence. 

\begin{theorem}[categorical Boson-Fermion correspondence]\label{thm:catBFintro}
The Fermionic functors $\Psi_i$ and $\Psi_i^*$ satisfy the following relations in \emph{Mat}$_{\Z\times\Z}(\K(\H))$:
\vspace{2mm}

\begin{enumerate}
\item  $(\Psi_i)^2 \cong 0$ \qquad and \qquad  $\Psi_i \Psi_j \cong \begin{cases} 
\Psi_j \Psi_i [-1] &\text{ if } i > j \\
\Psi_j \Psi_i [1] & \text{ if } i < j \end{cases}$
\item  $(\Psi_i^*)^2 \cong 0$ \qquad and \qquad 
$\Psi^*_i \Psi^*_j \cong 
\begin{cases}
\Psi^*_j \Psi^*_i [-1] &\text{ if } i > j \\ \Psi^*_j \Psi^*_i [1] & \text{ if } i < j \end{cases}$
\end{enumerate}
Moreover, the following relations also hold in $\emph{Mat}_{\Z\times\Z}(\K^b(\H))$:
\begin{itemize}
\item[(3)]  $\Psi_i \Psi_j^* \cong \begin{cases}
 \Psi_j^* \Psi_i [1] & \text{ if } i > j \\ \Psi_j^* \Psi_i [-1] & \text{ if } i < j \end{cases}$
\item[(4)]  there exist distinguished triangles $\Psi_i \Psi_i^* \rightarrow \mathbbm{1} \rightarrow \Psi_i^* \Psi_i$ and $ \Psi_i^* \Psi_i  \rightarrow \mathbbm{1} \rightarrow \Psi_i \Psi_i^*$.
\end{itemize}
\end{theorem}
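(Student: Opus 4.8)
The plan is to deduce all four matrix relations from the homotopy equivalences and distinguished triangles among the categorical Bernstein operators already established in Theorems~\ref{thm:BernsteinOps1}, \ref{thm:BernsteinOps2}, \ref{thm:CC*=C*C}, and~\ref{thm:CC*distingishedTriangle}. The first step is to unwind the construction of the Fermionic functors from Section~\ref{subsec:CatBFcorrespondence}: inside $\mathrm{Mat}_{\Z\times\Z}(\K(\H))$ the functor $\Psi_i$ is supported on a single off-diagonal, raising charge by one, and its charge-$m$ entry is a homological and internal grading shift of a single categorical Bernstein operator $\B_a$, where $a=a(i,m)$ is a fixed affine-linear function of $i$ and $m$ coming from the factor $z^{\alpha_0}$ in~\eqref{eq:introPsi}--\eqref{eq:introPsi*} together with the charge operators $\mathcal{Q}^{\pm1}$; likewise $\Psi_i^*$ is supported on the off-diagonal lowering charge by one, with entries shifts of $\B^*_{a(i,m)}$. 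The crucial simplification is that, since each factor occupies exactly one off-diagonal, every entry of a two-fold product $\Psi_i\Psi_j$, $\Psi_i^*\Psi_j^*$ or $\Psi_i\Psi_j^*$ is a \emph{single} composite of two Bernstein operators rather than an infinite sum; so the row-finiteness conditions of Definition~\ref{def:MatrixCat} hold automatically and each relation becomes a uniform, entry-by-entry statement.

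For relation~(1) with $i=j$, the charge-$m$ entry of $(\Psi_i)^2$ is, up to shift, a composite $\B_a\B_{a+1}$ of Bernstein operators with consecutive indices; this is the categorical lift of the classical identity $B_aB_{a+1}=0$, and it is null-homotopic by the degenerate case of Theorem~\ref{thm:BernsteinOps1} (alternatively, by a contracting homotopy read off from the adjunction maps defining the differentials), so assembling these contractions entry-wise along the off-diagonal gives $(\Psi_i)^2\cong 0$. For $i\neq j$, the charge-$m$ entries of $\Psi_i\Psi_j$ and $\Psi_j\Psi_i$ are, up to shift, the composites $\B_{a(i,m+1)}\B_{a(j,m)}$ and $\B_{a(j,m+1)}\B_{a(i,m)}$, which the $\B$--$\B$ commutation of Theorem~\ref{thm:BernsteinOps1} identifies up to a homological shift $[\pm1]$ whose sign is determined by the order of the two Bernstein indices. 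Since $a(i,m)-a(j,m)$ is a nonzero constant independent of $m$ with the sign of $i-j$, that shift is the same in every entry, yielding $\Psi_i\Psi_j\cong\Psi_j\Psi_i[-1]$ when $i>j$ and $\Psi_i\Psi_j\cong\Psi_j\Psi_i[1]$ when $i<j$. Relation~(2) is proved identically with Theorem~\ref{thm:BernsteinOps2} replacing Theorem~\ref{thm:BernsteinOps1}.

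For relations~(3) and~(4) the products $\Psi_i\Psi_j^*$ and $\Psi_j^*\Psi_i$ are supported on the main diagonal, and each of their entries is a single composite of a $\K^-(\H)$-complex with a $\K^+(\H)$-complex; the cancellations in $\H$ collapse such a composite to a bounded complex, which is exactly why these two relations can be asserted in $\mathrm{Mat}_{\Z\times\Z}(\K^b(\H))$. When $i\neq j$ the relevant Bernstein indices are distinct, so the mixed commutation of Theorem~\ref{thm:CC*=C*C} gives an entry-wise equivalence $\B_a\B_b^*\simeq\B_b^*\B_a[\pm1]$, with sign again dictated by whether $i>j$ or $i<j$ and uniform in $m$, hence relation~(3). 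When $i=j$ every diagonal entry is a composite of $\B$ with $\B^*$ at matched indices, and Theorem~\ref{thm:CC*distingishedTriangle} supplies distinguished triangles $\B_a\B_a^*\to\mathbbm{1}\to\B_a^*\B_a$ and $\B_a^*\B_a\to\mathbbm{1}\to\B_a\B_a^*$. Because $\Psi_i\Psi_i^*$, $\Psi_i^*\Psi_i$ and the identity matrix are all diagonal, and mapping cones of diagonal morphisms in $\mathrm{Mat}_{\Z\times\Z}(\K^b(\H))$ are computed entry-by-entry, these triangles assemble componentwise into the two distinguished triangles of~(4).

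The substantive content therefore lies upstream in Theorems~\ref{thm:BernsteinOps1}--\ref{thm:CC*distingishedTriangle}; what is left is bookkeeping, and the main obstacle is getting the shifts exactly right. One must verify that the internal grading shifts and index relabelings introduced when commuting $\mathcal{Q}^{\pm1}$ past the Bernstein entries, and when straightening $\B_{a(i,m+1)}\B_{a(j,m)}$ into the form occurring in the opposite product, cancel precisely, so that the \emph{only} residual discrepancy between $\Psi_i\Psi_j$ and $\Psi_j\Psi_i$ is the single homological shift $[\pm1]$ claimed, and that in the $i=j$ case of~(4) nothing survives beyond the identity matrix at the apex of the triangles. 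Two finer points deserve care: the boundary sub-case $i=j+1$ of relation~(1), where two Bernstein indices coincide and one needs Theorem~\ref{thm:BernsteinOps1} in its equal-index form $\B_a\B_a$ (the lift of $B_aB_a=-B_{a-1}B_{a+1}$) to keep the $i>j$ branch consistent at its edge; and the fact that $(\Psi_i)^2\cong 0$ is not a purely formal consequence of the commutation relation — since $X\simeq X[1]$ need not force $X\simeq 0$ — but genuinely rests on the vanishing $\B_a\B_{a+1}\simeq 0$.
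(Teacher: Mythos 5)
Your proposal is correct and follows essentially the same route as the paper: both reduce the matrix-level relations to the categorical Bernstein-operator identities of Theorems~\ref{thm:BernsteinOps1}, \ref{thm:BernsteinOps2}, \ref{thm:CC*=C*C}, and~\ref{thm:CC*distingishedTriangle}. The paper packages the bookkeeping via the conjugation relation $\mathbf{B}_i\simeq\mathcal{Q}\mathbf{B}_{i-1}\mathcal{Q}^{-1}$ of~\eqref{eq:BQcommutation} and the invertibility of $\mathcal{Q}$, then quotes Corollary~\ref{cor:BernsteinMatrices}; you instead unwind the matrix multiplication entry-by-entry, which is equivalent and gives exactly the composites $\B_{i-n}\B_{j-n+1}$ etc.\ with the correct index offsets, so that the sign of $i-j$ controls the homological shift uniformly in the charge. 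Your observation that $(\Psi_i)^2\simeq 0$ does not follow formally from the commutation identity but requires the nullhomotopy $\B_{a-1}\B_a\simeq 0$ is accurate and worth stating.

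One imprecision to flag: in your discussion of relations~(3) and~(4) you attribute the boundedness of the products $\Psi_i\Psi_j^*$ to ``cancellations in $\H$'' that collapse the composite of a $\K^-$-complex with a $\K^+$-complex. That is not the mechanism; a tensor product of an unbounded-above with an unbounded-below complex in $\K(\H)$ need not be bounded, and cancellations alone do not resolve the ambiguity between $\mathrm{Tot}^\oplus$ and $\mathrm{Tot}^\Pi$. The paper's Theorems~\ref{thm:CC*=C*C} and~\ref{thm:CC*distingishedTriangle} are stated over the finite quotients $\H_n$ of Definition~\ref{def:Hquotient}, where the Bernstein complexes themselves become bounded, and that is what licenses the $\K^b$ (equivalently $\K(\H_n)$) formulation of~(3) and~(4). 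Since your argument does invoke precisely those two theorems, the logic goes through; only the surrounding prose misidentifies the source of boundedness.
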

In the original theorem by Kac, the Boson-Fermion correspondence is proven by letting the vertex operators act on Fock space. Since the action is integrable the infinite series become finite. In our categorification this additional finiteness constraint is not needed for proving relations (1) and (2). These isomorphisms hold between unbounded complexes in the homotopy category of $\H$ with no finiteness conditions imposed at all. Consequently, our result is stronger in the sense that these relations hold in full generality for the functors in Mat$_{\Z \times \Z}(\K(\H))$. This is because relation (1) only involves complexes that are bounded below and relation (2) only involves complexes that are bounded above. These boundedness conditions are lacking in relations (3) and (4) and cause very serious convergence issues. Specifically, $\mathsf{B}_a$ is bounded below whereas $\mathsf{B}_a^*$ is bounded above, so relations (3) and (4) consist of homotopy equivalences between chain complexes which are unbounded in both directions. Such complexes do not generally behave well with the usual methods from homological algebra. Things are further complicated by Khovanov's Heisenberg category being neither graded nor Krull-Schmidt. To overcome this obstruction, we prove isomorphisms (3) and (4) in the context of an arbitrary finite quotient of $\H$ (see Definition \ref{def:Hquotient}). This allows us to work over the homotopy category of bounded complexes $\K^b(\H)$, effectively eliminating all convergence issues. Given that the action of $\Psi_i$ and $\Psi_i^*$ on $\mathsf{V}_{Fock}$ is integrable, when the construction is decategorified this additional constraint simply recovers the condition that the correspondence holds when acting on Fock space. 

To prove Theorem~\ref{thm:catBFintro} we augment Khovanov's diagrammatic framework to include the images of Young symmetrizers in the Karoubi envelope and construct explicit diagrammatic isomorphisms realizing certain Littlewood-Richardson decompositions (see Section \ref{subsec:LRBranchingIsos}).  We hope that these isomorphisms are the beginning of a thick calculus, or complete diagrammatic description of the Karoubi envelope of Khovanov's Heisenberg category.  This is of independent interest since the objects of this Karoubi envelope are natural constructions on induction and restriction functors between modules of the symmetric group.  

We also make extensive use of some homological algebra tools for infinite complexes very recently described by Elias and Hogancamp \cite{EH-CatDiag}. As was previously noted, manipulating unbounded complexes introduces many technical complications and subtleties that are nonexistent when the chain complexes are finite. In order to derive the desired homotopy equivalences we work with two different notions of the tensor product of complexes normally absent in usual study of homotopy categories.

With these homological techniques we prove a series of commutation relations for the Bernstein functors $\B_a$ and $\B_a^*$, and from those derive the desired categorical Clifford relations stated in Theorem \ref{thm:catBFintro}. Although we give explicit isomorphisms lifting the Clifford relations, we postpone the study of the Hom spaces between the Clifford generators for future work. It is our hope that this leads to a direct diagrammatic categorification of an infinite dimensional Clifford algebra.  

In recent related work Frenkel, Penkov, and Serganova propose a categorification of the Boson-Fermion correspondence via the representation theory of $sl(\infty)$\cite{FPS}. In their paper they introduce certain chain complexes over the category of tensor modules and prove that when passing to the Grothendieck ring these complexes satisfy the Clifford algebra anticommutation relations. While we anticipate their complexes and ours will be related, the precise connection is unclear. On the one hand, their result is only at the level of the Grothendieck ring. On the other hand, their category not semisimple and their complexes are not biadjoint. Thus their functors can only be defined in one direction.

In another related work \cite{Tian-BF} Tian explores the correspondence from the reverse direction, that is of \emph{bosonization}. His approach uses contact geometry and a modification of Khovanov's Heisenberg category. In this context, he constructs functors in terms of generators of a certain Clifford category and proves they satisfy the commutation relations of a Heisenberg algebra. Once again, his categorical Fock space is not semisimple and his construction does not admit biadjointness. Nonetheless, given its connection to Khovanov's original work, one can surmise a relation with ours.

The structure of the paper is as follows. In Section \ref{sec:DecategorifiedStory} we explain the decategorified picture and define the Heisenberg and Clifford algebras. We also present the vertex operator formulation of the Boson-Fermion correspondence and explain its relation to symmetric functions and Bernstein operators. In Section \ref{sec:HomAlgebra} we establish some notation and describe the tools from homological algebra necessary to manipulate infinite complexes. Section \ref{sec:ExtGraphicalCalc} provides a summary of the diagrammatics of Khovanov's Heisenberg category and expands the calculus by recalling explicit expressions of generalized Young idempotents. This mainly consists of constructing various Littlewood-Richardson branching isomorphisms within the Karoubian envelope of $\H$. The entire categorified construction of the Boson-Fermion correspondence is explained in Section \ref{sec:CatBFCorrespondence}.
This includes the categorical Bernstein operators, their relations, and the fermionic functors $\Psi_i$ and $\Psi_i^*$.
Due to their technical complexity, many of the proofs for the relations of the categorical Bernstein operators are presented in Section \ref{sec:PropCatBernsteinOps}. Finally, in Section \ref{sec:SigmaComplexes} we prove that certain unbounded chain complexes, $\Sigma^\pm$ in $\K(\H)$, satisfy particular properties conjectured by Cautis and Sussan and are thus categorified Fock space idempotents.

\subsection*{Acknowledgments}
I am deeply grateful to Matt Hogancamp for innumerable conversations and my advisor Aaron Lauda for his encouragement, support, and guidance throughout this entire project. Sincere thanks also go to Tony Licata and Josh Sussan for beneficial discussions and Mikhail Khovanov, Alistair Savage, Yin Tian, and Geordie Williamson for detailed suggestions on earlier versions of this work. The author was partially supported by NSF grants DMS-1255334 and DMS-1664240.

\section{Fermionic Vertex Operators and the Boson-Fermion Correspondence}\label{sec:DecategorifiedStory}

Henceforth let $\mathds{k}$ be a field of characteristic zero. In this section we loosely follow \cite[Chapter 14]{Kac1} and refer the reader to this source and \cite{Tingley} for a more detailed exposition of these topics.  

\subsection{Fock Space}\label{subsec:FockSpace}

Given an infinite sequence of integers $i_n \in \Z +\frac{1}{2}$ such that $i_1 > i_2 > \cdots$ and $i_n= i_{n-1}-1$ for $n$ large enough, a vector of the form $v=i_1 \wedge i_2 \wedge i_3 \wedge \cdots$ is called a \emph{semi-infinite monomial}. The $\mathds{k}$-linear span of all such vectors is known as the \emph{semi-infinite wedge} $\Lambda^{\frac{\infty}{2}}$. For any $v \in\Lambda^{\frac{\infty}{2}}$, let $N^+=\#\lbrace i>0| i \wedge v =0 \rbrace$ and $N^-=\#\lbrace i<0| i \wedge v \neq0 \rbrace$. Set the \emph{charge} $c$ of $v$ to be $c:=N^+-N^-$. Then any semi-infinite monomial has a unique charge and thus for any fixed $c\in \Z$, there is a vector $v_0 =\left(c-\frac{1}{2}\right) \wedge \left(c-\frac{3}{2}\right) \wedge \left(c-\frac{5}{2}\right) \wedge \dots$, referred to as the \emph{vacuum vector of charge} $c$.

\emph{Fermionic Fock space} $\mathcal{F}$ is defined as the semi-infinite wedge. This space carries a natural charge decomposition $\mathcal{F}= \bigoplus_{c \in \Z} \mathcal{F}_c$ known as the \emph{principal gradation}. We also define \emph{Bosonic Fock space} as the ring $\mathcal{B} = \mathds{k}[p_1,p_2,\cdots; t,t^{-1}]$.  This space also admits a principal gradation $\mathcal{B}= \bigoplus_{c \in \Z} \mathcal{B}_c$ where each $\mathcal{B}_c \cong \mathds{k}[p_1, p_2, \dots]t^c$. By identifying each $p_n$ with the $n^{th}$ power sum symmetric function then each $\mathcal{B}_c$ is isomorphic to the ring of symmetric functions. 

To any vector $v$ of charge $c$ in $\mathcal{F}_c$ we can assign a partition $\lambda(v)=(\lambda_1, \lambda_2, \dots,\lambda_n)$ given by $\lambda_j=i_j-c+j-\frac{1}{2}$. The ring of symmetric functions has a $\Z$-basis given by Schur functions. These functions are indexed by partitions which allows us to identify $\mathcal{B}_c$ with $\mathcal{F}_c$ for each $c \in \Z$. Since this holds for all charges, this bijection yields the first part of the \emph{Boson-Fermion correspondence} \cite[Section 14]{Kac1}. 

\begin{theorem}[Boson-Fermion correspondence part 1, {\cite[Section 14.10]{Kac1}}]\label{thm:FockSpaceIso}
For any fixed $c \in \Z$, the map $\sigma: \mathcal{F}_c \to \mathcal{B}_{c}$ given by sending a vector $v$ of charge $c$ in $\mathcal{F}_c$ to $t^c s_{\lambda(v)}$ in $\mathcal{B}_c$ is a grading preserving isomorphism of vector spaces. Thus, $\mathcal{F} \cong \mathcal{B}$.
\end{theorem}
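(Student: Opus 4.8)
\emph{Proof proposal.} The plan is to exhibit $\sigma$ as the $\mathds{k}$-linear extension of a bijection between two natural bases, one of $\mathcal{F}_c$ and one of $\mathcal{B}_c$, and then to check that the bijection matches up degrees. So the first thing I would do is pin down a distinguished basis on each side. On the Bosonic side, identifying $p_n$ with the $n$-th power sum symmetric function identifies $\mathds{k}[p_1,p_2,\dots]$ with the ring of symmetric functions over $\mathds{k}$ (this uses $\mathrm{char}\,\mathds{k}=0$, so that $\Q\subseteq\mathds{k}$), and inside that ring the Schur functions $\{s_\lambda\}$, indexed by partitions $\lambda$, form a $\mathds{k}$-basis \cite{MacDonald}. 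Hence $\{t^c s_\lambda : \lambda \text{ a partition}\}$ is a $\mathds{k}$-basis of $\mathcal{B}_c=\mathds{k}[p_1,p_2,\dots]\,t^c$. On the Fermionic side, the semi-infinite monomials $v=i_1\wedge i_2\wedge\cdots$ of charge $c$ form, by definition, a $\mathds{k}$-basis of $\mathcal{F}_c$.

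The heart of the argument is the combinatorial bijection. Given a charge-$c$ monomial $v$, set $\lambda_j(v)=i_j-c+j-\tfrac12$ as in the statement. Since $i_j-i_{j+1}$ is a positive integer, $\lambda_j(v)-\lambda_{j+1}(v)=(i_j-i_{j+1})-1\geq 0$, so $(\lambda_j(v))_j$ is weakly decreasing; the stabilization condition $i_n=i_{n-1}-1$ for $n\gg 0$ makes this sequence eventually constant, and a short computation with the charge formula $N^+-N^-=c$ shows that this constant is $0$ and that all entries are nonnegative — in particular the charge-$c$ vacuum maps to the empty partition. Thus $v\mapsto \lambda(v)=(\lambda_1(v),\lambda_2(v),\dots)$ sends charge-$c$ monomials to partitions. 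Conversely, given a partition $\mu$, the sequence $i_j:=\mu_j+c-j+\tfrac12$ is strictly decreasing in $\Z+\tfrac12$, satisfies $i_n=i_{n-1}-1$ once $\mu_n=0$, and thereby defines a charge-$c$ semi-infinite monomial; these two assignments are visibly mutually inverse.

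It follows that $\sigma$, being the $\mathds{k}$-linear extension of $v\mapsto t^c s_{\lambda(v)}$, carries the monomial basis of $\mathcal{F}_c$ bijectively onto the basis $\{t^c s_\lambda\}$ of $\mathcal{B}_c$, and is therefore an isomorphism of $\mathds{k}$-vector spaces. For the grading claim: the degree of $v$ in the principal gradation of $\mathcal{F}_c$ is $\sum_j\bigl(i_j-(c-j+\tfrac12)\bigr)=\sum_j \lambda_j(v)=|\lambda(v)|$, while $t^c s_{\lambda(v)}$ is homogeneous of degree $|\lambda(v)|$ in $\mathcal{B}_c$ (with $\deg p_n=n$ and $\deg t^{\pm1}=0$), since $s_\mu$ is homogeneous of degree $|\mu|$; hence $\sigma$ preserves degree. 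Taking the direct sum over $c\in\Z$ then yields $\mathcal{F}=\bigoplus_c\mathcal{F}_c\cong\bigoplus_c\mathcal{B}_c=\mathcal{B}$, compatibly with the principal gradation.

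I do not expect a genuine obstacle here: the only nontrivial external ingredient is the classical fact that the Schur functions are a $\Z$-basis of the ring of symmetric functions, which I would simply cite \cite{MacDonald}, and the rest is a careful unwinding of definitions. The one step deserving real care is the bijection of the second paragraph — correctly translating the conditions "$i_1>i_2>\cdots$ with $i_n=i_{n-1}-1$ for $n\gg 0$" into "weakly decreasing and eventually $0$", and fixing the charge normalization so that the vacuum of charge $c$ corresponds to the empty partition; once that is done, the statement follows immediately.
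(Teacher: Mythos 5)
Your proof is correct and is the standard argument (essentially the one in Kac, Chapter 14, or Tingley's notes); the paper itself does not supply a proof but simply cites \cite{Kac1}, so there is nothing in the paper to compare against. The two places deserving care are exactly the ones you flag: (i) that the charge condition forces the eventually-constant value of $\lambda_j(v)$ to be $0$ (this follows because the finitely many particle moves that take the charge-$c$ vacuum to $v$ each preserve $N^+-N^-$, so $v$ must stabilize to the charge-$c$ vacuum tail, giving $\lambda_j\to 0$ and hence $\lambda_j\geq 0$ by monotonicity); and (ii) the degree bookkeeping, where your formula $\sum_j\bigl(i_j-(c-j+\tfrac12)\bigr)=|\lambda(v)|$ is the right energy grading on $\mathcal{F}_c$ and matches $\deg(t^c s_{\lambda(v)})=|\lambda(v)|$ on $\mathcal{B}_c$. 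With those two points spelled out, the basis-bijection argument is complete.
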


\subsection{Heisenberg Algebra}\label{subsec:HeisAlg}

The infinitely generated Heisenberg algebra is a central player in the representation theory of affine Lie algebras and quantum field theories. To any $\Z$-lattice one can associate a Heisenberg algebra; in this article we focus on the case of $\Z$ with pairing $\langle 1,1 \rangle =1$. 

\begin{definition}\label{def:heisenberg-alg}
The Heisenberg algebra $\mathfrak{h}$ is the associative, unital, $\mathds{k}$-algebra with $\Z$-basis $p^{(m)}, q^{(m)}$ for $m\in \N_+$ satisfying the commutation relations:
\begin{itemize}
\item $p^{(n)}p^{(m)}=p^{(m)}p^{(n)}$
\item $q^{(n)}q^{(m)}=q^{(m)}q^{(n)}$
\item $q^{(n)}p^{(m)} = \sum_{k\geq 0} p^{(m-k)}q^{(n-k)}.$ 
\end{itemize}
\end{definition}
Alternatively, $\mathfrak{h}$ could also be presented by the generators $ p^{(1^m)}$ and $ q^{(1^m)}$ for $m\in \N$, satisfying analogous relations to those above. 

\begin{proposition}[{\cite[Sec. 2.1]{Kh-H}} ]\label{prop:heis-relations} The following relations hold in $\mathfrak{h}$:
\begin{equation}
q^{(n)}p^{(1^m)} = p^{(1^m)}q^{(n)} + p^{(1^{m-1})}q^{(n-1)} \;\; and \;\; q^{(1^n)}p^{(m)} = p^{(m)}q^{(1^n)} + p^{({m-1})}q^{(1^{n-1})}. 
\end{equation}
\end{proposition}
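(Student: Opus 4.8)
The plan is to encode the defining relations of Definition~\ref{def:heisenberg-alg} into a single generating-function identity and then rearrange it using the generating function of the elementary generators. Work in $\mathfrak{h}[[z,w]]$ with $z,w$ commuting formal variables, and set $P(z) = \sum_{m\ge 0} p^{(m)} z^m$ and $Q(w) = \sum_{n\ge 0} q^{(n)} w^n$, with the usual conventions $p^{(0)}=q^{(0)}=1$ and $p^{(j)}=q^{(j)}=0$ for $j<0$ (these also dispatch the degenerate cases $m=0$ or $n=0$). First I would check that the relation $q^{(n)}p^{(m)}=\sum_{k\ge 0}p^{(m-k)}q^{(n-k)}$ is equivalent to
\[
Q(w)\,P(z) \;=\; \frac{1}{1-wz}\,P(z)\,Q(w),
\]
which is a pure re-indexing: substituting $a=m-k$, $b=n-k$ turns $\sum_{n,m}q^{(n)}p^{(m)}w^nz^m$ into $\bigl(\sum_{k\ge0}(wz)^k\bigr)\bigl(\sum_{a,b}p^{(a)}q^{(b)}z^aw^b\bigr)$. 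This uses only that the $p^{(m)}$ commute among themselves and the $q^{(n)}$ commute among themselves, both of which are part of the presentation.

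Next, introduce $E(z)=\sum_{m\ge 0}p^{(1^m)}z^m$. Since $p^{(0)}=1$, the series $P(-z)$ is invertible in $\mathfrak{h}[[z]]$, and the classical identity relating elementary and complete symmetric functions --- equivalently the defining relation expressing the column generators $p^{(1^m)}$ in terms of the $p^{(k)}$ --- says precisely that $E(z)=P(-z)^{-1}$. Substituting $z\mapsto -z$ in the displayed identity and a one-line rearrangement using $E(z)=P(-z)^{-1}$ then give
\[
Q(w)\,E(z) \;=\; (1+wz)\,E(z)\,Q(w).
\]
Comparing the coefficient of $w^nz^m$ on the two sides yields $q^{(n)}p^{(1^m)} = p^{(1^m)}q^{(n)} + p^{(1^{m-1})}q^{(n-1)}$, which is the first relation.

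For the second relation I would run the mirror argument: set $F(w)=\sum_{n\ge 0}q^{(1^n)}w^n = Q(-w)^{-1}$ (again the elementary--complete identity, now for the $q$-generators), substitute $w\mapsto -w$ in $Q(w)P(z)=\frac{1}{1-wz}P(z)Q(w)$, and rearrange using $F(w)=Q(-w)^{-1}$ to obtain $F(w)\,P(z)=(1+wz)\,P(z)\,F(w)$; comparing the coefficient of $w^nz^m$ gives $q^{(1^n)}p^{(m)} = p^{(m)}q^{(1^n)} + p^{(m-1)}q^{(1^{n-1})}$. I do not expect a genuine obstacle: the only point needing a little care is justifying the power-series manipulations, which is fine since the relevant series have invertible constant term and each of the two families of generators commutes internally. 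For readers who prefer to avoid generating functions, an alternative is a direct induction on $m$ using the recursion $p^{(1^m)}=\sum_{k=1}^m(-1)^{k-1}p^{(k)}p^{(1^{m-k})}$ together with Definition~\ref{def:heisenberg-alg} to move $q^{(n)}$ past each $p^{(k)}$, but the generating-function route is shorter.
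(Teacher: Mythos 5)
Your argument is correct. Note that the paper itself gives no proof of this proposition; it is quoted from Khovanov's paper, where the relations are essentially read off from the identification of $\mathfrak{h}^\pm$ with $Sym$ (with $p^{(m)},p^{(1^m)}$ playing the roles of $h_m,e_m$ and $q^{(n)},q^{(1^n)}$ their adjoints), so the mixed relations amount to standard facts about $h_n^\perp$ and $e_n^\perp$ acting on symmetric functions. Your route instead stays entirely inside the presentation of Definition~\ref{def:heisenberg-alg}: the packaging of $q^{(n)}p^{(m)}=\sum_k p^{(m-k)}q^{(n-k)}$ as $Q(w)P(z)=\tfrac{1}{1-wz}P(z)Q(w)$ is an exact re-indexing, the identity $E(z)=P(-z)^{-1}$ (equivalently $\sum_k(-1)^k p^{(1^k)}p^{(m-k)}=\delta_{m,0}$) is the standard elementary--complete relation inside the commutative subalgebra $\mathfrak{h}^-$ and is consistent with Proposition~\ref{prop:HeisBasisRel}, and conjugating the displayed identity by $P(-z)^{-1}$ (resp.\ $Q(-w)^{-1}$) gives $Q(w)E(z)=(1+wz)E(z)Q(w)$ and $F(w)P(z)=(1+wz)P(z)F(w)$, whose coefficients are exactly the two claimed relations, with the conventions $p^{(1^{-1})}=q^{(-1)}=0$ handling the boundary cases. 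The power-series manipulations are legitimate since the inverted series have constant term $1$ and live in the internally commutative subalgebras, and each coefficient extraction is a finite sum. So your proposal is a valid, self-contained replacement for the citation; what Khovanov's symmetric-function argument buys instead is that it simultaneously exhibits the Fock space representation on which these relations are realized, which your purely formal derivation does not need.
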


If we denote by $\mathfrak{h}^+$ and $\mathfrak{h}^-$ the unital subalgebras of $\mathfrak{h}$ generated by the $q^{(n)}$ and $p^{(n)}$ for $n\in \N_+$ respectively, then there is an isomorphism of vector spaces $\mathfrak{h}\cong \mathfrak{h}^+ \otimes \mathfrak{h}^-$. Furthermore if ${\mathbbm{1}}$ denotes the trivial representation of $\mathfrak{h}^+$, then the induced representation $\Ind_{\mathfrak{h}^+}^\mathfrak{h}({\mathbbm{1}})$ is the one dimensional irreducible representation of $\mathfrak{h}$ known as the \emph{Fock space representation}. This representation is faithful and isomorphic as a vector space to the ring of symmetric functions.

\begin{remark} Those familiar with Heisenberg algebras will notice the previous presentations differ from the usual definition. More generally, the Heisenberg algebra shows up as a collection of oscillators in the following way. Define the \emph{oscillator algebra} as the algebra generated by $\alpha_n, n \in \Z$ and central element $\hbar \in \k$ satisfying the commutation relations: 
\begin{equation}
\alpha_n \alpha_{m} - \alpha_{m}\alpha_n = n\delta_{n,-m}\hbar \qquad \text{and} \qquad \hbar \alpha_n-\alpha_n \hbar=0.
\end{equation}
This algebra can be seen as an algebra of differential operators by defining its action on Bosonic Fock space as follows; For any $f=f(p_1, p_2, \dots)t^c \in \mathcal{B}_c$ set:
\begin{align}\label{eq:HeisGenAlpha}
\alpha_0\left(f\right)&:= \frac{\partial f}{\partial t} &
\alpha_{n}\left(f\right) &:= \frac{\partial f}{\partial p_n} &
\alpha_{-n}\left(f\right) &:=np_nf & \hbar(f):=\hbar f.
\end{align}
In particular, if $\hbar \neq 0$ this action yields an irreducible representation of $\mathcal{B}$. Moreover if we set $\hbar =1$ and consider $\alpha_n$ for $n \in \Z \setminus \lbrace 0 \rbrace$ then we recover the usual formulation for the Heisenberg algebra $\mathfrak{h}$. When seen as an algebra of differential operators, $\mathfrak{h}$ is often referred to as the \emph{Weyl algebra}. Thus, $\mathcal{B}$ is an infinite dimensional irreducible representation of $\mathfrak{h}$, equal to infinitely many copies of the one dimensional Fock space representation. 
\end{remark}

\begin{proposition}[c.f. \cite{Kh-H} ]\label{prop:HeisBasisRel}The following relations hold inside $\mathfrak{h}$:
\begin{align}\label{Heis:Pchangeofbasis}
\sum_{m \in \N} p^{(m)} z^{m} = \emph{exp}\left( \sum_{n\in\N} \frac{
z^n}{n}\alpha_{-n}\right) \;\; \text{and} \;\; 
\sum_{m \in \N} (-1)^{m} p^{(1^m)} z^{m} = \emph{exp}\left( -\sum_{n\in\N} \frac{
z^n}{n}\alpha_{-n}\right) 
\end{align}
\begin{align}\label{Heis:Qchangeofbasis}
\sum_{m \in \N} q^{(m)} z^{m} = \emph{exp}\left( \sum_{n\in\N} \frac{
z^n}{n}\alpha_{n}\right)\;\; \text{and} \;\; 
\sum_{m \in \N} (-1)^m q^{(1^m)} z^{m} = \emph{exp}\left( - \sum_{n\in\N} \frac{
z^n}{n}\alpha_{n}\right). 
\end{align}
\end{proposition}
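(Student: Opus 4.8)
The plan is to derive Proposition~\ref{prop:HeisBasisRel} as a purely combinatorial consequence of the change-of-basis identities between power sums and the elementary/complete symmetric functions, transported through the faithful Fock space representation $\mathcal{B}$. Since $\mathfrak{h}$ acts faithfully on $\mathcal{B}$ (the Fock space representation is faithful, as recalled above), it suffices to verify each of the four generating-function identities as an identity of operators on $\mathcal{B}$. On $\mathcal{B}_c \cong Sym$, the operator $p^{(m)}$ acts by multiplication by the complete homogeneous symmetric function $h_m$ (and $p^{(1^m)}$ by multiplication by the elementary symmetric function $e_m$), while $\alpha_{-n}$ acts by multiplication by $n p_n$, where $p_n$ is the $n$-th power sum; these are exactly the dictionary entries fixed by \eqref{eq:HeisGenAlpha} together with Khovanov's identification of the $p$-generators with complete/elementary symmetric functions discussed in the introduction. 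Dually, $q^{(m)}$ acts as the adjoint of multiplication by $h_m$ with respect to the Hall inner product, equivalently as the differential operator $h_m^\perp$, and $\alpha_n$ acts as $n\,\partial/\partial p_n = p_n^\perp$ (scaled), which is the adjoint of multiplication by $p_n$.

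With this translation in hand, the first identity in \eqref{Heis:Pchangeofbasis} becomes the classical generating-function identity
\[
\sum_{m\ge 0} h_m z^m \;=\; \exp\!\left(\sum_{n\ge 1}\frac{p_n}{n}z^n\right),
\]
which is Macdonald's $H(z)=\exp(\sum p_n z^n/n)$ \cite[I.2]{MacDonald}; the second identity in \eqref{Heis:Pchangeofbasis} is the companion identity $E(z)=\sum_{m\ge 0} e_m z^m = \exp(-\sum_{n\ge1}(-1)^n? )$—more precisely $\sum_m (-1)^m e_m z^m = E(-z) = \exp(-\sum_{n\ge1} p_n z^n/n)$, matching the stated sign. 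The identities in \eqref{Heis:Qchangeofbasis} follow by taking adjoints of those in \eqref{Heis:Pchangeofbasis} with respect to the Hall pairing on each $\mathcal{B}_c$: adjoining is an anti-automorphism, exponentials of (sums of) pairwise commuting multiplication operators go to exponentials of their adjoints, multiplication by $p_n$ has adjoint $p_n^\perp = \tfrac1n\alpha_n$, and multiplication by $h_m$ (resp. $e_m$) has adjoint $q^{(m)}$ (resp. $q^{(1^m)}$) by definition of the $q$-generators via restriction. Collecting these gives the four claimed equalities as operator identities on $\mathcal{B}$, hence as identities in $\mathfrak{h}$ by faithfulness.

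The main obstacle is bookkeeping rather than conceptual: one must pin down the precise normalization conventions — the factor of $n$ in $\alpha_{-n}\mapsto np_n$ versus the $1/n$ in $\sum z^n\alpha_{-n}/n$, and the signs distinguishing $e_m$ from $h_m$ — so that the exponents line up exactly as written, and one must justify manipulating the formal exponential series (each $\alpha_{\pm n}$ is locally nilpotent on $\mathcal{B}$, so $\exp$ is well-defined and the usual formal-power-series manipulations are legitimate). I would also note that these identities are already implicit in \cite{Kh-H}; the cleanest writeup is simply to cite Macdonald for the symmetric-function identities, invoke faithfulness of the Fock space representation, and spell out the adjunction argument passing from \eqref{Heis:Pchangeofbasis} to \eqref{Heis:Qchangeofbasis}.
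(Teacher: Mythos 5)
The paper itself does not prove this proposition (it is quoted from Khovanov), so there is no in-paper argument to compare against; your strategy --- verify the four identities on the faithful Fock representation, where they reduce to Macdonald's generating-function identities $H(z)=\exp\bigl(\sum_{n\geq 1}p_nz^n/n\bigr)$ and $\sum_m(-1)^me_mz^m=E(-z)=\exp\bigl(-\sum_{n\geq 1}p_nz^n/n\bigr)$, then obtain the $q$-versions by adjunction with respect to the Hall pairing --- is exactly the intended route, and the adjunction and faithfulness steps are sound.

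However, the one point you explicitly defer, the normalization, is where your write-up as it stands breaks, and it is not mere bookkeeping: with the dictionary you adopt ($\alpha_{-n}$ acting as multiplication by $np_n$ and $\alpha_n$ as $\partial/\partial p_n$, following \eqref{eq:HeisGenAlpha}), the operator $\exp\bigl(\sum_{n}\tfrac{z^n}{n}\alpha_{-n}\bigr)$ is multiplication by $\exp\bigl(\sum_n p_nz^n\bigr)$, not by $H(z)=\exp\bigl(\sum_n p_nz^n/n\bigr)$, so the first identity of \eqref{Heis:Pchangeofbasis} is literally false under that translation. The identification that is simultaneously compatible with $[\alpha_n,\alpha_{-m}]=n\delta_{n,m}$ and with the stated formulas is $\alpha_{-n}\mapsto$ multiplication by $p_n$ and $\alpha_n\mapsto p_n^{\perp}=n\,\partial/\partial p_n$ (equivalently, Kac's variables $x_n=p_n/n$); note this also corrects the dictionary \eqref{eq:T}, whose entries $\alpha_{-n}\to np_n$ and $\alpha_n\to p_n^{\perp}$ taken together would give commutator $n^2$ rather than $n$. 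Since all four identities hinge precisely on this factor of $n$, you should fix the normalization explicitly rather than leave it as a remark. With that done, your adjunction step goes through because multiplication by $p_n$ and $p_n^{\perp}$ are Hall-adjoint (recall $\langle p_n,p_n\rangle=n$), and the rest of the argument is correct. Two smaller points: faithfulness should be invoked for the action of $\mathfrak{h}_{\mathbb{Q}}$ (harmless in characteristic zero, but the $\alpha_n$ live there, not in the integral form), and local nilpotence is not what legitimizes the exponentials --- multiplication by $p_n$ is not locally nilpotent; rather, each coefficient of $z^m$ in the exponential is a finite expression, so the identities hold coefficientwise in $\mathfrak{h}_{\mathbb{Q}}[[z]]$.
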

\subsection{Clifford Algebra}\label{subsec:CliffAlg}
To any vector space $V$ over a field $\mathds{k}$ equipped with a quadratic form $Q(v)$ we can associate an infinite dimensional \emph{Clifford algebra} $\mathcal{CL}(V,Q) := T(V)/J$ where $J$ is its 2-sided ideal generated by terms $v\otimes v-Q(v)1$. Specifically, given operators $\psi_i$ and $\psi_i^*$ for $i\in \Z$, we are interested in the Clifford algebra associated to the vector space $V = \bigoplus_i \mathds{k} \psi_i + \bigoplus_i \mathds{k}\psi_i^*$ with $Q(\psi_i+\psi_j^*)=\delta_{i,j}1$ and zero otherwise \footnote{For completeness, we mention that its unique irreducible module $F_U$, i.e its spin module, is given by the maximal isotropic subspace $U=\bigoplus_{i \leq 0} \mathds{k} \psi_i + \bigoplus_{i>0} \mathds{k}\psi_i^*$ with the property that $U(\overline{0})=0$. This fact will not be used here.}.

\begin{definition}\label{def:clifford-alg} The \emph{Clifford algebra} $\mathcal{CL}$ is the associative, unital, $\mathds{k}$-algebra with generators $\psi_i$ and $\psi_j^*$ for $i,j \in \Z$ satisfying the anticommutation relations,
\begin{equation}\label{eq:CliffordRelations}
\psi_i \psi_j^* + \psi_j^* \psi_i = \delta_{j,i} \qquad \psi_i \psi_j + \psi_j \psi_i = 0 \qquad \psi_i^* \psi_j^* + \psi_j^* \psi_i^* = 0. 
\end{equation}
\end{definition}

\begin{proposition}\label{prop:CliffAction} Fermionic Fock space has an action of the Clifford algebra $\mathcal{CL}$ given by
\begin{align}\label{eq:CliffordGenerators}
\psi_j(i_1 \wedge i_2 \wedge \cdots) &:= \begin{cases} 0 & \text{ if $j-\frac{1}{2}=i_s$ for some s,} \\
(-1)^s i_1 \wedge i_2 \wedge \cdots i_s \wedge (j-\frac{1}{2}) \wedge i_{s+1} \wedge \cdots & \text{ if } i_s > j-\frac{1}{2} > i_{s+1}. \end{cases} \\
\psi^*_j(i_1 \wedge i_2 \wedge \cdots) &:= \begin{cases} 0 & \text{ if $j-\frac{1}{2} \neq i_s$ for all s,} \\
(-1)^{s+1} i_1 \wedge i_2 \wedge \cdots i_{s-1} \wedge i_{s+1} \wedge \cdots & \text{ if } i_s = j-\frac{1}{2}.  \end{cases}
\end{align}
Hence, when acting on $\mathcal{F}$ these operators, known as \emph{free fermions}, are adjoint to each other and act like creation and annihilation operators on $\mathcal{F}$.
\end{proposition}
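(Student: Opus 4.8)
The plan is to verify three things about the operators $\psi_j,\psi_j^*$: that they are well-defined endomorphisms of $\mathcal F$, that they are mutually adjoint for the natural inner product, and that they satisfy the Clifford relations \eqref{eq:CliffordRelations}; the first two are quick, and essentially all of the work lies in the sign bookkeeping for the third. First I would check well-definedness. A semi-infinite monomial $v=i_1\wedge i_2\wedge\cdots$ has an eventually consecutive tail ($i_n=i_{n-1}-1$ for $n\gg 0$). Inserting $j-\tfrac{1}{2}$, when it is not already among the $i_s$, alters only finitely many leading entries and hence produces a semi-infinite monomial of charge $c(v)+1$; deleting the entry equal to $j-\tfrac{1}{2}$, when present, re-indexes the tail without destroying its eventual consecutiveness and produces a semi-infinite monomial of charge $c(v)-1$. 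Extending $\mathds k$-linearly, $\psi_j$ and $\psi_j^*$ are therefore well-defined on $\mathcal F$, homogeneous of degrees $+1$ and $-1$ for the principal gradation.

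Next I would establish adjointness. Equip $\mathcal F$ with the inner product making the semi-infinite monomials orthonormal. For monomials $v,w$, both $\langle\psi_j v,w\rangle$ and $\langle v,\psi_j^*w\rangle$ vanish unless $w$ is obtained from $v$ by inserting $j-\tfrac{1}{2}$, equivalently $v$ is obtained from $w$ by deleting $j-\tfrac{1}{2}$; and in that case, if $j-\tfrac{1}{2}$ occupies slot $s+1$ of $w$, then $\psi_j v=(-1)^s w$ while $\psi_j^* w=(-1)^{(s+1)+1}v=(-1)^s v$, so both pairings equal $(-1)^s$. Hence $\psi_j^*$ is the adjoint of $\psi_j$; this in particular lets me deduce $\psi_i^*\psi_j^*+\psi_j^*\psi_i^*=0$ by taking adjoints of $\psi_i\psi_j+\psi_j\psi_i=0$.

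The heart of the argument is the Clifford relations, checked on monomials by counting, at each insertion or deletion, how many current entries exceed the affected value (insertion into slot $s+1$ carries $(-1)^s$, deletion from slot $s$ carries $(-1)^{s+1}$). The relations $\psi_i^2=0$ and $(\psi_i^*)^2=0$ are immediate since $\psi_i v$ always contains, and $\psi_i^* v$ always omits, the value $i-\tfrac{1}{2}$. For $\psi_i\psi_j+\psi_j\psi_i=0$ with $i\neq j$: if $v$ already contains $i-\tfrac{1}{2}$ or $j-\tfrac{1}{2}$ both composites vanish, and otherwise they yield the same monomial, where, writing $t$ for the number of entries of $v$ strictly between $\min(i,j)-\tfrac{1}{2}$ and $\max(i,j)-\tfrac{1}{2}$, inserting the larger value first then the smaller picks up a sign $(-1)^{t+1}$ whereas the opposite order picks up $(-1)^{t}$, so the two composites cancel. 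For the mixed relation $\psi_i\psi_j^*+\psi_j^*\psi_i=\delta_{ij}$ I would split into cases according to whether $v$ contains $i-\tfrac{1}{2}$ and/or $j-\tfrac{1}{2}$: when $i\neq j$ each case either has one composite zero or has both nonzero and cancelling by the same count; when $i=j$ exactly one of $\psi_i v,\psi_i^*v$ is nonzero, and re-inserting (resp.\ re-deleting) $i-\tfrac{1}{2}$ into the slot it came from returns $v$ with sign $+1$, giving $\psi_i\psi_i^*+\psi_i^*\psi_i=\mathrm{id}$. The assertion about adjointness and creation/annihilation is then exactly the content of the adjointness computation above.

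The main obstacle is purely organizational: keeping the insertion/deletion sign conventions consistent throughout the case analysis for the mixed relation, since the relevant count of ``entries above the affected value'' shifts once an intermediate insertion or deletion has already occurred. Everything else is formal. This is the content of \cite{Kac1}, Lecture 14; I would include the verification here both for completeness and because the explicit sign conventions recur later in the categorified setting.
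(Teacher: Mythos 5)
Your verification is correct, and the sign bookkeeping (insertion into slot $s+1$ carries $(-1)^s$, deletion from slot $s$ carries $(-1)^{s+1}$) matches the conventions of \eqref{eq:CliffordGenerators}; note the paper itself offers no proof of this proposition, treating it as standard and deferring to \cite[Chapter 14]{Kac1}, which argues exactly as you do. So your write-up simply supplies the omitted routine check — well-definedness on semi-infinite monomials, adjointness for the inner product with orthonormal monomials, and the case-by-case Clifford relations — and is consistent with the cited source.
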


In essence, $\psi_i$ and $\psi_i^*$ raise and lower the charge of $v$ by creating and eliminating electrons in the $i$th energy state. Moreover, since for any vectors $v,w \in \mathcal{F}$ there is always  a finite sequence of $\psi_i$'s and $\psi_j$'s whose action on $v$ will yield $w$, the representation of the Clifford algebra induced by this action is also irreducible. 

\subsection{Connections with Symmetric Functions}\label{subsec:ConxSymFun}

The ring of symmetric functions $Sym$ has various bases indexed by partitions. We will particularly consider Schur functions $s_\lambda$ and the
elementary $e_{\lambda}$, complete $h_\lambda$, and power sum $p_\lambda$ symmetric functions. This ring has a natural inner product with the property that $\langle s_\lambda, s_\mu\rangle=\delta_{\mu,\lambda}$. Since $Sym$ acts on itself via multiplication, then for each $f \in Sym$ we can define $f^\perp \in \End(Sym)$ as the adjoint operator on $Sym$ with respect to this inner product, that is $\langle f^\perp u, v\rangle = \langle u, fv\rangle$ for any $u,v \in Sym$. In this way, for each integer $n\in \N$ Macdonald considers the operators $e_n^{\perp}, h_n^{\perp}, p_n^{\perp} \in$ End$(Sym)$ and introduces the following generating functions \cite[Section 1.2]{MacDonald}. 
\begin{align}
P(z) &= \sum_{n\geq 0} p_n z^{n-1} & E(z) &= \sum_{n\geq 0} e_n z^n & H(z) &= \sum_{n\geq 0} h_n z^n \label{eq:SymGenSeries}\\
P^{\perp}(z) &= \sum_{n\geq 0} p^{\perp}_n z^{n-1} & E^{\perp}(z) &= \sum_{n\geq 0} e^{\perp}_n z^n 
&H^\perp(z) &= \sum_{n\geq 0} h^\perp z^n \label{eq:SymGenSeries2}
\end{align} 
These functions satisfy the relations $E(z)=$ exp$\left( \sum_{n\geq 1} (-1)^{n-1}\frac{p_n}{n}z^n \right)$ and $H(z)=$ exp$\left( \sum_{n\geq 1}\frac{p_n}{n}z^{n} \right)$. 
\smallskip

The \emph{Bernstein operators} $B_a$ and $B_a^*$ are the coefficients of $z^a$ for each $a \in \Z$ in the following formal power series\footnote{Macdonald uses the notation $B^\perp_a$ instead of $B^*_a$.} \cite{Zelevinsky}: 
\begin{align}\label{eq:BernsteinGenSeries} B(z)&=\sum_{a \in \Z} B_a z^a :=H(z) E^{\perp}\left( -z^{-1}\right) &
B^*(z)&=\sum_{a \in \Z} B^*_a z^a := E\left( -z^{-1}\right)H^\perp(z)&&\\
&=\sum_{a\in \Z} \sum_{\substack{n,m \in \N\\ n-m=a}}(-1)^m h_{n}e_m^\perp &
&=\sum_{a\in\Z}\sum_{\substack{n,m\in \N\\m-n=a}} (-1)^n e_n h_{m}^\perp&&
\end{align} 
The Bernstein operators act like creation and annihilation operators for Schur functions $s_\lambda$ in the sense that for any partition $\lambda=(\lambda_1, \dots, \lambda_n)$, we have $s_\lambda = B_{\lambda_1} \dots B_{\lambda_n}(1)$ and $\B^*_{\lambda_n}\dots \B^*_{\lambda_1}(s_\lambda)=1$ \cite[Section 1.5]{MacDonald}. We prove a categorical analogue of these relations in Theorem \ref{thm:BernsteinSpecht}. 

\begin{proposition}\label{prop:HeisAction}
The Heisenberg algebra $\mathfrak{h}$ acts on the ring of symmetric functions. 
\end{proposition}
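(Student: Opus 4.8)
The plan is to produce an explicit action of $\mathfrak{h}$ on $Sym$ using the identification of $p^{(m)}$ and $q^{(m)}$ with operators built from elementary and complete symmetric functions and their adjoints, and then to verify that Definition~\ref{def:heisenberg-alg} relations are satisfied. Concretely, I would let $p^{(m)}$ act on $f\in Sym$ by multiplication by the complete symmetric function $h_m$, and let $q^{(m)}$ act by $h_m^\perp$, the adjoint of multiplication by $h_m$ with respect to the Hall inner product (equivalently, skewing by $h_m$). The first two relations, $p^{(n)}p^{(m)}=p^{(m)}p^{(n)}$ and $q^{(n)}q^{(m)}=q^{(m)}q^{(n)}$, are immediate since multiplication operators by elements of the commutative ring $Sym$ commute, and taking adjoints preserves commutativity. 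The content is the mixed relation $q^{(n)}p^{(m)}=\sum_{k\geq 0}p^{(m-k)}q^{(n-k)}$, i.e.\ $h_n^\perp\circ(h_m\cdot)=\sum_{k\geq 0}(h_{m-k}\cdot)\circ h_{n-k}^\perp$ as endomorphisms of $Sym$.

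To verify the mixed relation I would use the standard coproduct computation for skewing operators. Recall that for any $f,g,u\in Sym$ one has $\langle h_n^\perp(h_m u),\,v\rangle=\langle h_m u,\,h_n v\rangle=\langle u\otimes h_m,\,\Delta(h_n v)\rangle$ where $\Delta$ is the comultiplication, and since $\Delta(h_n)=\sum_{i+j=n}h_i\otimes h_j$ is the defining coproduct, expanding $\Delta(h_n v)=\Delta(h_n)\Delta(v)$ and pairing against $h_m$ in the second tensor slot isolates the terms $h_{n-k}$ acting on the first factor after $h_m$ has been matched against $h_k$ from the $v$-part. Carrying this bookkeeping through yields exactly $\sum_{k\geq 0}h_{m-k}\,(h_{n-k}^\perp u)$, which is the claimed identity. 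Equivalently — and this is probably the cleanest route to write up — one can invoke Proposition~\ref{prop:heis-relations}-style manipulations already recorded in \cite{Kh-H, MacDonald}: the operators $h_m\cdot$ and $h_n^\perp$ are precisely the "creation/annihilation by complete symmetric functions" operators, and Macdonald \cite[Section 1.5, Example 29]{MacDonald} records exactly the commutation law above. So the proof reduces to citing this identity and checking it matches Definition~\ref{def:heisenberg-alg}.

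Finally I would note that this action is faithful and that it is the Fock space representation: $\mathfrak{h}\cong\mathfrak{h}^+\otimes\mathfrak{h}^-$ as vector spaces, the subalgebra $\mathfrak{h}^-$ generated by the $p^{(m)}$ acts by multiplication and so $\mathfrak{h}^-\cdot 1$ already spans all of $Sym$ (since the $h_m$ generate $Sym$ as an algebra), while $\mathfrak{h}^+$ generated by the $q^{(m)}$ annihilates $1$ because $h_n^\perp(1)=0$ for $n\geq 1$; hence $Sym\cong\Ind_{\mathfrak{h}^+}^{\mathfrak{h}}(\mathbbm 1)$ as asserted after Definition~\ref{def:heisenberg-alg}. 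The main obstacle is purely bookkeeping: getting the index shifts in the coproduct expansion of $\Delta(h_n v)$ lined up correctly so that the pairing against $h_m$ produces the sum $\sum_{k\geq 0}p^{(m-k)}q^{(n-k)}$ with the right range of $k$ (terms with $k>\min(m,n)$ vanishing automatically since $h_{<0}=0$ and $h_0=1$). There are no conceptual difficulties beyond this, and the alternative presentation by the $p^{(1^m)},q^{(1^m)}$ generators acting by $e_m\cdot$ and $e_m^\perp$ is verified identically using $\Delta(e_n)=\sum_{i+j=n}e_i\otimes e_j$, giving Proposition~\ref{prop:heis-relations}.
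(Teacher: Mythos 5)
Your proposal is correct, but it takes a different route from the paper. The paper's proof is a one-liner: it declares the action ``immediate from equation \eqref{eq:HeisGenAlpha}'', i.e.\ it uses the oscillator (Weyl-algebra) action $\alpha_{-n}=np_n\cdot$, $\alpha_n=\partial/\partial p_n$ on $\mathds{k}[p_1,p_2,\dots]\cong Sym$, with the $p^{(m)},q^{(m)}$ then expressed through the $\alpha_n$ via Proposition \ref{prop:HeisBasisRel}. You instead define the action directly on the integral generators, letting $p^{(m)}$ act by multiplication by $h_m$ and $q^{(m)}$ by the skewing operator $h_m^\perp$, and verify the defining relation $q^{(n)}p^{(m)}=\sum_{k\geq 0}p^{(m-k)}q^{(n-k)}$ through the coproduct identity $f^\perp(uv)=\sum (f_{(1)}^\perp u)(f_{(2)}^\perp v)$ with $\Delta(h_n)=\sum_{i+j=n}h_i\otimes h_j$ and $h_i^\perp h_m=h_{m-i}$ (or by citing the standard identity in Macdonald/Khovanov). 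Both arguments are sound. Your approach has the merit of working with the $\Z$-generators $p^{(m)},q^{(m)}$ throughout and of matching the identification \eqref{eq:T} used later, without passing through the $\alpha_n$, which only generate $\mathfrak{h}$ over $\mathds{Q}$; the paper's approach is shorter because it simply reuses the already-stated differential-operator action. Your ``carrying the bookkeeping through'' step is a little compressed, and the specific Macdonald example number you cite may not be the right one, but these are presentational points rather than gaps; the closing remarks about faithfulness and the Fock space identification are also consistent with what the paper asserts after Definition \ref{def:heisenberg-alg}.
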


\begin{proof}
This is immediate from equation \eqref{eq:HeisGenAlpha}. 
\end{proof}

From the definition of $p_n^\perp$ it can be deduced that $p_n^\perp$ acts on symmetric functions by $n\frac{\partial}{\partial p_n}$ (see \cite[pg. 76]{MacDonald}). Thus, combining the action in equation \eqref{eq:HeisGenAlpha} and Proposition \ref{prop:HeisBasisRel} then we can identify the various generators of $\mathfrak{h}$ and $Sym$ in the following manner:
\begin{equation}\label{eq:T} p^{(n)} \to h_n \qquad q^{(n)} \to h_n^\perp  \qquad \alpha_{-n} \to np_n \end{equation} 
\[p^{(1^n)} \to e_n\qquad q^{(1^n)}\to e_n^\perp  \qquad \alpha_{n} \to p^\perp_n\]

Moreover, since $Sym$ can be expressed as $\Z[h_1,h_2, \dots]$, $\Z[e_1,e_2,\dots]$, and $\mathds{Q}[p_1,p_2,\dots]$ it follows that $p^{(n)}$ and $q^{(n)}$ generate the integral form of the Heisenberg algebra $\mathfrak{h}_\Z$ whereas the $\alpha_n$ are generators for $\mathfrak{h}_{\mathds{Q}}=\mathfrak{h}_\Z \otimes_\Z \mathds{Q}$. 


\subsection{The Boson-Fermion Correspondence}\label{subsec:BFcorrespondence}
Given Theorem \ref{thm:FockSpaceIso} we will henceforth identify Fermionic and Bosonic Fock space and assume all operators are acting on $\mathcal{B}$. This isomorphism allows us to express the action of $\mathcal{CL}$ on $\mathcal{B}$ in such a way that the Clifford algebra action induced from Proposition \ref{prop:CliffAction} can be entirely recovered from the Heisenberg algebra action in Proposition \ref{prop:HeisAction}. In particular, we will construct fermions $\psi_i$, $\psi_i^*$ in terms of bosons $\alpha_n$, by a process known as \emph{fermionization}. To this effect, we introduce the operators on $\mathcal{B}$ 
\[
\Gamma_+(z) := \text{exp} \left( \sum_{n\geq 1} \frac{z^{-n}}{n}\alpha_n \right) \;\; \text{and} \;\; \Gamma_-(z) := \text{exp} \left( \sum_{n\geq 1} \frac{z^{n}}{n}\alpha_{-n} \right)
\]
which satisfy the commutation relation $ \Gamma_-(y) \Gamma_+(z) = (1-\frac{y}{z})\Gamma_+(z)\Gamma_-(y)$. Operators of this form are known as \emph{halves of vertex operators}. By equations (\ref{Heis:Pchangeofbasis}) and (\ref{Heis:Qchangeofbasis}) we immediately obtain the following identities:
\begin{align*}
\Gamma_-(z)&= \sum_{m \in \N} p^{(m)} z^{m} \;\;& \Gamma_-(z)^{-1}&= \sum_{m \in \N} (-1)^{m}p^{(1^m)} z^{m}\\
\Gamma_+\left(\frac{1}{z}\right)&= \sum_{m \in \N} q^{(m)} z^{m} \;\;
& \Gamma_+\left(\frac{1}{z}\right)^{-1}&= \sum_{m \in \N} (-1)^{m}q^{(1^m)} z^{m}
\end{align*}The operators $\Gamma_+(z)$ and $\Gamma_-(z)$ are adjoint operators in the sense that if we consider the generating series $\Gamma_\pm(z)=\sum_{n \geq 0} \Gamma_{n}^{\pm}z^{\mp n}$ then $\Gamma_n^+$ and $\Gamma_n^-$ are adjoint operators on $\mathcal{B}$ \cite[pg 315]{Kac1}. This statement follows immediately from the fact that $p^{(m)}$ and $q^{(m)}$ are adjoint operators in $\mathfrak{h}$. 

Thus, if we define the \emph{Fermionic fields} as $\psi(z) := \sum_{i\in \Z} \psi_i z^i$ and $\psi^*(z) := \sum_{i\in \Z} \psi_i^* z^{-i}$ and the \emph{charge operator} on $\mathcal{B}$ by $z^{\pm\alpha_0}(t^c s_\lambda):= z^{\pm c} s_\lambda$ then the second part of the \emph{Boson-Fermion correspondence} can be stated as follows.

\begin{theorem}[Boson-Fermion correspondence part 2, {\cite[14.10]{Kac1}}]\label{thm:BosonFermionCorrespondence} The operators $\Gamma_-$ and $\Gamma_+$ satisfy, 
\begin{equation*}
\psi(z)=z^{\alpha_0}\;t \;\Gamma_-(z) \Gamma_+(z)^{-1}
\qquad \text{and} \qquad
\psi^*(z)= t^{-1}\\ z^{-\alpha_0}\; \Gamma_-(z)^{-1} \Gamma_+(z).
\end{equation*}
\end{theorem}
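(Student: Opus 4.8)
The plan is to transport both identities to Bosonic Fock space $\mathcal{B}$ via the isomorphism $\sigma$ of Theorem~\ref{thm:FockSpaceIso} and then to reduce each identity to a comparison on the vacuum vectors $t^{c}$, $c\in\Z$. The engine is a uniqueness principle for vertex-type operators: if $\Phi(z)$ is an operator on $\mathcal{B}$ that shifts charge by a fixed amount and satisfies, for every $n\ge 1$ and a fixed sign, a twisted commutation relation $\Phi(z)\,\alpha_{-n}=(\alpha_{-n}\pm z^{-n})\,\Phi(z)$, then $\Phi(z)$ is completely determined by the family $\{\Phi(z)(t^{c})\}_{c\in\Z}$. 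Indeed, by \eqref{eq:HeisGenAlpha} the operator $\alpha_{-n}$ is multiplication by $np_{n}$ on each $\mathcal{B}_{c}$, so $\mathcal{B}_{c}=\mathds{k}[p_{1},p_{2},\dots]\,t^{c}$ is generated from $t^{c}$ by the pairwise commuting operators $\alpha_{-1},\alpha_{-2},\dots$, and pushing $\Phi(z)$ through these using the twisted relation expresses $\Phi(z)$ on any vector of $\mathcal{B}_{c}$ in terms of $\Phi(z)(t^{c})$. Hence it suffices to prove, for each of the two identities, that (i) both sides shift charge by $\pm1$ and satisfy the same twisted commutation relation with every $\alpha_{-n}$, and (ii) both sides take the same value on each vacuum $t^{c}$.

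For the right-hand sides, set $X(z):=z^{\alpha_{0}}t\,\Gamma_{-}(z)\Gamma_{+}(z)^{-1}$ and $X^{*}(z):=t^{-1}z^{-\alpha_{0}}\Gamma_{-}(z)^{-1}\Gamma_{+}(z)$. Expanding the exponentials defining $\Gamma_{\pm}$ and using only $[\alpha_{m},\alpha_{-n}]=m\delta_{m,n}$ together with the observation that the commutators produced are scalars, hence central (so that $e^{-A}\alpha_{m}e^{A}=\alpha_{m}+[\alpha_{m},A]$), one obtains $[\alpha_{m},\Gamma_{-}(z)]=z^{m}\Gamma_{-}(z)$ for $m>0$ and $[\alpha_{m},\Gamma_{+}(z)^{-1}]=z^{m}\Gamma_{+}(z)^{-1}$ for $m<0$, the remaining mixed commutators vanishing; consequently $[\alpha_{m},\Gamma_{-}(z)\Gamma_{+}(z)^{-1}]=z^{m}\Gamma_{-}(z)\Gamma_{+}(z)^{-1}$ and, dually, $[\alpha_{m},\Gamma_{-}(z)^{-1}\Gamma_{+}(z)]=-z^{m}\Gamma_{-}(z)^{-1}\Gamma_{+}(z)$ for all $m\neq 0$. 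Since each $\alpha_{m}$ with $m\neq0$ commutes with $z^{\pm\alpha_{0}}$ and with multiplication by $t$, the operators $X(z)$ and $X^{*}(z)$ satisfy $[\alpha_{m},X(z)]=z^{m}X(z)$ and $[\alpha_{m},X^{*}(z)]=-z^{m}X^{*}(z)$, while the prefactors $z^{\alpha_{0}}t$ and $t^{-1}z^{-\alpha_{0}}$ make them shift charge by $+1$ and $-1$; this is property (i). For (ii), note $\Gamma_{+}(z)^{\pm1}(t^{c})=t^{c}$ because $\alpha_{n}(1)=0$ for $n\ge1$, while the identities for $\Gamma_{\pm}$ recalled above together with \eqref{eq:T} give $\Gamma_{-}(z)(1)=\sum_{m}p^{(m)}z^{m}=H(z)$ and $\Gamma_{-}(z)^{-1}(1)=\sum_{m}(-1)^{m}p^{(1^{m})}z^{m}=E(-z)$; tracking the charge through the prefactors then yields $X(z)(t^{c})=z^{c+1}t^{c+1}H(z)$ and $X^{*}(z)(t^{c})=z^{-c}t^{c-1}E(-z)$.

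For the left-hand sides, realize the Heisenberg generators on $\mathcal{F}$ as the normally ordered fermion bilinears $\alpha_{m}=\sum_{j\in\Z}:\psi_{j}\psi^{*}_{j+m}:$ built from the operators of Proposition~\ref{prop:CliffAction}; a short computation with the anticommutation relations \eqref{eq:CliffordRelations} gives $[\alpha_{m},\psi_{k}]=\psi_{k-m}$ and $[\alpha_{m},\psi^{*}_{k}]=-\psi^{*}_{k+m}$, hence $[\alpha_{m},\psi(z)]=z^{m}\psi(z)$ and $[\alpha_{m},\psi^{*}(z)]=-z^{m}\psi^{*}(z)$, while $\psi(z),\psi^{*}(z)$ shift charge by $+1,-1$ and $\alpha_{0}$ acts by the charge. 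Using the partition assignment $v\mapsto\lambda(v)$ underlying Theorem~\ref{thm:FockSpaceIso}, one checks that $\sigma$ carries this fermionic Heisenberg action to the bosonic one of \eqref{eq:HeisGenAlpha} and carries the fermionic charge shift to multiplication by $t$; transporting the relations above along $\sigma$ gives property (i) for $\psi(z)$ and $\psi^{*}(z)$, with signs matching those found for $X(z)$ and $X^{*}(z)$. Finally one evaluates on the vacuum $v_{c}=(c-\tfrac12)\wedge(c-\tfrac32)\wedge\cdots$ using Proposition~\ref{prop:CliffAction}: $\psi_{j}(v_{c})=0$ unless $j\ge c+1$, in which case it is the charge-$(c{+}1)$ vector with partition $(j-c-1)$, so $\sigma(\psi(z)(v_{c}))=\sum_{j\ge c+1}t^{c+1}h_{j-c-1}z^{j}=z^{c+1}t^{c+1}H(z)$; dually $\psi^{*}_{j}(v_{c})=0$ unless $j\le c$, in which case it is $(-1)^{c+j}$ times the charge-$(c{-}1)$ vector with partition $(1^{c-j})$, so $\sigma(\psi^{*}(z)(v_{c}))=\sum_{j\le c}(-1)^{c+j}t^{c-1}e_{c-j}z^{-j}=z^{-c}t^{c-1}E(-z)$. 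These agree with the vacuum values of $X(z)$ and $X^{*}(z)$ computed above, so the uniqueness principle forces $\psi(z)=X(z)$ and $\psi^{*}(z)=X^{*}(z)$.

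The routine parts aside, the real content — and the place to be careful — is the fermionic realization of $\mathfrak{h}$ on $\mathcal{F}$ and the verification that $\sigma$ intertwines it with the bosonic action \eqref{eq:HeisGenAlpha}, together with the attendant sign and charge bookkeeping (the half-integer indexing of energy levels, and the interplay of $z^{\pm\alpha_{0}}$ with multiplication by $t^{\pm1}$); this is the combinatorial heart of the boson-fermion dictionary and is where the precise normalizations must be pinned down. A secondary technical point is that every operator in sight is a formal series with infinitely many terms: before expanding exponentials, invoking the central-commutator identities, or performing the reduction to the vacua, one should check that on any fixed vector of $\mathcal{B}$ only finitely many terms contribute to each homogeneous component, which is what legitimizes all of these manipulations.
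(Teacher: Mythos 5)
The paper does not prove this statement at all --- it is quoted from Kac and cited as \cite[Thm.~14.10]{Kac1} --- so there is no in-paper argument to match; your proposal reconstructs the standard proof from Kac's book (twisted commutation relations with the $\alpha_m$, reduction to vacuum vectors via the fact that each $\mathcal{B}_c$ is generated from $t^c$ by the $\alpha_{-n}$, then evaluation of both sides on the vacua). The computations you do carry out are correct: the commutators $[\alpha_m,\Gamma_\pm^{\pm1}]$, the charge bookkeeping through $z^{\pm\alpha_0}$ and $t^{\pm1}$, and the vacuum evaluations $\sigma(\psi(z)(v_c))=z^{c+1}t^{c+1}H(z)$ and $\sigma(\psi^*(z)(v_c))=z^{-c}t^{c-1}E(-z)$ all check out against the paper's conventions (including the sign $(-1)^{c+j}$ and the partitions $(j-c-1)$ and $(1^{c-j})$).

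The genuine gap is the step you yourself flag but do not carry out: the claim that the combinatorially defined isomorphism $\sigma$ of Theorem~\ref{thm:FockSpaceIso} (semi-infinite wedge $v\mapsto t^c s_{\lambda(v)}$) conjugates the normally ordered bilinears $\sum_j:\psi_j\psi^*_{j+m}:$ on $\mathcal{F}$ into the bosonic operators $\alpha_m$ of \eqref{eq:HeisGenAlpha}, and the fermionic charge shift into multiplication by $t$. This is not a routine normalization check; it is equivalent to a Murnaghan--Nakayama/Pieri-type statement comparing multiplication by power sums on Schur functions with the wedge action of the fermion bilinears, and it is of essentially the same depth as the theorem being proved. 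In Kac's treatment the issue is sidestepped because $\sigma$ is \emph{defined} as the unique $\mathfrak{h}$-intertwiner sending vacuum to vacuum (using irreducibility of each charge sector of $\mathcal{F}$ over $\mathfrak{h}$), and the Schur-function description of $\sigma$ is derived afterwards; with the paper's combinatorial definition of $\sigma$, your uniqueness argument only shows that $\psi(z)$ and the vertex operator agree after transporting by \emph{some} $\mathfrak{h}$-equivariant identification of $\mathcal{F}_c$ with $\mathcal{B}_c$, not necessarily the $\sigma$ fixed in Theorem~\ref{thm:FockSpaceIso}, unless the intertwining property of that specific $\sigma$ is proved. Supplying that verification (or proving irreducibility of $\mathcal{F}_c$ over $\mathfrak{h}$ plus matching of graded dimensions, and then showing the combinatorial $\sigma$ is the resulting intertwiner) is what is needed to close the argument; the secondary point you raise about formal-series convergence is indeed harmless, since on any fixed vector only finitely many terms contribute to each power of $z$.
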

Under the identification from \eqref{eq:T}, the generating series from \eqref{eq:SymGenSeries}, \eqref{eq:SymGenSeries2}, and \eqref{eq:BernsteinGenSeries} can be seen as operators on $\mathsf{V}_{Fock}$. Hence we may write $\Gamma_-(z)= H(z)$ and $\Gamma_+(z)^{-1}= E^\perp(-z^{-1})$. Consequently, $\Gamma_-(z) \Gamma_+(z)^{-1}= H(z)E^\perp(-z^{-1})= B(z)$ and $\Gamma_-(z)^{-1} \Gamma_+(z)= E(-z)H^\perp(z^{-1}) = B^*(z^{-1})$ where for any $a \in \Z$ the {Bernstein operators} $B_a$ and $B_a^*$ on $\mathcal{B}$ are given by:
\begin{align}\label{eq:BernsteinOps} 
B_a := \sum_{n-m=a} (-1)^{m}p^{(n)}q^{(1^m)} \;\; \text{and}  \;\; B^*_a := \sum_{m-n=a} (-1)^{n}p^{(1^n)}q^{(m)}.
\end{align}
Thus, for any $t^cs_\lambda \in \mathcal{B}_c$ with charge $c \in \Z$ the action of $\psi(z)$ and $\psi^*(z)$ is given by:
\begin{align*}
\psi(z)(t^c s_{\lambda}) &= z^{\alpha_0}t \sum_{a\in\Z} \mathcal{B}_a (t^c s_\lambda)z^a =z^{c+1}t^{c+1}\sum_{a\in\Z} \mathcal{B}_a (s_\lambda)z^a=\sum_{i\in\Z} t^{c+1} \mathcal{B}_{i-c-1} (s_\lambda)z^i
\end{align*}
\begin{align*}
\psi^*(z)(t^c s_{\lambda}) &= t^{-1}z^{-\alpha_0} \sum_{a\in\Z} \mathcal{B}^*_a (t^c s_\lambda)z^{-a} =t^{c-1}z^{-c}\sum_{a\in\Z} \mathcal{B}^*_a (s_\lambda)z^{-a}=\sum_{i\in\Z} t^{c-1} \mathcal{B}^*_{i-c} (s_\lambda)z^{-i}
\end{align*}
Since $\psi(z)(t^c s_\lambda)=\sum_{i\in\Z} \psi_i(t^c s_\lambda) z^i$ and $\psi^*(z)(t^c s_\lambda)=\sum_{i\in\Z} \psi^*_i(t^c s_\lambda) z^{-i}$ then by equating the sums for any $i \in \Z$ we can express the action of the fermionic generators in terms on the Bernstein operators as follows:
\begin{equation}
\psi_i(t^cs_\lambda) = t^{c+1} B_{i-(c+1)}(s_\lambda) \qquad\qquad \qquad \qquad\qquad \psi^*_i(t^cs_\lambda) = t^{c-1} B^*_{i-c}(s_\lambda).
\end{equation}
Consequently, the condition that $\psi_i$ and $\psi_i^*$ satisfy the Clifford algebra relations from \eqref{eq:CliffordRelations} is equivalent to the Bernstein operators satisfying certain anticommutation relation. Specifically, through a direct computation we find the equations on the left-hand side below hold if and only if the relations on the right-hand side also hold. 
\begin{align}
&\psi_i \psi_j^* + \psi_j^* \psi_i = \delta_{i,j} &&\Leftrightarrow  &&B_{i-c}B_{j-c}^*+B_{j-c-1}^*B_{i-c-1}=\delta_{i,j}\label{eq:cliff-bern-1}\\
&\psi_i \psi_j + \psi_j \psi_i = 0 &&\Leftrightarrow & &B_{i-c-2}B_{j-c-1}+B_{j-c-2}B_{i-c-1}=0\label{eq:cliff-bern-2}\\
&\psi^*_i \psi^*_j + \psi^*_j \psi^*_i = 0 &&
\Leftrightarrow &&B^*_{i-c+1}B^*_{j-c}+B^*_{j-c+1}B^*_{i-c}=0\label{eq:cliff-bern-3}.
\end{align}
The relations on the right hand side involving the Bernstein operators are proven by Macdonald in \cite[Sec. 1.5]{MacDonald} in the context of symmetric functions.


\section{Homological Algebra for Infinite Complexes}\label{sec:HomAlgebra}

We begin by establishing some conventions used throughout the paper.  
Suppose $\mathcal{C}$ is an additive, monoidal category and $\mathcal{A}$ and $\mathcal{B}$ are two chain complexes over $\mathcal{C}$. Then:
\begin{itemize}
\item The $i^{th}$ chain group of $\mathcal{A}$ is denoted by $\mathcal{A}_i$. 
\item The differentials decrease homological degree. That is, they are maps $\d_{\mathcal{A}}: \mathcal{A}_{i+1}\to\mathcal{A}_i$.
\item For any $s \in \Z$ we denote by $\mathcal{A}[s]$ the chain complex $\mathcal{A}$ with a homological degree shift of $s$. That is, $\mathcal{A}[s]$ is the complex whose $i^{th}$ chain group is given by $\mathcal{A}_{i-s}$ and whose differentials are the same as in $\mathcal{A}$ (modulo a sign). Thus, $\mathcal{A}[1]$ indicates a homological shift up by 1 (i.e. to the left) whereas $\mathcal{A}[-1]$ is a shift down by one (i.e. to the right). 
\item If the homological degree of each chain group in a chain complex $\mathcal{A}$ needs to be made explicit, we will write $\mathcal{A}_i[s+i]$ for some $s\in \Z$, to indicate that $\mathcal{A}_i$ lives in homological degree $s+1$. Thus, we will opt for the notation on the right rather than the left. 
\[ \cdots \xrightarrow{\d_{\mathcal{A}}}\mathcal{A}_{i+1} \xrightarrow{\d_{\mathcal{A}}}\underset{s+i}{ \underline{\mathcal{A}_{i}}} \xrightarrow{\d_{\mathcal{A}}}\mathcal{A}_{i-1} \xrightarrow{\d_{\mathcal{A}}} \cdots =
\cdots \xrightarrow{\d_{\mathcal{A}}}\mathcal{A}_{i+1}[s+i+1] \xrightarrow{\d_{\mathcal{A}}} \mathcal{A}_{i}[s+i] \xrightarrow{\d_{\mathcal{A}}}\mathcal{A}_{i-1}[s+i-1] \xrightarrow{\d_{\mathcal{A}}} \cdots  \] 
\item We will also write $\left\lbrace \bigoplus_i \mathcal{A}_i[s+i], \d_{\mathcal{A}} \right\rbrace$ to compactly denote the chain complex above, where $\mathcal{A}_i$ lives in homological degree $s+i$. 
\end{itemize}

We now recall some definitions and present the general machinery needed to perform the upcoming computations on bi-infinite chain complexes. We follow Elias-Hogancamp closely and refer the reader to Section 4 of \cite{EH-CatDiag} for all the statements below in their full generality.

 A \emph{chain map} $f:\mathcal{A}\to\mathcal{B}$ is a collection of maps $f_i:\mathcal{A}_i \to \mathcal{B}_i$ such that $\d_{\mathcal{B}}f_i-f_{i-1}\d_{\mathcal{A}} =0$ for all $i$. We say two chain maps $f,g: \mathcal{A} \to \mathcal{B}$ are \emph{homotopic}, denoted by $f \simeq g$ if there exists a \emph{homotopy}, $\mathsf{H}:\mathcal{A} \to \mathcal{B}[-1]$ with the property that $\mathsf{H}\circ \d_{\mathcal{A}} + \d_{\mathcal{B}}\circ \mathsf{H} = f-g$. If such maps exists, we say that $\mathcal{A}$ and $\mathcal{B}$ are \emph{homotopy equivalent}. In particular, we say a chain complex $\mathcal{A}$ is \emph{nullhomotopic} if there exists a homotopy $\mathsf{H}$ such that $\1_{\mathcal{A}} \simeq 0$. 

The \emph{homotopy category of $\mathcal{C}$} is denoted by $\K(\mathcal{C})$. Its objects are chain complexes over $\mathcal{C}$ and its morphisms are chain maps up to homotopy. Thus two complexes are isomorphic in $\K(\mathcal{C})$, denoted $\mathcal{A} \simeq \mathcal{B}$, if they are homotopy equivalent. Let $\K^+(\mathcal{C}), \K^-{(\mathcal{C})},$ and $\K^b(\mathcal{C})$ denote the full subcategories of $\K(\mathcal{C})$ of chain complexes which are bounded above, bounded below, and bounded above and below respectively. 

For any chain map $f:\mathcal{A}\to\mathcal{B}$, the \emph{mapping cone of $f$} is the chain complex Cone$(f):= \mathcal{A}[1] \oplus \mathcal{B}$ whose differential is given by $\begin{bmatrix}
-\d_{\mathcal{A}} & 0\\
f & \d_{\mathcal{B}}\\
\end{bmatrix}$. In particular, a triangle in $\K(\mathcal{C})$ is distinguished if and only if it is isomorphic to
\[
\mathcal{A} \xrightarrow{f} \mathcal{B} \xrightarrow{\iota} \text{Cone}(f) \xrightarrow{\pi} \mathcal{A}[1]
\]
where both $\iota$ and $\pi$ are canonical inclusion and projection maps associated to mapping cones. Thus, $\K(\mathcal{C})$ (and consequently $\K^+(\mathcal{C})$, $\K^-(\mathcal{C})$, and $\K^b(\H)$) is a triangulated category.

\begin{definition} \label{def:total complex}
Let $\mathcal{T} \subset \K(\mathcal{C})$ be a full triangulated subcategory and $I \subset \Z$ some indexing set. A \emph{bi-complex} in $\mathcal{T}$ is a collection $\lbrace \mathcal{A}^i,\d_{i},\d^i\rbrace_{i \in I}$ with $\mathcal{A}^i \in \mathcal{T}$ such that for each $i \in I$, $\d_i = \d_{\mathcal{A}^i}:\mathcal{A}_j^i \to \mathcal{A}_{j-1}^i$ and $\d^{i}:\mathcal{A}^i_j \to \mathcal{A}^{i-1}_j$ is a morphism of chain complexes satisfying:
\[
{(\d_i)}^2 =0 \text{,\hspace{6mm}} {(\d^{i})}^2 = 0 \text {\hspace{3mm} and \hspace{3mm}}  \d_i\d^i+\d^i\d_i=0.
\]
\end{definition}
\begin{remark}In this work we will only consider $\mathcal{T} = \K(\mathcal{C}),\K^-(\mathcal{C}), \K^+(\mathcal{C})$, or $\K^b(\mathcal{C})$.
\end{remark}

Given a bi-complex $\lbrace \mathcal{A}^i, \d_i,\d^i \rbrace$ in $\mathcal{T}$, we define its \emph{total complex} and its \emph{completed total complex} as the chain complexes given by: 
\begin{equation}
\text{Tot}^{\oplus} \lbrace \mathcal{A}^i, \d_i,\d^i \rbrace := \left\lbrace \bigoplus_i \mathcal{A}^i , \d \right\rbrace
\text{ \hspace{3mm} with differential \hspace{3mm}} \d = \sum \d_i+\d^i
\end{equation}
\begin{equation}
\text{Tot}^{\Pi}\lbrace \mathcal{A}^i, \d_i,\d^i \rbrace := \left\lbrace \prod_i \mathcal{A}^i,\d \right\rbrace
\text{ \hspace{3mm} with differential \hspace{3mm}} \d = \sum \d_i+\d^i
\end{equation}
We say $\lbrace \mathcal{A}^i, \d_i,\d^i \rbrace$ is $\mathcal{T}$-\emph{locally finite} if $\text{Tot}^{\oplus} \lbrace \mathcal{A}^i, \d_i,\d^i \rbrace$ and $ 
\text{Tot}^{\Pi}\lbrace \mathcal{A}^i, \d_i,\d^i \rbrace$ exist in $\mathcal{T}$ and are isomorphic.

\begin{definition}\label{def:ChainTensorProd} Given any additive, monoidal category $\mathcal{C}$ and $\mathcal{A}, \mathcal{B} \in \K(\mathcal{C})$ we define the \emph{tensor product} of $\mathcal{A}$ and $\mathcal{B}$ as the chain complex
\[\mathcal{A}\otimes \mathcal{B}:=\text{Tot}^{\oplus} \left\lbrace \mathcal{A} \otimes \mathcal{B}^j, \d_j,\d^j \right\rbrace \qquad \text{with} \qquad \d_j:= \d_{\mathcal{A}}\otimes \1_{\mathcal{B}^j}\text{ and } \d^j:=\1_{\mathcal{A}} \otimes \d_{\mathcal{B}^{j}} \]
and the \emph{completed tensor product} as the chain complex
\[\mathcal{A} \tilde{\otimes} \mathcal{B}:= \text{Tot}^{\Pi} \left\lbrace \mathcal{A} \otimes \mathcal{B}^j, \d_j,\d^j \right\rbrace \qquad \text{with} \qquad \d_j:= \d_{\mathcal{A}}\otimes \1_{\mathcal{B}^j}\text{ and } \d^j:=\1_{\mathcal{A}} \otimes \d_{\mathcal{B}^{j}}. \]
\end{definition}

When $\mathcal{A}$ is a single term complex, i.e. $\mathcal{A} = \mathcal{A}[s]$ with $\mathcal{A} \in \mathcal{C}$ and $s \in \Z$, we will write $\mathcal{A} \mathcal{B}[s]$ to denote $\mathcal{A}\otimes \mathcal{B}$ with differential $\1_{\mathcal{A}}\otimes \mathsf{d}_{\mathcal{B}}$ and omit any explicit mention of the type of tensor product since the bi-complex $\left\lbrace \mathcal{A}\otimes \mathcal{B}^i, \d_i,\d^i \right\rbrace$ is trivially $\mathcal{T}$-locally finite. 

In light of this new terminology, we pause to make a few technical but important remarks that will be used in Section \ref{sec:PropCatBernsteinOps}.

\begin{remark}\label{rem:totalcomsym}
Due to the symmetry in Definition \ref{def:ChainTensorProd} we can equivalently define the tensor product and the completed tensor product in the following ways:
\[\mathcal{A}\otimes \mathcal{B}:=\text{Tot}^{\oplus} \left\lbrace \mathcal{A}^i \otimes \mathcal{B}, \d_i,\d^i \right\rbrace \qquad \text{with} \qquad \d_i:=\1_{\mathcal{A}^i} \otimes \d_{\mathcal{B}} \text{ and }  \d^i:= \d_{\mathcal{A}^i}\otimes \1_{\mathcal{B}} \]
\[\mathcal{A} \tilde{\otimes} \mathcal{B}:= \text{Tot}^{\Pi} \left\lbrace \mathcal{A}^i \otimes \mathcal{B}, \d_i,\d^i \right\rbrace \qquad \text{with} \qquad \d_i:=\1_{\mathcal{A}^i} \otimes \d_{\mathcal{B}}  \text{ and } \d^i:= \d_{\mathcal{A}^i}\otimes \1_{\mathcal{B}}\]
When the distinction between these formulations is needed for $\mathcal{A}\otimes \mathcal{B}$ (or equivalently for $\mathcal{A}\tilde{\otimes} \mathcal{B}$), we will refer to $\mathcal{A}^i \otimes \mathcal{B}$ as the $i^{th}$ \emph{row} and $\mathcal{A} \otimes \mathcal{B}^j$ as the $j^{th}$ \emph{column} of the bi-complex. 
\end{remark}

\begin{remark}\label{rem:locallyfinite}
While it is clear that for finite complexes these definitions agree, i.e. bounded complexes are $\mathcal{T}$-\emph{locally finite}, the distinction between direct sums and direct products is crucial when "tensoring" infinite complexes. This is because depending on certain boundedness conditions of $\mathcal{A}$ and $\mathcal{B}$ their tensor product and their completed tensor product are often not homotopy equivalent or even well defined. In particular, if $\mathcal{A} \in \K^+(\mathcal{C})$ and $\mathcal{B} \in \K^-(\mathcal{C})$ then $\lbrace \mathcal {A} \otimes \mathcal{B}_j, \d_j,\d^j \rbrace$ is not necessarily $\K(\mathcal{C})$-locally finite. 
\end{remark}

Given a bi-complex $\lbrace \mathcal{A}^i, \d_i,\d^i \rbrace$ where each complex $\mathcal{A}^i = \lbrace \bigoplus \mathcal{A}^i_j,\d_i \rbrace$, we say the bi-complex is \emph{homologically locally finite} if for any $k \in \Z$ the sum $\bigoplus_{i+j=k}\mathcal{A}^i_j$ is finite and thus isomorphic to $\prod_{i+j=k}\mathcal{A}^i_j$. In this case the following proposition holds. 

\begin{proposition}[{\cite[Prop. 4.10]{EH-CatDiag}} ] \label{prop:locallyfinite}
Let $\lbrace \mathcal{A}^i, \d_i,\d^i \rbrace$ be a bi-complex such that each subcomplex $\mathcal{A}^i = \lbrace \bigoplus \mathcal{A}^i_j,\d_i \rbrace \in \mathcal{T}$ and that for each fixed homological degree $j$ as objects inside $\mathcal{C}$ we have,
\[ \bigoplus_{i \in \Z} \mathcal{A}^i_j \cong   \prod_{i \in \Z} \mathcal{A}^i_j .\]
Then $\bigoplus_i \mathcal{A}^i \cong \prod_i\mathcal{A}^i $ and thus $\lbrace \mathcal{A}^i, \d_i,\d^i \rbrace$ is $\mathcal{T}$-locally finite. 
\end{proposition}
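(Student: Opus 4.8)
The plan is to produce an honest isomorphism of chain complexes $\text{Tot}^{\oplus}\{\mathcal{A}^i,\d_i,\d^i\}\cong\text{Tot}^{\Pi}\{\mathcal{A}^i,\d_i,\d^i\}$, not merely a homotopy equivalence; the proposition then follows at once. First I would write both total complexes in explicit homological degree: with the convention that the summand $\mathcal{A}^i_j$ contributes to homological degree $i+j$, the degree-$k$ chain group of $\text{Tot}^{\oplus}$ is $\bigoplus_{i+j=k}\mathcal{A}^i_j$ and that of $\text{Tot}^{\Pi}$ is $\prod_{i+j=k}\mathcal{A}^i_j$, in both cases carrying the differential $\d=\sum_i(\d_i+\d^i)$ assembled from the component maps $\d_i\colon\mathcal{A}^i_j\to\mathcal{A}^i_{j-1}$ and $\d^i\colon\mathcal{A}^i_j\to\mathcal{A}^{i-1}_j$. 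Each such component lowers exactly one of the two grid indices by one, so the formal sum defining $\d$ is unambiguous on both the direct-sum and the direct-product total complex --- each coordinate of the output receives contributions from at most two coordinates of the input --- and there is no convergence issue. The hypothesis says precisely that for every $k$ the biproduct $\bigoplus_{i+j=k}\mathcal{A}^i_j$ is finite, hence exists as an object of $\mathcal{C}$ and, via the comparison map $\iota_k$ which is the identity on each summand, is canonically both the coproduct and the product of the finitely many $\mathcal{A}^i_j$ with $i+j=k$.

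The heart of the argument is that $\Phi:=(\iota_k)_{k\in\Z}$ is a chain map $\text{Tot}^{\oplus}\to\text{Tot}^{\Pi}$, and in fact a degreewise isomorphism. It is a chain map because the two differentials $\d^{\oplus}$ and $\d^{\Pi}$ are built from literally the same component maps $\d_i,\d^i$ between the individual objects $\mathcal{A}^i_j$, while $\iota_k$ is built from the identity maps on those same objects; hence the relation $\d^{\Pi}\circ\iota_k=\iota_{k-1}\circ\d^{\oplus}$ may be checked coordinate by coordinate, where it is trivial. Since each $\iota_k$ is an isomorphism in $\mathcal{C}$ by the finiteness hypothesis, $\Phi$ is an isomorphism already in the category of chain complexes over $\mathcal{C}$: the degreewise inverses $\iota_k^{-1}$ automatically commute with the differentials (being inverses of maps that do) and assemble into a two-sided inverse of $\Phi$. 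A fortiori $\text{Tot}^{\oplus}\cong\text{Tot}^{\Pi}$ in $\K(\mathcal{C})$, which is exactly the asserted isomorphism $\bigoplus_i\mathcal{A}^i\cong\prod_i\mathcal{A}^i$.

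It then remains to observe that both total complexes actually lie in $\mathcal{T}$, so that $\{\mathcal{A}^i,\d_i,\d^i\}$ is $\mathcal{T}$-locally finite in the sense of the definition: for $\mathcal{T}=\K(\mathcal{C})$ this is automatic, and for $\mathcal{T}=\K^{\pm}(\mathcal{C})$ or $\K^b(\mathcal{C})$ the boundedness of the total complex needed is inherited from that of the individual $\mathcal{A}^i\in\mathcal{T}$ together with the finiteness hypothesis (and in any case, since the two total complexes are isomorphic, it suffices to know that one of them lies in $\mathcal{T}$). I do not expect a deep obstacle here: the only steps demanding genuine care are the coordinatewise verification that $\Phi$ intertwines the two assembled differentials, and the mild boundedness bookkeeping required to land inside the chosen subcategory $\mathcal{T}$; the rest is the standard fact that a degreewise isomorphism of chain complexes is an isomorphism of chain complexes.
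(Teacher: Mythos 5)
The paper does not prove this proposition---it is cited directly from \cite[Prop.~4.10]{EH-CatDiag}---so there is no internal proof to compare against. Your argument is the standard (and correct) one: the hypothesis makes the canonical comparison map an isomorphism in each degree, and since $\d^{\oplus}$ and $\d^{\Pi}$ are assembled from literally the same component maps $\d_i$ and $\d^i$ between the same objects $\mathcal{A}^i_j$, the comparison map is a chain map by a coordinatewise check; a degreewise isomorphism of chain complexes is an isomorphism, and membership in $\mathcal{T}$ transfers across it. This is sound.

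Two small points of bookkeeping, neither of which affects the validity. First, you write that the hypothesis says the degree-$k$ biproduct is \emph{finite}; the hypothesis only says that $\bigoplus$ and $\prod$ agree as objects of $\mathcal{C}$, which is weaker (and is all your proof actually uses). Second, you adopt the convention that $\mathcal{A}^i_j$ sits in total degree $i+j$ and accordingly rewrite the hypothesis as a statement about $\bigoplus_{i+j=k}$, while the paper's statement is about $\bigoplus_{i\in\Z}$ with $j$ fixed. These are the same condition under a re-indexing of the internal degree of each row, and your convention is in fact the internally consistent one given that $\d^i$ must lower total degree; the paper's Definition~\ref{def:total complex} is a bit loose on this point. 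It is worth flagging explicitly that you are re-indexing so a reader does not think you have changed the hypothesis.
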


 In particular, if $\mathcal{A}, \mathcal{B} \in \K^+(\mathcal{C})$ (resp. $\K^-(\mathcal{C})$) then $\lbrace \mathcal {A} \otimes \mathcal{B}_j, \d_j,\d^j \rbrace$ is homologically locally finite and so, by Proposition \ref{prop:locallyfinite}, also $\K^+(\mathcal{C})$-locally finite (resp. $\K^-(\mathcal{C})$-locally finite). 

Lastly, we present our main tools. We refer the reader to \cite[Lemma 6.1]{CL-Vertex} and \cite[Prop. 4.20]{EH-CatDiag} for their respective proofs. 

\begin{lemma}[Gaussian elimination]\label{lem:gaussian-elimination}
Let $ X, Y, Z, W, U, V$ be six objects in an additive category and consider the complex
$[\dots \rightarrow U \xrightarrow{u} X \oplus Y \xrightarrow{f} Z \oplus W \xrightarrow{v} V \rightarrow \dots]$
where $f = \left( \begin{matrix} C & D \\ A & B \end{matrix} \right)$ and $u,v$ are arbitrary morphisms. If $D: Y \rightarrow Z$ is an isomorphism, then the following chain complexes are homotopic.  
\[ [\dots \rightarrow U \xrightarrow{u} X \oplus Y \xrightarrow{f} Z \oplus W \xrightarrow{v} V \rightarrow \dots] \simeq [\dots \rightarrow U \xrightarrow{u} X \xrightarrow{A-BD^{-1}C} W \xrightarrow{v|_W} V \rightarrow \dots] \]
\end{lemma}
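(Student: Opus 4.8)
The plan is to realise this homotopy equivalence as the operation of splitting off a contractible two–term summand. An invertible change of basis supported on just the two middle chain groups $X\oplus Y$ and $Z\oplus W$ will exhibit the left–hand complex as the direct sum of the right–hand complex with the two–term complex $[\,0\to Y\xrightarrow{D}Z\to 0\,]$; since $D$ is an isomorphism this second summand is nullhomotopic, so discarding it leaves exactly the right–hand complex. A feature of the argument worth stressing is that it is entirely \emph{local}: only two consecutive chain groups are altered and the contracting homotopy is supported on a single summand, so the rest of the complex — which may well be unbounded in either or both directions — is untouched and no convergence issue arises. This is precisely why Gaussian elimination remains valid in the infinite setting used later in the paper.

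First I would record the constraints coming from the fact that the left–hand side is a complex: writing $u=\bigl(\begin{smallmatrix}u_X\\ u_Y\end{smallmatrix}\bigr)\colon U\to X\oplus Y$ and $v=(\,v_Z\ \ v_W\,)\colon Z\oplus W\to V$ for the components of the two adjacent differentials, the relations $f\circ u=0$ and $v\circ f=0$ yield $Cu_X+Du_Y=0$ and $v_ZD+v_WB=0$, whence $u_Y=-D^{-1}Cu_X$ and $v_Z=-v_WBD^{-1}$ (here we use that $D$ is invertible). Next I would perform the change of basis on $X\oplus Y$ and on $Z\oplus W$ given by the unitriangular — hence invertible — matrices appearing in the identity
\[
\begin{pmatrix}\1 & 0\\ -BD^{-1} & \1\end{pmatrix}\begin{pmatrix}C & D\\ A & B\end{pmatrix}\begin{pmatrix}\1 & 0\\ -D^{-1}C & \1\end{pmatrix}=\begin{pmatrix}0 & D\\ A-BD^{-1}C & 0\end{pmatrix},
\]
which is checked by a one–line multiplication. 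In the new basis $f$ therefore becomes block–antidiagonal; and, substituting $u_Y=-D^{-1}Cu_X$ and $v_Z=-v_WBD^{-1}$, the incoming differential $u$ becomes $u_X$ landing only in the $X$–summand and the outgoing differential $v$ becomes $v_W$ issuing only from the $W$–summand. Hence, after this isomorphism of complexes, the left–hand complex is visibly the direct sum of the complex $[\cdots\to U\xrightarrow{u_X}X\xrightarrow{A-BD^{-1}C}W\xrightarrow{v_W}V\to\cdots]$ and the complex $[\,0\to Y\xrightarrow{D}Z\to 0\,]$.

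To conclude, I would note that $u_X$ is the $X$–component of $u$ and $v_W=v|_W$, so the first summand is exactly the right–hand complex of the statement, while $[\,0\to Y\xrightarrow{D}Z\to 0\,]$ is nullhomotopic with contracting homotopy $D^{-1}\colon Z\to Y$ and is therefore zero in the homotopy category; this gives the asserted equivalence. An alternative presentation, closer to \cite[Lemma 6.1]{CL-Vertex}, is to extract from this decomposition the two explicit mutually inverse–up–to–homotopy chain maps (identities away from the middle, with a projection and an inclusion twisted by $BD^{-1}$ and $D^{-1}C$ at the two middle spots) and to verify the composites by hand, the only nonzero homotopy again being $D^{-1}$. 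Neither route has a genuine obstacle: the substantive point is that invertibility of $D$ is exactly what allows the $Y$– and $Z$–entries to be cancelled, and the only thing demanding care is the sign bookkeeping on the shifted differentials together with the (easy) check that the manipulation never disturbs the possibly unbounded remainder of the complex.
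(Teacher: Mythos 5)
Your proof is correct: the unitriangular change of basis, the resulting block anti-diagonal differential, and the splitting off of the contractible summand $[\,0\to Y\xrightarrow{D}Z\to 0\,]$ all check out, including the identities $u_Y=-D^{-1}Cu_X$ and $v_Z=-v_WBD^{-1}$ forced by $f\circ u=0$ and $v\circ f=0$. The paper itself does not prove this lemma but defers to \cite[Lemma 6.1]{CL-Vertex} and \cite[Prop. 4.20]{EH-CatDiag}, and your argument is the standard one given there (equivalently, the explicit chain maps and homotopy $D^{-1}$ you mention as an alternative), so there is nothing to add.
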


\begin{proposition}[Simultaneous simplifications]\label{prop:SimultSimp} 
Let $I \subset \Z$ and suppose $\lbrace \mathcal{A}^i,\d_{i},\d^i\rbrace_{i\in I}$ and $\lbrace \mathcal{B}^i,{\mathsf{b}}_{i},{\mathsf{b}}^i\rbrace_{i\in I}$ are bi-complexes in $\mathcal{T}$ such that for each $i \in I$, $\mathcal{A}^i$ and $\mathcal{B}^i$ are homotopy equivalent.
\begin{itemize}
\item[i)] If $I$ is bounded below then $\emph{Tot}^{\oplus} \lbrace \mathcal{A}^i, \d_i,\d^i \rbrace_{i \in I} \simeq \emph{Tot}^{\oplus} \lbrace B^i,{\mathsf{b}}_{i},{\mathsf{b}}^i\rbrace_{i\in I}$. 
\item[ii)] If $I$ is bounded above then $\emph{Tot}^{\Pi} \lbrace \mathcal{A}^i, \d_i,\d^i \rbrace_{i\in I} \simeq \emph{Tot}^{\Pi} \lbrace B^i,{\mathsf{b}}_{i},{\mathsf{b}}^i\rbrace_{i\in I}$. 
\end{itemize}
\end{proposition}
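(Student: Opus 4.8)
The plan is to prove this by homotopy transfer: realize the total complex as a perturbation of the complex of ``columns'' and apply the homological perturbation lemma, in the convergent form appropriate to one-sided twisted complexes (cf.\ \cite[\S4]{EH-CatDiag}). I will treat case (i); case (ii) is formally dual, obtained by replacing $\bigoplus$ with $\prod$, ``bounded below'' with ``bounded above'', and reversing the relevant inequalities. Concretely, $\mathrm{Tot}^{\oplus}\{\mathcal{A}^i,\d_i,\d^i\}$ is the complex $(\bigoplus_i \mathcal{A}^i,\ \bigoplus_i\d_i)$ perturbed by the horizontal differential $\delta := \sum_i \d^i$, and the goal is to transfer $\delta$ across the column-wise homotopy equivalences.

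First I would record, for each $i$, homotopy-equivalence data $f^i\colon \mathcal{A}^i \to \mathcal{B}^i$ and $g^i\colon \mathcal{B}^i\to\mathcal{A}^i$ with homotopies witnessing $\1_{\mathcal{A}^i}\simeq g^if^i$ and $\1_{\mathcal{B}^i}\simeq f^ig^i$, and — by the standard trick that promotes a homotopy equivalence to a deformation retract — arrange that $(f^i,g^i,h^i)$ is a contraction ($f^ig^i=\1$, $f^ih^i = h^ig^i = 0$, $(h^i)^2 = 0$). Since direct sums of maps and of homotopies require no convergence hypothesis, summing over $i$ produces a contraction $(F,G,H)$ of $(\bigoplus_i\mathcal{A}^i,\ \bigoplus_i\d_i)$ onto $(\bigoplus_i\mathcal{B}^i,\ \bigoplus_i\mathsf{b}_i)$, with $F,G,H$ all preserving the column index. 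Next, since each $\d^i$ lowers the column index by exactly one while $H$ preserves it, $H\delta$ is locally nilpotent: every element of $\bigoplus_i\mathcal{A}^i$ is supported on finitely many indices, all $\ge \min I$, so some power $(H\delta)^N$ kills it; hence $\1+H\delta$ is invertible via its Neumann series and the perturbation lemma applies, producing a contraction of $\mathrm{Tot}^{\oplus}\{\mathcal{A}^i,\d_i,\d^i\} = (\bigoplus_i\mathcal{A}^i,\ \bigoplus_i\d_i+\delta)$ onto $(\bigoplus_i\mathcal{B}^i,\ \bigoplus_i\mathsf{b}_i+\delta')$ for a pointwise-finite twisted differential $\delta' = F\delta G + (\text{higher terms in }H)$ — in particular, a homotopy equivalence of the two total complexes. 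Case (ii) runs the same way: when $I$ is bounded above, an element of $\prod_i\mathcal{B}^i$ is supported on indices $\le \max I$, so each fixed column of $\delta'$ receives only finitely many contributions and all the series converge.

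The step I expect to be the main obstacle is matching $\bigoplus_i\mathsf{b}_i + \delta'$ with $\mathrm{Tot}^{\oplus}\{\mathcal{B}^i,\mathsf{b}_i,\mathsf{b}^i\}$ and, underneath it, pinning down what ``$\mathsf{b}^i$'' should mean: the perturbation lemma naturally outputs a \emph{one-sided twisted} differential $\delta'$ whose components lower the column index by $1,2,3,\dots$, whereas Definition~\ref{def:total complex} permits only a horizontal map that lowers it by one. The clean resolution is to read the proposition with $\{\mathcal{B}^i,\mathsf{b}_i,\mathsf{b}^i\}$ denoting the one-sided twisted complex produced by the transfer — which is how the statement is formulated and used in \cite[\S4]{EH-CatDiag} — so that $\mathsf{b}^i$ is \emph{by construction} the lower-by-one component $f^{i-1}\d^ig^i$ of $\delta'$, the remaining components being part of the twisted structure, and there is nothing external left to identify. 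If instead one insists on an honest bi-complex on the right-hand side, one must additionally verify in context that the higher components of $\delta'$ can be cleared by iterated Gaussian elimination (Lemma~\ref{lem:gaussian-elimination}), leaving $\mathsf{b}^i \simeq f^{i-1}\d^ig^i$; this holds whenever the homotopy data can be chosen compatibly with the horizontal maps, which is the case in the applications. Granting this identification, the remaining work is routine but essential: verifying local nilpotence, checking that the perturbed data again satisfies the contraction identities, and dualizing everything for $\mathrm{Tot}^{\Pi}$.
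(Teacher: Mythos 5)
Your overall route is sound, and it is essentially the route behind the paper's own proof: the paper does not prove this proposition internally but cites Elias--Hogancamp (Prop.\ 4.20 of \cite{EH-CatDiag}), whose argument is exactly this kind of columnwise homotopy transfer with convergence supplied by the one-sided boundedness of $I$ (horizontal components strictly lower the index, so for $\mathrm{Tot}^{\oplus}$ with $I$ bounded below each column maps into only finitely many columns, and dually for $\mathrm{Tot}^{\Pi}$). Your reading of the conclusion is also the correct one: the transferred differential is a one-sided twisted differential whose length-one component is $f^{i-1}\d^i g^i$, and the proposition should be understood as producing that twisted complex; this is consistent with how it is used in the paper, where the sentence following the proposition restricts attention to columnwise Gaussian eliminations.

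The one step that does not survive scrutiny as written is the claim that one can always ``promote'' a homotopy equivalence $f^i\colon \mathcal{A}^i\to\mathcal{B}^i$ to a contraction with $f^i g^i=\1_{\mathcal{B}^i}$ on the nose. A strict equality $f^i g^i=\1$ forces $g^i f^i$ to be an honest idempotent chain endomorphism of $\mathcal{A}^i$, hence (when idempotents split, as in complexes over $\H$) an isomorphism of complexes $\mathcal{A}^i\cong\mathcal{B}^i\oplus(\text{contractible})$; a general homotopy equivalence need not be of this form, so the ``standard trick'' only yields the side conditions $h^2=0$, $fh=0$, $hg=0$ once a strict deformation retract is already in hand, not the strict retract itself. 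This is repairable in three ways, any of which completes your argument: use the version of the perturbation lemma valid for homotopy equivalences rather than deformation retracts; replace $\mathcal{A}^i$ by the mapping cylinder of $f^i$, which receives strict (degreewise split, contractible-cokernel) deformation retracts onto both $\mathcal{A}^i$ and $\mathcal{B}^i$; or observe, as the paper does, that in every application the columnwise equivalences arise from Gaussian eliminations (Lemma \ref{lem:gaussian-elimination}), which are strict deformation retracts, so the contraction data exists by construction. Finally, phrase the local nilpotence columnwise rather than elementwise (objects of $\H$ have no elements): $H\delta$ strictly lowers the column index and $I$ is bounded below, so on each column the Neumann series is a finite sum, which is the statement you actually need.
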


In particular, if $\mathcal{A}_i$ and $\mathcal{B}_i$ are homotopic by a sequence of Gaussian eliminations  and $\mathcal{T} = \K^+(\mathcal{C})$ or $\K^-(\mathcal{C})$ then Proposition \ref{prop:SimultSimp} applies and all Gaussian eliminations can be simultaneously performed.


\section{Extended Graphical Calculus for Khovanov's Heisenberg Category}\label{sec:ExtGraphicalCalc}

\subsection{Khovanov's Heisenberg Category}\label{subsec:KhovanovHeisCat}
For any commutative unital ring $\k$, Khovanov introduced a categorical framework for the Heisenberg algebra $\mathfrak{h}$. This framework consists of a monoidal category $\H$ which is the Karoubi envelope of a category $\H'$ whose definition we now sketch (see \cite{Kh-H} for more details).

The additive, $\mathds{k}$-linear, monoidal category $\H'$ is generated by two objects $Q_+$ and $Q_-$. An object of $\H'$ is a finite direct sum of tensor products $Q_{\epsilon}:= Q_{\epsilon_1} \otimes \dots \otimes Q_{\epsilon_k}$ where $\epsilon=(\epsilon_i)$ is a sequence with $\epsilon_i \in \lbrace +,- \rbrace$ for each $i$. The empty sequence $\emptyset$ corresponds to the monoidal unit and is denoted $\1$. If $\epsilon_i=+$ or $\epsilon_i=-$ for all $i$ we denote its corresponding object by $Q_{+k}$ or  $Q_{-k}$ respectively. For any sequences $\epsilon, \epsilon'$, the space of homomorphisms Hom$_{\H'}(Q_{\epsilon},Q_{\epsilon'})$ is the $\mathds{k}$-module described by certain string diagrams with relations. 

Specifically, we denote $Q_+$ as an upward pointing arrow and $Q_-$ as a downward pointing arrow. Monoidal composition of such objects is given by sideways concatenation.  By convention, composition of morphisms is done vertically and read bottom to top. When not presented diagrammatically we will use "$\otimes$" and "$\circ$" to denote horizontal and vertical composition, respectively. The morphisms between such objects are generated by the crossings, caps, and cups displayed below:
\begin{equation}\label{eq:maps}
\hackcenter{
\begin{tikzpicture}[scale=1]
\draw [->](0,0) -- (1,1) [thick];
\draw [->](1,0) -- (0,1) [thick];
\draw (3,.5) arc (180:360:.5)[->] [thick];
\draw (7,.25) arc (0:180:.5) [->][thick];
\draw (9,.5) arc (180:360:.5)[<-] [thick];
\draw (13,.25) arc (0:180:.5) [<-][thick];
\end{tikzpicture}}
\end{equation}
The crossing on the left is a morphism $Q_{++} \to Q_{++}$ whereas the rightmost cap is a morphism $Q_{+-} \to \1$. These morphisms are isotopy invariant and satisfy the relations below (note that \eqref{heis:up-double} and \eqref{heis:up-triple} also hold for downward pointing strands).

\begin{minipage}{0.45\textwidth}
\begin{align}
\label{heis:up-double}
\hackcenter{\begin{tikzpicture}[scale=0.8, yscale=.8]
    \draw[thick] (0,0) .. controls ++(0,.5) and ++(0,-.5) .. (.75,1);
    \draw[thick] (.75,0) .. controls ++(0,.5) and ++(0,-.5) .. (0,1);
    \draw[thick, ->] (0,1 ) .. controls ++(0,.5) and ++(0,-.5) .. (.75,2);
    \draw[thick, ->] (.75,1) .. controls ++(0,.5) and ++(0,-.5) .. (0,2);
    \node at (0,-.25) {$\;$};
    \node at (0,2.25) {$\;$};
\end{tikzpicture}}
&\;\; = \;\;
\hackcenter{\begin{tikzpicture}[scale=0.8, yscale=.8]
    \draw[thick, ->] (0,0) -- (0,2);
    \draw[thick, ->] (.75,0) -- (.75,2);
\end{tikzpicture}}
\\
  \label{heis:downup}
  \hackcenter{\begin{tikzpicture}[scale=0.8, yscale=.8]
    \draw[thick,<-] (0,0) .. controls ++(0,.5) and ++(0,-.5) .. (.75,1);
    \draw[thick] (.75,0) .. controls ++(0,.5) and ++(0,-.5) .. (0,1);
    \draw[thick, ->] (0,1 ) .. controls ++(0,.5) and ++(0,-.5) .. (.75,2);
    \draw[thick] (.75,1) .. controls ++(0,.5) and ++(0,-.5) .. (0,2);
\end{tikzpicture}}
&\;\; = \;\;
\hackcenter{\begin{tikzpicture}[scale=0.8, yscale=.8]
    \draw[thick, <-] (0,0) -- (0,2);
    \draw[thick, ->] (.75,0) -- (.75,2);
\end{tikzpicture}}
\;\; - \;\;
\hackcenter{\begin{tikzpicture}[scale=0.8, yscale=.8]
    \draw[thick, <-] (0,0) .. controls ++(0,.75) and ++(0,.75) ..(.75,0);
    \draw[thick, <-] (.75,2) .. controls ++(0,-.75) and ++(0,-.75) .. (0,2);
\end{tikzpicture}}
  \\
  \label{eq:heis-bub}
\hackcenter{\begin{tikzpicture}[scale=0.8]
\draw [shift={+(0,0)}](0,0) arc (180:360:0.5cm) [thick];
\draw [shift={+(0,0)}][->](1,0) arc (0:180:0.5cm) [thick];
\node at (0,-1) {$\;$};
\node at (0,1) {$\;$};
\end{tikzpicture}}
& \;\; = \;\; 1,
\end{align}
\end{minipage}%
\hfill
\begin{minipage}{0.45\textwidth}
\begin{align} \label{heis:up-triple}
\hackcenter{\begin{tikzpicture}[scale=0.8, yscale=.8]
    \draw[thick, ->] (0,0) .. controls ++(0,1) and ++(0,-1) .. (1.2,2.5);
    \draw[thick, ] (.6,0) .. controls ++(0,.5) and ++(0,-.5) .. (0,1.25);
    \draw[thick, ->] (0,1.25) .. controls ++(0,.5) and ++(0,-.5) .. (0.6,2.5);
    \draw[thick, ->] (1.2,0) .. controls ++(0,1) and ++(0,-1) .. (0,2.5);
\end{tikzpicture}}
&\;\; = \;\;
\hackcenter{\begin{tikzpicture}[scale=0.8, yscale=.8]
    \draw[thick, ->] (0,0) .. controls ++(0,1) and ++(0,-1) .. (1.2,2.5);
    \draw[thick, ] (.6,0) .. controls ++(0,.5) and ++(0,-.5) .. (1.2,1.25);
    \draw[thick, ->] (1.2,1.25) .. controls ++(0,.5) and ++(0,-.5) .. (0.6,2.5);
    \draw[thick, ->] (1.2,0) .. controls ++(0,1) and ++(0,-1) .. (0,2.5);
\end{tikzpicture}}
\\
\label{heis:up down}
  \hackcenter{\begin{tikzpicture}[scale=0.8, yscale=.8]
    \draw[thick] (0,0) .. controls ++(0,.5) and ++(0,-.5) .. (.75,1);
    \draw[thick,<-] (.75,0) .. controls ++(0,.5) and ++(0,-.5) .. (0,1);
    \draw[thick] (0,1 ) .. controls ++(0,.5) and ++(0,-.5) .. (.75,2);
    \draw[thick, ->] (.75,1) .. controls ++(0,.5) and ++(0,-.5) .. (0,2);
\end{tikzpicture}}
&\;\; = \;\;
\hackcenter{\begin{tikzpicture}[scale=0.8, yscale=.8]
    \draw[thick,->] (0,0) -- (0,2);
    \draw[thick, <-] (.75,0) -- (.75,2);
\end{tikzpicture}}
\\
\label{heis: right twist curl}
\hackcenter{
\begin{tikzpicture}[scale=0.8]
\draw  [shift={+(5,0)}](0,0) .. controls (0,.5) and (.7,.5) .. (.9,0) [thick];
\draw  [shift={+(5,0)}](0,0) .. controls (0,-.5) and (.7,-.5) .. (.9,0) [thick];
\draw  [shift={+(5,0)}](1,-1) .. controls (1,-.5) .. (.9,0) [thick];
\draw  [shift={+(5,0)}](.9,0) .. controls (1,.5) .. (1,1) [->] [thick];
\end{tikzpicture}}
&\;\;=\;\; 0
\end{align}
\end{minipage}%

\emph{Khovanov's Heisenberg category} $\H$ is defined as the \emph{Karoubian envelope} of $\H'$. Thus, the objects of $\H$ are given by pairs $(Q_\epsilon,e)$ where $Q_\epsilon$ is an object in $\H'$ and $e \in$ End$_{\H'}(Q_\epsilon)$ an idempotent, i.e. $e^2=e$. The space Hom$_{\H}(Q_\epsilon,e),(Q_{\epsilon'}, e'))$ consists of morphisms $f \in $ Hom$_{\H'}(Q_\epsilon, Q_{\epsilon'})$ such that $e'f=fe$. A map $f \in$ Hom$_{\H}((Q_\epsilon,e),(Q_{\epsilon'}, e'))$ is an isomorphism in $\H$ if there exists $f' \in$ Hom$_{\H}((Q_{\epsilon'},e'),(Q_\epsilon, e))$ such that $ff'=id_{Q_{\epsilon'}}$ and $f'f=id_{Q_\epsilon}$. This category is clearly $\mathds{k}$-linear, additive, and monoidal. Moreover, it satisfies the following theorem. 

\begin{theorem}[{\cite[Section 3.3]{Kh-H}} ]
There is an injective map $\phi \colon \mathfrak{h} \rightarrow \K_0(\H)$.
\end{theorem}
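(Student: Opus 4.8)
The plan is to build $\phi$ as a homomorphism of $\k$-algebras directly from the presentation of $\mathfrak{h}$ in Definition~\ref{def:heisenberg-alg}, and then to obtain injectivity by identifying $\phi$, followed by the action of $\K_0(\H)$ on the Grothendieck group of $\bigoplus_{n\ge 0}\k[S_n]\text{-mod}$, with the (faithful) Fock space representation of $\mathfrak{h}$. For the construction: the braid relations \eqref{heis:up-double} and \eqref{heis:up-triple} guarantee that sending a simple transposition to the positive crossing of adjacent strands gives an algebra map $\k[S_m]\to\End_{\H'}(Q_{+m})$, and likewise for downward strands. Since $\mathrm{char}\,\k=0$, each $\k[S_m]$ contains the full symmetrizer $e_{(m)}$ and the full antisymmetrizer $e_{(1^m)}$, so in the Karoubi envelope $\H$ one may set $\P^{(m)}:=(Q_{+m},e_{(m)})$, $\P^{(1^m)}:=(Q_{+m},e_{(1^m)})$ and, dually, $\Q^{(m)},\Q^{(1^m)}$. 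Then define $\phi$ on the $\Z$-basis by $\phi(p^{(m)}):=[\P^{(m)}]$, $\phi(q^{(m)}):=[\Q^{(m)}]$, extended multiplicatively.

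Next I would check that $\phi$ respects the three families of relations of Definition~\ref{def:heisenberg-alg}. The relations $p^{(n)}p^{(m)}=p^{(m)}p^{(n)}$ and $q^{(n)}q^{(m)}=q^{(m)}q^{(n)}$ reduce to isomorphisms $\P^{(n)}\P^{(m)}\cong\P^{(m)}\P^{(n)}$ in $\H$, which one produces by conjugating the idempotent $e_{(n)}\otimes e_{(m)}$ by the block-swap permutation diagram and using that same-oriented crossings satisfy the symmetric group relations. The mixed relation $q^{(n)}p^{(m)}=\sum_{k\ge 0}p^{(m-k)}q^{(n-k)}$ is the substantive one, and I would deduce it from the object-level decomposition
\[
\Q^{(n)}\,\P^{(m)}\;\cong\;\bigoplus_{k=0}^{\min(n,m)}\P^{(m-k)}\,\Q^{(n-k)}\qquad\text{in }\H
\]
by applying $[\,\cdot\,]$. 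This isomorphism is the diagrammatic avatar of the Mackey/branching decomposition: the local relation \eqref{heis:downup} says, after splitting the idempotent normalized by \eqref{eq:heis-bub}, that $Q_-\otimes Q_+\cong(Q_+\otimes Q_-)\oplus\1$; iterating this on $Q_{-n}\otimes Q_{+m}$ and tracking how the caps and cups slide past $e_{(m)}$ and $e_{(n)}$ produces exactly the listed summands. With these checks in place, $\phi$ is a well-defined homomorphism of $\k$-algebras.

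For injectivity, I would invoke Khovanov's action of $\H$ on $\bigoplus_{n\ge 0}\k[S_n]\text{-mod}$, in which $Q_+$ acts by induction $\Ind_{S_n}^{S_{n+1}}$, $Q_-$ by restriction $\Res^{S_n}_{S_{n-1}}$, and the remaining generators by the unit/counit of this adjunction and the standard Mackey isomorphisms. Because $\k[S_n]$ is semisimple this descends to a ring homomorphism $\K_0(\H)\to\End_\k\!\big(\K_0(\bigoplus_n\k[S_n]\text{-mod})\big)$, and under the Frobenius characteristic $\K_0(\bigoplus_n\k[S_n]\text{-mod})\cong Sym$ the endofunctor attached to $\P^{(m)}$ descends to multiplication by $h_m$ and the one attached to $\Q^{(m)}$ to $h_m^{\perp}$. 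Hence the composite $\mathfrak{h}\xrightarrow{\ \phi\ }\K_0(\H)\to\End_\k(Sym)$ is precisely the Fock space representation of $\mathfrak{h}$ under the dictionary \eqref{eq:T}. That representation is faithful (Section~\ref{subsec:HeisAlg}), so this composite is injective, and therefore so is $\phi$.

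The step I expect to be the main obstacle is the object-level identity $\Q^{(n)}\P^{(m)}\cong\bigoplus_k\P^{(m-k)}\Q^{(n-k)}$ in $\H$: it requires careful bookkeeping of Young idempotents living inside nested symmetric group algebras in $\H'$, together with repeated use of \eqref{heis:downup} and \eqref{heis:up-double}, and getting the combinatorics of how cups and caps interact with the symmetrizers exactly right. By comparison, the commutation relations and the entire injectivity argument are essentially formal.
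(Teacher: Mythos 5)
Your proposal is correct and follows essentially the same route as the source: the paper itself only cites this theorem from Khovanov \cite{Kh-H}, and Khovanov's argument is precisely the one you outline --- define $\phi$ on the generators $p^{(m)},q^{(m)}$ via the Young-symmetrized objects $\P^{(m)},\Q^{(m)}$, verify the defining relations through categorical decompositions of the type $\Q^{(n)}\P^{(m)}\cong\bigoplus_{k}\P^{(m-k)}\Q^{(n-k)}$ (which appear in this paper as Propositions~\ref{prop:QPdecomposition} and~\ref{prop:QPswap}), and deduce injectivity from the faithfulness of the Fock space representation realized on $\K_0\bigl(\bigoplus_n \k[S_n]\text{-mod}\bigr)\cong Sym$ under the dictionary \eqref{eq:T}. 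I see no gaps to flag.
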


Set $\P:= (Q_+, id)$ and $\Q:=(Q_-,id)$, and denote them diagrammatically by an upward and downward pointing strand respectively. Then by \eqref{heis:up-double} and \eqref{heis:up-triple} there exist canonical homomorphisms:
\[\mathds{k}[S_n]\to \text{End}_{\H'}(Q_{+n}) \qquad \text{and} \qquad \mathds{k}[S_n]\to \text{End}_{\H'}(Q_{-n}).\]
In particular, if $\k$ is a field of characteristic zero then for any partition $\lambda \vdash n$ we denote the set of standard Young tableau by $\SYT(\lambda)$, then for $T \in \SYT(\lambda)$ the image of its Young symmetrizer $e_T$ is an idempotent in $\End_{\H'}(Q_{+n})$ \cite{Fulton-YoungTableux, Garcia}. Consequently, for each $T \in \SYT(\lambda)$ we set:
\[\P^{T}:=(Q_{+n}, e_T) \qquad \text{and} \qquad \Q^{T}:=(Q_{-n}, e_T).\] 
If $T_{row}$ is the standard row ordered filling of $\lambda$, then for any other $T \in \SYT(\lambda)$ there exists a permutation $\sigma \in S_n$ such that $T_{row}=\sigma T$. That is, any standard Young tableau can be transformed into the row ordered filling by applying some permutation of the entries. However, by the inclusions above we have that $S_n \in$ Aut$(Q_{\pm})$ and so the map $\sigma: (Q_{+n},e_{T}) \to (Q_{+n},e_{T_{row}})$ is an isomorphism in $\Kar(\H')$. Consequently, if $T,T' \in \SYT(\lambda)$ then $e^{T}$ and $e^{T'}$ are conjugate and so $\P^{T} \simeq \P^{T'}$ in $\H$. Thus, set 
\[\P^{\lambda} := (Q_{+n},e_{T_{row}})\text{ and }\Q^{\lambda} := (Q_{-n},e_{T_{row}})\]
 as the canonical representatives for the isomorphism classes of $\P^{T}$ and $\Q^{T}$ with $T \in \SYT(\lambda)$. 

\subsection{Diagramatics for Young Idempotents}\label{sec:diag-young-idempotents}
For any partition $\lambda \vdash n$, let $\lambda^t$ denote the transposed partition whose rows are given by the columns of $\lambda$. Denote by $T_{row}$ and $T_{col}$ the standard row and column ordered fillings of $\lambda$ respectively. 

Since $\P^\lambda$ will always denote the idempotent corresponding to the tableau with the standard row filling, if we label the leftmost strand on the top of the diagram by $1$ and the rightmost strand $n$, then the diagram corresponding to $\lambda$ will consist of symmetrizing all strands whose labels lie in the same row in $\lambda$ and antisymmetrizing the strands whose entries lie in the same column of $\lambda$. 
The objects corresponding to $(n)$ and $(n)^t=(1^n)$ in $\H$ are the \textit{complete symmetrizer} and \textit{complete antisymmetrizer} depicted by white and dark gray boxes. In line with Khovanov and Cvitanovi\'{c}, we depict them below. The diagrams for $\Q^{(n)}$ and $\Q^{(n)^t}$ are identical but with downward arrows instead. 
\begin{equation}
\P^{(n)} \;\; :=  \;\;
\hackcenter{
\begin{tikzpicture} [scale=0.6]
  \draw[->] (.2,-1) -- (.2,1);
  \draw[->] (1.3,-1) -- (1.3,1);
  \draw[fill = white] (0,-.35) rectangle (1.5,.35);
  \node at (.75,0) {$n$};
  \node at (.75,-.7) {$\dots$};
    \node at (.75,.7) {$\dots$};
\end{tikzpicture}}
\qquad\qquad
\P^{(n)^t} \;\; :=  \;\;
\hackcenter{
\begin{tikzpicture} [scale=0.6]
  \draw[ ->] (.2,-1) -- (.2,1);
  \draw[ ->] (1.3,-1) -- (1.3,1);
  \draw[fill = gray!60] (0,-.35) rectangle (1.5,.35);
  \node at (.75,0) {$n$};
  \node at (.75,-.7) {$\dots$};
    \node at (.75,.7) {$\dots$};
\end{tikzpicture}}
\end{equation}
More generally, for any $\lambda = (\lambda_1, \dots, \lambda_k)$ with $\lambda^t=(\lambda_1^*, \dots, \lambda_l^*)$ let $\sigma \in S_n$ be a permutation such that $T_{row}=\sigma T_{col}$. Following Cvitanovi\'{c} \cite{Birdtracks} we depict the \emph{Young idempotent} $\P^\lambda$ in $\H$ in the following manner:
\begin{equation}\label{diag:YoungIdempotentExploded}
\P^{\lambda}
=\;\; \alpha_{\lambda} \;\;
\hackcenter{
\begin{tikzpicture} [scale=0.7, yscale=.8]
  \draw[ ->] (.2,-1) -- (.2,1);
  \draw[ ->] (1.3,-1) -- (1.3,1);
  \draw[fill = white] (0,-.35) rectangle (1.5,.35);
  \node at (.75,0) [scale=.75]{$\lambda_1$};
    \node at (.75,.7) {$\dots$};
    \draw[ ->] (1.8,-1) -- (1.8,1);
  \draw[ ->] (2.9,-1) -- (2.9,1);
  \draw[fill = white] (1.6,-.35) rectangle (3.1,.35);
  \node at (2.35,0) [scale=.75]{$\lambda_2$};
    \node at (2.35,.7) {$\dots$};
  \node at (3.5,0) {$\dots$};  
  \draw[ ->] (4.2,-1) -- (4.2,1);
  \draw[ ->] (5.3,-1) -- (5.3,1);
  \draw[fill = white] (4,-.35) rectangle (5.5,.35);
  \node at (4.75,0) [scale=.75]{$\lambda_k$};
    \node at (4.75,.7) {$\dots$};
\begin{scope}[shift={(0,-1.5)}]
  \draw[ ->] (.2,-1) -- (.2,1);
  \draw[ ->] (1.3,-1) -- (1.3,1);
    \node at (.75,.7) {$\dots$};
    \draw[ ->] (1.8,-1) -- (1.8,1);
  \draw[ ->] (2.9,-1) -- (2.9,1);
    \node at (2.35,.7) {$\dots$};
  \node at (3.5,0) {$\dots$};  
  \draw[ ->] (4.2,-1) -- (4.2,1);
  \draw[ ->] (5.3,-1) -- (5.3,1);
   \draw[fill = gray!20] (0,-.35) rectangle (5.5,.35);
   \node at (2.75,0) [scale=.75]{$\sigma$};
    \node at (4.75,.7) {$\dots$};
    \end{scope}
    \begin{scope}[shift={(0,-3)}]
  \draw[ ->] (.2,-1) -- (.2,1);
  \draw[ ->] (1.3,-1) -- (1.3,1);
  \draw[fill = gray!60] (0,-.35) rectangle (1.5,.35);
  \node at (.75,0)[scale=.75] {$\lambda_1^*$};
    \node at (.75,.7) {$\dots$};
    \draw[ ->] (1.8,-1) -- (1.8,1);
  \draw[ ->] (2.9,-1) -- (2.9,1);
  \draw[fill = gray!60] (1.6,-.35) rectangle (3.1,.35);
  \node at (2.35,0) [scale=.75]{$\lambda_2^*$};
    \node at (2.35,.7) {$\dots$};
  \node at (3.5,0) {$\dots$};  
  \draw[ ->] (4.2,-1) -- (4.2,1);
  \draw[ ->] (5.3,-1) -- (5.3,1);
  \draw[fill = gray!60] (4,-.35) rectangle (5.5,.35);
  \node at (4.75,0) [scale=.75]{$\lambda_l^*$};
    \node at (4.75,.7) {$\dots$};
    \end{scope}
    \begin{scope}[shift={(0,-4.5)}]
  \draw[ ->] (.2,-1) -- (.2,1);
  \draw[ ->] (1.3,-1) -- (1.3,1);
  \node at (.75,-.7) {$\dots$};
    \node at (.75,.7) {$\dots$};
    \draw[ ->] (1.8,-1) -- (1.8,1);
  \draw[ ->] (2.9,-1) -- (2.9,1);
  \node at (2.35,-.7) {$\dots$};
    \node at (2.35,.7) {$\dots$};
  \node at (3.5,0) {$\dots$};  
  \draw[ ->] (4.2,-1) -- (4.2,1);
  \draw[ ->] (5.3,-1) -- (5.3,1);
   \draw[fill = gray!20] (0,-.35) rectangle (5.5,.35);
   \node at (2.75,0) [scale=.75]{$\sigma^{-1}$};
  \node at (4.75,-.7) {$\dots$};
    \node at (4.75,.7) {$\dots$};
    \end{scope}
\end{tikzpicture}} \;\; = \;\alpha_{\lambda}\;\;\hackcenter{
\begin{tikzpicture} [scale=0.5, xscale=1.1,scale=.7, every node/.style={scale=1.1}]
\draw[line width=0.15mm, <-] (-5.4,5) to (-5.4,0);
\draw[ gray,<-] (-5,5) to (-5,3)..controls ++(0,-2) and ++(0,1)..(-3.4,0);
\draw[ gray,<-] (-4.6,5) to (-4.6,3)..controls ++(0,-2) and ++(0,1)..(-0.65,0);
\draw[line width=0.15mm, <-] (-3.7,5) to (-3.7,3)..controls ++(0,-2) and ++(0,2)..(5,0);
\draw[ line width=0.15mm,<-] (-2.9,5) to (-2.9,3)..controls ++(0,-1) and ++(0,1)..(-5.1,0);
\draw[gray, <-] (-2.5,5) to (-2.5,3)..controls ++(0,-1) and ++(0,1)..(-3,0);
\draw[ line width=0.15mm,<-] (-1.2,5) to (-1.2,3)..controls ++(0,-2) and ++(0,.75)..(2.5,0);
\draw[ line width=0.15mm,<-] (1.2,5) to (1.2,3)..controls ++(0,-1) and ++(0,2)..(-4.7,0);
\draw[ gray, <-] (2.3,5) to (2.3,3)..controls ++(0,-3) and ++(0,.5)..(0.25,0);
\draw[ line width=0.15mm,<-] (2.9,5) to (2.9,3)..controls ++(0,-1) and ++(0,1)..(3,0);
\draw[line width=0.15mm, <-] (4.2,5) to (4.2,3)..controls ++(0,-2) and ++(0,1)..(-4.2,0);
\draw[ line width=0.15mm,<-] (5.9,5) to (5.9,3)..controls ++(0,-3) and ++(0,.5)..(0.65,0);
\begin{scope}[yshift=-10mm]
\draw[line width=0.15mm, -] (-5.4,-4) to (-5.4,-0);
\draw[ gray,-] (-5,-4) to (-5,-3)..controls ++(0,2) and ++(0,-1)..(-3.4,0);
\draw[ gray,-] (-4.6,-4) to (-4.6,-3)..controls ++(0,2) and ++(0,-1)..(-0.65,0);
\draw[line width=0.15mm, -] (-3.7,-4) to (-3.7,-3)..controls ++(0,2) and ++(0,-2)..(5,0);
\draw[ line width=0.15mm,-] (-2.9,-4) to (-2.9,-3)..controls ++(0,1) and ++(0,-1)..(-5.1,0);
\draw[gray, -] (-2.5,-4) to (-2.5,-3)..controls ++(0,1) and ++(0,-1)..(-3,0);
\draw[ line width=0.15mm,-] (-1.2,-4) to (-1.2,-3)..controls ++(0,2) and ++(0,-.75)..(2.5,0);
\draw[ line width=0.15mm,-] (1.2,-4) to (1.2,-3)..controls ++(0,1) and ++(0,-2)..(-4.7,0);
\draw[ gray, -] (2.3,-4) to (2.3,-3)..controls ++(0,3) and ++(0,-.5)..(0.25,0);
\draw[ line width=0.15mm,-] (2.9,-4) to (2.9,-3)..controls ++(0,1) and ++(0,-1)..(3,0);
\draw[line width=0.15mm, -] (4.2,-4) to (4.2,-3)..controls ++(0,2) and ++(0,-1)..(-4.2,0);
\draw[ line width=0.15mm,-] (5.9,-4) to (5.9,-3)..controls ++(0,3) and ++(0,-.5)..(0.65,0);
\end{scope}
\draw[fill=white] (-5.5,3) rectangle (-3.5,4);
\draw[fill=white] (-3,3) rectangle (-1,4);\draw[fill=white] (1,3) rectangle (3,4);\draw[fill=white] (4,3) rectangle (6,4);
\draw[fill=gray!60] (-5.5,-1) rectangle (-4,0);
\draw[fill=gray!60] (-3.5,-1) rectangle (-2,0);\draw[fill=gray!60] (-0.75,-1) rectangle (.75,0);\draw[fill=gray!60] (2,-1) rectangle (3.5,0);
\draw[fill=gray!60] (4,-1) rectangle (5.5,0);
\node at (-4.5,3.5)[scale=.75]{$\lambda_1$};
\node at (-2,3.5)[scale=.75]{$\lambda_2$};
\node at (2,3.5)[scale=.75]{$\lambda_s$};
\node at (5,3.5)[scale=.75]{$\lambda_k$};
\node at (-4.75,-.5)[scale=.75]{$\lambda_1^*$};
\node at (-2.75,-.5)[scale=.75]{$\lambda_2^*$};
\node at (0,-.5)[scale=.75]{$\lambda_r^*$};
\node at (2.75,-.5)[scale=.75]{$\lambda_t^*$};
\node at (4.75,-.5)[scale=.75]{$\lambda_l^*$};
\node at (-4.5,-5.5) {$\underbrace{\hspace{0.35in}}$};
\node at (-2,-5.5) {$\underbrace{\hspace{0.35in}}$};
\node at (2.1,-5.5) {$\underbrace{\hspace{0.35in}}$};
\node at (5.1,-5.5) {$\underbrace{\hspace{0.35in}}$};
\node at (-4.5,-6)[scale=.7]{$\lambda_1$};
\node at (-2,-6)[scale=.7]{$\lambda_2$};
\node at (2,-6)[scale=.7]{$\lambda_s$};
\node at (5,-6)[scale=.7]{$\lambda_k$};
\node at (0,-5.5){$\dots$};
\node at (0,3.5){$\dots$};
\node at (-1.35,-.5){$\dots$};
\node at (1.45,-.5){$\dots$};
\end{tikzpicture}}
\end{equation}
Then the coefficient $\alpha_{\lambda} $ ensures the object is idempotent and is defined by the formula:
\[\alpha_{\lambda}:= \frac{{\Pi_{i=1}^k |\lambda_i|}\Pi_{j=1}^l |\lambda_j^*!|}{Y}\qquad\text{where} \qquad Y=\frac{n!}{\# \SYT(\lambda)}.\]
Clearly, $\alpha_{\lambda}$ is the same for all $\SYT(\lambda)$ and satisfies $\alpha_{\lambda} = \alpha_{\lambda^t}$. 
Such detail is rarely needed, so henceforth we will instead utilize the following notation to represent \eqref{diag:YoungIdempotentExploded}. Note that the coefficient $\alpha_\lambda$ is built into this notation.  
\begin{equation}\label{diag:YoungIdempotent}
\P^{\lambda}= \;\;
\hackcenter{
\begin{tikzpicture} [scale=0.6]
  \draw[ ->] (.2,-1) -- (.2,1);
  \draw[ ->] (1.8,-1) -- (1.8,1);
  \draw[fill = gray!20] (-0.0,-.35) rectangle (2,.35);
  \node at (1,0) {$\lambda$};
  \node at (1,-.7) {$\dots$};
    \node at (1,.7) {$\dots$};
\end{tikzpicture}}
\hspace{15mm}
\Q^{\lambda^t}=
\hackcenter{
\begin{tikzpicture} [scale=0.6]
  \draw[ <-] (.2,-1) -- (.2,1);
  \draw[ <-] (1.8,-1) -- (1.8,1);
  \draw[fill = gray!20] (-0.0,-.35) rectangle (2,.35);
  \node at (1,0) {$\lambda^t$};
  \node at (1,-.7) {$\dots$};
    \node at (1,.7) {$\dots$};
\end{tikzpicture}}
\end{equation}

Moreover, we will often clump strands together into "thick" lines $\hackcenter{\begin{tikz} \draw [line width= 0.7mm,
    decoration={markings,mark=at position 1 with {\arrow[line width=.3mm]{>}}},
    postaction={decorate},shorten >=0.6pt] (0,-.3)--(0,.3);\end{tikz}}$ to denote more than one strand at a time and reserve "thin" lines $\hackcenter{\begin{tikz} \draw[->] (0,-.3)--(0,.3);\end{tikz}}$ to indicate strands of thickness exactly equal to 1. 
\begin{example}
Suppose $\lambda = (a,b)$, so that $\lambda^t = (2^b, 1^{a-b})$. Then $\P^{\lambda}$ in (\ref{diag:YoungIdempotent}) is depicted by the diagram:
\begin{equation}
 \P^{(a,b)} \;\; = \;\;
\hackcenter{
\begin{tikzpicture} [scale=0.6]
  \draw[ ->] (.2,-1) -- (.2,1);
  \draw[ ->] (1.8,-1) -- (1.8,1);
  \draw[fill = gray!20] (-0.0,-.35) rectangle (2.2,.35);
  \node at (1,0) [scale=.75]{$(a,b)$};
  \node at (1,-.7) [scale=.75]{$\dots$};
    \node at (1,.7) [scale=.75]{$\dots$};
\end{tikzpicture}}
\;\; = \;\; \frac{(2!)^b (a-b+1)}{a+1}\;
 \hackcenter{
\begin{tikzpicture} [scale=0.6, yscale=.8]
 \draw[ <-] (-2.8,2.1) to (-2.8,-3.1);
 \draw[ <-] (-2.2,2.1) to (-2.2,1.4) .. controls ++(0,-1) and ++(0,1) .. (-1.5,-1)
        .. controls ++(0,-1) and ++(0,1) .. (-2.2,-3.1)  ;
\node at (-1.6,2.0) [scale=.75]{$\dots$};
\node at (-1.6,1.0) [scale=.75]{$\dots$};
\node at (-1.6,-2.9) [scale=.75]{$\dots$};
\draw[ <-] (-1,2.1) to (-1,1.4) .. controls ++(0,-1) and ++(0,1) .. (.3,-1)
        .. controls ++(0,-1) and ++(0,1) .. (-1,-3.1) ;
 \draw[ <-] (-.3,2.1) to (-.3,1.4) .. controls ++(0,-1) and ++(0,1) .. (1.7,-1)
        .. controls ++(0,-1) and ++(0,1) .. (-.3,-3.1);
\node at (.3,2.0) [scale=.75]{$\dots$};
\node at (.3,1.0) [scale=.75]{$\dots$};
\node at (.3,-2.9) [scale=.75]{$\dots$};
\draw[ <-] (.9,2.1) to (.9,1.4) .. controls ++(0,-1) and ++(0,1) .. (2.7,-1)
        .. controls ++(0,-1) and ++(0,1) .. (.9,-3.1);
  \draw[ ->] (1.8,-3.1).. controls ++(0,.75) and ++(0,-.75) ..(-2.2,-1.2) to (-2.2,-.7)
            .. controls ++(0,.5) and ++(0,-.5) .. (1.8,1.2) to (1.8,2.1);
  \draw[ ->] (2.4,-3.1).. controls ++(0,1) and ++(0,-.5) ..(-.9,-1.2) to (-.9,-.7) .. controls ++(0,.5) and ++(0,-.85) .. (2.4,1.2) to (2.4,2.1);
   \node at (2.9,2.0)[scale=.75] {$\dots$};
  \draw[ ->] (3.4,-3.1) .. controls ++(0,.85) and ++(0,-.5) ..(.9,-1.2) to (.9,-.7) .. controls ++(0,.5) and ++(0,-.85) .. (3.4,1.2) to (3.4,2.1);
 \begin{scope}[yscale=1.4, shift={(0,-.5)}]
  \draw[fill =white] (-3,1.3) rectangle (1.2,1.8);
  \draw[fill =white] (1.6,1.3) rectangle (3.6,1.8);
  \node at (-1.0,1.55)[scale=.75] {$a$};
    \node at (2.55,1.55)[scale=.75] {$b$};
    \end{scope}
  \begin{scope}[yscale=1.4, shift={(0,.3)}]
  \draw[fill =gray!60] (.1,-1.2) rectangle (1.1,-0.7);
  \draw[fill =gray!60] (-1.7,-1.2) rectangle (-.7,-0.7);
  \draw[fill =gray!60] (-3,-1.2) rectangle (-2,-0.7);
    \node at (-2.5,-.95)[scale=.75] {$2$};
    \node at (-1.2,-.95)[scale=.75] {$2$};
    \node at (.6,-.95) [scale=.75]{$2$};
    \end{scope}
     \node at (-.3,-1) [scale=.75]{$\dots$};
  \node at ( .35,-3.3) [scale=.75]{$\underbrace{\hspace{0.5in}}$};
  \node at (-1.85,-3.3) [scale=.75]{$\underbrace{\hspace{0.65in}}$};
  \node at (2.65,-3.3) [scale=.75]{$\underbrace{\hspace{0.6in}}$};
  \node at ( .35,-3.65) [scale=.75]{$ a-b$};
  \node at (-1.85,-3.65) [scale=.75]{$ b$};
  \node at (2.65,-3.65) [scale=.75]{$ b$};
  \end{tikzpicture}}
\end{equation}
\end{example}
Symmetrizers and antisymmetrizers are elements of the group algebra of the symmetric group and satisfy the following relations (see \cite{Birdtracks, Garcia}):
\begin{equation}\label{eq:sym-anti-absorbsion}
\hackcenter{\begin{tikzpicture}[scale=.3, yscale=-1,xscale=1.2, yscale=.8,every node/.style={scale=0.6}]
\node at (1.3,.8) {$\cdots$};
\node at (1.3,-2.6) {$\cdots$};
\node at (0,.8) {$\cdots$};
\node at (0,-2.6) {$\cdots$};
\node at (-1.3,.8) {$\cdots$};
\node at (-1.3,-2.6) {$\cdots$};
\draw (-1.8, -2.8)--(-1.8,1.2);
\draw (1.8, -2.8)--(1.8,1.2);
\draw (-.8, -2.8)--(-.8,1.2);
\draw (.8, -2.8)--(.8,1.2);
\draw[fill=white] (-2,-.5) rectangle (2,.5);
\draw[fill=white] (-1,-2.2) rectangle (1,-1.2);
\end{tikzpicture}}
\;\;=\;\;
\hackcenter{\begin{tikzpicture}[scale=.3, xscale=1.2, yscale=.8,every node/.style={scale=0.6}]
\node at (1.3,.8) {$\cdots$};
\node at (1.3,-2.6) {$\cdots$};
\node at (0,.8) {$\cdots$};
\node at (0,-2.6) {$\cdots$};
\node at (-1.3,.8) {$\cdots$};
\node at (-1.3,-2.6) {$\cdots$};
\draw (-1.8, -2.8)--(-1.8,1.2);
\draw (1.8, -2.8)--(1.8,1.2);
\draw (-.8, -2.8)--(-.8,1.2);
\draw (.8, -2.8)--(.8,1.2);
\draw[fill=white] (-2,-1.3) rectangle (2,-.3);
\end{tikzpicture}}
\qquad\qquad\text{and}\qquad\qquad
\hackcenter{\begin{tikzpicture}[scale=.3, yscale=-1,xscale=1.2, yscale=.8,every node/.style={scale=0.6}]
\node at (1.3,.8) {$\cdots$};
\node at (1.3,-2.6) {$\cdots$};
\node at (0,.8) {$\cdots$};
\node at (0,-2.6) {$\cdots$};
\node at (-1.3,.8) {$\cdots$};
\node at (-1.3,-2.6) {$\cdots$};
\draw (-1.8, -2.8)--(-1.8,1.2);
\draw (1.8, -2.8)--(1.8,1.2);
\draw (-.8, -2.8)--(-.8,1.2);
\draw (.8, -2.8)--(.8,1.2);
\draw[fill=gray!60] (-2,-.5) rectangle (2,.5);
\draw[fill=gray!60] (-1,-2.2) rectangle (1,-1.2);
\end{tikzpicture}}
\;\;=\;\;
\hackcenter{\begin{tikzpicture}[scale=.3, xscale=1.2, yscale=.8,every node/.style={scale=0.6}]
\node at (1.3,.8) {$\cdots$};
\node at (1.3,-2.6) {$\cdots$};
\node at (0,.8) {$\cdots$};
\node at (0,-2.6) {$\cdots$};
\node at (-1.3,.8) {$\cdots$};
\node at (-1.3,-2.6) {$\cdots$};
\draw (-1.8, -2.8)--(-1.8,1.2);
\draw (1.8, -2.8)--(1.8,1.2);
\draw (-.8, -2.8)--(-.8,1.2);
\draw (.8, -2.8)--(.8,1.2);
\draw[fill=gray!60] (-2,-1.3) rectangle (2,-.3);
\end{tikzpicture}}
\end{equation}

\begin{equation}\label{eq:sym-anti-cross-absorbsion}
\hackcenter{\begin{tikzpicture}[scale=.3, xscale=1.2, yscale=.8,every node/.style={scale=0.5}]
\node at (1.3,.8) {$\cdots$};
\node at (1.3,-2.6) {$\cdots$};
\node at (-1.3,.8) {$\cdots$};
\node at (-1.3,-2.6) {$\cdots$};
\begin{scope}[shift={(1.6,0)}]
\draw (-1.4, -2.8)--(-1.4, 0)--(-1.8,1.2);
\draw (-1.8, -2.8)--(-1.8, 0)--(-1.4,1.2);
\end{scope}
\draw (-1.8, -2.8)--(-1.8,1.2);
\draw (1.8, -2.8)--(1.8,1.2);
\draw (-1, -2.8)--(-1,1.2);
\draw (1, -2.8)--(1,1.2);
\draw[fill=white] (-2,-1.3) rectangle (2,-.3);
\end{tikzpicture}}
\;\;=\;\;
\hackcenter{\begin{tikzpicture}[scale=.3, xscale=1.2, yscale=.8,every node/.style={scale=0.5}]
\node at (1.3,.8) {$\cdots$};
\node at (1.3,-2.6) {$\cdots$};
\node at (0,.8) {$\cdots$};
\node at (0,-2.6) {$\cdots$};
\node at (-1.3,.8) {$\cdots$};
\node at (-1.3,-2.6) {$\cdots$};
\draw (-1.8, -2.8)--(-1.8,1.2);
\draw (1.8, -2.8)--(1.8,1.2);
\draw (-.8, -2.8)--(-.8,1.2);
\draw (.8, -2.8)--(.8,1.2);
\draw[fill=white] (-2,-1.3) rectangle (2,-.3);
\end{tikzpicture}}
\qquad\qquad\text{and}\qquad\qquad
\hackcenter{\begin{tikzpicture}[scale=.3, xscale=1.2, yscale=.8,every node/.style={scale=0.5}]
\node at (1.3,.8) {$\cdots$};
\node at (1.3,-2.6) {$\cdots$};
\node at (-1.3,.8) {$\cdots$};
\node at (-1.3,-2.6) {$\cdots$};
\begin{scope}[shift={(1.6,0)}]
\draw (-1.4, -2.8)--(-1.4, 0)--(-1.8,1.2);
\draw (-1.8, -2.8)--(-1.8, 0)--(-1.4,1.2);
\end{scope}
\draw (-1.8, -2.8)--(-1.8,1.2);
\draw (1.8, -2.8)--(1.8,1.2);
\draw (-1, -2.8)--(-1,1.2);
\draw (1, -2.8)--(1,1.2);
\draw[fill=gray!60] (-2,-1.3) rectangle (2,-.3);
\end{tikzpicture}}
\;\;=\;\;-\;\;
\hackcenter{\begin{tikzpicture}[scale=.3, xscale=1.2, yscale=.8,every node/.style={scale=0.5}]
\node at (1.3,.8) {$\cdots$};
\node at (1.3,-2.6) {$\cdots$};
\node at (0,.8) {$\cdots$};
\node at (0,-2.6) {$\cdots$};
\node at (-1.3,.8) {$\cdots$};
\node at (-1.3,-2.6) {$\cdots$};
\draw (-1.8, -2.8)--(-1.8,1.2);
\draw (1.8, -2.8)--(1.8,1.2);
\draw (-.8, -2.8)--(-.8,1.2);
\draw (.8, -2.8)--(.8,1.2);
\draw[fill=gray!60] (-2,-1.3) rectangle (2,-.3);
\end{tikzpicture}}
\end{equation}

\begin{equation}\label{eq:whiteblackbox}
\hackcenter{\begin{tikzpicture}[scale=.3, xscale=1.2, yscale=.8,every node/.style={scale=0.6}]
\node at (1.3,.8) {$\cdots$};
\node at (1.3,-2.6) {$\cdots$};
\node at (0,.8) {$\cdots$};
\node at (0,-2.6) {$\cdots$};
\node at (-1.3,.8) {$\cdots$};
\node at (-1.3,-2.6) {$\cdots$};
\draw (-1.8, -2.8)--(-1.8,1.2);
\draw (1.8, -2.8)--(1.8,1.2);
\draw (-.8, -2.8)--(-.8,1.2);
\draw (.8, -2.8)--(.8,1.2);
\draw[fill=white] (-2,-.5) rectangle (2,.5);
\draw[fill=gray!60] (-1,-2.2) rectangle (1,-1.2);
\end{tikzpicture}}
\;\;=0=\;\;
\hackcenter{\begin{tikzpicture}[scale=.3, xscale=1.2, yscale=.8,every node/.style={scale=0.6}]
\node at (1.3,.8) {$\cdots$};
\node at (1.3,-2.6) {$\cdots$};
\node at (0,.8) {$\cdots$};
\node at (0,-2.6) {$\cdots$};
\node at (-1.3,.8) {$\cdots$};
\node at (-1.3,-2.6) {$\cdots$};
\draw (-1.8, -2.8)--(-1.8,1.2);
\draw (1.8, -2.8)--(1.8,1.2);
\draw (-.8, -2.8)--(-.8,1.2);
\draw (.8, -2.8)--(.8,1.2);
\draw[fill=gray!60] (-2,-.5) rectangle (2,.5);
\draw[fill=white] (-1,-2.2) rectangle (1,-1.2);
\end{tikzpicture}}
\end{equation}

\begin{align}\label{eq:sym-exploded}
\hackcenter{\begin{tikzpicture}[scale=.3, xscale=1.2, yscale=.8,every node/.style={scale=0.6}]
\node at (1.3,.8) {$\cdots$};
\node at (1.3,-2.6) {$\cdots$};
\node at (0,.8) {$\cdots$};
\node at (0,-2.6) {$\cdots$};
\node at (-1.3,.8) {$\cdots$};
\node at (-1.3,-2.6) {$\cdots$};
\draw (-1.8, -2.8)--(-1.8,1.2);
\draw (1.8, -2.8)--(1.8,1.2);
\draw (-.8, -2.8)--(-.8,1.2);
\draw (.8, -2.8)--(.8,1.2);
\draw[fill=white] (-2,-1.3) rectangle (2,-.3);
\node at (0,-.8){$n$};
\end{tikzpicture}}
&\;\;=\;\;\frac{1}{n}\;\;\left\lbrace\;\;
\hackcenter{\begin{tikzpicture}[scale=.3, xscale=1.2, yscale=.8,every node/.style={scale=0.6}]
\node at (1.3,.8) {$\cdots$};
\node at (1.3,-2.6) {$\cdots$};
\node at (0.3,.8) {$\cdots$};
\node at (0.3,-2.6) {$\cdots$};
\node at (-.8,.8) {$\cdots$};
\node at (-0.8,-2.6) {$\cdots$};
\draw (-1.8, -2.8)--(-1.8,1.2);
\draw (-1.4, -2.8)--(-1.4,1.2);
\draw (1.8, -2.8)--(1.8,1.2);
\draw (-.3, -2.8)--(-.3,1.2);
\draw (.8, -2.8)--(.8,1.2);
\draw[fill=white] (-1.5,-1.3) rectangle (2,-.3);
\node at (.2,-.8){$n-1$};
\end{tikzpicture}}
\;\;+\;\;n-1\;
\hackcenter{\begin{tikzpicture}[scale=.3, xscale=1.2, yscale=.8,every node/.style={scale=0.6}]
\node at (1.3,.8) {$\cdots$};
\node at (1.3,-2.6) {$\cdots$};
\node at (0.3,.8) {$\cdots$};
\node at (0.3,-2.6) {$\cdots$};
\node at (-.8,.8) {$\cdots$};
\node at (-0.8,-2.6) {$\cdots$};
\draw (-1.4, -2.8)--(-1.4, -1.3)--(-1.8,-.5)--(-1.8,1.2);
\draw (-1.8, -2.8)--(-1.8, -1.3)--(-1.4,-.5)--(-1.4,1.2);
\draw (1.8, -2.8)--(1.8,1.2);
\draw (-.3, -2.8)--(-.3,1.2);
\draw (.8, -2.8)--(.8,1.2);
\draw[fill=white] (-1.5,-.3) rectangle (2,.7);
\begin{scope}[shift={(0,-2)}]
\draw[fill=white] (-1.5,-.5) rectangle (2,.5);
\node at (.2,0){$n-1$};
\end{scope}
\node at (.2,0.2){$n-1$};
\end{tikzpicture}}\;\;\right\rbrace
\\
&\;\;=\;\;\frac{1}{n}\;\;\left\lbrace\;\;
\hackcenter{\begin{tikzpicture}[scale=.3, xscale=1.2, yscale=.8,every node/.style={scale=0.6}]
\node at (1.3,.8) {$\cdots$};
\node at (1.3,-2.6) {$\cdots$};
\node at (0.3,.8) {$\cdots$};
\node at (0.3,-2.6) {$\cdots$};
\node at (-.8,.8) {$\cdots$};
\node at (-0.8,-2.6) {$\cdots$};
\draw (-1.8, -2.8)--(-1.8,1.2);
\draw (-1.4, -2.8)--(-1.4,1.2);
\draw (1.8, -2.8)--(1.8,1.2);
\draw (-.3, -2.8)--(-.3,1.2);
\draw (.8, -2.8)--(.8,1.2);
\draw[fill=white] (-1.5,-.5) rectangle (2,.5);
\node at (.2,0){$n-1$};
\end{tikzpicture}}
\;\;+\;\;
\hackcenter{\begin{tikzpicture}[scale=.3, xscale=1.2, yscale=.8,every node/.style={scale=0.6}]
\node at (1.3,.8) {$\cdots$};
\node at (1.3,-2.6) {$\cdots$};
\node at (0.3,.8) {$\cdots$};
\node at (0.3,-2.6) {$\cdots$};
\node at (-.8,.8) {$\cdots$};
\node at (-0.8,-2.6) {$\cdots$};
\draw (-1.4, -2.8)--(-1.8,-.5)--(-1.8,1.2);
\draw (-1.8, -2.8)--(-1.4,-.5)--(-1.4,1.2);
\draw (1.8, -2.8)--(1.8,1.2);
\draw (-.3, -2.8)--(-.3,1.2);
\draw (.8, -2.8)--(.8,1.2);
\draw[fill=white] (-1.5,-.5) rectangle (2,.5);
\node at (.2,0){$n-1$};
\end{tikzpicture}}
\;\;+\;\;
\hackcenter{\begin{tikzpicture}[scale=.3, xscale=1.2, yscale=.8,every node/.style={scale=0.6}]
\node at (1.3,.8) {$\cdots$};
\node at (1.3,-2.6) {$\cdots$};
\node at (0.3,.8) {$\cdots$};
\node at (0.3,-2.6) {$\cdots$};
\node at (-.8,.8) {$\cdots$};
\node at (-0.8,-2.6) {$\cdots$};
\draw (-1.1, -2.8)--(-1.8,-0.5)--(-1.8,1.2);
\draw (-1.8, -2.8)--(-1.4,-.5)--(-1.4,1.2);
\draw (-1.5, -2.8)--(-1.1,-.5)--(-1.1,1.2);
\draw (1.8, -2.8)--(1.8,1.2);
\draw (-.3, -2.8)--(-.3,1.2);
\draw (.8, -2.8)--(.8,1.2);
\draw[fill=white] (-1.5,-.5) rectangle (2,.5);
\node at (.2,0){$n-1$};
\end{tikzpicture}}
\;\;+ \; \cdots \; +\;\;
\hackcenter{\begin{tikzpicture}[scale=.3, xscale=1.2, yscale=.8,every node/.style={scale=0.6}]
\node at (1.3,.8) {$\cdots$};
\node at (1.1,-2.6) {$\cdots$};
\node at (0.3,.8) {$\cdots$};
\node at (0.1,-2.6) {$\cdots$};
\node at (-.6,.8) {$\cdots$};
\node at (-1,-2.6) {$\cdots$};
\draw (2,-2.8)..controls++(0,0) and ++(.6,-0.3)..(1.5, -2.4)--(-1.6,-1)..controls++(-.3,0.2) and ++(0,0)..(-1.8,1.2);
\draw (-1.8, -2.8)--(-1.4,-.5)--(-1.4,1.2);
\draw (-1.5, -2.8)--(-1.1,-.5)--(-1.1,1.2);
\draw (1.5, -2.8)--(1.8,-0.5)--(1.8,1.2);
\draw (-.6, -2.8)--(-.3,-0.5)--(-.3,1.2);
\draw (.5, -2.8)--(.8,-0.5)--(.8,1.2);
\draw[fill=white] (-1.5,-.5) rectangle (2,.5);
\node at (.2,0){$n-1$};
\end{tikzpicture}}\;\;\right\rbrace
\end{align}

\begin{align}\label{eq:anti-exploded}
\hackcenter{\begin{tikzpicture}[scale=.3, xscale=1.2, yscale=.8,every node/.style={scale=0.6}]
\node at (1.3,.8) {$\cdots$};
\node at (1.3,-2.6) {$\cdots$};
\node at (0,.8) {$\cdots$};
\node at (0,-2.6) {$\cdots$};
\node at (-1.3,.8) {$\cdots$};
\node at (-1.3,-2.6) {$\cdots$};
\draw (-1.8, -2.8)--(-1.8,1.2);
\draw (1.8, -2.8)--(1.8,1.2);
\draw (-.8, -2.8)--(-.8,1.2);
\draw (.8, -2.8)--(.8,1.2);
\draw[fill=gray!60] (-2,-1.3) rectangle (2,-.3);
\node at (0,-.8){$n$};
\end{tikzpicture}}
&\;\;=\;\;\frac{1}{n}\;\;\left\lbrace\;\;
\hackcenter{\begin{tikzpicture}[scale=.3, xscale=1.2, yscale=.8,every node/.style={scale=0.6}]
\node at (1.3,.8) {$\cdots$};
\node at (1.3,-2.6) {$\cdots$};
\node at (0.3,.8) {$\cdots$};
\node at (0.3,-2.6) {$\cdots$};
\node at (-.8,.8) {$\cdots$};
\node at (-0.8,-2.6) {$\cdots$};
\draw (-1.8, -2.8)--(-1.8,1.2);
\draw (-1.4, -2.8)--(-1.4,1.2);
\draw (1.8, -2.8)--(1.8,1.2);
\draw (-.3, -2.8)--(-.3,1.2);
\draw (.8, -2.8)--(.8,1.2);
\draw[fill=gray!60] (-1.5,-1.3) rectangle (2,-.3);
\node at (.2,-.8){$n-1$};
\end{tikzpicture}}
\;\;-\;\;n-1\;
\hackcenter{\begin{tikzpicture}[scale=.3, xscale=1.2, yscale=.8,every node/.style={scale=0.6}]
\node at (1.3,.8) {$\cdots$};
\node at (1.3,-2.6) {$\cdots$};
\node at (0.3,.8) {$\cdots$};
\node at (0.3,-2.6) {$\cdots$};
\node at (-.8,.8) {$\cdots$};
\node at (-0.8,-2.6) {$\cdots$};
\draw (-1.4, -2.8)--(-1.4, -1.3)--(-1.8,-.5)--(-1.8,1.2);
\draw (-1.8, -2.8)--(-1.8, -1.3)--(-1.4,-.5)--(-1.4,1.2);
\draw (1.8, -2.8)--(1.8,1.2);
\draw (-.3, -2.8)--(-.3,1.2);
\draw (.8, -2.8)--(.8,1.2);
\draw[fill=gray!60] (-1.5,-.3) rectangle (2,.7);
\begin{scope}[shift={(0,-2)}]
\draw[fill=gray!60] (-1.5,-.5) rectangle (2,.5);
\node at (.2,0){$n-1$};
\end{scope}
\node at (.2,0.2){$n-1$};
\end{tikzpicture}}\;\;\right\rbrace
\\
&\;\;=\;\;\frac{1}{n}\;\;\left\lbrace\;\;
\hackcenter{\begin{tikzpicture}[scale=.3, xscale=1.2, yscale=.8,every node/.style={scale=0.6}]
\node at (1.3,.8) {$\cdots$};
\node at (1.3,-2.6) {$\cdots$};
\node at (0.3,.8) {$\cdots$};
\node at (0.3,-2.6) {$\cdots$};
\node at (-.8,.8) {$\cdots$};
\node at (-0.8,-2.6) {$\cdots$};
\draw (-1.8, -2.8)--(-1.8,1.2);
\draw (-1.4, -2.8)--(-1.4,1.2);
\draw (1.8, -2.8)--(1.8,1.2);
\draw (-.3, -2.8)--(-.3,1.2);
\draw (.8, -2.8)--(.8,1.2);
\draw[fill=gray!60] (-1.5,-.5) rectangle (2,.5);
\node at (.2,0){$n-1$};
\end{tikzpicture}}
\;\;-\;\;
\hackcenter{\begin{tikzpicture}[scale=.3, xscale=1.2, yscale=.8,every node/.style={scale=0.6}]
\node at (1.3,.8) {$\cdots$};
\node at (1.3,-2.6) {$\cdots$};
\node at (0.3,.8) {$\cdots$};
\node at (0.3,-2.6) {$\cdots$};
\node at (-.8,.8) {$\cdots$};
\node at (-0.8,-2.6) {$\cdots$};
\draw (-1.4, -2.8)--(-1.8,-.5)--(-1.8,1.2);
\draw (-1.8, -2.8)--(-1.4,-.5)--(-1.4,1.2);
\draw (1.8, -2.8)--(1.8,1.2);
\draw (-.3, -2.8)--(-.3,1.2);
\draw (.8, -2.8)--(.8,1.2);
\draw[fill=gray!60] (-1.5,-.5) rectangle (2,.5);
\node at (.2,0){$n-1$};
\end{tikzpicture}}
\;\;+\;\;
\hackcenter{\begin{tikzpicture}[scale=.3, xscale=1.2, yscale=.8,every node/.style={scale=0.6}]
\node at (1.3,.8) {$\cdots$};
\node at (1.3,-2.6) {$\cdots$};
\node at (0.3,.8) {$\cdots$};
\node at (0.3,-2.6) {$\cdots$};
\node at (-.8,.8) {$\cdots$};
\node at (-0.8,-2.6) {$\cdots$};
\draw (-1.1, -2.8)--(-1.8,-0.5)--(-1.8,1.2);
\draw (-1.8, -2.8)--(-1.4,-.5)--(-1.4,1.2);
\draw (-1.5, -2.8)--(-1.1,-.5)--(-1.1,1.2);
\draw (1.8, -2.8)--(1.8,1.2);
\draw (-.3, -2.8)--(-.3,1.2);
\draw (.8, -2.8)--(.8,1.2);
\draw[fill=gray!60] (-1.5,-.5) rectangle (2,.5);
\node at (.2,0){$n-1$};
\end{tikzpicture}}
\;\;-\; \cdots \; +(-1)^{n-1}\;\;
\hackcenter{\begin{tikzpicture}[scale=.3, xscale=1.2, yscale=.8,every node/.style={scale=0.6}]
\node at (1.3,.8) {$\cdots$};
\node at (1.1,-2.6) {$\cdots$};
\node at (0.3,.8) {$\cdots$};
\node at (0.1,-2.6) {$\cdots$};
\node at (-.6,.8) {$\cdots$};
\node at (-1,-2.6) {$\cdots$};
\draw (2,-2.8)..controls++(0,0) and ++(.6,-0.3)..(1.5, -2.4)--(-1.6,-1)..controls++(-.3,0.2) and ++(0,0)..(-1.8,1.2);
\draw (-1.8, -2.8)--(-1.4,-.5)--(-1.4,1.2);
\draw (-1.5, -2.8)--(-1.1,-.5)--(-1.1,1.2);
\draw (1.5, -2.8)--(1.8,-0.5)--(1.8,1.2);
\draw (-.6, -2.8)--(-.3,-0.5)--(-.3,1.2);
\draw (.5, -2.8)--(.8,-0.5)--(.8,1.2);
\draw[fill=gray!60] (-1.5,-.5) rectangle (2,.5);
\node at (.2,0){$n-1$};
\end{tikzpicture}}\;\;\right\rbrace
\end{align}

\subsection{Littlewood-Richardson Branching Isomorphisms}\label{subsec:LRBranchingIsos}
We begin with a classic result from the representation theory of the symmetric group. We refer the reader to \cite{Fulton-YoungTableux} for more details and \cite[Section 9.4.2]{Birdtracks} for a proof of the following theorem.

\begin{theorem}\label{thm:CompleteIdemp}
Given partitions $\lambda,\mu \vdash n$, let $\Delta_\lambda= \#STY(\lambda)$ be the dimension of the irreducible representation of $S_n$ corresponding to $\lambda$. For any $T \in$ \emph{SYT}($\lambda$) and $T' \in$ \emph{SYT}($\mu$) the following relations holds in $\H$:
\begin{equation}\label{eq:Pdecomp}\P^{\otimes n} = \bigoplus_{\substack{\lambda \vdash n\\ T \in \emph{SYT}(\lambda)}} \P^{T} \simeq \;\;\bigoplus_{\lambda \vdash n} (\P^{\lambda})^{\oplus \Delta_\lambda}
\end{equation}
\begin{equation}\label{eq:PPidemp}
\1_{\P^{T'}}\circ 1_{\P^{T}} \simeq \begin{cases} 1_{\P^{T}} & \text{if } \lambda = \mu $ and $ T =T',\\
0 & \text{else.}\end{cases}
\end{equation}
\end{theorem}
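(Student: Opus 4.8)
The plan is to recognize that the statement is really a statement about the group algebra $\k[S_n]$ sitting inside $\End_{\H'}(Q_{+n})$, and then transport it through the Karoubi completion $\H = \Kar(\H')$, in which idempotents split by construction. First I would recall that relations \eqref{heis:up-double} and \eqref{heis:up-triple} provide an algebra homomorphism $\iota_n\colon \k[S_n] \to \End_{\H'}(Q_{+n})$ sending the simple transposition $s_i$ to the diagram crossing the $i$th and $(i+1)$st strands; by Khovanov's basis theorem for morphism spaces in $\H'$ \cite{Kh-H} this map is injective, so $\k[S_n]$ may be regarded as a subalgebra of $\End_{\H'}(Q_{+n})$, and each Young idempotent $e_T$ (being built only out of crossings, i.e. out of symmetrizing and antisymmetrizing boxes) lies in its image. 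Consequently the objects $\P^T = (Q_{+n},e_T)$ and $\P^\lambda = (Q_{+n},e_{T_{row}})$ are precisely the splittings of these idempotents in $\H$.

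Next I would import the classical representation theory of $S_n$ over a field of characteristic $0$: the family $\{e_T : \lambda \vdash n,\ T \in \SYT(\lambda)\}$ of (seminormal) Young idempotents forms a complete set of orthogonal primitive idempotents of $\k[S_n]$, that is $\sum_{\lambda,T} e_T = 1$, $e_{T'}e_T = \delta_{T,T'}\,e_T$, each $\k[S_n]e_T$ is isomorphic to the irreducible $V_\lambda$ for $T$ of shape $\lambda$, and $e_T$ is conjugate to $e_{T'}$ whenever $T,T'$ have the same shape; moreover $\#\SYT(\lambda) = \Delta_\lambda = \dim V_\lambda$. This is exactly the content of \cite[Section 9.4.2]{Birdtracks} and \cite{Fulton-YoungTableux}; the only diagrammatic input it requires are the absorption and vanishing relations \eqref{eq:sym-anti-absorbsion}--\eqref{eq:whiteblackbox}, which merely re-express the corresponding identities among the boxes in $\k[S_n]$.

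Finally I would conclude. Splitting the orthogonal idempotents $e_T$ inside $\Kar(\H')$ gives $\P^{\otimes n} = (Q_{+n},\mathrm{id}) = \bigoplus_{\lambda \vdash n}\bigoplus_{T \in \SYT(\lambda)} (Q_{+n},e_T) = \bigoplus_{\lambda,T}\P^T$; grouping terms by shape and using $\P^T \simeq \P^\lambda$ for $T$ of shape $\lambda$ produces $\bigoplus_{\lambda}(\P^\lambda)^{\oplus\Delta_\lambda}$, which is \eqref{eq:Pdecomp}. Under this decomposition the identity $\1_{\P^T}$, viewed as an idempotent endomorphism of $\P^{\otimes n}$, is precisely $e_T$, so $\1_{\P^{T'}}\circ\1_{\P^T} = e_{T'}e_T = \delta_{T,T'}\,e_T = \delta_{T,T'}\,\1_{\P^T}$, which is \eqref{eq:PPidemp} (the displayed case split degenerates to $T = T'$ since a standard tableau determines its shape). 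The one place with genuine mathematical content, and hence the step I expect to be the main obstacle, is the orthogonality and completeness of the $e_T$: for distinct shapes $e_{T'}e_T$ vanishes by the classical argument (forcing a complete symmetrizer directly atop a complete antisymmetrizer on $\ge 2$ strands, which is $0$ by \eqref{eq:whiteblackbox}), but the raw Young symmetrizers within a fixed shape are primitive yet \emph{not} mutually orthogonal, so one must work with their orthogonal (seminormal) representatives indexed by standard tableaux — and it is exactly this that makes the ``$0$'' in \eqref{eq:PPidemp} hold even for distinct tableaux of the same shape. Since the theorem is quoted from the literature, I would invoke \cite[Section 9.4.2]{Birdtracks} for this point rather than reprove it.
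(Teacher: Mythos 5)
Your proposal is correct and follows essentially the same route as the paper, which offers no independent proof but defers exactly to \cite{Fulton-YoungTableux} and \cite[Section 9.4.2]{Birdtracks}; your outline simply makes explicit the transport of that classical statement through the embedding $\k[S_n]\hookrightarrow \End_{\H'}(Q_{+n})$ and the splitting of idempotents in the Karoubi envelope. Your flagging of the one genuinely delicate point — that orthogonality within a fixed shape requires the orthogonalized (seminormal/birdtrack-normalized) idempotents rather than raw Young symmetrizers — is accurate and is precisely what the cited reference supplies.
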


Given any two standard fillings $T,T'$ of $\lambda,\mu \vdash n$ let $\delta$ denote the Kronecker delta of the corresponding standard Young tableau. Then equations \eqref{eq:Pdecomp} and \eqref{eq:PPidemp} are diagrammatically the following statements. 
\begin{align}
\hackcenter{
\begin{tikzpicture} [scale=0.5, every node/.style={scale=0.75}]
  \draw[, ->] (.2,-1) -- (.2,1);
   \draw[, ->] (1,-1) -- (1,1);
  \draw[, ->] (1.8,-1) -- (1.8,1);
   \node at (.6,0) {$\dots$};
   \node at (1.4,0) {$\dots$};
  \node at (1,-1.3) {$\underbrace{\hspace{20mm}}$};
  \node at (1,-1.7) {$n$};
\end{tikzpicture}}
&= \;\;\bigoplus_{\substack{\lambda \vdash n\\ T \in \emph{SYT}(\lambda)}}\hspace{2mm}
\hackcenter{
\begin{tikzpicture} [scale=0.5, every node/.style={scale=0.75}]
\begin{scope}[shift={(0,1)}]
  \draw[, ->] (.2,-1) -- (.2,1);
  \draw[, ->] (1.8,-1) -- (1.8,1);
  \draw[fill =  gray!20] (-0.0,-.35) rectangle (2,.35);
  \node at (1,0) {$T$};
  \node at (1,-.7) {$\dots$};
    \node at (1,.7) {$\dots$};
 \end{scope}
\end{tikzpicture}}\label{eq:P=sumLambda}
\\
 \hackcenter{
\begin{tikzpicture} [scale=0.5, every node/.style={scale=0.75}]
  \draw[, -] (.2,-1.5) -- (.2,1.5);
  \draw[, -] (.8,-1.5) -- (.8,1.5);
  \draw[, -] (1.8,-1.5) -- (1.8,1.5);
\draw[, -] (2.4,-1.5) -- (2.4,1.5);
  \draw[fill =  gray!20] (-0.2,.3) rectangle (2.8,1);
  \draw[fill =  gray!20] (-0.2,-.3) rectangle (2.8,-1);
  \node at (1.3,.65) {$T$};
   \node at (1.3,-.65) {$T'$};
  \node at (1.3,-1.2) {$\dots$};
    \node at (1.3,1.3) {$\dots$};
\end{tikzpicture}}
\;\; &= \;\; \delta_{\lambda, \mu} \delta_{T,T'}\;
\hackcenter{
\begin{tikzpicture} [scale=0.5, every node/.style={scale=0.75}]
  \draw[, -] (.2,-1) -- (.2,1);
  \draw[, -] (.8,-1) -- (.8,1);
  \draw[, -] (1.8,-1) -- (1.8,1);
  \draw[, -] (2.4,-1) -- (2.4,1);
  \draw[fill =  gray!20] (-0.2,-.35) rectangle (2.8,.35);
  \node at (1.3,0) {$T$};
  \node at (1.3,-.7) {$\dots$};
    \node at (1.3,.7) {$\dots$};
\end{tikzpicture}}\label{eq:PlambdaOrthogonal}
\end{align}

\begin{proposition}\label{prop:QPdecomposition}
Given any $n,m \in \N$, the following relation holds in $\H'$:
\begin{equation}\label{eq:sMatchings}
\hackcenter{\begin{tikzpicture}[scale=0.75,yscale=.6,xscale=1.2 ,every node/.style={scale=0.8, xscale=1.1}]
%
\node at (-1.6,3.1) {$\overbrace{\hspace{29mm}}$};
\node at (1.5,3.1) {$\overbrace{\hspace{30mm}}$};
\node at (-1.6,3.5){$n$};
\node at (1.5,3.5){$m$};
\node at (-1.6,-5.6) {$\underbrace{\hspace{29mm}}$};
\node at (1.5,-5.6) {$\underbrace{\hspace{30mm}}$};
\node at (-1.6,-6){$n$};
\node at (1.5,-6){$m$};
%
\node at (0,2.95){$\dots \dots\dots\dots\dots\dots\dots\dots\dots\dots\dots\dots\dots$};
\node at (0,-5.35){$\dots \dots\dots\dots\dots\dots\dots\dots\dots\dots\dots\dots\dots$};
\node at (-.85,-1){$\dots$};
\node at (-2.2,-1){$\dots$};
\node at (.85,-1){$\dots$};
\node at (2.2,-1){$\dots$};
\draw[>->](-3,3)--(-3,-5.4);
\draw[>->](-2.75,3)--(-2.75,-5.4);
\draw[>->](-1.5,3)--(-1.5,-5.4);
\draw[>->](-.3,3)--(-.3,-5.4);
\draw[<-<](3,3)--(3,-5.4);
\draw[<-<](2.75,3)--(2.75,-5.4);
\draw[<-<](1.5,3)--(1.5,-5.4);
\draw[<-<](.1,3)--(.1,-5.4);
\end{tikzpicture}}
=
\hackcenter{\begin{tikzpicture}
\node at (0,0)[scale=2, yscale=1.2]{$\sum$};
\node at (0,-.75){$\substack{0\leq s \leq \emph{min}(n,m)\\s-\emph{matchings}}$};
\end{tikzpicture}}
\hackcenter{\begin{tikzpicture}[scale=0.75, xscale=1.2, every node/.style={scale=0.8,xscale=1.1}]
%
\node at (-1.6,3.1) {$\overbrace{\hspace{29mm}}$};
\node at (1.5,3.1) {$\overbrace{\hspace{30mm}}$};
\node at (-1.6,3.5){$n$};
\node at (1.5,3.5){$m$};
\node at (-1.6,-5.6) {$\underbrace{\hspace{29mm}}$};
\node at (1.5,-5.6) {$\underbrace{\hspace{30mm}}$};
\node at (-1.6,-6){$n$};
\node at (1.5,-6){$m$};
\node at (-2.2,-1.1) {$\underbrace{\hspace{17mm}}$};
\node at (1.7,-1.1) {$\underbrace{\hspace{17mm}}$};
\node at (-2.2,-1.3){$m-s$};
\node at (1.7,-1.3){$n-s$};
%
\node at (0,2.95){$\dots \dots\dots\dots\dots\dots\dots\dots\dots\dots\dots\dots\dots$};
\node at (-1.9,1.25){$\dots$};
\node at (-1.2,1.25){$\dots$};
\node at (1.5,1.25){$\dots$};
\node at (.6,1.25){$\dots$};
\node at (-2.25,-.95){$\dots\dots\dots$};
\node at (1.7,-.95){$\dots\dots\dots$};
\draw [>->](-3,3)..controls++(0,-1) and ++(0,-1)..(1,3);
\draw [>->](-2.5,3)..controls++(0,-.75) and ++(0,-.75)..(3,3);
\draw [>->](-1,3)..controls++(0,-1.5) and ++(0,-1.5)..(2,3);
\draw [>->](-2,3)..controls++(0,-1.75) and ++(0,-1.75)..(1.25,3);
%
\draw [>->](-2.8,3)..controls++(0,-3) and ++(0,1.5)..(1,-1);
\draw [>->](-2.2,3)..controls++(0,-3) and ++(0,1.5)..(1.6,-1);
\draw [>->](-1.2,3)..controls++(0,-3) and ++(0,1.5)..(2.4,-1);
\begin{scope}[xscale=-1]
\draw [<-<](-2.3,3)..controls++(0,-3) and ++(0,1.5)..(1.5,-1);
\draw [<-<](-1.8,3)..controls++(0,-3) and ++(0,1.5)..(2.2,-1);
\draw [<-<](-.5,3)..controls++(0,-3) and ++(0,1.5)..(2.9,-1);
\end{scope}
\begin{scope}[yscale=-1, shift={(0,2.5)}]
\node at (0,2.95){$\dots \dots\dots\dots\dots\dots\dots\dots\dots\dots\dots\dots\dots$};
\node at (-1.9,1.25){$\dots$};
\node at (-1.2,1.25){$\dots$};
\node at (1.5,1.25){$\dots$};
\node at (.6,1.25){$\dots$};
\node at (-2.25,-.95){$\dots\dots\dots$};
\node at (1.7,-.95){$\dots\dots\dots$};
\draw [<-<](-3,3)..controls++(0,-1) and ++(0,-1)..(1,3);
\draw [<-<](-2.5,3)..controls++(0,-.75) and ++(0,-.75)..(3,3);
\draw [<-<](-1,3)..controls++(0,-1.5) and ++(0,-1.5)..(2,3);
\draw [<-<](-2,3)..controls++(0,-1.75) and ++(0,-1.75)..(1.25,3);
%
\draw [<-<](-2.8,3)..controls++(0,-3) and ++(0,1.5)..(1,-1);
\draw [<-<](-2.2,3)..controls++(0,-3) and ++(0,1.5)..(1.6,-1);
\draw [<-<](-1.2,3)..controls++(0,-3) and ++(0,1.5)..(2.4,-1);
\begin{scope}[xscale=-1]
\draw [>->](-2.3,3)..controls++(0,-3) and ++(0,1.5)..(1.5,-1);
\draw [>->](-1.8,3)..controls++(0,-3) and ++(0,1.5)..(2.2,-1);
\draw [>->](-.5,3)..controls++(0,-3) and ++(0,1.5)..(2.9,-1);
\end{scope}
\end{scope}
\end{tikzpicture}
}\end{equation}
where the sum ranges over all possible ways of selecting $s$ distinct $Q_-$ and $s$ distinct $Q_+$ and pairing them with a cap and cup for all values $0\leq s \leq$ min$(n,m)$. On the left, the map is the identity morphism for $Q_{-n}\otimes Q_{+m}$. On the right, each map is an endomorphism of $Q_{-n}\otimes Q_{+m}$ that factors through $Q_{+(m-s)}\otimes Q_{-(n-s)}$ for each value of $s$.
\medskip

Consequently, the following isomorphism holds in $\H$:
\[\Q^{\otimes n} \P^{\otimes m} \cong \bigoplus_{s=0}^{\emph{min}(n,m)} (s!){{n}\choose{s}}  {{m}\choose{s}} \P^{\otimes (m-s)} \Q^{\otimes (n-s)}\]
\end{proposition}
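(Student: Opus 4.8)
The plan is to establish the morphism identity \eqref{eq:sMatchings} in $\H'$ first, and then read off the object-level decomposition in $\H$ from it by recognizing the matching diagrams as a complete system of orthogonal idempotents. For \eqref{eq:sMatchings} I would induct on $n$. The case $n=0$ is vacuous, and the case $n=1$ is the engine of the argument: one drags the single downward strand rightward past the $m$ upward strands one crossing at a time, and each pass is controlled by relation \eqref{heis:downup}, which rewrites a downward--upward crossing followed by the reverse crossing as the identity minus a cup--cap. Iterating this and using isotopy invariance, $\Id_{Q_-Q_{+m}}$ expands as one ``drag all the way through and back'' term, which factors through $Q_{+m}$, together with, for each $j$, a term in which the downward strand is capped against the $j$-th upward strand and a fresh up--down pair is created, which factors through $Q_{+(m-1)}$; these are exactly the $(1,m)$-matchings, so the sublemma holds.

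For the inductive step I strip off the leftmost $Q_-$, apply the inductive hypothesis to $\Id_{Q_{-(n-1)}Q_{+m}}$, and then to each resulting summand---which factors through some $Q_{+(m-s')}Q_{-(n-1-s')}$---apply the $n=1$ sublemma to transport the remaining leftmost $Q_-$ through the $Q_{+(m-s')}$ block. A summand either passes this strand entirely through (keeping $s=s'$, with the new strand joining the unmatched downward strands) or caps it against one of the $m-s'$ still-unmatched upward strands (raising $s$ to $s'+1$). A short combinatorial check shows each $(n,m)$-matching is produced exactly once: a matching in which the leftmost $Q_-$ is unmatched arises uniquely from the pass-through option applied to its restriction, while one in which the leftmost $Q_-$ is paired with some $Q_+$ arises uniquely by deleting that pair and then recapping (the relevant $Q_+$ being unmatched, hence draggable, in the restriction). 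This proves \eqref{eq:sMatchings}, and in particular that the number of $s$-matchings is $s!\binom{n}{s}\binom{m}{s}$ (choose $s$ of the $n$ bottom $Q_-$'s, $s$ of the $m$ bottom $Q_+$'s, and a bijection between them), which also matches the decategorified Weyl-algebra identity $q^n p^m = \sum_s s!\binom{n}{s}\binom{m}{s} p^{m-s}q^{n-s}$.

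To pass to objects, write each matching term of \eqref{eq:sMatchings} as $f_M := g_M\circ h_M$ with $h_M\colon Q_{-n}Q_{+m}\to Q_{+(m-s_M)}Q_{-(n-s_M)}$ the cap-and-reorder half and $g_M$ the cup-and-reorder half, so \eqref{eq:sMatchings} reads $\Id_{Q_{-n}Q_{+m}} = \sum_M f_M$. It then suffices to verify the orthogonality relations $h_{M'}\circ g_M = \delta_{M,M'}\,\Id_{Q_{+(m-s_M)}Q_{-(n-s_M)}}$. Granting these, $f_M^2 = g_M(h_Mg_M)h_M = f_M$ and $f_Mf_{M'}=\delta_{M,M'}f_M$, so $\{f_M\}$ is a complete system of orthogonal idempotents and the standard splitting observation gives $\Q^{\otimes n}\P^{\otimes m} = (Q_{-n}Q_{+m},\Id)\cong\bigoplus_M (Q_{+(m-s_M)}Q_{-(n-s_M)},\Id) = \bigoplus_{s=0}^{\min(n,m)} s!\binom{n}{s}\binom{m}{s}\,\P^{\otimes(m-s)}\Q^{\otimes(n-s)}$. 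When $M=M'$, the composite $h_Mg_M$ is, up to isotopy, the identity on the unmatched strands together with one closed loop per matched pair once the reorder permutations are straightened by \eqref{heis:up-double} and \eqref{heis:up down}; each loop evaluates to $1$ by \eqref{eq:heis-bub}. When $M\neq M'$, the matchings differ either in $s$ or in which strands they pair, and in the composite $h_{M'}g_M$ a cup produced by $g_M$ is forced (after the intervening reorderings) into a configuration containing a strand that turns back on itself---a curl, which vanishes by \eqref{heis: right twist curl}---so the composite is $0$.

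The step I expect to be the main obstacle is this last verification: checking, uniformly across all the ways two matchings can differ, that the mixed composite $h_{M'}g_M$ always reduces via isotopy, the crossing relations \eqref{heis:up-double} and \eqref{heis:up down}, the bubble evaluation \eqref{eq:heis-bub}, and crucially the curl relation \eqref{heis: right twist curl} either to the identity or to zero. The combinatorics underlying \eqref{eq:sMatchings} itself, although fiddly, is routine bookkeeping once the $n=1$ sublemma is established.
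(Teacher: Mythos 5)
Your proposal is correct and takes essentially the same route as the paper: the identity \eqref{eq:sMatchings} is established by the same induction, dragging one $Q_-$ at a time across the upward strands via \eqref{heis:downup} (the paper inducts on $n$ and then $m$ from the base case \eqref{heis:downup}, you do $n=1$ for all $m$ first, which is the same mechanism), and the multiplicity $s!\binom{n}{s}\binom{m}{s}$ comes from counting $s$-matchings exactly as in the paper. Your explicit check that the matching diagrams form a complete orthogonal system --- same-matching composites reducing to identities and counterclockwise bubbles equal to $1$ by \eqref{eq:heis-bub} (with linked circles slid off through \eqref{heis:up down} and \eqref{heis:downup}, whose correction terms carry curls), and mixed composites vanishing by \eqref{heis: right twist curl} --- is more detail than the paper records, but it is precisely the verification left implicit in the paper's closing sentence ``the relation in $\H$ follows,'' and it does go through.
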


\begin{proof}
The proof follows from a straightforward inductive argument on $n$ and $m$. The case $n=m=1$ is relation \eqref{heis:downup}. Inducting first on $n$ consists of tensoring $\Q\P$ on the left by $\Q$. In this way, we see that the identity morphism of $\Q^{\otimes n}\P$ equals the sum of $n+1$ maps. The first is the obvious crossing map, which crosses the upward strand across all downward strands to its left and then back. The remaining $n$ maps are given by all $n$ distinct ways of paring $\P$ with a $\Q$ by a cup on top and a cap on the bottom. The second induction follows identically by tensoring $\Q^{\otimes n} \P$ with $\P$ on the right $m$ times and inducting on $m$. 

In $\H$ the top half of the diagrams are morphisms between $\P^{\otimes (m-s)} \Q^{\otimes (n-s)}$ and $\Q^{\otimes n} \P^{\otimes m}$ and the bottom half are maps in the reverse direction. Since for any $0\leq s \leq \text{min}(n,m)$ the number of $s$-matchings between $\P$'s and $\Q$'s is precisely $(s!){{n}\choose{s}}  {{m}\choose{s}}$, the relation in $\H$ follows. 
\end{proof}

\noindent \textbf{Notation:} In the remainder of the paper we make extensive use of the following notation:
\begin{itemize}
\item Given any object $\mathsf{X} \in \H$, denote by $\1_{\mathsf{X}}$ the identity morphism on $\mathsf{X}$ and depict it by vertical oriented lines with no crossings. 
\item For any indexing set $I$ and any two indexes $s,t \in I$, denote by $\delta_{s,t}$ the Kronecker delta function. 
\end{itemize}

With these isomorphisms in hand, we will now present various relations for the tensor products of $\Q^\lambda$ and $\P^\mu$ for varying partitions $\lambda$ and $\mu$. We begin by a Proposition due to Khovanov {\cite[Section 2.2]{Kh-H}}. 

\begin{proposition}\label{prop:Q*Pswap}
For any integers $n,m \geq 1$, define the morphisms $\rho_1, \rho_2, \iota_1, \iota_2$ in $\H$ as follows:
\begin{align*}
\rho_1\;\; := \;\;
\hackcenter{
\begin{tikzpicture}[scale=0.4, every node/.style={scale=0.6}]
 \draw[line width= 0.7mm,
    decoration={markings,mark=at position 1 with {\arrow[line width=.3mm]{>}}},
    postaction={decorate},shorten >=0.6pt] (1,-1.1) to (1,-.2) .. controls ++(0,.5) and ++(0,-.5) .. (-1.4,2.2) to (-1.4,3.1);
  \draw[line width= 0.7mm,
    decoration={markings,mark=at position 0.04 with {\arrow[line width=.3mm]{<}}},
    postaction={decorate}] (-1.4,-1.1) to (-1.4,-.2) .. controls ++(0,.5) and ++(0,-.5) .. (1,2.2) to (1,3.1);
\begin{scope}[shift={(0,-.3)}]
   \draw[fill =white] (-2.2,2.2) rectangle (-0.2,2.8);
    \draw[fill =gray!60] (.2,2.2) rectangle (2.2,2.8);
     \node at (1.2,2.5) {$n$};
      \node at (-1.2,2.5) {$m$};
  \end{scope}
  \begin{scope}[shift={(0,.3)}]
  \draw[fill =white] (.2,-.8) rectangle (2.2,-.2);
  \draw[fill =gray!60] (-2.2,-.8) rectangle (-.2,-.2);
  \node at (1.2,-.5) {$m$}; 
  \node at (-1.2,-.5) {$n$}; 
  \end{scope}
\end{tikzpicture}}
& \qquad\rho_2:=\;\;
\hackcenter{
\begin{tikzpicture}[scale=0.4, every node/.style={scale=0.6}, yscale=1.2]
 \draw[line width= 0.7mm,
    decoration={markings,mark=at position 1 with {\arrow[line width=.3mm]{>}}},
    postaction={decorate},shorten >=0.6pt] (1,-1.1) to (1,-.2) .. controls ++(0,.5) and ++(0,-.5) .. (-1.4,2.2) to (-1.4,3.1);
  \draw[line width= 0.7mm,
    decoration={markings,mark=at position 0.04 with {\arrow[line width=.3mm]{<}}},
    postaction={decorate}] (-1.4,-1.1) to (-1.4,-.2) .. controls ++(0,.5) and ++(0,-.5) .. (1,2.2) to (1,3.1);
  \draw[, <-] (-.4,-1.1) to (-.4,-.2) .. controls ++(0,.7) and ++(0,.7) .. (.4,-.2) to (.4,-1.1);
\begin{scope}[shift={(0,-.3)}]
   \draw[fill =white] (-2.2,2.2) rectangle (-0.2,2.8);
    \draw[fill =gray!60] (.2,2.2) rectangle (2.2,2.8);
     \node at (1.2,2.5) {$n$};
      \node at (-1.2,2.5) {$m$};
  \end{scope}
  \begin{scope}[shift={(0,.3)}]
  \draw[fill =white] (.2,-.8) rectangle (2.2,-.2);
  \draw[fill =gray!60] (-2.2,-.8) rectangle (-.2,-.2);
  \node at (1.2,-.5) {$m$}; 
  \node at (-1.2,-.5) {$n$}; 
  \end{scope}
\end{tikzpicture}}
&\qquad \iota_1\;\; :=
\hackcenter{
\begin{tikzpicture}[scale=0.4, every node/.style={scale=0.6}, xscale =-1]
 \draw[line width= 0.7mm,
    decoration={markings,mark=at position 1 with {\arrow[line width=.3mm]{>}}},
    postaction={decorate},shorten >=0.6pt] (1,-1.1) to (1,-.2) .. controls ++(0,.5) and ++(0,-.5) .. (-1.4,2.2) to (-1.4,3.1);
  \draw[line width= 0.7mm,
    decoration={markings,mark=at position 0.04 with {\arrow[line width=.3mm]{<}}},
    postaction={decorate}] (-1.4,-1.1) to (-1.4,-.2) .. controls ++(0,.5) and ++(0,-.5) .. (1,2.2) to (1,3.1);
\begin{scope}[shift={(0,-.3)}]
   \draw[fill =white] (-2.2,2.2) rectangle (-0.2,2.8);
    \draw[fill =gray!60] (.2,2.2) rectangle (2.2,2.8);
     \node at (1.2,2.5) {$n-1$};
      \node at (-1.2,2.5) {$m-1$};
  \end{scope}
  \begin{scope}[shift={(0,.3)}]
  \draw[fill =white] (.2,-.8) rectangle (2.2,-.2);
  \draw[fill =gray!60] (-2.2,-.8) rectangle (-.2,-.2);
  \node at (1.2,-.5) {$m$}; 
  \node at (-1.2,-.5) {$n$}; 
  \end{scope}
\end{tikzpicture}}
&\qquad
\iota_2:=\;(nm) \;
\hackcenter{
\begin{tikzpicture}[scale=0.4, every node/.style={scale=0.6}, yscale=-1.2]
 \draw[line width= 0.7mm,
    decoration={markings,mark=at position 0.04 with {\arrow[line width=.3mm]{<}}},
    postaction={decorate}] (1,-1.1) to (1,-.2) .. controls ++(0,.5) and ++(0,-.5) .. (-1.4,2.2) to (-1.4,3.1);
  \draw[line width= 0.7mm,
    decoration={markings,mark=at position 1 with {\arrow[line width=.3mm]{>}}},
    postaction={decorate},shorten >=0.6pt] (-1.4,-1.1) to (-1.4,-.2) .. controls ++(0,.5) and ++(0,-.5) .. (1,2.2) to (1,3.1);
  \draw[, ->] (-.4,-1.1) to (-.4,-.2) .. controls ++(0,.7) and ++(0,.7) .. (.4,-.2) to (.4,-1.1);
\begin{scope}[shift={(0,-.3)}]
   \draw[fill =white] (-2.2,2.2) rectangle (-0.2,2.8);
    \draw[fill =gray!60] (.2,2.2) rectangle (2.2,2.8);
     \node at (1.2,2.5) {$n-1$};
      \node at (-1.2,2.5) {$m-1$};
  \end{scope}
  \begin{scope}[shift={(0,.3)}]
  \draw[fill =white] (.2,-.8) rectangle (2.2,-.2);
  \draw[fill =gray!60] (-2.2,-.8) rectangle (-.2,-.2);
  \node at (1.2,-.5) {$m$}; 
  \node at (-1.2,-.5) {$n$}; 
  \end{scope}
\end{tikzpicture}}
\end{align*}
These maps satisfy the relations $\1_{\Q^{(n)^t}\P^{(m)}} = \iota_1 \circ \rho_1 +\iota_2\circ \rho_2$ and $\rho_t \circ \iota_s = \delta_{s,t} \1_s$ in $\H$. 
Consequently, the isomorphism $\Q^{(n)^t}\P^{(m)} \cong \P^{(m)}\Q^{(n)^t} \oplus \P^{(m-1)}\Q^{(n-1)^t}$ holds in $\H$.

\end{proposition}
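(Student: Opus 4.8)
The plan is to prove the two families of diagrammatic identities $\rho_t\circ\iota_s=\delta_{s,t}\1_s$ and $\1_{\Q^{(n)^t}\P^{(m)}}=\iota_1\circ\rho_1+\iota_2\circ\rho_2$; the last assertion then follows with no extra work, since a quadruple $(\rho_1,\rho_2,\iota_1,\iota_2)$ satisfying exactly these relations is by definition a biproduct datum, so it exhibits $\Q^{(n)^t}\P^{(m)}$ as $\P^{(m)}\Q^{(n)^t}\oplus\P^{(m-1)}\Q^{(n-1)^t}$ in the additive category $\H$.

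For the identities $\rho_t\circ\iota_s=\delta_{s,t}\1_s$, each composite is a closed diagram built from a pair of white boxes $\P^{(m)}$ (or $\P^{(m-1)}$), a pair of dark boxes $\Q^{(n)^t}$ (or $\Q^{(n-1)^t}$), the appropriate bundle of crossings of an upward symmetrized cable past a downward antisymmetrized cable, and — in the terms involving $\iota_2$ or $\rho_2$ — a single cup and/or cap. I would compute $\rho_1\iota_1$ by pulling the two thick cables apart: the crossings cancel two strands at a time by the oriented Reidemeister-II relation \eqref{heis:up down} (in its correction-term-free handedness), and the two resulting copies of each box merge by \eqref{eq:sym-anti-absorbsion}, leaving $\1_{\P^{(m)}\Q^{(n)^t}}$; any correction term that would arise from the other handedness \eqref{heis:downup} produces a turnback against a box and is killed by \eqref{heis: right twist curl}. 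For $\rho_2\iota_2$ the cap of $\iota_2$ meets the cup of $\rho_2$, and after straightening via \eqref{heis:up down} and evaluating the resulting small circle via \eqref{eq:heis-bub} one is left with $\1_{\P^{(m-1)}\Q^{(n-1)^t}}$ times a scalar; here the explicit factor $nm$ built into $\iota_2$ together with the normalizing coefficients hidden in the thick-box notation \eqref{diag:YoungIdempotent} — concretely, the $\tfrac1n$-type factors appearing in the expansions \eqref{eq:sym-exploded} and \eqref{eq:anti-exploded} — cancel to give the coefficient $1$. The off-diagonal composites $\rho_1\iota_2$ and $\rho_2\iota_1$ each create a curl (or turnback) against a symmetrizer or antisymmetrizer box, hence vanish by \eqref{heis: right twist curl} (equivalently by \eqref{eq:sym-anti-cross-absorbsion} followed by \eqref{eq:whiteblackbox}).

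For the completeness relation I would realize $\Q^{(n)^t}\P^{(m)}$ as the image of the idempotent $e^{(n)}_{\mathrm{anti}}\otimes e^{(m)}_{\mathrm{sym}}$ on $\Q^{\otimes n}\P^{\otimes m}$, so that $\1_{\Q^{(n)^t}\P^{(m)}}=(e_{\mathrm{anti}}\otimes e_{\mathrm{sym}})\circ\1_{\Q^{\otimes n}\P^{\otimes m}}\circ(e_{\mathrm{anti}}\otimes e_{\mathrm{sym}})$, and then substitute the $s$-matching expansion of Proposition \ref{prop:QPdecomposition} for the middle factor. The $s=0$ summand is precisely $\iota_1\rho_1$ once the idempotent boxes are absorbed into the crossing by \eqref{eq:sym-anti-cross-absorbsion}. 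The $nm$ summands with $s=1$ all become equal after the sandwiching — moving the single cup to a standard position is absorbed by the symmetrizer on the $\P$-cable and by the antisymmetrizer on the $\Q$-cable, the sign incurred on the latter being cancelled by the sign from the corresponding motion of the cap at the bottom — so their sum is $nm$ times one of them, which is $\iota_2\rho_2$. Every $s\ge 2$ summand is killed: at least two cups now connect the symmetrizer box to the antisymmetrizer box, and sliding the antisymmetrizer up along these cups places a restriction of it directly above a restriction of the symmetrizer sharing at least two strands, so the term vanishes by the transposition/sign mechanism of \eqref{eq:whiteblackbox}.

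The main obstacle I anticipate is precisely this last paragraph: confirming that only the $s=0,1$ terms survive and organizing the $s=1$ contributions so that the stray signs produced by pushing cups and caps through the antisymmetrizer cancel in pairs, leaving the clean coefficient $nm$ that matches the definition of $\iota_2$. The only other delicate point is tracking orientations carefully enough to know when \eqref{heis:up down} applies cleanly versus when \eqref{heis:downup} contributes a correction term in the $\rho_t\iota_s$ computations, and checking that the curl-vanishing relation \eqref{heis: right twist curl} genuinely applies to the configurations that occur; everything else is a routine application of the absorption identities \eqref{eq:sym-anti-absorbsion}–\eqref{eq:whiteblackbox}.
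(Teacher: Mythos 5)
Your proof is correct, and your overall strategy---establish $\rho_t\iota_s=\delta_{s,t}\1$ and $\1=\iota_1\rho_1+\iota_2\rho_2$, then read off the biproduct decomposition---matches the logical shape of the paper's (very terse) argument. Where you differ is in the packaging: for the completeness relation the paper appeals directly to a diagrammatic manipulation with \eqref{heis:downup}, while you route it through the $s$-matching expansion of Proposition~\ref{prop:QPdecomposition} and then kill the $s\ge 2$ contributions by the sign mechanism. Since \eqref{eq:sMatchings} is itself proved by iterating \eqref{heis:downup}, these are the same computation at two levels of abstraction; your version has the advantage of making the $s\ge 2$ vanishing an explicit, checkable step (antisymmetrizer and symmetrizer sharing two strands across a pair of caps give cancelling signed terms, cf.\ \eqref{eq:whiteblackbox}) rather than folding it silently into ``a diagrammatic computation.''

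Two small points of care. First, in your $\rho_2\iota_2$ computation the invocation of \eqref{eq:heis-bub} should not be read as producing a bare bubble: what actually closes up is a loop obtained only after peeling one strand off each box via \eqref{eq:sym-exploded} and \eqref{eq:anti-exploded}, and it is those $1/m$ and $1/n$ factors---you do mention them---together with the built-in $nm$ in $\iota_2$ that produce the scalar $1$; this is precisely the step the paper handles by citing the explosion relations and \eqref{heis: right twist curl}. Second, a cosmetic slip: $\iota_2$ has a \emph{cup} and $\rho_2$ has a \emph{cap}, so in $\rho_2\iota_2$ it is the cup of $\iota_2$ that meets the cap of $\rho_2$; the roles are reversed in your phrasing. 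Neither affects the substance of the argument.
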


\begin{proof}
The first relation is obtained via a diagrammatic computation and the use of (\ref{heis:downup}). The second follows from using (\ref{eq:sym-exploded}) and (\ref{eq:anti-exploded}), and then realizing that by (\ref{heis: right twist curl}) the diagrams are nonzero if and only if $s=t$. 
\end{proof}

The analogous statement $\Q^{(n)}\P^{(m)^t} \cong \P^{(m)^t}\Q^{(n)} \oplus \P^{(m-1)^t}\Q^{(n-1)}$ also holds in $\H$.
\medskip

The remainder of the section is devoted  to enhancing the calculus for Khovanov's Heisenberg category by constructing explicit isomorphisms for various Littlewood-Richardson branching rules.

\begin{proposition}\label{prop:QPswap}
For any integers $n,m \geq 1$ and $0 \leq s \leq$ \emph{min}$(m,n)$, define the morphisms $\rho_s$ and $\iota_s$ in $\H$ as follows:
\begin{equation} \label{eqn:rho-iotaQP}
\rho_s:=\;\;
\hackcenter{\begin{tikzpicture} [scale=0.5, every node/.style={scale=0.75}]
\draw[line width= 0.7mm,
    decoration={markings,mark=at position .5 with {\arrow[line width=.3mm]{>}}},
    postaction={decorate} ] (1.5,-1.3) to (1.5,-.2) .. controls ++(0,1.5) and ++(0,-.5) .. (-1,2.2) to (-1,3.3);
\draw[line width= 0.7mm,
    decoration={markings,mark=at position .6 with {\arrow[line width=.3mm]{>}}},
    postaction={decorate}]
(1,3.3)to (1,2.2) .. controls ++(0,-.5) and ++(0,1.5) .. (-1.5,-.2) to(-1.5,-1.3) ;
\draw[line width= 0.7mm,
    decoration={markings,mark=at position .5 with {\arrow[line width=.3mm]{>}}},
    postaction={decorate}](.75,-1.3) to (.75,-.2).. controls ++(0,1) and ++(0,1) .. (-.75,-.2) to (-.75,-1.3) ;
   \draw[fill =white] (-2.2,2.2) rectangle (-0.2,2.8);
  \draw[fill =white] (-2.2,-.8) rectangle (-.2,-.2);
  \draw[fill =white] (.2,2.2) rectangle (2.2,2.8);
  \draw[fill =white] (.2,-.8) rectangle (2.2,-.2);
  \node at (1.2,-.5) {$m$};
  \node at (1.2,2.5) {$n-s$};
  \node at (-1.2,-.5) {$n$};
  \node at (-1.2,2.5) {$m-s$};
\end{tikzpicture}}
\qquad\qquad
\iota_s:=\;\; {{n}\choose{s}}{{m}\choose{s}} s!\;\;
\hackcenter{\begin{tikzpicture} [scale=0.5, every node/.style={scale=0.75}, yscale=-1, xscale=-1]
\draw[line width= 0.7mm,
    decoration={markings,mark=at position .5 with {\arrow[line width=.3mm]{>}}},
    postaction={decorate}] (1.5,-1.3) to (1.5,-.2) .. controls ++(0,1.5) and ++(0,-.5) .. (-1,2.2) to (-1,3.3);
\draw[line width= 0.7mm,
    decoration={markings,mark=at position .6 with {\arrow[line width=.3mm]{>}}},
    postaction={decorate}]
(1,3.3)to (1,2.2) .. controls ++(0,-.5) and ++(0,1.5) .. (-1.5,-.2) to(-1.5,-1.3) ;
\draw[line width= 0.7mm,
    decoration={markings,mark=at position .55 with {\arrow[line width=.3mm]{>}}},
    postaction={decorate}](.75,-1.3) to (.75,-.2).. controls ++(0,1) and ++(0,1) .. (-.75,-.2) to (-.75,-1.3) ;
   \draw[fill =white] (-2.2,2.2) rectangle (-0.2,2.8);
  \draw[fill =white] (-2.2,-.8) rectangle (-.2,-.2);
  \draw[fill =white] (.2,2.2) rectangle (2.2,2.8);
  \draw[fill =white] (.2,-.8) rectangle (2.2,-.2);
  \node at (1.2,-.5) {$n$};
  \node at (1.2,2.5) {$m-s$};
  \node at (-1.2,-.5) {$m$};
  \node at (-1.2,2.5) {$n-s$};
\end{tikzpicture}}.
\end{equation}
These morphisms satisfy $\rho_s \circ \iota_t = \delta_{s,t} \cdot \1_{\P^{(n-t)}\Q^{(m-t)}}$ and  $
\1_{\Q^{(m)}\P^{(n)}} = \sum_s \iota_s \circ \rho_s$. Consequently, the following isomorphisms hold in $\H$:
\[\Q^{(m)}\P^{(n)} \cong \bigoplus_{s=0}^{\textup{min}(m,n)} \P^{(n-s)}\Q^{(m-s)}
\qquad \qquad \text{and} \qquad \qquad \Q^{(m)}\P^{(n)} \cong \P^{(n)}\Q^{(m)} \oplus \Q^{(m-1)}\P^{(n-1)}.\]
\end{proposition}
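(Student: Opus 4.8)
The plan is to deduce both stated relations from the decomposition of $\Q^{\otimes m}\P^{\otimes n}$ already furnished by Proposition \ref{prop:QPdecomposition}, together with a direct diagrammatic check in the spirit of Khovanov's argument for Proposition \ref{prop:Q*Pswap}. The starting observation is that $\P^{(n)}$ and $\Q^{(m)}$ are by construction the images in $\H$ of the complete symmetrizer idempotents $e_n\in\End_{\H'}(Q_{+n})$ and $e_m\in\End_{\H'}(Q_{-m})$; thus $\Q^{(m)}\P^{(n)}$ is the summand of $\Q^{\otimes m}\P^{\otimes n}$ cut out by $e_m\otimes e_n$, and a morphism with source or target $\Q^{(m)}\P^{(n)}$ is precisely a morphism between the ambient tensor powers that is absorbed on the relevant side by $e_m\otimes e_n$. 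The same remark applies to $\P^{(n-s)}\Q^{(m-s)}$, so every identity below may be checked after "sandwiching" by the appropriate complete symmetrizers.

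First I would establish $\1_{\Q^{(m)}\P^{(n)}}=\sum_s \iota_s\circ\rho_s$. Apply Proposition \ref{prop:QPdecomposition} with the roles of $n$ and $m$ interchanged: the diagrammatic statement there expands $\1_{\Q^{\otimes m}\P^{\otimes n}}$ as $\sum_{s=0}^{\min(m,n)}\sum_M \beta_M$, where $M$ ranges over the $(s!)\binom ms\binom ns$ ways of pairing $s$ downward strands with $s$ upward strands by a cap below and a cup above, and $\beta_M$ factors through $\P^{\otimes(n-s)}\Q^{\otimes(m-s)}$. Pre- and post-composing with $e_m\otimes e_n$ and repeatedly using the absorption relations \eqref{eq:sym-anti-absorbsion} and \eqref{eq:sym-anti-cross-absorbsion} in their symmetrizer form, one sees that $(e_m\otimes e_n)\,\beta_M\,(e_m\otimes e_n)$ is \emph{independent of $M$} for each fixed $s$: the permutation of strands that distinguishes two $s$-matchings is swallowed by the adjacent complete symmetrizers, and the two unmatched blocks of sizes $m-s$ and $n-s$ simultaneously acquire complete symmetrizers, exhibiting this common morphism as $\iota_s'\circ\rho_s$ with $\rho_s$ as in \eqref{eqn:rho-iotaQP} and $\iota_s'$ the unnormalized version of $\iota_s$. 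Summing over the $(s!)\binom ms\binom ns$ matchings of each type gives $\1_{\Q^{(m)}\P^{(n)}}=\sum_s (s!)\binom ms\binom ns\,\iota_s'\circ\rho_s=\sum_s \iota_s\circ\rho_s$, since the binomial factor is exactly the scalar built into $\iota_s$ in \eqref{eqn:rho-iotaQP}.

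Next I would verify $\rho_s\circ\iota_t=\delta_{s,t}\cdot\1_{\P^{(n-t)}\Q^{(m-t)}}$ by a direct computation with the two diagrams in \eqref{eqn:rho-iotaQP}. Their composite has complete symmetrizers in the middle wedged between the $t$ cap-cup pairs coming from $\iota_t$ and the $s$ cap-cup pairs coming from $\rho_s$. Exploding these middle symmetrizers via \eqref{eq:sym-exploded} (and \eqref{eq:anti-exploded} if antisymmetrizers are encountered) and sliding strands, every term in which a cup of $\rho_s$ closes directly onto a cap of $\iota_t$ produces a curl and vanishes by \eqref{heis: right twist curl}, while the genuinely closed loops that remain evaluate to $1$ by \eqref{eq:heis-bub}. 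A short count shows that the composite is nonzero only when $s=t$, and that the number of surviving diagrams in that case is precisely $(s!)\binom ms\binom ns$, which cancels the normalization in $\iota_s$ and leaves exactly $\1_{\P^{(n-s)}\Q^{(m-s)}}$. Given this orthogonality and the relation of the previous paragraph, the $\iota_s\circ\rho_s$ are mutually orthogonal idempotents summing to $\1_{\Q^{(m)}\P^{(n)}}$, so $\Q^{(m)}\P^{(n)}\cong\bigoplus_{s=0}^{\min(m,n)}\P^{(n-s)}\Q^{(m-s)}$; splitting off the $s=0$ term and identifying $\bigoplus_{s\ge 1}\P^{(n-s)}\Q^{(m-s)}=\bigoplus_{s'=0}^{\min(m-1,n-1)}\P^{((n-1)-s')}\Q^{((m-1)-s')}\cong\Q^{(m-1)}\P^{(n-1)}$ (the same proposition with $(m,n)$ replaced by $(m-1,n-1)$) yields the two-term isomorphism. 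The main obstacle is the bookkeeping in these last two steps: making the multiplicity $(s!)\binom ms\binom ns$ collapse to $1$ after sandwiching by symmetrizers, and pinning down the normalization constant so that $\rho_s\circ\iota_s$ is the identity on the nose rather than a nonzero scalar multiple of it; this forces careful use of the explosion relations \eqref{eq:sym-exploded}--\eqref{eq:anti-exploded} and a precise count of which exploded terms survive \eqref{heis: right twist curl}.
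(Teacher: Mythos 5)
Your overall strategy coincides with the paper's: the resolution of the identity $\1_{\Q^{(m)}\P^{(n)}}=\sum_s\iota_s\circ\rho_s$ is obtained exactly as in the paper, by sandwiching the $s$-matching expansion of Proposition \ref{prop:QPdecomposition} with the complete symmetrizers and absorbing the permutations that distinguish the $(s!)\binom{m}{s}\binom{n}{s}$ matchings of a given size, and the orthogonality $\rho_s\circ\iota_t=\delta_{s,t}\1$ is attacked, as in the paper, by exploding the middle symmetrizers and killing terms through the right twist curl relation \eqref{heis: right twist curl}. The first half of your argument is fine and is essentially a more detailed version of what the paper says.

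The gap is in the bookkeeping of the orthogonality step, which is the only delicate point of the proposition since its entire purpose is to show that $\rho_s\circ\iota_s$ is the identity on the nose and not a nonzero multiple of it. If you fully explode $e_m\otimes e_n=\tfrac{1}{m!\,n!}\sum_{\sigma\in S_m,\,\tau\in S_n}\sigma\otimes\tau$, each term carries the weight $\tfrac{1}{m!\,n!}$, so for the normalization $(s!)\binom{m}{s}\binom{n}{s}$ built into $\iota_s$ to cancel, the curl-free contribution must total $\tfrac{s!\,(m-s)!\,(n-s)!}{m!\,n!}=\bigl[(s!)\binom{m}{s}\binom{n}{s}\bigr]^{-1}$; the relevant count is $s!\,(m-s)!\,(n-s)!$ (one permutation of the $s$ arcs, used simultaneously on the up and down sides, times free permutations of the through strands absorbed by the outer boxes), not $(s!)\binom{m}{s}\binom{n}{s}$ as you claim. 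With your count the scalar would be $\bigl[(s!)\binom{m}{s}\binom{n}{s}\bigr]^{2}/(m!\,n!)$, which already fails for $m=n=2$, $s=t=1$. Moreover, "count the surviving diagrams and evaluate each to $1$" is not by itself a complete scheme: the terms in which a cup closes directly onto a cap are the ones that give counterclockwise circles equal to $1$ by \eqref{eq:heis-bub} (you have this backwards), curls arise when an arc end is rerouted to a through position, and there remain curl-free cross terms that are not multiples of the identity --- e.g.\ for $m=n=2$, $s=t=1$ the term routing both ends of the bottom arc to the top boundary is a cup--cap composite, which must cancel against the correction term produced when the double crossing of the through strands is resolved via \eqref{heis:downup}. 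The paper's own computation organizes this differently, using the partial explosion \eqref{eq:sym-exploded} iteratively so that for $s=t$ a single term survives with exactly the coefficient $\bigl[(s!)\binom{m}{s}\binom{n}{s}\bigr]^{-1}$; you should either redo your count in that scheme, or verify the scalar by evaluating the composite on a small case as a check, or invoke the general result the paper cites (Theorem 9.2 of \cite{Savage-WreathProd}).
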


\begin{proof}
The proof consists of composing the diagrams in (\ref{eqn:rho-iotaQP}) and computing the result. The relation $\rho_s \circ \iota_t = \delta_{s,t} \1_{\P^{(n-t)}\Q^{(m-t)}}$ follows from exploding the symmetrizers $n$ and $m$ as in (\ref{eq:sym-exploded}) and noting that either all terms (when $s \neq t$) or all but one term (when $s=t$) will contain a right twist curl. Thus, by (\ref{heis:up down}) and (\ref{heis: right twist curl}) the result will be either zero or the identity.  Now, if we compose the morphisms in equation \eqref{eq:sMatchings} with the symmetrizers for $n$ and $m$ on top and for $m-s$ and $n-s$ on the bottom we obtain $\rho_s$. The reverse composition yields $\iota_s$. Hence, the second relation $
\1_{\Q^{(m)}\P^{(n)}} = \sum_s \iota_s \circ \rho_s$ follows directly from Proposition~ \ref{prop:QPdecomposition}.
\end{proof}
Proposition \ref{prop:QPswap} holds in far more generality. We refer the reader to Theorem 9.2 in \cite{Savage-WreathProd} for details. 

\begin{proposition} \label{prop:PP*merge}
For any integers $n \geq 1$ and $m>1$  define the following morphisms in $\H$: 
\begin{align}\label{eq:iotaPP*}
\iota_{m}^{n,m}&:=\;\hackcenter{
\begin{tikzpicture} [scale=0.5, yscale = -1,every node/.style={scale=0.6}]
  \draw[line width=.7mm, -] (.85,-1.2) -- (.85,1.2);
  \draw[line width=.7mm , -] (2.15,-1.2) -- (2.15,1.2);
  \draw[fill = gray!20] (.5,.3) rectangle (2.5,1);
  \draw[fill = white] (0.5,-.3) rectangle (1.25,-1);
\draw[fill = gray!60] (1.75,-.3) rectangle (2.5,-1);
  \node at (1.5,.65) {$(n,1^{m})$};
   \node at (.85,-.65) {$n$};
   \node at (2.15,-.65) {$m$};
\end{tikzpicture}}
\;\;=\;\frac{(n)(m+1)}{n+m}\;\;
\hackcenter{\begin{tikzpicture} [scale=.25,every node/.style={scale=0.8}, yscale=.85]
\begin{scope}[shift={(0,-6)}]
\draw[line width=.7mm,-] (4.5,2) to (4.5,9);
\draw[-] (4,5.5) to (4,4.5)..controls ++(0,-0.3) and ++(1,0.3)..(2.5,3.5)..controls ++(-1,-0.3) and ++(0,0)..(1.5,2.5)--(1.5,2);
\draw[line width=.7mm, -](3.5, 2)..controls ++(0,0) and ++(1,-0.5)..(2.5, 4)..controls ++(-1,0.5) and ++(0,0)..(1.5,5.5) to (1.5, 9);
\draw  (2,7.5) to (3.5, 5.5);
\draw[fill =white] (2.5,6.75) rectangle (0.4,8.25);
\draw[fill =gray!60] (5.7,4.25) rectangle (3.2,5.75);
\node at (4.5,5) [scale=.75] {$m+1$};
\node at (1.5,7.5)  [scale=.75]{$n$};
\end{scope}
\end{tikzpicture}}
\qquad\qquad
\\
\iota_{m-1}^{n,m}&:=\;\;(m)\;\;\hackcenter{
\begin{tikzpicture} [scale=0.5,yscale=-1, every node/.style={scale=0.6}]
  \draw[line width=.7mm, -] (.85,-1.2) -- (.85,1.2);
  \draw[line width=.7mm , -] (2.15,-1.2) -- (2.15,1.2);
  \draw[fill = gray!20] (0,.3) rectangle (3,1);
  \draw[fill = white] (0.5,-.3) rectangle (1.25,-1);
\draw[fill = gray!60] (1.75,-.3) rectangle (2.5,-1);
  \node at (1.5,.65) {$(n+1,1^{m-1})$};
   \node at (.85,-.65) {$n$};
   \node at (2.15,-.65) {$m$};
\end{tikzpicture}}
\;\;=\;\;\frac{(n+1)(m)^2}{n+m}\;\;
\hackcenter{\begin{tikzpicture} [scale=.25,every node/.style={scale=0.8}, yscale=.85]
\begin{scope}[shift={(0,-6)}]
\draw[line width=.7mm,-] (4.5,2) to (4.5,11.5);
\draw[-] (4,5.5) to (4,4.5)..controls ++(0,-0.3) and ++(1,0.3)..(2.5,3.5)..controls ++(-1,-0.3) and ++(0,0)..(1.5,2.5)--(1.5,2);
\draw[line width=.7mm, -](3.5, 2)..controls ++(0,0) and ++(1,-0.5)..(2.5, 4)..controls ++(-1,0.5) and ++(0,0)..(1.5,5.5) to (1.5, 11.5);
\draw  (2,7.5) to (3.5, 5.5);
\begin{scope}[shift={(0,15.1)}, yscale=-1]
\draw  (2,7.5) to (3.5, 5.5);
\end{scope}
\draw[fill =white] (2.5,6.75) rectangle (0.4,8.25);
\draw[fill =gray!60] (5.7,4.25) rectangle (3.2,5.75);
\draw[fill =gray!60] (5.7,9.5) rectangle (3.2,11);
\node at (4.5,5) [scale=.75] {$m$};
\node at (1.5,7.5)  [scale=.75]{$n+1$};
\node at (4.5,10.25) [scale=.75] {$m$};
\end{scope}
\end{tikzpicture}}
\\
\rho_{n,m}^{m}&:=\;\hackcenter{
\begin{tikzpicture} [scale=0.5, every node/.style={scale=0.6}]
  \draw[line width=.7mm, -] (.85,-1.2) -- (.85,1.2);
  \draw[line width=.7mm , -] (2.15,-1.2) -- (2.15,1.2);
  \draw[fill = gray!20] (.5,.3) rectangle (2.5,1);
  \draw[fill = white] (0.5,-.3) rectangle (1.25,-1);
\draw[fill = gray!60] (1.75,-.3) rectangle (2.5,-1);
  \node at (1.5,.65) {$(n,1^{m})$};
   \node at (.85,-.65) {$n$};
   \node at (2.15,-.65) {$m$};
\end{tikzpicture}}
\;\;=\; \frac{(n)(m+1)}{n+m}\;\;
\hackcenter{\begin{tikzpicture} [scale=.35,every node/.style={scale=0.8}, xscale=.75]
\begin{scope}[shift={(0,6)}]
\draw[line width=.7mm, -] (4.5,4) to (4.5,9);
\draw[line width=.7mm, -] (1.5,4) to (1.5, 9);
\draw (2.4,7.5) to (3.4, 7);
\draw[fill =white] (2.5,7.5) rectangle (0.4,8.5);
\draw[fill =gray!60] (5.6,6) rectangle (3.5,7);
\node at (4.5,6.5) [scale=.75] {$m+1$};
\node at (1.5,8)  [scale=.75]{$n$};
\end{scope}
\begin{scope}[shift={(0,6)}]
\draw[line width=.7mm, -] (4.5,5) to (4.5,6);
\draw[line width=.7mm, -] (1.5,4) to (1.5, 4);
\draw  (4.2,6.5) to (1.2,4.7);
\draw[fill =gray!60] (5.6,6) rectangle (3.3,7);
\draw[fill =white] (2.5,4.5) rectangle (0.4,5.5);
\node at (1.5,5) [scale=.75]{$n$};
\node at (4.5,6.5) [scale=.75]{$m+1$};
\end{scope}
\end{tikzpicture}}
\\
\rho_{n,m}^{m-1}&:= \;\hackcenter{
\begin{tikzpicture} [scale=0.5, every node/.style={scale=0.6}]
  \draw[line width=.7mm, -] (.85,-1.2) -- (.85,1.2);
  \draw[line width=.7mm , -] (2.15,-1.2) -- (2.15,1.2);
  \draw[fill = gray!20] (0,.3) rectangle (3,1);
  \draw[fill = white] (0.5,-.3) rectangle (1.25,-1);
\draw[fill = gray!60] (1.75,-.3) rectangle (2.5,-1);
  \node at (1.5,.65) {$(n+1,1^{m-1})$};
   \node at (.85,-.65) {$n$};
   \node at (2.15,-.65) {$m$};
\end{tikzpicture}}
\;\;=\; \frac{(n+1)(m)}{n+m}\;\;
\hackcenter{\begin{tikzpicture} [scale=.35,every node/.style={scale=0.8}, xscale=.75]
\begin{scope}[shift={(0,6)}]
\draw[line width=.7mm, -] (4.5,3.5) to (4.5,9);
\draw[line width=.7mm, -] (1.5,3.5) to (1.5, 9);
\draw (2.4,7.5) to (3.4, 7);
\draw (2,7.5) to (3.4, 5);
\draw (3.4,6) to (2.4, 5);
\draw[fill =white] (2.5,7.5) rectangle (0.4,8.5);
\draw[fill =gray!60] (5.6,6) rectangle (3.3,7);
\draw[fill =white] (2.5,5) rectangle (0.4,4);
\draw[fill =gray!60] (5.6,5) rectangle (3.3,4);
\node at (4.5,6.5) [scale=.75] {$m$};
\node at (1.5,8)  [scale=.75]{$n+1$};
\node at (4.5,4.5) [scale=.75] {$m$};
\node at (1.5,4.5)  [scale=.75]{$n$};
\end{scope}
\end{tikzpicture}}
\end{align}
These morphisms satisfy the relations
\begin{itemize}
\item[(i)] $\rho_{n,m}^m \circ \iota_{m}^{n,m}= \1_{\P^{(n,1^m)}}$ and $\rho_{n,m}^{m-1} \circ \iota_{m-1}^{n,m}=\1_{\P^{(n+1,1^{m-1})}}$
\item[(ii)]$\rho_{n,m}^m \circ \iota_{m-1}^{n,m}=0$ and $\rho_{n,m}^{m-1} \circ \iota_m^{n,m}=0$ 
\item[(iii)]$\1_{\P^{(n)}\P^{(m)^t}}=\iota_m^{n,m} \circ \rho_{n,m}^m + \iota_{m-1}^{n,m} \circ {\rho_{n,m}^{m-1}}.$ 
\end{itemize}
Consequently, the isomorphism $\P^{(n)}\P^{(m)^t} \cong \P^{(n,1^m)} \oplus \P^{(n+1,1^{m-1})}$ holds in $\H$. 
\end{proposition}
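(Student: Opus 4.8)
It suffices to verify the three displayed relations (i)--(iii); the direct sum decomposition then follows formally from splitting idempotents. I organize the argument around the full subcategory $\mathcal{D}_N\subseteq\H$ of direct summands of $\P^{\otimes N}$ with $N=n+m$. By Theorem~\ref{thm:CompleteIdemp} and the discussion of §\ref{subsec:KhovanovHeisCat}, the objects $\P^\nu$ ($\nu\vdash N$) are pairwise non-isomorphic indecomposables of $\mathcal{D}_N$ with $\End_\H(\P^\nu)\cong\k$ (image of a primitive idempotent of $\k[S_N]$), so $\Hom_\H(\P^\lambda,\P^\mu)=\delta_{\lambda,\mu}\,\k$, and $\mathcal{D}_N$ is Krull--Schmidt; indeed $\mathcal{D}_N$ is equivalent to $\k[S_N]\text{-mod}$ via $\P^\nu\leftrightarrow S^\nu$, under which $\P^{(n)}\P^{(m)^t}=(\P^{\otimes N},e_{(n)}\otimes e_{(1^m)})\leftrightarrow\Ind_{S_n\times S_m}^{S_N}(\mathrm{triv}\boxtimes\mathrm{sgn})$.

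First I would record the \emph{absorption relations}: by \eqref{eq:sym-anti-absorbsion} the $(n,1^m)$- and $(n{+}1,1^{m-1})$-boxes at the outer ends of the four diagrams are idempotent-absorbing, so $\rho_{n,m}^m=\1_{\P^{(n,1^m)}}\circ\rho_{n,m}^m$, $\iota_m^{n,m}=\iota_m^{n,m}\circ\1_{\P^{(n,1^m)}}$, and similarly for the $m-1$ versions. Hence each composite $\rho_{n,m}^a\circ\iota_b^{n,m}$ is a morphism between the appropriate $\P^\nu$'s, and relation (ii) is immediate: $\rho_{n,m}^m\circ\iota_{m-1}^{n,m}\in\Hom_\H(\P^{(n+1,1^{m-1})},\P^{(n,1^m)})$ and $\rho_{n,m}^{m-1}\circ\iota_m^{n,m}\in\Hom_\H(\P^{(n,1^m)},\P^{(n+1,1^{m-1})})$, and both vanish since $(n,1^m)\neq(n+1,1^{m-1})$.

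For relation (i), each of $\rho_{n,m}^m\circ\iota_m^{n,m}$ and $\rho_{n,m}^{m-1}\circ\iota_{m-1}^{n,m}$ is an endomorphism of an indecomposable $\P^\nu$ with $\End_\H(\P^\nu)=\k\cdot\1$, hence a scalar, and it remains to check the scalar is $1$ --- which is exactly what the normalization constants in \eqref{eq:iotaPP*} are designed to do. Concretely I would stack the two diagrams and explode the middle antisymmetrizer via \eqref{eq:anti-exploded} (and the middle symmetrizer via \eqref{eq:sym-exploded}); by \eqref{heis: right twist curl} and \eqref{heis:up down} every summand containing a right curl dies, leaving a single surviving diagram equal to $\1_{\P^{(n,1^m)}}$ scaled by a combinatorial factor whose product with $\bigl(\tfrac{n(m+1)}{n+m}\bigr)$ is $1$; the case of $\rho_{n,m}^{m-1}\circ\iota_{m-1}^{n,m}$ is analogous with constant $\tfrac{(n+1)m^2}{n+m}$. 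I expect this scalar bookkeeping --- counting how many terms survive the explosion formulas and with what multiplicity --- to be the main technical obstacle; the rest is formal.

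Finally, relation (iii) and the decomposition. Given (i) and (ii), $f:=\iota_m^{n,m}\circ\rho_{n,m}^m$ and $g:=\iota_{m-1}^{n,m}\circ\rho_{n,m}^{m-1}$ are orthogonal idempotents on $\P^{(n)}\P^{(m)^t}$ (e.g. $f^2=\iota_m(\rho^m\iota_m)\rho^m=f$, $fg=\iota_m(\rho^m\iota_{m-1})\rho^{m-1}=0$), so $e:=f+g$ is idempotent and $\P^{(n)}\P^{(m)^t}\cong\mathrm{im}(f)\oplus\mathrm{im}(g)\oplus\mathrm{im}(\1-e)$; the equations $\rho_{n,m}^m\circ\iota_m^{n,m}=\1_{\P^{(n,1^m)}}$ and $\rho_{n,m}^{m-1}\circ\iota_{m-1}^{n,m}=\1_{\P^{(n+1,1^{m-1})}}$ exhibit $\P^{(n,1^m)}$ and $\P^{(n+1,1^{m-1})}$ as the retracts cut out by $f$ and $g$, so $\mathrm{im}(f)\cong\P^{(n,1^m)}$ and $\mathrm{im}(g)\cong\P^{(n+1,1^{m-1})}$. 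Since $\mathrm{im}(\1-e)$ is again a summand of $\P^{\otimes N}$, Krull--Schmidt gives $\mathrm{im}(\1-e)\cong\bigoplus_{\nu\vdash N}(\P^\nu)^{\oplus c_\nu}$; passing to $\K_0(\H)$ and using $[\P^{(n)}\P^{(m)^t}]=[\P^{(n)}][\P^{(1^m)}]=h_ne_m=s_{(n,1^m)}+s_{(n+1,1^{m-1})}$ (the Pieri rule, equivalently $\Ind_{S_n\times S_m}^{S_N}(\mathrm{triv}\boxtimes\mathrm{sgn})\cong S^{(n,1^m)}\oplus S^{(n+1,1^{m-1})}$) together with the linear independence of the classes $[\P^\nu]=s_\nu$ forces all $c_\nu=0$. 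Hence $\1-e=0$, which is relation (iii), and $\P^{(n)}\P^{(m)^t}\cong\P^{(n,1^m)}\oplus\P^{(n+1,1^{m-1})}$.
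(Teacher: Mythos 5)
Your proof is essentially correct but takes a genuinely different route from the paper's, which is worth noting. The paper proves all three relations by direct diagrammatic manipulation: (i) by composing the boxes and invoking Theorem~\ref{thm:CompleteIdemp}; (ii) by observing that in the composite a symmetrizer is joined to an antisymmetrizer by at least two strands and applying \eqref{eq:whiteblackbox}; (iii) by inserting the complete decomposition $\1_{\P^{\otimes(n+m)}}=\sum_{\lambda,T}\1_{\P^T}$ between $e_{(n)}\otimes e_{(1^m)}$ and $e_{(n)}\otimes e_{(1^m)}$ and ruling out every shape other than $(n,1^m),(n{+}1,1^{m-1})$ by the same whiteblackbox argument. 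You instead work at the level of the additive subcategory of summands of $\P^{\otimes(n+m)}$, using Hom-space vanishing $\Hom_\H(\P^\lambda,\P^\mu)=0$ for $\lambda\neq\mu$ to get (ii), indecomposability of $\P^\nu$ to reduce (i) to a scalar, and a Krull--Schmidt plus $K_0$/Pieri argument for (iii). Your version is cleaner conceptually and more transferable, while the paper's is self-contained at the string-diagram level and does not require passing to the Grothendieck group.

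Two small points to be aware of. First, for (i) you only reduce to a scalar check and explicitly defer the computation; the paper does essentially the same, but if you wanted a complete argument the scalar verification (via \eqref{eq:sym-exploded}, \eqref{eq:anti-exploded}, and the vanishing of right curls) is where the normalizations $\tfrac{n(m+1)}{n+m}$ and $\tfrac{(n+1)m^2}{n+m}$ actually get used and cannot be skipped. Second, the assertion $\End_\H(\P^\nu)\cong\k$ needs a little care: $\End_{\H'}(Q_{+N})$ a priori contains more than $\k[S_N]$ (bubbles), so the honest statement is that any composite of strand diagrams with no closed loops, sandwiched between $e_\nu$'s, lies in $e_\nu\k[S_N]e_\nu=\k\cdot e_\nu$ by primitivity; since $\rho\circ\iota$ here has no free loops, this suffices, but it is worth saying so explicitly rather than claiming $\End_\H(\P^\nu)\cong\k$ as a blanket fact.
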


\begin{proof}
Relation (i) follows directly from composing the diagrams and Theorem \ref{thm:CompleteIdemp}. 
Relation (ii) follows from equation \eqref{eq:whiteblackbox}, since composing the respective morphisms will connect a symmetrizer and an antisymmetrizer by more than one strand. Relation (iii) is a consequence of pre and post composing the diagrams in Proposition \ref{thm:CompleteIdemp} by symmetrizers for $n$ and antisymmetrizers for $m$. In particular, given any $\lambda=(\lambda_1, \dots, \lambda_k) \vdash (n+m)$ suppose we post-compose the diagram of $\P^\lambda$ with the symmetrizer for $n$ on the rightmost $n$ strands and the antisymmetrizer for $m$ on the leftmost $m$ strands. If $\lambda_1 <n$ then all the strands of $\lambda_1$ and at least one strand of $\lambda_2$ will be be connected to the symmetrizer for $n$ above them. However, since every strand in $\lambda_2$ is antisymmetrized with one strand from $\lambda_1$, then there will exist an antisymmetrizer than is connected to the symmetrizer for $n$ by more than one strand. By relation \eqref{eq:whiteblackbox} the diagram would then be zero. Likewise, if $\lambda_1>n+1$ or $\lambda_i>1$ for any $i>1$ then the antisymmetrizer for $m$ would be connected to a symmetrizer for some $\lambda_i$ by more than one strand. Thus, $\lambda_1=n$ or $n+1$ and $\lambda_i=1$ for all $i>1$. Consequently, the only partitions that are not zero are $\lambda=(n,1^m)$ and $(n+1,1^{m-1})$. 
\end{proof}

\begin{proposition}\label{prop:PPmerge}
For any $0\leq s\leq$ \emph{min}$(n,m)$ define the morphisms $\iota_s^m:\P^{(m+n-s,s)} \to \P^{(m)}\P^{(n)}$ and $\rho_m^s: \P^{(m)}\P^{(n)} \to \P^{(m+n-s,s)} $ in $\H$ as follows:\begin{equation}\label{eq:rho-iotaPP}
\iota_{s}^{m}:=\;\;\begin{cases} (m)\; \hackcenter{
\begin{tikzpicture} [scale=0.5, yscale = -1,every node/.style={scale=0.6}]
  \draw[line width=.7mm,-] (2.15,-2.6)--(2.15,-1.7)--(.85,-1) -- (.85,1.2);
  \draw[line width=.7mm , -] (.85,-2.6)--(.85,-1.7)--(2.15,-1) -- (2.15,1.2);
  \draw[fill = gray!20] (0,.3) rectangle (3,1);
  \draw[fill = white] (0.5,-.3) rectangle (1.25,-1);
\draw[fill = white] (1.75,-.3) rectangle (2.5,-1);
 \draw[fill = white] (0.5,-1.7) rectangle (1.25,-2.4);
\draw[fill = white] (1.75,-1.7) rectangle (2.5,-2.4);
  \node at (1.5,.65) {$(m+n-s,s)$};
   \node at (.85,-.65) {$n$};
   \node at (2.15,-.65) {$m$};
   \node at (.85,-2) {$m$};
   \node at (2.15,-2) {$n$};
\end{tikzpicture}}& m<n \\
\\
(n) \;\; \hackcenter{
\begin{tikzpicture} [scale=0.5, yscale = -1,every node/.style={scale=0.6}]
  \draw[line width=.7mm, -] (.85,-1.2) -- (.85,1.2);
  \draw[line width=.7mm , -] (2.15,-1.2) -- (2.15,1.2);
  \draw[fill = gray!20] (0,.3) rectangle (3,1);
  \draw[fill = white] (0.5,-.3) rectangle (1.25,-1);
\draw[fill = white] (1.75,-.3) rectangle (2.5,-1);
  \node at (1.5,.65) {$(m+n-s,s)$};
   \node at (.85,-.65) {$m$};
   \node at (2.15,-.65) {$n$};
\end{tikzpicture}}& m \geq n
\end{cases}
\qquad \qquad
\rho_{m}^{s}:=\;\;\begin{cases} \hackcenter{
\begin{tikzpicture} [scale=0.5, yscale = 1,every node/.style={scale=0.6}]
  \draw[line width=.7mm,-] (2.15,-2.6)--(2.15,-1.7)--(.85,-1) -- (.85,1.2);
  \draw[line width=.7mm , -] (.85,-2.6)--(.85,-1.7)--(2.15,-1) -- (2.15,1.2);
  \draw[fill = gray!20] (0,.3) rectangle (3,1);
  \draw[fill = white] (0.5,-.3) rectangle (1.25,-1);
\draw[fill = white] (1.75,-.3) rectangle (2.5,-1);
 \draw[fill = white] (0.5,-1.7) rectangle (1.25,-2.4);
\draw[fill = white] (1.75,-1.7) rectangle (2.5,-2.4);
  \node at (1.5,.65) {$(m+n-s,s)$};
   \node at (.85,-.65) {$n$};
   \node at (2.15,-.65) {$m$};
   \node at (.85,-2) {$m$};
   \node at (2.15,-2) {$n$};
\end{tikzpicture}}& m<n \\
\\
\hackcenter{
\begin{tikzpicture} [scale=0.5, yscale = 1,every node/.style={scale=0.6}]
  \draw[line width=.7mm, -] (.85,-1.2) -- (.85,1.2);
  \draw[line width=.7mm , -] (2.15,-1.2) -- (2.15,1.2);
  \draw[fill = gray!20] (0,.3) rectangle (3,1);
  \draw[fill = white] (0.5,-.3) rectangle (1.25,-1);
\draw[fill = white] (1.75,-.3) rectangle (2.5,-1);
  \node at (1.5,.65) {$(m+n-s,s)$};
   \node at (.85,-.65) {$m$};
   \node at (2.15,-.65) {$n$};
\end{tikzpicture}}& m \geq n
\end{cases}
\end{equation}
These maps satisfy the relations $\rho_m^t \circ \iota_s^m = \delta_{s,t} \cdot \1_{\P^{(m+n-s,s)}} $ and $1_{\P^{(m)}\P^{(n)}} = \sum_{s=0}^{\emph{min}(n,m)} \iota_s^m \circ \rho_m^s$. Hence, the following isomorphism holds in $\H$. 
\[\P^{(m)}\P^{(n)} \cong \bigoplus_{s=0}^{\emph{min}(m,n)} \P^{(m+n-s,s)}\]
\end{proposition}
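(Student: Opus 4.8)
The plan is to establish the two displayed relations, $\rho_m^t \circ \iota_s^m = \delta_{s,t}\cdot\1_{\P^{(m+n-s,s)}}$ and $\1_{\P^{(m)}\P^{(n)}} = \sum_{s=0}^{\min(m,n)} \iota_s^m \circ \rho_m^s$, after which the direct sum decomposition is a formal consequence: in any additive category, maps $\iota_s\colon X_s\to Y$ and $\rho_s\colon Y\to X_s$ satisfying $\rho_t\circ\iota_s = \delta_{s,t}\1_{X_s}$ and $\sum_s\iota_s\circ\rho_s = \1_Y$ identify $Y \cong \bigoplus_s X_s$, the $s$-th summand being the image of the idempotent $\iota_s\circ\rho_s$. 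This follows the pattern of the proofs of Propositions~\ref{prop:QPswap} and \ref{prop:PP*merge}, and as there it is carried out by direct computation with the diagrams in \eqref{eq:rho-iotaPP}.

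For the first relation, the case $s=t$ is handled by composing $\iota_s^m$ with $\rho_m^s$, absorbing adjacent symmetrizers and antisymmetrizers via \eqref{eq:sym-anti-absorbsion} and \eqref{eq:sym-anti-cross-absorbsion}, and collapsing what remains using the idempotency relations of Theorem~\ref{thm:CompleteIdemp}; the scalars built into the definitions of $\iota_s^m$ and $\rho_m^s$ in \eqref{eq:rho-iotaPP} are exactly the normalization constants that make the outcome $\1_{\P^{(m+n-s,s)}}$ and not a nonzero multiple of it. For $s\neq t$ the composite $\rho_m^t\circ\iota_s^m$ involves the Young idempotents $\P^{(m+n-s,s)}$ and $\P^{(m+n-t,t)}$, whose $s$ and $t$ height-two column blocks cannot be matched; expanding both idempotents as in \eqref{diag:YoungIdempotentExploded} and repeating the strand-counting argument used for Proposition~\ref{prop:PP*merge}(iii), one sees that an antisymmetrizer must be joined to a symmetrizer along more than one strand, so the diagram vanishes by \eqref{eq:whiteblackbox}. (Conceptually, once $\End_{\H'}(Q_{+k})$ is identified with the group algebra $\k[S_k]$, the $s\neq t$ case is immediate because $\Hom_\H(\P^{(m+n-s,s)},\P^{(m+n-t,t)})=0$ for distinct irreducibles, and the $s=t$ case reduces to computing a single scalar in $\End_\H(\P^{(m+n-s,s)})\cong\k$.)

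The completeness relation $\sum_s \iota_s^m\circ\rho_m^s = \1_{\P^{(m)}\P^{(n)}}$ is the crux, and the step I expect to be the main obstacle. I would sandwich the complete decomposition $\P^{\otimes(m+n)} = \bigoplus_{\lambda\vdash m+n,\, T\in\mathrm{SYT}(\lambda)}\P^T$ of \eqref{eq:Pdecomp} between $\1_{\P^{(m)}}\otimes\1_{\P^{(n)}}$ on top and bottom, and argue, exactly as in the proof of Proposition~\ref{prop:PP*merge}(iii), that the only partitions $\lambda$ whose summand survives both symmetrizers are the two-row partitions $(m+n-s,s)$ with $0\le s\le\min(m,n)$. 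The genuinely delicate point -- the reason this is more than a restatement of \eqref{eq:Pdecomp} -- is that $\P^\lambda$ a priori occurs in $\P^{\otimes(m+n)}$ with multiplicity $\Delta_\lambda$, whereas it must occur in $\P^{(m)}\P^{(n)}$ with multiplicity exactly one (the Littlewood--Richardson coefficient $c^{\lambda}_{(m),(n)}$); so one must show that for $\lambda=(m+n-s,s)$ the $\Delta_\lambda$-fold contribution collapses, after re-bracketing, to the single idempotent $\iota_s^m\circ\rho_m^s$. This amounts to checking that $\Hom_\H(\P^{(m+n-s,s)},\P^{(m)}\P^{(n)})$ is one-dimensional, which can be verified directly by reducing any such morphism, via the explosion relations \eqref{eq:sym-exploded}, \eqref{eq:anti-exploded} and the crossing-absorption relations \eqref{eq:sym-anti-cross-absorbsion}, to a scalar multiple of $\rho_m^s$. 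With the surviving summands thus identified with the $\iota_s^m\circ\rho_m^s$, relation (ii) follows. An alternative, more hands-on route is induction on $m$: peel one strand off $\P^{(m)}$ using \eqref{eq:sym-exploded} and feed the result into the inductive hypothesis together with Pieri's rule $\P^{(k)}\P\cong\P^{(k+1)}\oplus\P^{(k,1)}$, though this is messier because the intermediate shapes are no longer two-row and one must check that the extra terms cancel.
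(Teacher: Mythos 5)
Your argument follows the paper's proof essentially verbatim: the orthogonality relation \eqref{eq:PlambdaOrthogonal} gives $\rho_m^t\circ\iota_s^m=\delta_{s,t}\,\1_{\P^{(m+n-s,s)}}$, and the completeness relation is obtained by pre- and post-composing the decomposition \eqref{eq:P=sumLambda} with the symmetrizers for $m$ and $n$ and observing, via \eqref{eq:whiteblackbox}, that only the two-row shapes $(m+n-s,s)$ survive, which is exactly the paper's route. Your extra discussion of how the multiplicity $\Delta_\lambda$ collapses to a single summand (one-dimensionality of $\Hom_{\H}(\P^{(m+n-s,s)},\P^{(m)}\P^{(n)})$) is a more careful rendering of a step the paper leaves implicit, not a different approach.
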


\begin{proof}
The relation $\rho_m^t \circ \iota_s^m = \delta_{s,t} \cdot \1_{\P^{(m+n-s,s)}} $ follows from \eqref{eq:PlambdaOrthogonal}. Likewise, $1_{\P^{(m)}\P^{(n)}} = \sum_{s=0}^{\emph{min}(n,m)} \iota_s^m \circ \rho_m^s$ is a consequence of pre and post composing the diagrams from equation \eqref{eq:P=sumLambda} with symmetrizers for $m$ and $n$ and then noting that the only possible partitions that are nonzero have shape $\lambda=(n+m-s,s)$ for $0\leq s \leq$ min($n,m)$. Hence, by Theorem \ref{thm:CompleteIdemp} the result follows. The final statement, $\P^{(m)}\P^{(n)} \cong \bigoplus_{s=0}^{\emph{min}(m,n)} \P^{(m+n-s,s)}$ is immediate from Theorem \ref{thm:CompleteIdemp}. 
\end{proof}

\noindent \textbf{Notation:} Given partitions $\mu \vdash n$ and $\lambda \vdash n-1$, we write $\lambda=\mu-\square$ if $\lambda$ can be obtained from $\mu$ by removing a box (equivalently, $\mu=\lambda+\square$ if $\mu$ can be obtained from $\lambda$ by adding a box). Hence, if $\mu=(\mu_1, \dots, \mu_m)$ and $\lambda=(\lambda_1, \dots, \lambda_k)$ then $m=k$ or $k+1$ and there will exist an integer $1\leq s\leq$ min$(m,k)$ such that $\mu_s=\lambda_{s}+1$ and $\mu_i=\lambda_i$ for all $i\neq s$ (if $m=k+1$ set $\lambda_{k+1}=0$). In this case, we write $\lambda=\mu(s^-)$ or equivalently $\mu=\lambda(s^+)$. 

\begin{proposition}\label{prop:QlambdaPswap}
For any $\mu=(\mu_1, \dots, \mu_m) \vdash n$ and for each $\mu(s^-)=\mu-\square$, define the following morphisms in $\H$:
\begin{align}\label{eq:rhoQlambdaP}
\rho_0:=\hackcenter{
\begin{tikzpicture} [scale=0.65, yscale = -1,every node/.style={scale=1}]
  \draw[line width= 0.7mm,
    decoration={markings,mark=at position 1 with {\arrow[line width=.3mm]{>}}},
    postaction={decorate},shorten >=0.6pt] (2,-1.2) ..controls++(0,.5) and ++(0,-.5)..(1,.4)-- (1,1.2);
  \begin{scope}[xscale=-1, shift={(-3.2,0)}]
  \draw[ <-] (2.3,-1.2) ..controls++(0,.5) and ++(0,-.5)..(1,.4)-- (1,1.2);
  \end{scope}
  \draw[fill = gray!20] (0.2,.0) rectangle (1.7,.7);
  \node at (1,.4) {$\mu$};
\end{tikzpicture}}
&
\qquad 
\rho_s:=\;
\hackcenter{\begin{tikzpicture} [scale=0.65, every node/.style={scale=1}]
\draw[ line width= 0.7mm,
    decoration={markings,mark=at position 1 with {\arrow[line width=.3mm]{>}}},
    postaction={decorate},shorten >=0.6pt] (-.75,2)to (-.75,-1.7) ;
\draw[line width= 0.7mm,
    decoration={markings,mark=at position 1 with {\arrow[line width=.3mm]{>}}},
    postaction={decorate},shorten >=0.6pt] (-1.5,2)to (-1.5,-1.7) ;
\draw[directed=.25](.75,-1.7) to (.75,-.2).. controls ++(0,1) and ++(0,1) .. (-.5,-.2)  ;
\draw[line width= 0.7mm,
    decoration={markings,mark=at position 1 with {\arrow[line width=.3mm]{>}}},
    postaction={decorate},shorten >=0.6pt] (.1,2)to (.1,-1.7) ;
   \draw[fill =gray!20] (-2.2,1.1) rectangle (.4,1.8);
  \draw[fill =white] (-1.2,-.4) rectangle (-.2,.2);
   \draw[fill =gray!20] (-2.2,-1.2) rectangle (.4,-.6);
  \node at (-.6,-.1) {$\mu_s$};
  \node at (-.8,1.45) {$\mu(s^-)$};
   \node at (-.8,-.9) {$\mu$};
\end{tikzpicture}}
&
\qquad
\iota_0:=\hackcenter{
\begin{tikzpicture} [scale=0.65, yscale=-1,xscale = -1,every node/.style={scale=1}]
  \draw[line width= 0.7mm,
    decoration={markings,mark=at position 1 with {\arrow[line width=.3mm]{>}}},
    postaction={decorate},shorten >=0.6pt] (2,-1.2) ..controls++(0,.5) and ++(0,-.5)..(1,.4)-- (1,1.2);
  \begin{scope}[xscale=-1, shift={(-3.2,0)}]
  \draw[ <-] (2.3,-1.2) ..controls++(0,.5) and ++(0,-.5)..(1,.4)-- (1,1.2);
  \end{scope}
  \draw[fill = gray!20] (0.2,.0) rectangle (1.7,.7);
  \node at (1,.4) {$\mu$};
\end{tikzpicture}}
&
\qquad
\iota_s:=\;
\hackcenter{\begin{tikzpicture} [yscale=-1,scale=0.65, every node/.style={scale=1}]
\draw[ line width= 0.7mm,
    decoration={markings,mark=at position 0.04 with {\arrow[line width=.3mm]{<}}},
    postaction={decorate}] (-.75,2)to (-.75,-1.7) ;
\draw[line width= 0.7mm,
    decoration={markings,mark=at position 0.04 with {\arrow[line width=.3mm]{<}}},
    postaction={decorate}] (-1.5,2)to (-1.5,-1.7) ;
\draw[ directed=.8] (-.5,-1).. controls ++(0,1) and ++(0,1) .. (.75,-1) to (.75,-1.7) ;
\draw[line width= 0.7mm,
    decoration={markings,mark=at position 0.04 with {\arrow[line width=.3mm]{<}}},
    postaction={decorate}] (.1,2)to (.1,-1.7) ;
\begin{scope}[shift={(0,-.3)}]
   \draw[fill =gray!20] (-2.2,1.1) rectangle (.4,1.8);
  \draw[fill =white] (-1.2,0.2) rectangle (-.2,.8);
   \draw[fill =gray!20] (-2.2,-1.2) rectangle (.4,-.6);
  \node at (-.7,.5)[scale=.65]{$\mu_s -1$};
  \node at (-.8,1.45) {$\mu(s^-)$};
   \node at (-.8,-.9) {$\mu$};
   \end{scope}
\end{tikzpicture}}
\end{align}
\noindent These morphisms satisfy the relations:
\[ \rho_s \circ \iota_r= \delta_{s,r} \1_{\mu(s^-)}\qquad \text{and} \qquad \1_{\Q^{\mu}\P}=\sum_{\mu-\square}\iota_s\circ \rho_s\ .\]
Consequently, the following isomorphism holds in $\H$:
\[\Q^{\mu}\P \cong \P\Q^{\mu} \oplus \bigoplus_{\lambda=\mu - \square}\Q^{\lambda}.\]
\end{proposition}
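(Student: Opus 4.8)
The plan is to establish the two stated identities — the orthogonality relations $\rho_s\circ\iota_r=\delta_{s,r}\,\1_{\Q^{\mu(s^-)}}$ (together with $\rho_0\circ\iota_0=\1_{\P\Q^\mu}$ and $\rho_0\circ\iota_s=\rho_s\circ\iota_0=0$) and the completeness relation $\1_{\Q^\mu\P}=\iota_0\circ\rho_0+\sum_{\mu-\square}\iota_s\circ\rho_s$ — by direct diagrammatic computation, following the template of the proofs of Propositions \ref{prop:Q*Pswap}, \ref{prop:QPswap}, and \ref{prop:PPmerge}. Once both hold, the maps $\iota_0\circ\rho_0$ and the $\iota_s\circ\rho_s$ form a complete family of mutually orthogonal idempotents on $\Q^\mu\P$, and since $\H$ is idempotent complete this yields the direct sum decomposition $\Q^\mu\P\cong\P\Q^\mu\oplus\bigoplus_{\lambda=\mu-\square}\Q^\lambda$, with the $\iota$'s and $\rho$'s as the structure maps.

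\emph{Orthogonality.} To compute $\rho_s\circ\iota_r$ I would stack the two defining diagrams and absorb the abutting copies of the Young idempotent via \eqref{eq:sym-anti-absorbsion}. When $s\neq r$ the traveling cap and cup land on strands belonging to two different rows of $\mu$; exploding the symmetrizer on the affected row through \eqref{eq:sym-exploded} then produces only terms containing a right-twist curl, all of which vanish by \eqref{heis: right twist curl}, and the mixed composites $\rho_0\circ\iota_s$, $\rho_s\circ\iota_0$ die for the same reason. When $s=r$, exploding the symmetrizer leaves exactly one curl-free term — which straightens to $\1_{\Q^{\mu(s^-)}}$ by the bigon relation \eqref{heis:up down} — all other terms again carrying a curl. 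Finally $\rho_0\circ\iota_0$ is an up--down bigon running past the $\mu$-box, hence equal to $\1_{\P\Q^\mu}$ directly by \eqref{heis:up down}.

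\emph{Completeness.} Here I would begin from the $m=1$ case of Proposition \ref{prop:QPdecomposition} (equivalently, iterating \eqref{heis:downup} $n$ times): $\1_{\Q^{\otimes n}\P}$ equals the term in which the up-strand crosses all $n$ down-strands and returns, plus $n$ terms $\kappa_1,\dots,\kappa_n$, where $\kappa_j$ caps and cups the up-strand with the $j$-th down-strand and so factors through $\Q^{\otimes(n-1)}$. Pre- and post-composing this resolution with the Young idempotent $e_{T_{row}}$ of $\mu$ (allowed since it is idempotent) sends the crossing term to $\iota_0\circ\rho_0$. For $e_{T_{row}}\,\kappa_j\,e_{T_{row}}$: unless the $j$-th box is a corner (removable) box of $\mu$, that strand is antisymmetrized in $e_{T_{row}}$ with a strand lower in its column, so the cap--cup forces a symmetrizer and an antisymmetrizer to share at least two strands and the term vanishes by \eqref{eq:whiteblackbox}; the surviving $\kappa_j$'s are indexed by the corner boxes, and collecting those at the corner of row $s$ reassembles $\iota_s\circ\rho_s$ with the binomial scalars built into their definitions. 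Summing over $s$ gives the completeness relation.

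\textbf{Main obstacle.} The hard part is the completeness relation, specifically verifying that conjugation by $e_{T_{row}}$ annihilates exactly the non-corner cap--cup terms and that the survivors recombine with precisely the normalization constants appearing in $\iota_s$ and $\rho_s$. This is a diagrammatic shadow of the branching rule $\Res^{S_n}_{S_{n-1}}S_\mu\cong\bigoplus_{\lambda=\mu-\square}S_\lambda$, and matching the scalars is where essentially all of the bookkeeping resides; by comparison the orthogonality relations are local applications of \eqref{heis:up down}, \eqref{heis: right twist curl}, and \eqref{eq:whiteblackbox}.
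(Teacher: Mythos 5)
Your orthogonality computation is fine and is essentially the paper's own argument: the bigon relation \eqref{heis:up down} handles the $s=0$ composites, expanding the idempotent and invoking the right-twist-curl relation \eqref{heis: right twist curl} kills the $s\neq r$ terms, and Theorem \ref{thm:CompleteIdemp} identifies the surviving $s=r$ term with the identity. The gap is in the completeness step. Your key claim --- that after conjugating the resolution of $\1_{\Q^{\otimes n}\P}$ from Proposition \ref{prop:QPdecomposition} by $e_{T_{row}}$, the cap--cup terms $\kappa_j$ vanish for every non-corner box $j$ by \eqref{eq:whiteblackbox} --- is false. First, a box can fail to be removable because of a box to its \emph{right} rather than below it, a case your column-antisymmetry argument does not even address: for $\mu=(2)$ the row symmetrizer gives $e_\mu\kappa_1 e_\mu = e_\mu\kappa_2 e_\mu$, and this is nonzero, since otherwise conjugating Proposition \ref{prop:QPdecomposition} would yield $\1_{\Q^{(2)}\P}=\iota_0\circ\rho_0$, contradicting the summand $\Q^{(1)}$ in Proposition \ref{prop:QPswap}. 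Second, even when there is a box directly below, the term survives: for $\mu=(1,1)$ one has $e_\mu\kappa_1 e_\mu = (e_\mu s_1)\kappa_1(s_1 e_\mu) = e_\mu\kappa_2 e_\mu \neq 0$, the two antisymmetry signs cancelling; the cap--cup does not in general force a symmetrizer and an antisymmetrizer to share two strands, so \eqref{eq:whiteblackbox} never enters. What actually happens is that all $n$ conjugated cap--cup terms are generically nonzero and must be \emph{recombined}, with scalars, into the idempotents $\iota_s\circ\rho_s$ indexed by removable corners only; moreover each $\iota_s\circ\rho_s$ contains the middle idempotent $e_{\mu(s^-)}$, which your raw terms $e_\mu\kappa_j e_\mu$ do not, so producing it requires an additional resolution-of-identity step on the $n-1$ intermediate strands (Theorem \ref{thm:CompleteIdemp}, \eqref{eq:P=sumLambda}), where \eqref{eq:whiteblackbox}-type arguments are legitimately used to exclude every shape other than $\mu-\square$.

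The paper sidesteps this by running the computation in the opposite direction: it composes the diagrams $\iota_s\circ\rho_s$, expands the intermediate idempotent, and matches the resulting sum against the decomposition of the identity of $\Q^{\otimes n}\P$ furnished by Proposition \ref{prop:QPdecomposition}. If you wish to keep your direction (conjugating that resolution by $e_\mu$), you must replace the vanishing claim by this recombination-plus-idempotent-insertion argument; as written, the step that is supposed to produce exactly the removable-corner summands does not go through.
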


\begin{proof}
The result follows from an explicit computation of the composition of the diagrams. For $s=0$, the identity $\rho_s \circ \iota_r= \delta_{s,r} \1_{\mu(s^-)}$ is immediate from \eqref{heis:up down}. For any other values of $s,t$ we expand the idempotent corresponding to $\mu$ and then use \eqref{heis: right twist curl} to observe that the only nonzero diagram occurs precisely when $s=t$, in which case by \eqref{eq:PlambdaOrthogonal} from Theorem \ref{thm:CompleteIdemp} the result follows.  The identity $\1_{\Q^{\lambda}\P}=\sum_{s=0}^m\iota_s\circ \rho_s $ is a consequence of once again composing the diagrams, expanding the idempotent for $\mu(s+)$ and then using Proposition \ref{prop:QPdecomposition} to deduce the desired equality in $\H'$ and the resulting isomorphism in $\H$. 

\end{proof}

\begin{proposition}\label{prop:PlambdaPmerge}
For any $\lambda = (\lambda_1, \dots, \lambda_k) \vdash n-1$ and each $\lambda(s^+)=\lambda +\square$, define the morphisms $\iota_s: \P^{\lambda(s^+)} \to \P^{\lambda}\P$ and $\rho_s: \P^{\lambda}\P \to \P^{\lambda(s^+)}$ in $\H$ as follows:
\begin{align}\label{eq:PlambdaPmerge}
\iota_s:=\;
\hackcenter{\begin{tikzpicture} [scale=0.65, every node/.style={scale=1}]
\draw[ line width= 0.7mm,
    decoration={markings,mark=at position 0.04 with {\arrow[line width=.3mm]{<}}},
    postaction={decorate}] (-.75,2)to (-.75,-1.7) ;
\draw[line width= 0.7mm,
    decoration={markings,mark=at position 0.04 with {\arrow[line width=.3mm]{<}}},
    postaction={decorate}] (-1.5,2)to (-1.5,-1.7) ;
\draw[directed=.8] (-.5,-.2).. controls ++(0,1) and ++(0,-.5) .. (.75,.8) to (.75,2);
\draw[line width= 0.7mm,
    decoration={markings,mark=at position 0.04 with {\arrow[line width=.3mm]{<}}},
    postaction={decorate}] (.1,2)to (.1,-1.7) ;
\begin{scope}[shift={(0,-.3)}]
   \draw[fill =gray!20] (-2.2,1.1) rectangle (.4,1.8);
  \draw[fill =white] (-1.2,-.4) rectangle (-.2,.2);
   \draw[fill =gray!20] (-2.2,-1.2) rectangle (.4,-.6);
  \node at (-.7,-.1)  [scale=.6]{$\lambda_s+1$};
  \node at (-.8,1.4){$\lambda$};
   \node at (-.8,-.95) {$\lambda(s^+)$};
   \end{scope}
\end{tikzpicture}}
\qquad \qquad\qquad
\rho_s:=\;
\hackcenter{\begin{tikzpicture} [scale=0.65, every node/.style={scale=1}]
\draw[ line width= 0.7mm,
    decoration={markings,mark=at position 0.04 with {\arrow[line width=.3mm]{<}}},
    postaction={decorate}] (-.75,2)to (-.75,-1.7) ;
\draw[line width= 0.7mm,
    decoration={markings,mark=at position 0.04 with {\arrow[line width=.3mm]{<}}},
    postaction={decorate}] (-1.5,2)to (-1.5,-1.7) ;
\draw[directed=.25](.75,-1.7)to(.75,-1.3)  .. controls ++(0,2) and ++(0,-1) ..(-.5,.7) to (-.5,1) ;
\draw[line width= 0.7mm,
    decoration={markings,mark=at position 0.04 with {\arrow[line width=.3mm]{<}}},
    postaction={decorate}] (.1,2)to (.1,-1.7) ;
\begin{scope}[shift={(0,-.3)}]
   \draw[fill =gray!20] (-2.2,1.1) rectangle (.4,1.8);
  \draw[fill =white] (-1.2,.2) rectangle (-.2,-.4);
   \draw[fill =gray!20] (-2.2,-1.2) rectangle (.4,-.6);
  \node at (-.7,-.1)  [scale=1]{$\lambda_s$};
  \node at (-.8,1.4){$\lambda(s^+)$};
   \node at (-.8,-.95) {$\lambda$};
   \end{scope}
\end{tikzpicture}}
\end{align}
These morphism satisfy the relations: $\rho_s \circ \iota_r= \delta_{s,r} \1_{\lambda(s^+)} 
$ and $\1_{\P^{(\lambda)}\P}=\sum_{\lambda+\square}\iota_s\circ \rho_s $. 
Thus, the following isomorphism holds in $\H$:
\[\P^{\lambda}\P \cong \bigoplus_{\mu =\lambda+ \square}\P^{\mu}.\]
\end{proposition}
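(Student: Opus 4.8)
The plan is to follow the template already used in Propositions~\ref{prop:QlambdaPswap} and \ref{prop:PPmerge}: first establish the two algebraic identities $\rho_s\circ\iota_r=\delta_{s,r}\,\1_{\P^{\lambda(s^+)}}$ and $\1_{\P^{\lambda}\P}=\sum_{s}\iota_s\circ\rho_s$ by explicit diagrammatic computation, and then deduce the direct-sum decomposition formally from these. As a consistency check on the statement, its decategorification is precisely the branching rule $\mathrm{Res}^{S_n}_{S_{n-1}}V_\mu=\bigoplus_{\lambda=\mu-\square}V_\lambda$ (equivalently the Pieri rule $s_\lambda h_1=\sum_{\mu=\lambda+\square}s_\mu$), which predicts exactly one copy of each $\P^{\lambda+\square}$ and so tells us the answer must have this shape.

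\textbf{Step 1 (orthogonality).} To compute $\rho_s\circ\iota_r\colon\P^{\lambda(r^+)}\to\P^{\lambda(s^+)}$, I would stack the two diagrams in \eqref{eq:PlambdaPmerge} and expand the intermediate Young idempotent $\P^{\lambda}$ using the row and column expansions \eqref{eq:sym-exploded} and \eqref{eq:anti-exploded}. The strand extracted from the $r$-th row at the bottom of $\iota_r$ and re-inserted into the $s$-th row at the top of $\rho_s$ then produces, in every summand (or, when $r=s$, in all but one summand), a configuration containing a right-twist curl, which vanishes by \eqref{heis: right twist curl}. When $r=s$ the unique surviving term is identified with $\1_{\P^{\lambda(s^+)}}$ via the orthogonality relation \eqref{eq:PlambdaOrthogonal} from Theorem~\ref{thm:CompleteIdemp}.

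\textbf{Step 2 (completeness) and conclusion.} For $\1_{\P^{\lambda}\P}=\sum_s\iota_s\circ\rho_s$, I would begin with the complete decomposition \eqref{eq:P=sumLambda} of the identity on $\P^{\otimes n}$, pre- and post-compose it with the idempotent $\P^{\lambda}\otimes\1_{\P}$ acting on the first $n-1$ strands, and argue that the only partitions $\mu\vdash n$ contributing a nonzero term are those with $\mu=\lambda+\square$. This is the same connectivity argument as in Proposition~\ref{prop:PP*merge}: if $\mu$ is not obtained from $\lambda$ by adding a single box, then in the expansion some antisymmetrizer of $\P^\mu$ is joined to a symmetrizer of $\P^\lambda$ by at least two strands, forcing the diagram to vanish by \eqref{eq:whiteblackbox}. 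Matching the surviving summands with the composites $\iota_s\circ\rho_s$ then gives the relation. Finally, given Steps~1 and 2, the morphisms $e_s:=\iota_s\circ\rho_s\in\End_{\H}(\P^{\lambda}\P)$ form a complete family of mutually orthogonal idempotents with $(\rho_s,\iota_s)$ exhibiting $\P^{\lambda(s^+)}$ as the image of $e_s$; since $\H$ is idempotent-complete, this yields $\P^{\lambda}\P\cong\bigoplus_{\mu=\lambda+\square}\P^{\mu}$.

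The main obstacle I expect is Step~1: carefully tracking the single extracted strand through the fully expanded idempotent and checking that exactly one term survives with coefficient precisely $1$. The delicate point is the bookkeeping of the normalization constants $\alpha_{\lambda}$ and $\alpha_{\lambda(s^+)}$ built into the boxed notation of \eqref{diag:YoungIdempotent}, together with the combinatorial factor counting the ways the extracted strand can be re-absorbed into a symmetrizer; this is presumably why the analogous earlier statements (Propositions~\ref{prop:QlambdaPswap} and \ref{prop:PP*merge}) carry explicit rational coefficients in front of their $\iota$'s and $\rho$'s, and the same care will be needed here.
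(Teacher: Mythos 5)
There is a genuine flaw in Step~1. The vanishing argument via right-twist curls does not apply here: the relation \eqref{heis: right twist curl} kills a diagram containing a \emph{curl}, i.e.\ a cup followed by a cap on a single strand, and such a configuration can only arise in a composite involving $\Q$ strands (this is exactly what happens in Proposition~\ref{prop:QlambdaPswap}, which you seem to be imitating). In the present proposition, however, both $\iota_r$ and $\rho_s$ are built entirely out of upward-oriented $\P$ strands and crossings --- there is not a single cup or cap in sight --- so the composite $\rho_s\circ\iota_r$ is a morphism in $\End_{\H'}(Q_{+n})$ with all strands monotone, and no curl can ever appear no matter how one expands the intermediate idempotents. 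The mechanism that actually forces $\rho_s\circ\iota_r=0$ for $r\neq s$ is the orthogonality of Young idempotents of distinct shape, i.e.\ relation \eqref{eq:PlambdaOrthogonal} from Theorem~\ref{thm:CompleteIdemp}: after absorbing the intermediate symmetrizers, the composite factors (up to conjugation by a permutation) through $\1_{\P^{T'}}\circ\1_{\P^{T}}$ with $T\in\SYT(\lambda(r^+))$, $T'\in\SYT(\lambda(s^+))$, and this vanishes since $\lambda(r^+)\neq\lambda(s^+)$. You correctly cite \eqref{eq:PlambdaOrthogonal} for the surviving $r=s$ term, but the same orthogonality relation is also what does the work for $r\neq s$.

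Your Step~2 is a legitimate alternative to the paper's route. The paper instead expands the idempotent for $\lambda(s^+)$ directly via \eqref{eq:sym-exploded}, \eqref{eq:anti-exploded} and then cancels summands using \eqref{eq:whiteblackbox} and \eqref{eq:sym-anti-absorbsion}; your approach starts from the full decomposition \eqref{eq:P=sumLambda} of $\1_{\P^{\otimes n}}$ and sandwiches with $\1_{\P^\lambda}\otimes\1_\P$, which is closer in spirit to the proof of Proposition~\ref{prop:PP*merge}. That should also work, though you would need to be a bit more careful than the one-line connectivity argument there: $\P^\lambda$ is a general Young idempotent rather than a pure symmetrizer or antisymmetrizer, so the statement that a two-strand connection to a box of $\P^\mu$ forces vanishing has to be checked row by row and column by column of $\lambda$, not just for one block. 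Finally, your concern about the normalizations $\alpha_\lambda$ is misplaced for this proposition: unlike \eqref{eqn:rho-iotaQP} and \eqref{eq:iotaPP*}, the maps in \eqref{eq:PlambdaPmerge} carry no extra scalar, precisely because the boxed notation \eqref{diag:YoungIdempotent} already absorbs the normalization so that the $\P^\nu$ are honest idempotents.
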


\begin{proof}
By composing the diagrams and expanding the idempotent for $\lambda$ by relation \eqref{eq:PlambdaOrthogonal} in Theorem \ref{thm:CompleteIdemp} we find that $\rho_s \circ \iota_r$ is either zero for $s \neq t$ or the identity for $s=t$. When composing in the other direction, we instead expand the idempotent corresponding to $\lambda(s^+)$ with \eqref{eq:sym-exploded} and \eqref{eq:anti-exploded}. We then repeatedly apply \eqref{eq:whiteblackbox} and \eqref{eq:sym-anti-absorbsion} to simplify and cancel the summands. Finally, we use Theorem \ref{thm:CompleteIdemp} to ascertain the relation $\1_{\P^{(\lambda)}\P}=\sum_{\lambda+\square}\iota_s\circ \rho_s$ holds in $\H'$. The final isomorphism in $\H$ follows immediate from these results by Theorem \ref{thm:CompleteIdemp}.
\end{proof}

\section{A Categorical Boson-Fermion Correspondence}\label{sec:CatBFCorrespondence}

\begin{definition}\label{def:MatrixCat}
For any category $\mathcal{C}$ denote by Mat$_{\Z \times \Z}(\mathcal{C})$ the category with \emph{objects} given by formal $\Z \times \Z$ matrices $\mathsf{X}=(\mathsf{X}_{i,j})_{i,j \in \Z}$ with $\mathsf{X}_{i,j} \in \mathcal{C}$ satisfying the condition that  $\mathsf{X}_{i,j}=0$ for $|i-j|\gg0$. Its \emph{morphisms} also consist of infinite matrices and are given by $(f_{i,j})_{i,j\in\Z} \in$ Hom$_{Mat(\mathcal{C})}(\mathsf{X},\mathsf{Y})$ with entries $f_{i,j} \in$ Hom$_\mathcal{C}(\mathsf{X}_{i,j},\mathsf{Y}_{i,j})$. Composition of morphisms is given by entry-wise composition of the matrices, so that $(f\circ g)_{i,j}:=f_{i,j}\circ g_{i,j}$.

Likewise, define Mat$_{\Z \times \1}(\mathcal{C})$ by restricting the conditions in the definition above to column vectors. Thus, Mat$_{\Z \times \1}(\mathcal{C})$ consists of infinite dimensional vectors on $\mathcal{C}$ with finitely many nonzero entries. 

If $\mathcal{C}$ is additive and monoidal then Mat$_{\Z \times \Z}(\mathcal{C})$ inherits the additive and monoidal structure by taking entry-wise direct sums and the usual multiplication of matrices. Moreover, if $\mathcal{C}$ acts an additive category $\mathcal{A}$, there is an induced action of Mat$_{\Z \times \Z}(\mathcal{C})$ on Mat$_{\Z \times 1}(\mathcal{A})$ given by multiplying the matrix with the column vector in the usual way and then taking the direct sums of the resulting actions inside $\mathcal{A}$. That is, for any row $[\dots \mathsf{X}_{i,j_1} \dots \mathsf{X}_{i,j_k}\dots]$ of $\mathsf{X} \in$ Mat$_{\Z \times \Z}(\mathcal{C})$ and $[\dots \mathsf{v}_{j_1}\dots \mathsf{v}_{j_k}\dots]^T \in$ Mat$_{\Z \times \1}(\mathcal{A})$ then $[\dots \mathsf{X}_{i,j_1} \dots \mathsf{X}_{i,j_k}\dots] \cdot [\dots \mathsf{v}_{j_1}\dots \mathsf{v}_{j_k}\dots]^T = \bigoplus_{j\in\Z} \mathsf{X}_{i,j}(\mathsf{v}_{j})$ where $\mathsf{X}_{i,j}(\mathsf{v}_{j})$ is given by the action of $\mathcal{C}$ on $\mathcal{A}$. Since only finitely many $\mathsf{v}_{j}$ are non-zero the sum is finite and thus this action is well defined. Thus, for an additive category $\mathcal{C}$, the categories Mat$_{\Z\times 1}(\mathcal{C})$ and Mat$_{\Z\times \Z}(\mathcal{C})$ are isomorphic (equivalent) to a direct sum of $\N$ copies of $\mathcal{C}$. 

\end{definition}

In particular, if $\mathcal{C}=\K(\H)$ then $\mathsf{X}, \mathsf{Y}\in$ Mat$_{\Z \times \Z}(\mathcal{\K(\H)})$ are infinite matrices whose entries $\mathsf{X}_{i,j}, \mathsf{Y}_{i,j}$ are infinite chain complexes in $\K(\H)$. Consequently their product, denoted by $\mathsf{X}\mathsf{Y}$, is given by multiplying the matrices in the usual way and has entries of the form $\bigoplus_j \mathsf{X}_{i,j}\otimes \mathsf{Y}_{j,k}$. The term $\mathsf{X}_{i,j}\otimes\mathsf{Y}_{j,k}$ is the usual tensor product of the chain complexes $\mathsf{X}_{i,j},\mathsf{Y}_{j,k}$ up to homotopy. As above, the condition $\mathsf{X}_{i,j}=0$ for $|i-j|\gg0$ ensures that the direct sum of complexes $\bigoplus_j \mathsf{X}_{i,j}\mathsf{Y}_{j,k}$ is always finite. Thus, the entries of $\mathsf{X}\mathsf{Y}$ consist of finite direct sums of infinite complexes up to homotopy.

In fact, many of the properties of object in $\K(\H)$ make sense in Mat$_{\Z \times \Z}(\mathcal{\K(\H)})$. For instance, given $\mathsf{X},\mathsf{Y},\mathsf{Z} \in$ Mat$_{\Z \times \Z}(\mathcal{\K(\H)})$ and a family of morphisms 
\[\mathsf{d}_{i,j}:\mathsf{X}_{i,j} \to \mathsf{Y}_{i,j} \qquad\text{ and }\qquad \mathsf{d}'_{i,j}:\mathsf{Y}_{i,j} \to \mathsf{Z}_{i,j}\]
with the property that $\mathsf{d}'_{i,j}\mathsf{d}_{i,j}=0$ for all $i,j \in \Z$, then the morphisms $\mathsf{d}:=(\mathsf{d}_{i,j}) \in$ Hom$_{\text{Mat}(\mathcal{\K(\H)})}(\mathsf{X},\mathsf{Y})$ and $\mathsf{d'}:=(\mathsf{d'}_{i,j}) \in$ Hom$_{\text{Mat}(\mathcal{\K(\H)})}(\mathsf{Y},\mathsf{Z})$ are well defined and satisfy $\d' \circ \d =0$. Hence, it makes sense to consider the following sequence in Mat$_{\Z \times \Z}(\mathcal{\K(\H)})$:
\[\mathsf{X} \xrightarrow{\d} \mathsf{Y} \xrightarrow{\d'} \mathsf{Z}\]
Thus, given appropriate maps $\d_{ij}$ we can consider chain complexes on Mat$_{\Z \times \Z}(\mathcal{\K(\H)})$. Moreover, if $\mathsf{X}_{i,j} \simeq \mathsf{Y}_{i,j}$ in $\K(\H)$ for all $i,j\in\Z$, then we say $\mathsf{X}$ is homotopy equivalent to $\mathsf{Y}$ in Mat$_{\Z \times \Z}(\mathcal{\K(\H)})$ and write $\mathsf{X} \simeq \mathsf{Y}$. Likewise, for any $s \in \Z$ we write $\mathsf{X}[s]$ to denote the matrix with entries $\mathsf{X}_{i,j}[s]$ for each $i,j \in \Z$. 

At the decategorified level, recall that by Proposition \ref{prop:HeisAction} the Fock space representation of $\mathfrak{h}$ is isomorphic to $Sym$ and thus Bosonic Fock space as a module over $\mathfrak{h}$ is isomorphic to the direct sum of infinitely many copies of the Fock space representation. Motivated by this define \emph{categorical Fock space} as the additive category \[\mathsf{V}_{Fock} := \text{Mat}_{\Z \times 1}\left(\bigoplus_n \mathds{k}[S_n]\text{-mod}\right),\] 
where any $\mathsf{v} =(\mathsf{v}_c)_{c\in\Z}  \in \textup{Ob}(\mathsf{V}_{Fock})$ has only finitely many nonzero entries, and whose morphisms consist of vectors with entries given by $\bigoplus_n \k[S_n]-$module maps. Moreover, we make the following generalization. 



\begin{definition}\label{def:Hquotient}
Given any $n \in \N$, define the category \begin{equation}
\H_n:= \bigoplus_{m\in \Z} \left(\mathds{k}[S_m],\mathds{k}[S_n]\right)\text{-bimod}.
\end{equation}
So that, $\H_0 = \bigoplus_n \mathds{k}[S_n]$-mod and $\mathsf{V}_{Fock} =$ Mat$_{\Z \times 1}(\H_0)$.
\end{definition}

\begin{remark}Since $\P$ and $\Q$ are induction and restriction functors on $\bigoplus_n \mathds{k}[S_n$-mod, by considering the left action of $\P$ and the right action of $\Q$ on the trivial module $\k$ we see that $\H_n$ can equivalently be defined as the category whose objects and morphisms are the same as in $\H$, but subject to the condition that any morphism that factors through $\mathsf{M}\otimes \Q^{\otimes n}$ for any $\mathsf{M} \in$ Mor$(\H)$ is zero. This category is the universal categorical Fock space since it maps to any other categorification of Fock space. For more details see \cite[Section 7]{CS-BosonFermion}. 
\end{remark}

\subsection{Categorical Bernstein Operators}\label{subsec:CatBernsteinOps}
Motivated by the Bernstein operators defined in \eqref{eq:BernsteinOps}, we make the following definition. 

\begin{definition}\label{def:CatBernsteinOps} The \emph{categorical Bernstein operators} are infinite chain complexes in $\K(\H)$ given by:
\begin{equation}\label{eq:Ca}
\B_a:=\left\lbrace \bigoplus_{x-y=a} \P^{(x)} \Q^{(y)^t}[y], \mathsf{d}_a \right\rbrace 
\qquad \text{with} \qquad
\mathsf{d}_a := \;\;
\hackcenter{
\begin{tikzpicture} [scale=0.6]
  \draw[<-] (.4,-1) -- (.4,0);
  \draw[line width= 0.7mm,
    decoration={markings,mark=at position 0.07 with {\arrow[line width=.3mm]{<}}},
    postaction={decorate},shorten >=1pt] (1,-1) -- (1,1);
  \draw[line width= 0.7mm,
    decoration={markings,mark=at position 1 with {\arrow[line width=.3mm]{>}}},
    postaction={decorate},shorten >=1pt] (-1,-1) -- (-1,1);
  \draw[ ] (-.4,-1) -- (-.4,0);
    \draw[ directed=.8]  (-.4,.3) .. controls ++(0,.4) and ++(0,.4) .. (.4,.3);
        \draw[fill =white] (-1.4,-.3) rectangle (-.2,.3);
        \draw[fill =gray!60] (.2,-.3) rectangle (1.4,.3);
  \node at (.85,-0) [scale=.5]{$y$};
  \node at (-.85,0) [scale=.75]{$x$};
  \end{tikzpicture}} \qquad \text{ and } \qquad a \in \Z,
\end{equation}
so that $\B_a=[ \dots\to \P^{(x)}\Q^{(y)^t}[y] \to \P^{(x-1)}\Q^{(y-1)^t}[y-1]\to\dots]$, and 
\begin{equation}\label{eq:Ca*}
\B_a^*:=\left\lbrace \bigoplus_{y-x=a} \P^{(x)^t} \Q^{(y)}[-x], \mathsf{d}_a^* \right\rbrace \qquad \text{with} \qquad
\mathsf{d}_a^*:= \;\;
\hackcenter{
\begin{tikzpicture} [scale=0.6, yscale=-1, xscale=-1]
  \draw[<-] (.4,-1) -- (.4,0);
  \draw[line width= 0.7mm,
    decoration={markings,mark=at position 0.07 with {\arrow[line width=.3mm]{<}}},
    postaction={decorate},shorten >=0.6pt] (1,-1) -- (1,1);
  \draw[line width= 0.7mm,
    decoration={markings,mark=at position 1 with {\arrow[line width=.3mm]{>}}},
    postaction={decorate},shorten >=1pt] (-1,-1) -- (-1,1);
  \draw[ ] (-.4,-1) -- (-.4,0);
    \draw[ directed=.8]  (-.4,.3) .. controls ++(0,.4) and ++(0,.4) .. (.4,.3);
        \draw[fill =white] (-1.4,-.3) rectangle (-.2,.3);
        \draw[fill =gray!60] (.2,-.3) rectangle (1.4,.3);
  \node at (.85,-0) [scale=.75]{$x$};
  \node at (-.85,0) [scale=.75]{$y$};
  \end{tikzpicture}} \qquad \text{and} \qquad a \in \Z,
  \end{equation}
so that $\B_a^*=[ \dots\to \P^{(x)^t}\Q^{(y)}[-x] \to \P^{(x+1)^t}\Q^{(y+1)}[-x-1]\to\dots]$. 
  \end{definition}
Since $x,y \geq 0$, the complex $\B_a \in \K^-(\H)$ is unbounded above and supported in non-negative homological degrees whereas $\B_a^* \in \K^+(\H)$ is unbounded below and supported in non-positive homological degrees. These functors are biadjoint in $\K(\H)$ 
since $\P$ and $\Q$ are biadjoint in $\H$. 

As in the decategorified picture, their action on $\mathsf{V}_{Fock}$ is integrable. That is, for any $\mathsf{v} \in \bigoplus_n \mathds{k}[S_n]$-mod, since $\Q$ is the restriction functor there is an $N \in \N$ such that $\Q^{\otimes n}(\mathsf{v})=0$ for all $n>N$. Hence for any $a \in \Z$, the complexes $\B_a(\mathsf{v})$ and $\B_a^*(\mathsf{v})$ are finite and belong to Mat$_{\Z\otimes 1}(\K^b(\H))$. 

\begin{theorem}\label{thm:BernsteinSpecht}
The categorical Bernstein operators are creation and annihilation functors for Specht modules. That is, for any Specht module $S_\lambda$ associated to partition $\lambda=(\lambda_k, \dots, \lambda_1)\vdash n$ with $\lambda_k\geq \dots \geq \lambda_1 \geq~0$ and $\k$ the trivial module over $\k[S_n]$-mod, then $\B_{\lambda_k}\B_{\lambda_{k-1}}\dots\B_{\lambda_1}(\mathds{k})\simeq S_\lambda$ and $\B^*_{\lambda_1} \dots \B^*_{\lambda_{k-1}}\B^*_{\lambda_k}(S_\lambda)\simeq \mathds{k}$.
\end{theorem}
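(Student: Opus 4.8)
The plan is to prove both halves simultaneously by induction on the number $k$ of rows, which reduces the statement to the two single-operator lemmas
\[
\B_a(S_\mu)\simeq S_{(a,\mu)}\quad\text{for every partition }\mu\text{ with }\mu_1\le a,
\qquad\text{and}\qquad
\B^*_a(S_\nu)\simeq S_{\nu^-}\quad\text{whenever }\nu_1=a,
\]
where $(a,\mu)$ is $\mu$ with a new first row of length $a$ prepended and $\nu^-$ is $\nu$ with its first row deleted. (The induction step only ever meets $\B_a$ applied to a Specht module whose largest part is $\le a$, and $\B^*_a$ applied to one whose largest part equals $a$, so these are the only cases needed; the base case $k\le1$ is immediate since $\Q^{(y)^t}$ kills the module over $S_0$ for $y>0$, giving $\B_a(\mathds{k})\simeq S_{(a)}$.) A useful preliminary remark is that although $\B_a\in\K^-(\H)$ is unbounded, $\Q$ acts by restriction, so $\B_a(S_\mu)$ is supported only in homological degrees $0\le y\le|\mu|$, and similarly $\B^*_a(S_\nu)$ is bounded. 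Hence every homotopy equivalence here takes place in $\K^b$ of the semisimple category $\bigoplus_m\mathds{k}[S_m]\text{-mod}$, so none of the Elias--Hogancamp machinery of Section~\ref{sec:HomAlgebra} is needed and each cancellation below is an ordinary finite Gaussian elimination (Lemma~\ref{lem:gaussian-elimination}).

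Next I would compute the complex $\B_a(S_\mu)$ explicitly. On $\bigoplus_n\mathds{k}[S_n]\text{-mod}$ the functors $\P^{(x)}$ and $\Q^{(y)^t}$ are induction and restriction composed with the symmetrizer/antisymmetrizer idempotents, which in characteristic zero is exactly classical Pieri branching: $\P^{(x)}(S_\nu)\cong\bigoplus_{\kappa/\nu\text{ horiz. }x\text{-strip}}S_\kappa$ and $\Q^{(y)^t}(S_\mu)\cong\bigoplus_{\mu/\nu\text{ vert. }y\text{-strip}}S_\nu$ (the module-level shadow of Propositions~\ref{prop:QlambdaPswap} and~\ref{prop:PlambdaPmerge}). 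Thus each chain group of $\B_a(S_\mu)$ is a sum of Specht modules $S_\kappa$ with $|\kappa|=|\mu|+a$ fixed, and by semisimplicity (Schur's lemma) the differential preserves isotypic components, so $\B_a(S_\mu)\simeq\bigoplus_\kappa C^\kappa\otimes S_\kappa$ where $C^\kappa$ is a finite complex of multiplicity vector spaces, with basis in homological degree $y$ the set of $\nu$ such that $\mu/\nu$ is a vertical $y$-strip and $\kappa/\nu$ a horizontal $(y+a)$-strip, and differential obtained from the cap--cup map $\mathsf{d}_a$ of~\eqref{eq:Ca} by inserting the explicit $\iota$/$\rho$ isomorphisms of Section~\ref{subsec:LRBranchingIsos}. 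Passing to $K_0$ and using~\eqref{eq:T} and~\eqref{eq:BernsteinOps}, the Euler characteristic of $C^\kappa$ equals the coefficient of $s_\kappa$ in $B_a(s_\mu)$, which is $\delta_{\kappa,(a,\mu)}$ by the classical identity. Since a bounded complex over a semisimple category is formal, it therefore suffices to prove that $C^\kappa$ is acyclic for $\kappa\ne(a,\mu)$ and quasi-isomorphic to $\mathds{k}$ in degree $0$ for $\kappa=(a,\mu)$ (the surviving term being the $y=0$, $\nu=\mu$ term, which lies in the poset precisely because $a\ge\mu_1$ makes $(a,\mu)/\mu$ a horizontal strip).

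The \textbf{main obstacle} is establishing exactly that acyclicity, which is the categorical incarnation of the sign-reversing involution proving $B_a s_\mu=s_{(a,\mu)}$ classically: one must order the basis $\{\nu\}$ of $C^\kappa$ so that the differential of $C^\kappa$ exhibits an isomorphism at each successive stage, then apply Lemma~\ref{lem:gaussian-elimination} repeatedly, cancelling everything when $\kappa\ne(a,\mu)$ and all but one term when $\kappa=(a,\mu)$. The technical heart is checking that the relevant components of $\mathsf{d}_a$ really are isomorphisms (invertible scalars after the idempotent normalizations $\alpha_\lambda$ and the homological signs are accounted for); this is carried out by composing the diagrams of Propositions~\ref{prop:QlambdaPswap}, \ref{prop:PlambdaPmerge}, and~\ref{prop:QPswap} and using Theorem~\ref{thm:CompleteIdemp} — precisely the bookkeeping deferred to Section~\ref{sec:PropCatBernsteinOps} — or, equivalently, by feeding in the commutation relations of Theorems~\ref{thm:BernsteinOps1} and~\ref{thm:BernsteinOps2} to perform the cancellations at the level of complexes and induct on $|\mu|$. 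The statement for $\B^*_a$ is then obtained either by the mirror-image computation (now $\P^{(x)^t}$ contributes vertical strips, $\Q^{(y)}$ horizontal strips, and the cancellation leaves $S_{\nu^-}$ exactly when $\nu_1=a$) or, more economically, by invoking the biadjointness of $\B_a$ and $\B^*_a$ in $\K(\H)$ together with formality: $\mathrm{Hom}_{\K(\H)}(\B^*_a S_\nu, S_\rho)\cong\mathrm{Hom}_{\K(\H)}(S_\nu, \B_a S_\rho)$ pins down $H_0(\B^*_a S_\nu)$ from the already-proved Lemma~A, and a parallel boundedness/Euler-characteristic argument shows $\B^*_a S_\nu$ is again concentrated in degree $0$. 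Feeding the two lemmas back into the induction on $k$ finishes the proof.
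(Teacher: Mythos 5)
Your proposal is correct and follows essentially the same route as the paper: reduce to the single-operator statement $\B_a(S_\mu)\simeq S_{(a,\mu)}$, expand each chain group of $\B_a(S_\mu)$ via Pieri branching, show the relevant components of $\mathsf{d}_a$ are isomorphisms, cancel by Gaussian elimination, and deduce the $\B^*_a$ half from biadjointness. The semisimplicity/formality framing and the explicit isotypic decomposition $\bigoplus_\kappa C^\kappa\otimes S_\kappa$ are a nice conceptual overlay, and the Euler-characteristic consistency check is a helpful guide, but the core cancellation argument you defer to is exactly the diagrammatic three-step verification the paper carries out.
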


\begin{proof}
Suppose $n \in \N$ and $\lambda = (\lambda_k, \dots, \lambda_1)\vdash n$. Since $\Q$ acts by zero on $\k$, by Proposition~\ref{prop:QPdecomposition} for any $y \geq 0$
\[\Q^{\otimes y} \P^{\otimes n} (\k) \simeq \bigoplus_{s=0}^{\text{min}(y,n)} \P^{\otimes (n-s)}\Q^{\otimes (y-s)}(\k) \simeq \begin{cases} \P^{\otimes (n-y)}(\k) & 0 \leq y \leq n \\ 0 & n<y \end{cases}\]
Composing with the canonical projection and inclusion maps for any $y\leq n$
\[\Q^{(y)^t} \P^{\lambda} \hookrightarrow \Q^{\otimes y} \P^{n} \twoheadrightarrow \P^{\otimes(n-y)} \twoheadrightarrow \P^{\mu}\]
then, when acting on $\k$, there is an isomorphism $\Q^{(y)^t} \P^{\lambda}(\k) \simeq \bigoplus_{\lambda\setminus\mu = 1^y}\P^{\mu}(\k)$ where the sum is taken over all ways of removing $y$ boxes in distinct rows from $\lambda$. Moreover, we also know that $\P^{(y+a)}\P^{\mu} = \bigoplus_{\gamma\setminus\mu=(y+a)} \P^{\gamma}$ where the sum ranges over all ways of adding $y+a$ boxes in distinct columns to $\mu$. Hence, for any $a\geq \lambda_n$ the action of $\B_{a}$ on $\P^{\lambda}(\k)$ reduces to following bounded complex
\begin{align*}
\B_{a}\P^{\lambda}(\k)&\simeq \bigoplus_{\lambda\setminus \mu = 1^n}\P^{(n+a)}\P^{\mu}(\k)
\rightarrow \dots \rightarrow\bigoplus_{\lambda\setminus \mu = 1^y}\P^{(y+a)}\P^{\mu}(\k)
\rightarrow \bigoplus_{\lambda\setminus \mu = 1^{y-1}}\P^{(y+a-1)}\P^{\mu}(\k) \rightarrow \dots \rightarrow 
\P^{(a)}\P^{\mu}(\k)\\
&\simeq \bigoplus_{\lambda\setminus \mu = 1^n} \bigoplus_{\gamma\setminus\mu=(n+a)}  \P^{\gamma}(\k)
\rightarrow \dots \rightarrow\bigoplus_{\lambda\setminus \mu = 1^y}\bigoplus_{\gamma\setminus\mu=(y+a)}  \P^{\gamma}(\k)
\rightarrow\dots \rightarrow  \bigoplus_{\gamma\setminus\mu=(a)}  \P^{\gamma}(\k).
\end{align*}
Composing the differential of $\B_a$ with the projection and injection maps above we find that the differential from $\bigoplus_{\lambda\setminus \nu = 1^y}\bigoplus_{\gamma\setminus\nu=(y+a)}  \P^{\gamma}(\k) \to  \bigoplus_{\lambda\setminus \mu = 1^{y-1}}\bigoplus_{\beta\setminus\mu=(y-1+a)}  \P^{\beta}(\k)$ for each $\gamma$ with $\gamma\setminus \nu = (y+a)$ and $\nu$ with $\lambda\setminus \nu = 1^y$ is given by the leftmost diagram below. The subsequent equivalences are obtained by the following steps. 
\begin{itemize}
\item[(1)] Expanding the idempotent for $\lambda$ the map is nonzero if and only if $\mu\setminus\nu=1$. That is, $\mu=\nu+\square$ so that $\nu_{i_s}=\mu_{i_s}-1$ for some index $i_s$ and $\nu_j=\mu_j$ for all other $j \neq i_s$. After some computations the differential reduces to a nonzero multiple of the second diagram.
\item[(2)] By fully expanding the symmetrizers and anti-symmetrizers for $y+a-1$, $\mu$, $\nu$ and $y+a$ the map is equivalent to a sum of multiples $c_{\sigma}$ of $\1_{\P^{\beta}}\circ \sigma\circ \1_{\P^{\gamma}}$ over some subset $ J \in S_{|\gamma|}$. 
\item[(3)] Since $\1_{\P^{\beta}}\circ \sigma\circ \1_{\P^{\gamma}} = \1_{\P^{T_{row}}}\circ \1_{\P^{T'}}$ with $T_{row} \in \SYT(\beta)$ and $T' = \sigma T_{row} \in \SYT(\gamma)$ then by equation \eqref{eq:PlambdaOrthogonal} from Theorem \ref{thm:CompleteIdemp} the sum is nonzero only when $\beta = \gamma$ and $\sigma=id$. Thus, the third diagram is equivalent to a scalar multiple of the last diagram below. 
\end{itemize}
\[
\hackcenter{
\begin{tikzpicture}
\draw[->] (-2.9,-2.8) -- (-2.9,3);
\draw [->](-2.3,-2.8) -- (-2.3,3);
\draw [directed=.6](-2.2,0.2)..controls++(0,.4) and ++(0,.4)..(-1.4,.2);
\draw [fill=white]  (-3,1.45) rectangle (-2,1.85);
\draw [fill=white]  (-3,-.05) rectangle (-2,.35);
\node at (-2.5,0.15){$y+a$};
\node at (-2.5,1.65)[scale=.7]{$y+a-1$};
\node at (-2.5,.55){$\dots$};
%
\draw [->](0.1,-.2) -- (0.1,3);
\draw [->](0.5,-.2) -- (0.5,3);
\draw [->](1,-.2) -- (1,3);
\draw [->](1.4,-.2) -- (1.4,3);
\draw [->](1.9,-.2) -- (1.9,3);
\draw [directed=.5](1.3,.9)..controls++(0,.5) and ++(0,1.5)..(-1.4,.1);
\draw [directed=.5](.4,.9)..controls++(0,.2) and ++(0,1)..(-.6,.1);
\draw [fill=gray!20] (-3,2.45) rectangle (2,2.85);
\draw [fill=gray!60]  (-1.5,-.05) rectangle (-.5,.35);
\draw [fill=gray!20] (0,-.05) rectangle (2,.35);
\draw [fill=gray!20]  (0,1.45) rectangle (2,1.85);
\draw [fill=white]  (.2,.5) rectangle (.8,.95);
\draw [fill=white]  (1.1,.5) rectangle (1.7,.95);
\node at (-.5,2.65){$\beta$};
\node at (-1,0.15){$y$};
\node at (1,0.15){$\lambda$};
\node at (1,1.65){$\mu$};
\node at (.5,0.75){$\lambda_{i_1}$};
\node at (1.4,0.75)[scale=.75]{$\lambda_{i_{y-1}}$};
\node at (-.9,.55){$\dots$};
\node at (.75,1.35)[scale=.75]{$\dots$};
\node at (.3,1.35)[scale=.75]{$\dots$};
\node at (1.3,1.35)[scale=.75]{$\dots$};
\node at (1.75,1.35)[scale=.75]{$\dots$};
\begin{scope}[shift={(0,.3)},yscale=-1]
\draw (0.1,-.2) -- (0.1,3.1);
\draw (0.6,-.2) -- (0.6,3.1);
\draw (1,-.2) -- (1,3.1);
\draw (1.5,-.2) -- (1.5,3.1);
\draw (1.9,-.2) -- (1.9,3.1);
\draw [directed=.5] (-1.4,0.1)..controls++(0,1.5) and ++(0,.4)..(1.4,.9);
\draw [directed=.5](-.6,0.1)..controls++(0,1) and ++(0,.2)..(.5,.9);
\draw [fill=gray!60]  (-1.5,-.05) rectangle (-.5,.35);
\draw [fill=gray!20] (0,-.05) rectangle (2,.35);
\draw [fill=gray!20]  (0,1.45) rectangle (2,1.85);
\draw [fill=white]  (.3,.5) rectangle (.9,.95);
\draw [fill=white]  (1.2,.5) rectangle (1.8,.95);
\node at (-1,0.15){$y$};
\node at (1,0.15){$\lambda$};
\node at (1,1.65){$\nu$};
\node at (.6,0.75){$\lambda_{j_1}$};
\node at (1.5,0.75){$\lambda_{j_y}$};
\node at (-.9,.55){$\dots$};
\node at (.75,1.35)[scale=.75]{$\dots$};
\node at (.3,1.35)[scale=.75]{$\dots$};
\node at (1.3,1.35)[scale=.75]{$\dots$};
\node at (1.75,1.35)[scale=.75]{$\dots$};
\end{scope}
\begin{scope}[shift={(0,.3)},yscale=-1]
\draw [fill=white]  (-3,1.45) rectangle (-2,1.85);
\draw [fill=gray!20] (-3,2.45) rectangle (2,2.85);
\node at (-2.5,1.65){$y+a$};
\node at (-2.5,.55){$\dots$};
\node at (-.5,2.65){$\gamma$};
\end{scope}
\end{tikzpicture}}
\;\;\overset{(1)}{\sim}\;\;
\hackcenter{\begin{tikzpicture}
\draw[->] (-2.9,-1.2) -- (-2.9,3);
\draw [->](-2.3,-1.2) -- (-2.3,3);
\draw [directed=.5](-2.2,0.2)..controls++(0,.7) and ++(0,-.4)..(-1,1.2) to (-1,1.5);
\draw [fill=white]  (-3,1.45) rectangle (-2,1.85);
\draw [fill=white]  (-3,-.05) rectangle (-2,.35);
\node at (-2.5,0.15){$y+a$};
\node at (-2.5,1.65)[scale=.7]{$y+a-1$};
\node at (-2.5,.55){$\dots$};
%
%
\begin{scope}[shift={(-1.5,0)}]
\draw (0.1,-1.2) -- (0.1,3)[->];
\draw (0.6,-1.2) -- (0.6,3)[->];
\draw (1,-1.2) -- (1,3)[->];
\draw (1.5,-1.2) -- (1.5,3)[->];
\draw (1.9,-1.2) -- (1.9,3)[->];
\draw [fill=gray!20]  (-1.5,2.45) rectangle (2,2.85);
\draw [fill=gray!20]  (0,1.45) rectangle (2,1.85);
\draw [fill=gray!20] (0,-.05) rectangle (2,.35);
\draw [fill=white]  (.3,.45) rectangle (.9,.85);
\draw [fill=gray!20] (-1.5,-1.05) rectangle (2,-.65);
\node at (.25,2.65){$\beta$};
\node at (.25,-.85){$\gamma$};
\node at (.6,.625){$\nu_{i_s}$};
\node at (1,0.15){$\nu$};
\node at (1,1.65){$\mu$};
\node at (.75,1.35)[scale=.75]{$\dots$};
\node at (.3,1.35)[scale=.75]{$\dots$};
\node at (1.3,1.35)[scale=.75]{$\dots$};
\node at (1.75,1.35)[scale=.75]{$\dots$};
\end{scope}
\end{tikzpicture}}
\;\;\overset{(2)}{\sim}\;\;
\sum_{\sigma \in J}\;c_{\sigma}\;
\hackcenter{\begin{tikzpicture}
\begin{scope}[shift={(-1.5,0)}]
\draw (0.1,.2) -- (0.1,3)[->];
\draw (0.6,.2) -- (0.6,3)[->];
\draw (1,.2) -- (1,3)[->];
\draw (1.5,.2) -- (1.5,3)[->];
\draw (1.9,.2) -- (1.9,3)[->];
\draw [fill=gray!20]  (0,2.45) rectangle (2,2.85);
\draw [fill=gray!40]  (0,1.45) rectangle (2,1.85);
\draw [fill=gray!20] (0,.45) rectangle (2,.85);
\node at (1,2.65){$\beta$};
\node at (1,1.65){$\sigma$};
\node at (1,.65){$\gamma$};
\node at (.75,1.35)[scale=.75]{$\dots$};
\node at (.3,1.35)[scale=.75]{$\dots$};
\node at (1.3,1.35)[scale=.75]{$\dots$};
\node at (1.75,1.35)[scale=.75]{$\dots$};
\end{scope}
\end{tikzpicture}}
\;\;\overset{(3)}{\sim}\;\;
\hackcenter{\begin{tikzpicture}
\begin{scope}[shift={(-1.5,0)}]
\draw (0.1,2.2) -- (0.1,3)[->];
\draw (0.6,2.2) -- (0.6,3)[->];
\draw (1,2.2) -- (1,3)[->];
\draw (1.5,2.2) -- (1.5,3)[->];
\draw (1.9,2.2) -- (1.9,3)[->];
\draw [fill=gray!20]  (0,2.45) rectangle (2,2.85);
\node at (1,2.65){$\gamma$};
\node at (.75,2.35)[scale=.75]{$\dots$};
\node at (.3,2.35)[scale=.75]{$\dots$};
\node at (1.3,2.35)[scale=.75]{$\dots$};
\node at (1.75,2.35)[scale=.75]{$\dots$};
\end{scope}
\end{tikzpicture}}
\]
Thus, these maps are isomorphisms whenever $\beta = \gamma$ and zero otherwise. By Lemma \ref{lem:gaussian-elimination} there is a homotopy equivalence
\[
\B_{a}\P^{\lambda}(\k)\simeq 0
\rightarrow \dots \rightarrow 0
\rightarrow0 \dots
\rightarrow \P^{(a,\lambda)}(\k) \simeq \P^{(a,\lambda_n, \dots, \lambda_1)}(\k).
 \]
Inductively, we obtain $\B_{\lambda_{n}}\B_{\lambda_{n-1}}\dots\B_{\lambda_1}(\k)\simeq\B_{\lambda_n}\P^{(\lambda_{n-1},\dots,\lambda_1)}(\k) \simeq \P^\lambda(\k) \simeq S_{\lambda}$ for any partition $\lambda$. The dual statement $\B^*_{\lambda_{1}}\dots\B^*_{\lambda_{n-1}}\B^*_{\lambda_n}(S_{\lambda})\simeq\k$ follows from the biadjointness of $\B_a$ and $\B^*_a$. 
\end{proof}

Recall that by \eqref{eq:cliff-bern-1}, \eqref{eq:cliff-bern-2}, and \eqref{eq:cliff-bern-3} the condition that the fermionic vertex operators satisfy the Clifford algebra relations in the Boson-Fermion correspondence is equivalent to the Bernstein operators satisfying certain anticommutation relations. Thus, in order to categorify this construction we prove analogous relations for the categorical Bernstein operators. We now state these relations but the proofs,due to their technical complexity, are deferred until Section \ref{sec:PropCatBernsteinOps}. 

\begin{theorem}\label{thm:CatBernstein1} Given any $a,b \in \Z$, the categorical Bernstein operators satisfy the following chain homotopy relations in $\K(\H)$:
\begin{equation*}
\B_{a-1}\otimes \B_b \cong \begin{cases}
\B_{b-1}\otimes \B_a[-1] & a>b\\
\B_{b-1}\otimes \B_a[1] & a<b \\
0 & a=b\\
\end{cases}
\qquad \text{and} \qquad
\B^*_{a+1}\otimes \B^*_{b} \cong \begin{cases}
\B^*_{b+1}\otimes \B^*_{a}[-1] & a>b\\
\B^*_{b+1}\otimes \B^*_{a}[1] & a<b \\
0 & a=b.\\
\end{cases}
\end{equation*}
\end{theorem}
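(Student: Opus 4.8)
The goal is to establish the "quasi-commutation" relations

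\[
\B_{a-1}\otimes\B_b \;\cong\; \B_{b-1}\otimes\B_a[\epsilon] \quad\text{(for }a\ne b\text{)},\qquad \B_{a-1}\otimes\B_a\cong 0,
\]

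(and the dual statement for the $\B^*$'s) in $\K(\H)$, where $\epsilon=-1$ if $a>b$ and $\epsilon=+1$ if $a<b$. Since both $\B_{a-1}$ and $\B_b$ lie in $\K^-(\H)$, the tensor product $\B_{a-1}\otimes\B_b$ is homologically locally finite (Proposition~\ref{prop:locallyfinite}) and lives in $\K^-(\H)$, so there are no convergence subtleties: this is why the theorem is stated for the $\K^-$-pair (and dually the $\K^+$-pair). The strategy is to write out $\B_{a-1}\otimes\B_b$ as a total complex of a bicomplex whose $(i,j)$-entry is $\P^{(x)}\Q^{(1^y)}\otimes\P^{(x')}\Q^{(1^{y'})}$, use the Littlewood–Richardson branching isomorphisms of Section~\ref{subsec:LRBranchingIsos} — specifically Proposition~\ref{prop:QPswap} ($\Q^{(m)}\P^{(n)}\cong\bigoplus_{s}\P^{(n-s)}\Q^{(m-s)}$, applied to the transpose/antisymmetrizer version) — to move each inner $\Q^{(1^y)}\P^{(x')}$ past each other, and then perform a cascade of Gaussian eliminations (Lemma~\ref{lem:gaussian-elimination}), organized via Proposition~\ref{prop:SimultSimp} so that all the eliminations in the bounded-above complex can be carried out simultaneously.

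\textbf{Key steps, in order.} First, expand $\B_{a-1}\otimes\B_b$ using Definition~\ref{def:ChainTensorProd}: it is $\mathrm{Tot}^\oplus\{\B_{a-1}\otimes(\B_b)^j,\d_j,\d^j\}$, and since both factors are in $\K^-(\H)$ this is $\K^-(\H)$-locally finite. Second, in each bidegree apply the branching isomorphism for $\Q^{(1^y)^{}}\P^{(x')}$ — i.e. the antisymmetrizer analogue of Proposition~\ref{prop:QPswap}, together with Proposition~\ref{prop:PP*merge}-type merges $\P^{(x)}\P^{(x'-s)}\cong\bigoplus_t\P^{(x+x'-s-t,t)}$ and the dual $\Q^{(1^y-s)}\Q^{(1^{y'})}$ — to rewrite each chain object as a direct sum of terms $\P^{\alpha}\Q^{\beta^t}$ indexed by the "matching data" $s$ and the shapes $\alpha,\beta$. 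Third, examine the induced differentials under these identifications and identify, in each homological degree, the identity summands that permit Gaussian elimination: the point is that after the rewriting, most of the direct summands appear in adjacent homological degrees connected by isomorphisms, and cancelling them leaves precisely the complex $\B_{b-1}\otimes\B_a$ with the appropriate shift. Fourth, handle the $a=b$ case: there the same cancellation is total and the complex is nullhomotopic. Fifth, invoke Proposition~\ref{prop:SimultSimp}(i) (for the $\B$'s, since $\B_{a-1}\otimes\B_b$ is bounded below) — or Proposition~\ref{prop:SimultSimp}(ii) for the $\B^*$ case, which is bounded above — to legitimize performing infinitely many eliminations at once. Finally, deduce the $\B^*$ statement either by a parallel argument using the $\K^+$ versions of the branching isos, or by applying the biadjunction/duality $(-)^*$ that swaps $\P\leftrightarrow\Q$ and reverses arrows, under which $\B_a\leftrightarrow\B_a^*$ and the degree shifts behave as stated.

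\textbf{The main obstacle.} The hard part is bookkeeping the differentials through the branching isomorphisms: the differential $\d_a$ of $\B_a$ is the adjunction (cup) map, and after decomposing $\Q^{(1^y)}\P^{(x')}$ into a sum over matchings $s$, one must compute exactly which components of $\d_{a-1}\otimes\1$ and $\1\otimes\d_b$ become isomorphisms and which become zero or lower-order maps — this requires the explicit $\rho_s,\iota_s$ formulas and the absorption relations \eqref{eq:sym-anti-absorbsion}–\eqref{eq:whiteblackbox}, and controlling the signs and scalar coefficients $\alpha_\lambda$ so that the surviving complex is genuinely isomorphic (not merely abstractly homotopy equivalent) to the shifted $\B_{b-1}\otimes\B_a$. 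A secondary subtlety is ensuring the Gaussian eliminations are compatible across all of $\Z$-many homological degrees simultaneously, which is exactly what Proposition~\ref{prop:SimultSimp} is designed for, but one must check its hypothesis that the term-by-term homotopy equivalences assemble into a morphism of bicomplexes. These computations are deferred to Section~\ref{sec:PropCatBernsteinOps}, where the analogous but more involved relations \eqref{eq:cliff-bern-1}–\eqref{eq:cliff-bern-3} are also treated.
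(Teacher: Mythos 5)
Your overall toolkit is the right one — expand the tensor product as a bi-complex, apply the Littlewood–Richardson branching isomorphisms of Section~\ref{subsec:LRBranchingIsos} to each chain group, perform Gaussian eliminations (Lemma~\ref{lem:gaussian-elimination}), and invoke Proposition~\ref{prop:SimultSimp} to carry them out simultaneously; and the strategy for $a=b$ (total cancellation) and for the $\B^*$'s (dual/parallel argument) matches the paper. However, your description of the endgame is not how the argument actually closes, and it hides the hardest step. You write that the cascade of Gaussian eliminations ``leaves precisely the complex $\B_{b-1}\otimes\B_a$ with the appropriate shift.'' That cannot be literally true: $\B_{b-1}\otimes\B_a$ is itself an unreduced tensor product, so a strictly simplifying cascade of eliminations starting from $\B_{a-1}\otimes\B_b$ will not land on it. What the paper actually does (Theorems~\ref{thm:BBa<b} and \ref{thm:BBa>b}) is reduce \emph{both} $\B_{a-1}\otimes\B_b$ and $\B_{b-1}\otimes\B_a$ to a common canonical complex $\{\mathcal{G}_k\}$ built from summands $\P^{(x-1,y)}\Q^{(\cdots,r)^t}$, and then observes (proof of Theorem~\ref{thm:BernsteinOps1}) that swapping the roles of $x$ and $y$ identifies the two reductions up to a homological shift of $1$. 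The homotopy equivalence is obtained by transitivity through $\mathcal{G}_k$, not by directly simplifying one product into the other.

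The second thing your plan underestimates is what Proposition~\ref{prop:SimultSimp} does to the transverse differential. After the column-wise reductions from Lemma~\ref{lem:CP-reduction}, the eliminated summands reappear as new composite ``zigzag'' arrows $\d_j := \d^{x-j}\circ(\d_{y+j-1})^{-1}\circ\cdots\circ\d^x$ connecting distant rows of the bi-complex (eq.~\eqref{eq:djRED}, Figure~\ref{fig:F+Fbicomplex}); a second round of Gaussian elimination on these zigzags — whose injectivity/surjectivity flips according to the sign of $b-a$ — is what eliminates the $\mathcal{F}^{\mathbb{II}}$ (resp.\ $\mathcal{F}^{\mathbb{I}}$) pieces and produces $\mathcal{G}_k$ (resp.\ $\mathcal{G}'_k$). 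Simply saying ``identify the identity summands'' misses this two-stage structure. Also, a small slip: $\B_{a-1}\otimes\B_b$ lies in $\K^-(\H)$, which in the paper's convention is \emph{bounded below}, so Proposition~\ref{prop:SimultSimp}(i) with $\mathrm{Tot}^\oplus$ is the relevant clause, not a bounded-above statement.
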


\begin{theorem}\label{thm:CatBernstein2}
Given any $a,b \in \Z$ and $n \in \N$, the categorical Bernstein operators satisfy the following chain homotopy relations in $\K(\H_n)$:
\[\B_{a+1}\otimes\B_{b+1}^* \cong \begin{cases}
\B_{b}^*\otimes\B_{a}[-1] &a<b \\
\B_{b}^*\otimes\B_{a}[1]&a>b.
\end{cases}\]
Moreover if $a\geq 0$ then $\B_{a}^*\otimes\B_{a} \simeq$ \emph{Cone }$(\B_{a+1}\otimes\B_{a+1}^* \to \1)$ and if $a<0$ then $\B_{a+1}\otimes\B_{a+1}^*[1] \simeq$ \emph{Cone} $(\1 \to \B_{a}^*\otimes\B_{a})$. Hence, there are distinguished triangles
\[\B_{a+1}\otimes\B_{a+1}^* \rightarrow \1 \rightarrow \B_{a}^*\otimes\B_{a} \;\; and \;\; \B_{a}^*\otimes\B_{a} \rightarrow \1 \rightarrow \B_{a+1}\otimes\B_{a+1}^*.\]
\end{theorem}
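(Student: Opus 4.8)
The plan is to prove Theorem~\ref{thm:CatBernstein2} by reducing the homotopy equivalences to the Littlewood-Richardson branching isomorphisms established in Section~\ref{subsec:LRBranchingIsos} and then applying Gaussian elimination together with the simultaneous-simplification machinery of Section~\ref{sec:HomAlgebra}. Throughout we work in $\K(\H_n)$, where $\mathsf{X}\otimes\Q^{\otimes n}\simeq 0$, so that the complexes $\B_{a+1}\otimes\B_{b+1}^*$ and $\B_b^*\otimes\B_a$ become bounded (only finitely many chain groups survive the quotient), eliminating the convergence pathologies mentioned in the introduction. The first move is to compute the bi-complex $\B_{a+1}\otimes\B_{b+1}^*$ explicitly: its $(x,y)$-entry (indexing the column from $\B_{a+1}$ and the row from $\B_{b+1}^*$) is $\P^{(x+a+1)}\Q^{(1^x)}\P^{(1^y)}\Q^{(y+b+1)}$ with appropriate homological shifts, and the two differentials are the adjunction (cup) maps from $\mathsf{d}_{a+1}$ and $\mathsf{d}^*_{b+1}$.

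First I would insert the isomorphism $\P^{(1^y)}\Q^{(1^x)^t}\,\text{-type}$ swap --- more precisely, the dual form of Proposition~\ref{prop:Q*Pswap} applied to $\Q^{(1^x)}\P^{(1^y)}$, which gives $\Q^{(1^x)}\P^{(1^y)}\cong\P^{(1^y)}\Q^{(1^x)}\oplus\P^{(1^{y-1})}\Q^{(1^{x-1})}$ --- into every entry of the bi-complex. After this substitution the bi-complex splits, as in the decategorified proof of the Bernstein anticommutation relations \cite[Sec. 1.5]{MacDonald}, into a ``diagonal'' part and an ``off-diagonal'' part. The off-diagonal summands, carrying the $\P^{(1^{y-1})}\Q^{(1^{x-1})}$ terms, reassemble (after reindexing $x\mapsto x-1$, $y\mapsto y-1$ and using Proposition~\ref{prop:PPmerge} / Proposition~\ref{prop:PP*merge} to recombine the symmetrizer--antisymmetrizer products) into a copy of $\B_b^*\otimes\B_a$ with the homological shift dictated by $\operatorname{sgn}(a-b)$; the diagonal summands must be shown to be nullhomotopic (when $a\neq b$) or to produce exactly the unit $\1$ (when $a=b$, via the bubble relation \eqref{eq:heis-bub}). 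This matching of terms is where the case distinction $a<b$ versus $a>b$ enters: it controls which of the two triangle inequalities between the running indices holds and hence the direction of the shift, exactly paralleling \eqref{eq:cliff-bern-1}. The nullhomotopy of the diagonal part is carried out by a cascade of Gaussian eliminations (Lemma~\ref{lem:gaussian-elimination}), each cancelling an isomorphism $D$ produced by the branching rules; Proposition~\ref{prop:SimultSimp} legitimizes performing all of them at once since, in $\K(\H_n)$, the relevant index set is bounded.

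For the distinguished-triangle statement, the key observation is that $\B_a^*\otimes\B_a$ and $\B_{a+1}\otimes\B_{a+1}^*$ differ precisely by the ``diagonal'' bubble term computed above, so one produces an explicit chain map $\B_{a+1}\otimes\B_{a+1}^*\to\1$ (resp. $\1\to\B_a^*\otimes\B_a$) whose mapping cone, after Gaussian elimination against the branching isomorphisms, is homotopy equivalent to the other complex. Concretely, when $a\geq0$ the term $\P^{(a)}\P^{(1^0)}\Q^{(0)}\dots$ in the $x=y=0$ corner of $\B_a^*\otimes\B_a$ contributes the summand $\1$ (using $a\geq 0$ so that $(a)$ is a genuine partition and $\Q^{(0)}=\1$), and the Cone identification follows; when $a<0$ the roles reverse and the unit appears instead in $\B_{a+1}\otimes\B_{a+1}^*[1]$. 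The two distinguished triangles are then just the two ways of rotating $\mathrm{Cone}$, using that $\K(\H_n)$ is triangulated.

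The main obstacle will be the bookkeeping in step two: verifying that the off-diagonal summands genuinely reassemble into $\B_b^*\otimes\B_a$ \emph{as a complex} --- i.e. that the induced differentials (obtained from $\mathsf{d}_{a+1}$, $\mathsf{d}^*_{b+1}$ composed with the branching inclusions/projections $\iota_s,\rho_s$) agree up to sign and isomorphism with $\mathsf{d}_a$, $\mathsf{d}^*_b$ --- and that the relative signs coming from the mapping-cone/shift conventions of Section~\ref{sec:HomAlgebra} work out. This is the categorical analogue of checking that the Bernstein operators' anticommutator in \cite{MacDonald} has exactly the claimed sign, but here it requires the explicit diagrammatic identities \eqref{eq:sym-exploded}, \eqref{eq:anti-exploded}, \eqref{heis:up down}, and \eqref{heis: right twist curl} to pin down each scalar. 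I expect this to be the technically heaviest part and the reason the proof is deferred to Section~\ref{sec:PropCatBernsteinOps}.
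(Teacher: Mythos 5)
Your opening move --- splitting the middle $\Q^{(1^x)}\P^{(1^y)}$ via the dual of Proposition~\ref{prop:Q*Pswap} --- is reasonable, and the $\K(\H_n)$ boundedness observation is exactly the right reason this theorem can even be stated this way. But the key step of your argument fails: the off-diagonal summands $\P^{(x+a+1)}\P^{(1^{y-1})}\Q^{(1^{x-1})}\Q^{(y+b+1)}$, after the reindexing $x\mapsto x-1$, $y\mapsto y-1$, have the shape (symmetrizer)(antisymmetrizer)(antisymmetrizer)(symmetrizer), i.e.\ they look like the \emph{diagonal} part of a shifted $\B\otimes\B^*$ and not like $\B_b^*\otimes\B_a$, whose chain groups are $\P^{(x')^t}\Q^{(x'+b)}\P^{(y'+a)}\Q^{(y')^t}$ with the antisymmetrizers on the outside. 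You cannot bridge that gap using only Propositions~\ref{prop:PPmerge} and~\ref{prop:PP*merge}; you would need further $\Q\P$-swap layers, which re-split the bi-complex and threaten an infinite regress. Likewise the claim that the diagonal summands are nullhomotopic by ``a cascade of Gaussian eliminations'' is where all the work actually is, and as stated it is unsupported.

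The paper's route is structurally different and sidesteps this. It first shows (Lemma~\ref{lem:QC-reductions}) that each row $\Q^{(m)^t}\B_{b+1}^*$ collapses to the mapping cone of a single boundary map onto a one-column object, and similarly for $\Q^{(n)}\B_a$. Tensoring then yields that $\B_{a+1}\otimes\B_{b+1}^*$ is $\mathrm{Tot}^{\oplus}$ of a reduced bi-complex (Theorem~\ref{thm:CC*directsum}) while $\B_b^*\tilde\otimes\B_a$ is $\mathrm{Tot}^{\Pi}$ of another reduced bi-complex (Theorem~\ref{thm:C*Cdirectprod}). The punchline of Theorem~\ref{thm:CC*=C*C} is that after a reindexing ($R'\mapsto y+a+1$, $v-b\mapsto R$) these are \emph{literally the same bi-complex}, and in $\K(\H_n)$ boundedness forces $\mathrm{Tot}^{\oplus}\cong\mathrm{Tot}^{\Pi}$ by Proposition~\ref{prop:locallyfinite}. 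There is no ``diagonal cancels, off-diagonal survives'' decomposition; the identification is one whole bi-complex to another, with the shift $[\pm 1]$ and the case split $a\lessgtr b$ coming from which of the two boundary objects in the cones ($\P^{(y+a+1)}\P^{(-b-y-1)^t}$ vs.\ $\P^{(v-b)^t}\P^{(a-v)}$) is forced to vanish by the sign of $a-b$. Your distinguished-triangle sketch is closer in spirit --- the unit $\1$ does emerge from a single corner --- but the precise mechanism is via the cone map $\mathsf{D}_v$ being nonzero only at $v=a$ (or $\mathsf{D}_y$ at $y=-a-1$ when $a<0$), not via a separate bubble computation at $x=y=0$. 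You should rework the reassembly step before this proof can be considered correct.
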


\subsection{A Categorical Boson-Fermion Correspondence}\label{subsec:CatBFcorrespondence} We begin by defining certain categorical operators on Mat$_{\Z\times \Z}(\K(\H))$. 

\begin{corollary}\label{cor:BernsteinMatrices}
For integers $i\in \Z$ define the operators $\emph{\textbf{B}}_i$ and $\emph{\textbf{B}}^*_i$ in \emph{Mat}$_{\Z \times \Z}(\K(\H))$ 
as infinite diagonal matrices with entries given by:
\begin{equation} \label{def:BernsteinMatrices}
(\emph{\textbf{B}}_i)_{n,m}:=\begin{cases} \B_{i-n} &  m=n\\ 0 & \text{ else}\end{cases}
\qquad\qquad
(\emph{\textbf{B}}_i^*)_{n,m}:=\begin{cases} \B^*_{i-n} & m=n\\ 0 & \text{ else.}\end{cases}
\end{equation}
These functors satisfy the following relations in \emph{Mat}$_{\Z \times \Z}(\K(\H))$:
\begin{equation}\label{eq:BernsteinMatRel1}
\mathbf{B}_{i-1} \mathbf{B}_j \cong \begin{cases}
\mathbf{B}_{j-1}\mathbf{B}_i[-1] & i>j\\
\mathbf{B}_{j-1}\mathbf{B}_i[1] & i<j \\
0 & i=j\\
\end{cases}
\qquad \text{and} \qquad
\mathbf{B}^*_{i+1}\mathbf{B}^*_{j} \cong \begin{cases}
\mathbf{B}^*_{j+1}\mathbf{B}^*_{i}[-1] & i>j\\
\mathbf{B}^*_{j+1}\mathbf{B}^*_{i}[1] & i<j \\
0 & i=j.\\
\end{cases}
\end{equation}
Additionally, in \emph{Mat}$_{\Z \times \Z}(\K(\H_n))$ they also satisfy the relations: 
\begin{align}
&\mathbf{B}_{i+1}\mathbf{B}_{j+1}^* \cong \begin{cases}
\mathbf{B}_{j}^*\mathbf{B}_{i}[-1] &i<j \\
\mathbf{B}_{j}^*\mathbf{B}_{i}[1]&i>j
\end{cases}\label{eq:BernsteinMatRel2}\\
&\mathbf{B}_{i+1}\mathbf{B}_{i+1}^* \rightarrow \mathbf{Id} \rightarrow \mathbf{B}_{i}^*\mathbf{B}_{i}\;\; \text{ and } \;\; \mathbf{B}_{i+1}^*\mathbf{B}_{i+1} \rightarrow \mathbf{Id} \rightarrow \mathbf{B}_{i}\mathbf{B}_{i}^* \;\; \text{ are distinguished triangles.}\label{eq:BernsteinMatRel3}
\end{align}
\end{corollary}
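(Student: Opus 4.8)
The plan is to deduce every relation in the Corollary by an entrywise reduction to the relations for the individual functors $\B_a$ and $\B_a^*$ already established in Theorems \ref{thm:CatBernstein1} and \ref{thm:CatBernstein2}. The key observation is that, by \eqref{def:BernsteinMatrices}, each of $\mathbf{B}_i$ and $\mathbf{B}_i^*$ is a \emph{diagonal} matrix, so it satisfies the off-diagonal vanishing condition of Definition \ref{def:MatrixCat} trivially, and the product of two such matrices is again diagonal, its $(n,n)$-entry being the single (finite) term
\[
(\mathbf{B}_i\mathbf{B}_j)_{n,n}=\B_{i-n}\otimes\B_{j-n},\qquad
(\mathbf{B}_i\mathbf{B}_j^*)_{n,n}=\B_{i-n}\otimes\B_{j-n}^*,\qquad
(\mathbf{B}_i^*\mathbf{B}_j)_{n,n}=\B_{i-n}^*\otimes\B_{j-n}.
\]
First I would record that, by the last paragraph of Definition \ref{def:MatrixCat}, Mat$_{\Z\times\Z}(\K(\H))$ (resp.\ Mat$_{\Z\times\Z}(\K(\H_n))$) is a direct sum of copies of $\K(\H)$ (resp.\ $\K(\H_n)$) indexed by the diagonal; consequently a homotopy equivalence of matrices is exactly one holding termwise on the diagonal, and a triangle of matrices is distinguished iff each diagonal entry is a distinguished triangle.

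Next, for \eqref{eq:BernsteinMatRel1} I would fix $n$, set $a=i-n$ and $b=j-n$ (so $a>b\iff i>j$, and similarly for the other cases), and apply Theorem \ref{thm:CatBernstein1} to the diagonal entries:
\[
(\mathbf{B}_{i-1}\mathbf{B}_j)_{n,n}=\B_{a-1}\otimes\B_b\cong
\begin{cases}
(\mathbf{B}_{j-1}\mathbf{B}_i)_{n,n}[-1] & i>j,\\
(\mathbf{B}_{j-1}\mathbf{B}_i)_{n,n}[1] & i<j,\\
0 & i=j,
\end{cases}
\]
where I have used $\B_{b-1}\otimes\B_a=(\mathbf{B}_{j-1}\mathbf{B}_i)_{n,n}$. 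Since this is uniform in $n$, the first half of \eqref{eq:BernsteinMatRel1} follows, and the $\mathbf{B}^*$-version is obtained identically from the second half of Theorem \ref{thm:CatBernstein1}. Relation \eqref{eq:BernsteinMatRel2} is proved the same way, applying the first part of Theorem \ref{thm:CatBernstein2} entrywise under the same substitution $a=i-n$, $b=j-n$.

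For the distinguished triangles \eqref{eq:BernsteinMatRel3} I would use the second part of Theorem \ref{thm:CatBernstein2}, which provides for \emph{every} $a\in\Z$ the distinguished triangles $\B_{a+1}\otimes\B_{a+1}^*\to\1\to\B_a^*\otimes\B_a$ and $\B_a^*\otimes\B_a\to\1\to\B_{a+1}\otimes\B_{a+1}^*$ in $\K(\H_n)$ (the two cases $a\geq 0$ and $a<0$ together covering all $a$). Placing the one with $a=i-n$ in the $(n,n)$-slot and taking the direct sum over $n$, and using that the mapping cone in the matrix category is computed entrywise together with $(\mathbf{Id})_{n,n}=\1$, yields exactly the asserted triangles $\mathbf{B}_{i+1}\mathbf{B}_{i+1}^*\to\mathbf{Id}\to\mathbf{B}_i^*\mathbf{B}_i$ and $\mathbf{B}_{i+1}^*\mathbf{B}_{i+1}\to\mathbf{Id}\to\mathbf{B}_i\mathbf{B}_i^*$.

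I do not expect a genuine obstacle: the only things to be careful about are the index bookkeeping (checking that the homological shifts match after the translation $a=i-n$) and the statement that homotopy equivalences and distinguished triangles in Mat$_{\Z\times\Z}(\K(\H))$ and Mat$_{\Z\times\Z}(\K(\H_n))$ are detected on the diagonal — which is immediate from the identification of these categories with direct sums of copies of the underlying homotopy category. All the analytic difficulty, namely the convergence issues attached to the bi-infinite complexes underlying $\B_a^*\otimes\B_a$, has already been absorbed into Theorems \ref{thm:CatBernstein1} and \ref{thm:CatBernstein2}; this Corollary is simply the repackaging of those results into matrix form.
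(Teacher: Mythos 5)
Your proof is correct and is essentially the paper's own argument: the published proof simply observes that composition in Mat$_{\Z \times \Z}(\K(\H))$ is entrywise (and that the action on $\mathsf{V}_{Fock}$ is integrable), so all relations follow at once from Theorems \ref{thm:CatBernstein1} and \ref{thm:CatBernstein2}, which is exactly your diagonal reduction with $a=i-n$, $b=j-n$. One small caveat, which is an index slip in the printed statement rather than a defect of your reasoning: the second triangle of \eqref{eq:BernsteinMatRel3} as written, $\mathbf{B}_{i+1}^*\mathbf{B}_{i+1}\to\mathbf{Id}\to\mathbf{B}_{i}\mathbf{B}_{i}^*$, is not literally the entrywise form of the second triangle of Theorem \ref{thm:CatBernstein2}, which instead yields $\mathbf{B}_{i}^*\mathbf{B}_{i}\to\mathbf{Id}\to\mathbf{B}_{i+1}\mathbf{B}_{i+1}^*$ (the version actually used later in the proof of Theorem \ref{thm:catBF}), so your claim that the theorem gives ``exactly the asserted triangles'' should be adjusted accordingly.
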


\begin{proof}
Since by Definition \ref{def:MatrixCat} morphisms in Mat$_{\Z\times \Z}(\K(\H))$ are given by matrices whose entries are chain homomorphisms and whose composition is given by composing the morphisms in $\K(\H)$ entry-wise, then the relations are an immediate consequence of Theorems \ref{thm:CatBernstein1} and \ref{thm:CatBernstein2}. Moreover, since any $\mathsf{v}=(\mathsf{v}_c)_{c\in\Z}$ only finitely many nonzero entries, the action of $\mathbf{B}_i$ and $\mathbf{B}_i^*$ on $\mathsf{V}_{Fock}$ is integrable and thus well defined.
\end{proof}

We define the \emph{charge functor} $\mathcal{Q} \in$ Mat$_{\Z\times \Z}(\K(\H))$ as the infinite dimensional matrix with $\1$'s on the lower off-diagonal and zeroes elsewhere. Likewise, we define $\mathcal{Q}^{-1} \in$ Mat$_{\Z\times \Z}(\K(\H))$ as the infinite dimensional matrix with $\1$'s on the upper off-diagonal and zeroes everywhere else. Functor $\mathcal{Q}$ raises the charge by shifting the indexes of $\mathsf{v} = (\dots, \mathsf{v}_{c-1},\mathsf{v}_c, \mathsf{v}_{c+1}, \dots)^T \in \mathsf{V}_{Fock}$ down by one. Analogously, $\mathcal{Q}^{-1}$ does the opposite. These functors are mutual inverses since $\mathcal{Q}\mathcal{Q}^{-1}\simeq\mathbf{Id}\simeq \mathcal{Q}^{-1}\mathcal{Q}$. 

A straightforward computation shows that for any integer $i \in \Z$, the functors $\mathbf{B}_i$, $\mathbf{B}_i^*$, and $\mathcal{Q}$ satisfy the following commutation relations in Mat$_{\Z\times \Z}(\K(\H))$:  
\begin{equation}\label{eq:BQcommutation}
\mathbf{B}_{i}\simeq \mathcal{Q} \mathbf{B}_{i-1} \mathcal{Q}^{-1}
\qquad\qquad
\mathbf{B}_{i}^*\simeq \mathcal{Q}\mathbf{B}_{i-1}^* \mathcal{Q}^{-1}
\end{equation}

We can now introduce the categorical Fermionic creation and annihilation operators as functors in Mat$_{\Z\times \Z}(\K(\H))$ and present the main result, a categorification of Theorem \ref{thm:BosonFermionCorrespondence}.

\begin{definition}
Given any $i\in \Z$, the \emph{ Fermionic functors} are the operators in \emph{Mat}$_{\Z\times \Z}(\K(\H))$ given by
\begin{equation}
\Psi_i:=\mathbf{B}_i\mathcal{Q} \qquad \text{and} \qquad \Psi_i^*:=\mathcal{Q}^{-1}\mathbf{B}^*_i. 
\end{equation}
\end{definition}

\begin{theorem}[categorical Boson-Fermion correspondence]\label{thm:catBF}
For any $i,j \in \Z$ the Fermionic functors satisfy the following relations in \emph{Mat}$_{\Z\times \Z}(\K(\H))$:
\vspace{2mm}
\begin{enumerate}
\item \label{R1} $(\Psi_i)^2 \simeq 0$ \qquad and \qquad  $\Psi_i \Psi_j \cong \begin{cases} 
\Psi_j \Psi_i [-1] &\text{ if } i > j \\
\Psi_j \Psi_i [1] & \text{ if } i < j \end{cases}$
\item \label{R2} $ (\Psi_i^*)^2 \simeq 0$ \qquad and \qquad 
$\Psi^*_i \Psi^*_j \cong 
\begin{cases}
\Psi^*_j \Psi^*_i [-1] &\text{ if } i > j \\ \Psi^*_j \Psi^*_i [1] & \text{ if } i < j \end{cases}$
\end{enumerate}
Moreover, for any $n \in \N$ the following relations also hold in \emph{Mat}$_{\Z \times \Z}(\K(\H_n))$:
\begin{itemize}
\item[(3)] \label{R3} $\Psi_i \Psi_j^* \cong \begin{cases}
 \Psi_j^* \Psi_i [-1] & \text{ if } i < j \\ \Psi_j^* \Psi_i [1] & \text{ if } i > j \end{cases}$
\item[(4)] \label{R4}  $\Psi_i \Psi_i^* \rightarrow \;$\emph{\textbf{Id}}$ \;\rightarrow \Psi_i^* \Psi_i$ and $\Psi_i^* \Psi_i \rightarrow \;$\emph{\textbf{Id}}$ \;\rightarrow \Psi_i \Psi_i^*$ are distinguished triangles.
\end{itemize}
\end{theorem}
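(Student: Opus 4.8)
The plan is to deduce everything directly from Corollary \ref{cor:BernsteinMatrices}, using only the commutation relations \eqref{eq:BQcommutation}, the identities $\mathcal{Q}\mathcal{Q}^{-1}\simeq\mathbf{Id}\simeq\mathcal{Q}^{-1}\mathcal{Q}$, and the fact that, $\mathcal{Q}$ being invertible in $\mathrm{Mat}_{\Z\times\Z}(\K(\H))$, tensoring with $\mathcal{Q}^{\pm1}$ is an autoequivalence and hence preserves homotopy equivalences, homological shifts, and distinguished triangles. The first step is to record the four normal forms
\[ \Psi_i\Psi_j\simeq \mathbf{B}_i\mathbf{B}_{j+1}\,\mathcal{Q}^2,\qquad \Psi_i^*\Psi_j^*\simeq \mathcal{Q}^{-2}\,\mathbf{B}_{i+1}^*\mathbf{B}_j^*,\qquad \Psi_i\Psi_j^*\simeq \mathbf{B}_i\mathbf{B}_j^*,\qquad \Psi_j^*\Psi_i\simeq \mathbf{B}_{j-1}^*\mathbf{B}_{i-1}, \]
each obtained by expanding $\Psi_i=\mathbf{B}_i\mathcal{Q}$, $\Psi_i^*=\mathcal{Q}^{-1}\mathbf{B}_i^*$, sliding the interior $\mathcal{Q}^{\pm1}$ past a Bernstein matrix via \eqref{eq:BQcommutation} (which shifts the Bernstein index by $\mp1$, e.g. $\mathcal{Q}\mathbf{B}_k\simeq\mathbf{B}_{k+1}\mathcal{Q}$ and $\mathbf{B}_k^*\mathcal{Q}^{-1}\simeq\mathcal{Q}^{-1}\mathbf{B}_{k+1}^*$), and then cancelling an adjacent $\mathcal{Q}\mathcal{Q}^{-1}$ or $\mathcal{Q}^{-1}\mathcal{Q}$.

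For parts \eqref{R1} and \eqref{R2} I would work in $\mathrm{Mat}_{\Z\times\Z}(\K(\H))$ and combine the first two normal forms with \eqref{eq:BernsteinMatRel1}. Relabelling \eqref{eq:BernsteinMatRel1} so that it reads $\mathbf{B}_i\mathbf{B}_{j+1}\cong\mathbf{B}_j\mathbf{B}_{i+1}[-1]$ for $i>j$, $\cong\mathbf{B}_j\mathbf{B}_{i+1}[1]$ for $i<j$, and $\cong 0$ for $i=j$, and then tensoring on the right by $\mathcal{Q}^2$, gives $(\Psi_i)^2\simeq 0$ together with $\Psi_i\Psi_j\cong\Psi_j\Psi_i[-1]$ for $i>j$ and $\Psi_i\Psi_j\cong\Psi_j\Psi_i[1]$ for $i<j$, since the right-hand side of $\Psi_i\Psi_j\simeq\mathbf{B}_i\mathbf{B}_{j+1}\mathcal{Q}^2$ becomes $\Psi_j\Psi_i$ under the substitution $\mathbf{B}_i\mathbf{B}_{j+1}\mapsto\mathbf{B}_j\mathbf{B}_{i+1}$. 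Part \eqref{R2} is identical, using $\Psi_i^*\Psi_j^*\simeq\mathcal{Q}^{-2}\mathbf{B}_{i+1}^*\mathbf{B}_j^*$ and the $\mathbf{B}^*$ half of \eqref{eq:BernsteinMatRel1}. For part (3) I would pass to $\mathrm{Mat}_{\Z\times\Z}(\K(\H_n))$ — this is the first place the $\H_n$ hypothesis is needed, since \eqref{eq:BernsteinMatRel2} and \eqref{eq:BernsteinMatRel3} only hold there — and combine the last two normal forms with \eqref{eq:BernsteinMatRel2} rewritten as $\mathbf{B}_i\mathbf{B}_j^*\cong\mathbf{B}_{j-1}^*\mathbf{B}_{i-1}[-1]$ for $i<j$ and $[1]$ for $i>j$; this is exactly $\Psi_i\Psi_j^*\cong\Psi_j^*\Psi_i[-1]$ for $i<j$ and $\Psi_j^*\Psi_i[1]$ for $i>j$. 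For part (4), the $i=j$ specializations of the normal forms read $\Psi_i\Psi_i^*\simeq\mathbf{B}_i\mathbf{B}_i^*$ and $\Psi_i^*\Psi_i\simeq\mathbf{B}_{i-1}^*\mathbf{B}_{i-1}$, so the two requested distinguished triangles are precisely the entrywise lifts of the two triangles of \eqref{eq:BernsteinMatRel3} taken at index $i-1$ (equivalently, of Theorem \ref{thm:CatBernstein2} with $a=i-1$), using that the triangulated structure of $\mathrm{Mat}_{\Z\times\Z}(\K(\H_n))$ is computed entrywise.

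Essentially all the genuine content has been pushed into Theorems \ref{thm:CatBernstein1} and \ref{thm:CatBernstein2} (hence Corollary \ref{cor:BernsteinMatrices}), whose proofs are deferred to Section \ref{sec:PropCatBernsteinOps}; what remains here is bookkeeping, and the only thing that can realistically go wrong is keeping the index shifts produced by \eqref{eq:BQcommutation} consistent with the index shifts built into \eqref{eq:BernsteinMatRel1}–\eqref{eq:BernsteinMatRel3}, and matching the homological shifts $[\pm1]$ and the $i>j$ versus $i<j$ cases exactly. The one conceptual point worth stating carefully is that homotopy equivalences, homological shifts, and distinguished triangles in $\mathrm{Mat}_{\Z\times\Z}(-)$ may all be verified entrywise — this is immediate from the identification of $\mathrm{Mat}_{\Z\times\Z}(\mathcal{C})$ with a direct sum of copies of $\mathcal{C}$ recorded in Definition \ref{def:MatrixCat}, together with the row-finiteness condition $\mathsf{X}_{i,j}=0$ for $|i-j|\gg 0$ that makes the matrix products $\Psi_i\Psi_j$, $\Psi_i\Psi_j^*$, etc.\ finite and hence well defined in the first place.
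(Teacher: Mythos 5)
Your proposal is correct and follows essentially the same route as the paper's proof: expand $\Psi_i=\mathbf{B}_i\mathcal{Q}$, $\Psi_i^*=\mathcal{Q}^{-1}\mathbf{B}_i^*$, slide the charge operators with \eqref{eq:BQcommutation}, cancel $\mathcal{Q}^{\pm1}$, and invoke Corollary \ref{cor:BernsteinMatrices} (i.e.\ Theorems \ref{thm:CatBernstein1} and \ref{thm:CatBernstein2}) entrywise. The only cosmetic difference is that you push the charge operators to the right (producing $\mathbf{B}_i\mathbf{B}_{j+1}\mathcal{Q}^2$, etc.) and then relabel indices, whereas the paper pushes them to the left, and your index bookkeeping and homological shifts all check out.
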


\begin{proof}
The result follows easily from Corollary \ref{cor:BernsteinMatrices}. Given any $i,j \in \Z$ we use the commutation relations from \eqref{eq:BQcommutation} and compute the required compositions.
\begin{align*}
\Psi_i\Psi_j &= \mathbf{B}_i \mathcal{Q} \mathbf{B}_j \mathcal{Q} \simeq \mathcal{Q}\mathbf{B}_{i-1} \mathbf{B}_j \mathcal{Q} & \Psi_i^*\Psi_j^* &= \mathcal{Q}^{-1}\mathbf{B}^*_i \mathcal{Q}^{-1} \mathbf{B}_j^* \simeq \mathcal{Q}^{-1}\mathcal{Q}^{-1}\mathbf{B}^*_{i+1} \mathbf{B}_j^*
\\
\Psi_j\Psi_i &= \mathbf{B}_j \mathcal{Q} \mathbf{B}_i \mathcal{Q} \simeq \mathcal{Q}\mathbf{B}_{j-1} \mathbf{B}_i \mathcal{Q}
&
\Psi^*_j\Psi^*_i &= \mathcal{Q}^{-1}\mathbf{B}^*_j \mathcal{Q}^{-1} \mathbf{B}_i^* \simeq \mathcal{Q}^{-1}\mathcal{Q}^{-1} \mathbf{B}^*_{j+1} \mathbf{B}_i^*
\end{align*}
Since the charge operators $\mathcal{Q}$ and $\mathcal{Q}^{-1}$ are invertible, then relations (1) and (2) of the theorem follow directly from \eqref{eq:BernsteinMatRel1} in Corollary \ref{cor:BernsteinMatrices}. 
Similarly we have, 
\begin{align*}
&\Psi_i \Psi_j^* = \mathbf{B}_i \mathcal{Q} \mathcal{Q}^{-1}\mathbf{B}_j^* \simeq \mathbf{B}_i\mathbf{B}_j^*\\
&\Psi_j^* \Psi_i = \mathcal{Q}^{-1}\mathbf{B}_j^* \mathbf{B}_i \mathcal{Q} \simeq \mathcal{Q}^{-1}\mathbf{B}_j^* \mathcal{Q}\mathbf{B}_{i-1} \simeq \mathcal{Q}^{-1} \mathcal{Q}\mathbf{B}_{j-1}^*\mathbf{B}_{i-1} \simeq \mathbf{B}_{j-1}^*\mathbf{B}_{i-1}
\end{align*}
Once again, relations (3) and (4) follow directly from \eqref{eq:BernsteinMatRel2} and \eqref{eq:BernsteinMatRel3} in Corollary \ref{cor:BernsteinMatrices}. 
\end{proof}


\section{Properties of Categorical Bernstein Operators}\label{sec:PropCatBernsteinOps}

\noindent \textbf{Notation:} We will often sum over indexes $t\geq$ max$(a_1, \dots, a_n)$ or min$(a_1,\dots,a_n) \geq t$. For notational simplicity, we will simply denote this by $t\geq (a_1, \dots, a_n)$ or $(a_1,\dots,a_n)\geq t$.

\subsection{Proof of Theorem \ref{thm:CatBernstein1}}\hfill
\medskip

In this section we will compute the tensor products $\B_{b-1}\otimes \B_a$ and $\B_{a-1}\otimes \B_b$ for integers $a,b$. In order to do this we appeal to Definition \ref{def:ChainTensorProd} and compute the total complex of certain bi-complexes. Since $\B_{b-1}\otimes \B_a =$ Tot$^{\oplus}\left\lbrace \P^{(x)}\Q^{(y)^t}\B_a, \d_{b-1}\otimes \1_{\B_a}, (-1)^x\1_{\B_{b-1}}\otimes \d_{a}\right\rbrace_{x-y=a}$ we first prove certain homotopy equivalences for the subcomplexes $\P^{(x)}\Q^{(y)^t}\B_{a}$ for fixed integers $x,y$. With these equivalences, Lemma \ref{lem:gaussian-elimination}, and Proposition \ref{prop:SimultSimp} we derive the desired categorical commutation relations for $\B_{b-1} \otimes \B_a$ and $\B_{a-1}\otimes \B_b$ stated in Theorem \ref{thm:CatBernstein1}. Thus, we begin with a sequence of technical lemmas. 
\begin{lemma}\label{lem:QC-reduction}
Given any fixed $a \in \Z$ and $n \in \N$, the chain complex $\left\lbrace \Q^{(n)^t}\B_a, \1_{n}\otimes \d_a \right\rbrace$ is homotopy equivalent to \emph{Cone ($\mathsf{D}$)} where $\mathsf{D}$ is the chain map:
\begin{equation*}
\left\lbrace \bigoplus_{x\geq (n,-a,0)}\P^{(x+a)}\Q^{(x,n)}[x-1], \mathsf{d}_x =\hackcenter{\begin{tikzpicture} [scale=.25]
\draw[line width= 0.7mm,
    decoration={markings,mark=at position 0.04 with {\arrow[line width=.3mm]{<}}},
    postaction={decorate},shorten >=.6pt] (4.5,2) to (4.5,8.5);
\draw[line width= 0.7mm,
    decoration={markings,mark=at position 1 with {\arrow[line width=.3mm]{>}}},
    postaction={decorate},shorten >=1pt] (-1.5,2) to (-1.5, 8.5);
\draw[line width= 0.7mm,
    decoration={markings,mark=at position 0.04 with {\arrow[line width=.3mm]{<}}},
    postaction={decorate},shorten >=.6pt] (1.5,2) to (1.5, 8.5);
\draw[directed=.6] (-.8,5.5)..controls ++(0,1) and ++(0,1)..(4.2, 5.5);
\draw[fill =white] (-2.7,4.5) rectangle (-0.4,5.5);
\draw[fill =gray!20] (0.4,7) rectangle (5.6,8);
\draw[fill =gray!60] (5.6,4.5) rectangle (3.3,5.5);
\draw[fill =gray!20] (0.4,4) rectangle (5.6,3);
\node at (4.5,5) [scale=.75]{$x$};
\end{tikzpicture}}\right\rbrace \xrightarrow{\mathsf{D}} \left\lbrace\bigoplus_{x' \geq (0,-a+1)}^{n-1}\P^{(x'+a-1)}\Q^{(n-1,x')}[x'],\mathsf{d}_{x'}=\hackcenter{\begin{tikzpicture} [scale=.25]
\draw[line width= 0.7mm,
    decoration={markings,mark=at position 0.04 with {\arrow[line width=.3mm]{<}}},
    postaction={decorate},shorten >=.6pt] (4.5,2) to (4.5,8.5);
\draw[line width= 0.7mm,
    decoration={markings,mark=at position 1 with {\arrow[line width=.3mm]{>}}},
    postaction={decorate},shorten >=1pt] (-1.5,2) to (-1.5, 8.5);
\draw[line width= 0.7mm,
    decoration={markings,mark=at position 0.04 with {\arrow[line width=.3mm]{<}}},
    postaction={decorate},shorten >=.6pt] (1.5,2) to (1.5, 8.5);
\draw[directed=.6] (-.8,5.5)..controls ++(0,1) and ++(0,1)..(4.2, 5.5);
\draw[fill =white] (-2.7,4.5) rectangle (-0.4,5.5);
\draw[fill =gray!20] (0.4,7) rectangle (5.6,8);
\draw[fill =gray!60] (5.6,4.5) rectangle (3.3,5.5);
\draw[fill =gray!20] (0.4,4) rectangle (5.6,3);
\node at (4.5,5) [scale=.75]{$x'$};
\end{tikzpicture}} \right\rbrace
\end{equation*}
given by $\mathsf{D}= \hackcenter{\begin{tikzpicture} [scale=.25]
\draw[line width= 0.7mm,
    decoration={markings,mark=at position 0.04 with {\arrow[line width=.3mm]{<}}},
    postaction={decorate},shorten >=.6pt] (4.5,2) to (4.5,8.5);
\draw[line width=0.7mm, -] (-1.5,2) to (-1.5, 8.5);
\draw[line width= 0.7mm,
    decoration={markings,mark=at position 0.04 with {\arrow[line width=.3mm]{<}}},
    postaction={decorate},shorten >=.6pt] (1.5,2) to (1.5, 8.5);
\draw[directed=.6] (-.8,5.5)..controls ++(0,1) and ++(0,1)..(4.2, 5.5);
\draw[fill =white] (-3,4.5) rectangle (-0,5.5);
\draw[fill =gray!20] (0.4,7) rectangle (5.6,8);
\draw[fill =gray!60] (5.6,4.5) rectangle (3.3,5.5);
\draw[fill =gray!20] (0.4,4) rectangle (5.6,3);
\node at (-1.5,5) [scale=.5]{$n+a$};
\node at (3,3.5) [scale=.5]{$(n,n)$};
\node at (4.5,5) [scale=.5]{$n$};
\node at (3,7.5) [scale=.5]{$(x+a-1,y)$};
\begin{scope}[shift={(0,4)}]
\draw[line width=0.7mm, -] (4.5,2) to (4.5,8.5);
\draw[line width= 0.7mm,
    decoration={markings,mark=at position 1 with {\arrow[line width=.3mm]{>}}},
    postaction={decorate},shorten >=1pt] (-1.5,2) to (-1.5, 8.5);
\draw[line width=0.7mm, -] (1.5,2) to (1.5, 8.5);
\draw[directed=.6] (-.8,5.5)..controls ++(0,1) and ++(0,1)..(4.2, 5.5);
\draw[fill =white] (-3,4.5) rectangle (-0,5.5);
\draw[fill =gray!20] (0.4,7) rectangle (5.6,8);
\draw[fill =gray!60] (5.6,4.5) rectangle (3.3,5.5);
\draw[fill =gray!20] (0.4,4) rectangle (5.6,3);
\node at (-1.5,5) [scale=.5]{$n+a-1$};
\node at (3,3.5) [scale=.5]{$(n,n-1)$};
\node at (4.5,5) [scale=.5]{$n$};
\node at (3,7.5) [scale=.5]{$(n-1,n-1)$};
\end{scope}
\end{tikzpicture}} 
$ \;\; for $x=n$ and zero otherwise. 
\end{lemma}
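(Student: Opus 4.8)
\textbf{Proof proposal for Lemma \ref{lem:QC-reduction}.}

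The plan is to compute the chain complex $\{\Q^{(n)^t}\B_a,\,\1_n\otimes\d_a\}$ termwise, simplify each chain group using the Littlewood--Richardson branching isomorphisms of Section \ref{subsec:LRBranchingIsos}, and then recognize the resulting simplified complex as a mapping cone. First I would fix $a$ and $n$ and write out the complex $\Q^{(n)^t}\B_a$ explicitly: its $x$-th chain group is $\Q^{(n)^t}\P^{(x)}\Q^{(y)^t}[y]$ with $x-y=a$, and the differential is $\1_n$ tensored with the adjunction map $\d_a$ of \eqref{eq:Ca}. The key structural move is to apply Proposition \ref{prop:Q*Pswap} (the isomorphism $\Q^{(n)^t}\P^{(m)}\cong \P^{(m)}\Q^{(n)^t}\oplus\P^{(m-1)}\Q^{(n-1)^t}$, iterated as in the analogous statement $\Q^{(n)^t}\P^{(m)}\cong\bigoplus_s\ldots$, or directly Proposition \ref{prop:QPswap}-style reasoning) to push the $\Q^{(n)^t}$ past the $\P^{(x)}$. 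This produces a direct sum over the number $s$ of cups/caps that are formed, and then the leftover $\Q$'s recombine with the existing $\Q^{(y)^t}$ using the merging proposition of the Littlewood--Richardson type (Proposition \ref{prop:QlambdaPswap} / Proposition \ref{prop:PlambdaPmerge} and the $\Q$-versions). Bookkeeping the ranks and the homological shifts $[y]$ carefully should collapse the whole complex, after a cascade of Gaussian eliminations (Lemma \ref{lem:gaussian-elimination}) performed simultaneously via Proposition \ref{prop:SimultSimp}, onto two ``strands'' of surviving terms: one indexed by $x\geq(n,-a,0)$ of the form $\P^{(x+a)}\Q^{(x,n)}[x-1]$ and one indexed by $x'\geq(0,-a+1)$, $x'\leq n-1$, of the form $\P^{(x'+a-1)}\Q^{(n-1,x')}[x']$.

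Next I would identify the two surviving strands as the complexes appearing in the statement, namely the domain and codomain complexes $\{\bigoplus_x\P^{(x+a)}\Q^{(x,n)}[x-1],\d_x\}$ and $\{\bigoplus_{x'}\P^{(x'+a-1)}\Q^{(n-1,x')}[x'],\d_{x'}\}$, and check that the component of the total differential connecting them is precisely the map $\mathsf D$ described diagrammatically (a cap/adjunction map that lives in homological degree shift matching the cone convention, nonzero only in the $x=n$ term). This requires tracking which piece of the original differential $\1_n\otimes\d_a$ survives the elimination: the diagonal ``internal'' pieces of $\d_a$ become the $\d_x$ and $\d_{x'}$ after the swap isomorphisms are applied, and the one remaining ``off-diagonal'' piece, coming from the interaction of the adjunction cap in $\d_a$ with the $\Q^{(n)^t}$ strand, is exactly $\mathsf D$. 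By the definition of mapping cone recalled in Section \ref{sec:HomAlgebra}, $\mathrm{Cone}(\mathsf D)=\{\text{domain}\}[1]\oplus\{\text{codomain}\}$ with the stated triangular differential, so once the identification of the two strands and the connecting map is in place, the homotopy equivalence $\{\Q^{(n)^t}\B_a\}\simeq\mathrm{Cone}(\mathsf D)$ follows.

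The main obstacle I anticipate is the bookkeeping of the Gaussian eliminations: after swapping $\Q^{(n)^t}$ past $\P^{(x)}$ one gets, in each homological degree, a direct sum of many terms $\P^{(x+a-s)}\Q^{(\cdots)}$ indexed by $s$ and by the ways of removing/adding boxes, and one must verify that all but the two claimed families of terms are connected by isomorphisms in the differential and hence cancel in pairs, with no unexpected surviving contributions and with the correct signs and homological degrees (the shifts $[y]=[x-a]$ interacting with the $s!\binom{n}{s}\binom{m}{s}$ multiplicities). Because the complexes here are unbounded above, I would be careful to invoke Proposition \ref{prop:SimultSimp}(i) (valid since $\B_a\in\K^-(\H)$ is bounded below, so the relevant indexing set is bounded below) to legitimately perform infinitely many eliminations at once. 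A secondary subtlety is confirming the precise range conditions $x\geq\max(n,-a,0)$ and $0\vee(-a+1)\leq x'\leq n-1$ on the surviving indices, which come from the non-negativity constraints $x,y\geq0$ on the symmetrizer/antisymmetrizer widths together with the constraint that a cup of width $n$ can only be formed when enough strands are present; these should drop out automatically from the vanishing relations \eqref{eq:whiteblackbox} and \eqref{heis: right twist curl} once the diagrams are composed.
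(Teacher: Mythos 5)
Your plan is correct and follows essentially the same route as the paper: apply the swap isomorphism of Proposition~\ref{prop:Q*Pswap} to move $\Q^{(n)^t}$ past $\P^{(x)}$, merge the resulting pairs of antisymmetrizers via the (transposed) Littlewood--Richardson decomposition of Proposition~\ref{prop:PPmerge} (not \ref{prop:QlambdaPswap}/\ref{prop:PlambdaPmerge}, which handle single boxes), observe that the off-diagonal components of the differential are isomorphisms exactly when the merge indices agree, and Gaussian-eliminate so that only the strands $\P^{(x+a)}\Q^{(x,n)^t}$ (for $x\geq n$) and $\P^{(x+a-1)}\Q^{(n-1,x)^t}$ (for $x<n$) survive. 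The only refinement worth noting is that $\mathsf{D}$ is not a surviving component of $\1_n\otimes\d_a$ but the correction term $A-BD^{-1}C$ produced by the elimination at the transition degree $x=n$ (Lemma~\ref{lem:gaussian-elimination}), which is exactly the two-story composite drawn in the statement.
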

\begin{proof}
We begin by applying the isomorphism from Propositions (\ref{prop:Q*Pswap}) and  (\ref{prop:PPmerge}) to each chain group of $\Q^{(n)^t} \otimes \B_a$. 
\begin{align*}
\Q^{(n)^t} \otimes \B_a& \simeq [ \dots \rightarrow \P^{(x+a)}\Q^{(n)^t}\Q^{(x)^t}[x] \oplus \P^{(x+a-1)}\Q^{(n-1)^t}\Q^{(x)^t}[x] \rightarrow \dots ]\\
& \simeq [\dots \rightarrow \bigoplus_{s=0}^{(n,x)} \P^{(x+a)}\Q^{(n+x-s,s)^t}[x] \oplus  \bigoplus_{s=0}^{(n-1,x)}\P^{(x+a-1)}\Q^{(n-1+x-s,s)^t}[x] \rightarrow \dots]
\end{align*}
Pre and post composing with these isomorphisms, for each fixed index $s$ in homological degree $x$, the differential on each summand of the resulting chain complex is a morphism
\[
\begin{tikzcd}[row sep=.3mm, column sep=.3mm]
& \P^{(x+a)}\Q^{(n+x-s,s)^t}[x] && (\delta_{s',s}+\delta_{s',s-1})\P^{(x+a-1)}\Q^{(n+x-1-s',s')^t}[x-1]\\
\mathsf{d}:& \oplus & \to &\oplus\\
&\P^{(x+a-1)}\Q^{(n+x-1-s,s)^t}[x] && (\delta_{s',s}+\delta_{s',s-1})\P^{(x+a-2)}\Q^{(n+x-2-s',s')^t}[x-1]
\end{tikzcd}\]
given explicitly by the matrix
\begin{align}\label{eq:s-differential}\d=
\begin{pmatrix} (\delta_{s',s}+\delta_{s',s-1})
\hackcenter{\begin{tikzpicture} [scale=.30]
\draw[line width= 0.7mm,
    decoration={markings,mark=at position 0.04 with {\arrow[line width=.3mm]{<}}},
    postaction={decorate},shorten >=.6pt] (4.5,2) to (4.5,8.5);
\draw[line width= 0.7mm,
    decoration={markings,mark=at position 1 with {\arrow[line width=.3mm]{>}}},
    postaction={decorate},shorten >=.6pt] (-1.5,2) to (-1.5, 8.5);
\draw[line width= 0.7mm,
    decoration={markings,mark=at position 0.04 with {\arrow[line width=.3mm]{<}}},
    postaction={decorate},shorten >=.6pt] (1.5,2) to (1.5, 8.5);
\draw[directed=.6] (-.8,5.5)..controls ++(0,1) and ++(0,1)..(4.2, 5.5);
\draw[fill =white] (-2.7,4.5) rectangle (-0.4,5.5);
\draw[fill =gray!20] (0.1,7) rectangle (5.9,8);
\draw[fill =gray!60] (5.6,4.5) rectangle (3.3,5.5);
\draw[fill =gray!60] (2.5,4.5) rectangle (0.4,5.5);
\draw[fill =gray!20] (0.1,4) rectangle (5.8,3);
\node at (-1.5,5) [scale=.5]{$x+a$};
\node at (3,3.5) [scale=.5]{$(n+x-s,s)$};
\node at (1.5,5)[scale=.5] {$n$};
\node at (4.5,5) [scale=.5]{$x$};
\node at (3,7.5) [scale=.5]{$(n+x-1-s',s')$};
\end{tikzpicture}} & \delta_{s,s'}\hackcenter{\begin{tikzpicture} [scale=.27]
\draw[line width= 0.7mm,
    decoration={markings,mark=at position 0.04 with {\arrow[line width=.3mm]{<}}},
    postaction={decorate},shorten >=.6pt] (4.5,2) to (4.5,9.5);
\draw[line width= 0.7mm,
    decoration={markings,mark=at position 1 with {\arrow[line width=.3mm]{>}}},
    postaction={decorate},shorten >=1pt] (-1.5,2) to (-1.5, 9.5);
\draw[line width= 0.7mm,
    decoration={markings,mark=at position 0.04 with {\arrow[line width=.3mm]{<}}},
    postaction={decorate},shorten >=.6pt] (1.5,2) to (1.5, 9.5);
\draw[ directed= .6]  (2.2,5.5) to (4.2, 6.5);
\draw[fill =white] (-3,4.5) rectangle (-0.1,5.5);
\draw[fill =gray!20] (-0.2,8) rectangle (6.2,9);
\draw[fill =gray!60] (6,6.5) rectangle (3,7.5);
\draw[fill =gray!60] (2.7,4.5) rectangle (0.2,5.5);
\draw[fill =gray!20] (-0.1,4) rectangle (6.2,3);
\node at (-1.5,5) [scale=.5]{$x+a-1$};
\node at (3,3.5) [scale=.5]{$(n+x-1-s,s)$};
\node at (1.5,5)[scale=.5] {$n-1$};
\node at (4.5,7) [scale=.5]{$x-1$};
\node at (3,8.5) [scale=.5]{$(n+x-1-s',s')$};
\end{tikzpicture}} \\
0&  (\delta_{s',s}+\delta_{s',s-1})\hackcenter{\begin{tikzpicture} [scale=.27]
\draw[line width= 0.7mm,
    decoration={markings,mark=at position 0.04 with {\arrow[line width=.3mm]{<}}},
    postaction={decorate},shorten >=.6pt] (4.5,2) to (4.5,8.5);
\draw[line width= 0.7mm,
    decoration={markings,mark=at position 1 with {\arrow[line width=.3mm]{>}}},
    postaction={decorate},shorten >=1pt] (-1.5,2) to (-1.5, 8.5);
\draw[line width= 0.7mm,
    decoration={markings,mark=at position 0.04 with {\arrow[line width=.3mm]{<}}},
    postaction={decorate},shorten >=.6pt] (1.5,2) to (1.5, 8.5);
\draw[directed=.6] (-.8,5.5)..controls ++(0,1) and ++(0,1)..(4.2, 5.5);
\draw[fill =white] (-3,4.5) rectangle (-0.1,5.5);
\draw[fill =gray!20] (-.2,7) rectangle (6.2,8);
\draw[fill =gray!60] (5.6,4.5) rectangle (3.3,5.5);
\draw[fill =gray!20] (-0.1,4) rectangle (6,3);
\node at (-1.5,5) [scale=.5]{$x+a-1$};
\node at (3,3.5) [scale=.5]{$(n+x-1-s,s)$};
\node at (4.5,5) [scale=.5]{$x$};
\node at (3,7.5) [scale=.5]{$(n+x-2-s',s')$};
\end{tikzpicture}} \\
\end{pmatrix}.
\end{align}
The differential from $ \P^{(x+a-1)}\Q^{(n-1+x-s,s)^t}[x]\rightarrow \P^{(x+a-1)}\Q^{(n+x-1-s',s')^t}[x-1]$ on the top right corner of \eqref{eq:s-differential} equals $\rho_{n-2}^{s'}\circ \iota_s^{n-2}$. By Proposition \ref{prop:PPmerge} it is an isomorphism if and only if $s=s'$ and is zero otherwise. Thus, the map between homological degrees $x$ and $x-1$ given below is zero whenever $s\neq s'$. 
\[ 
\d: \bigoplus_{s=0}^{(n-1,x)}\P^{(x+a-1)}\Q^{(n-1+x-s,s)^t}[x]\rightarrow \bigoplus_{s'=0}^{(n,x-1)} \P^{(x+a-1)}\Q^{(n+x-1-s',s')^t}[x-1].\]
Specifically, for $n>x$ this map becomes
\[
\d: \bigoplus_{s=0}^{x} \P^{(x+a-1)}\Q^{(n-1+x-s,s)^t}[x] \to \bigoplus_{s'=0}^{x-1}\P^{(x+a-1)}\Q^{(n+x-1-s',s')^t}[x-1].\]
Since the terms with $0\leq s \leq x-1$ in homological degree $x$ are bijectively mapped onto the terms with $0\leq s'\leq x-1$ in homological degree $x-1$, then for each homological degree $x<n$ the only terms that are not canceled by Lemma \ref{lem:gaussian-elimination} are $\P^{(x+a-1)}\Q^{(n-1,x)^t}[x]$. Moreover, since the maps
\[
\P^{(x+a-1)}\Q^{(n-1,x)^t}[x] \to \bigoplus_{s'=0}^{x-1}\P^{(x+a-1)}\Q^{(n+x-1-s',s')^t}[x-1]\]
are zero, then the Gaussian elimination will not alter the existing arrow between $\P^{(x+a-1)}\Q^{(n-1,x)^t}[x] \to \P^{(x+a-2)}\Q^{(n-1,x-1)^t}[x-1]$. As a result, there is a chain homotopy equivalence
\[\left\lbrace \Q^{(n)^t}\B_a, \d_x \right\rbrace_{n>x\geq (0,-a+1)} \simeq \left\lbrace  \P^{(x+a-1)}\Q^{(n-1,x)^t}[x], \d_x \right\rbrace_{n>x\geq (0,-a+1)}.\] 
Likewise, if $x\geq n$ then
\[
\d: \bigoplus_{s=0}^{n-1} \P^{(x+a)}\Q^{(n+x-s,s)^t}[x+1] \to \bigoplus_{s'=0}^{n}\P^{(x+a)}\Q^{(n+x-s',s')^t}[x].\]
By the same argument as before we obtain that $\lbrace \Q^{(n)^t}\B_a, \d_x \rbrace_{x\geq n} \simeq \lbrace  \P^{(x+a)}\Q^{(x,n)^t}[x], \d_x \rbrace_{x\geq n}$.

When considering $\d:(\Q^{(n)^t}\B_a)_n \to (\Q^{(n)^t}\B_a)_{n-1}$, however, the situation is different since we have an isomorphism between all terms in the source and target as follows
\[
\d: \bigoplus_{s=0}^{n-1} \P^{(n+a-1)}\Q^{(2n-1-s,s)^t}[n] \to \bigoplus_{s'=0}^{n-1}\P^{(n+a-1)}\Q^{(2n-1-s',s')^t}[n-1].\]
Therefore, the Gaussian elimination in this homological degree alters the differential from $\P^{(n+a)}\Q^{(n,n)^t}[n] \to \P^{(n+a-1)}\Q^{(n-1,n)^t}[n-1]$, which by Lemma \ref{lem:gaussian-elimination} is given by the diagram for $\mathsf{D}$ at $x=n$ in the statement of the Lemma \ref{lem:QC-reduction}.
Putting all this together we see that
\[\Q^{(n)^t}\B_{a} \simeq \dots \xrightarrow{\mathsf{d}_x} \P^{(n+a)}\Q^{(n,n)^t}[n] \xrightarrow{\mathsf{D}}\P^{(n+a-2)}\Q^{(n-1,n-1)^t}[n-1]\xrightarrow{\mathsf{d}_x} \dots.\] 
Define the chain map $\mathsf{D}:\lbrace  \P^{(x+a)}\Q^{(x,n)^t}[x-1], \d_x \rbrace_{x\geq n} \to \lbrace  \P^{(x+a-1)}\Q^{(n-1,x)^t}[x], \d_x \rbrace_{n>x\geq (0,-a+1)}$ by setting $\mathsf{D}_x =0$ for all $x\neq n$ and $\mathsf{D}_n$ equal to the previous diagram. It then follows that $\lbrace \Q^{(n)^t}\B_a, \d_x \rbrace_{x\geq(0,-a+1)}$ is homotopy equivalent to Cone$(\mathsf{D})$.
\end{proof}

\begin{lemma}\label{lem:CP-reduction}
Given any $b \in \Z$ and $n \in \N$, the chain complex $\lbrace \B_{b-1}\P^{(n)},\mathsf{d}_{b-1} \otimes \1_n \rbrace $ is homotopy equivalent to \emph{Cone ($\mathsf{D}$)}, where $\mathsf{D}$ is the chain map:
\begin{equation*}
\left\lbrace \bigoplus_{y\geq (n,0,b-1)}\P^{(y,n)}\Q^{(y-b+1)^t}[y-b], \d_y \right\rbrace \xrightarrow{\mathsf{D}} \left\lbrace\bigoplus^{n-1}_{y\geq (0,b)}\P^{(n-1,y)}\Q^{(y-b)^t}[y-b+1],\mathsf{d}_y \right\rbrace
\end{equation*}
where $\mathsf{d}_y= \hackcenter{\begin{tikzpicture} [xscale=-1,scale=.25]
\draw[line width= 0.7mm,
    decoration={markings,mark=at position 1 with {\arrow[line width=.3mm]{>}}},
    postaction={decorate},shorten >=1pt] (4.5,2) to (4.5,8.9);
\draw[line width= 0.7mm,
    decoration={markings,mark=at position 0.04 with {\arrow[line width=.3mm]{<}}},
    postaction={decorate},shorten >=.6pt] (-1.5,2) to (-1.5, 8.9);
\draw[line width= 0.7mm,
    decoration={markings,mark=at position 1 with {\arrow[line width=.3mm]{>}}},
    postaction={decorate},shorten >=1pt] (1.5,2) to (1.5, 8.9);
\draw[directed=.4] (4.2, 5.5)..controls ++(0,1) and ++(0,1)..(-.8,5.5);
\draw[fill =gray!60] (-3,4.5) rectangle (-0,5.5);
\draw[fill =gray!20] (0.4,7) rectangle (5.6,8);
\draw[fill =white] (5.6,4.5) rectangle (3.3,5.5);
\draw[fill =gray!20] (0.4,4) rectangle (5.6,3);
\node at (-1.5,5) [scale=.5]{$y-b+1$};
\end{tikzpicture}}\;\;$ and $\;\;
\mathsf{D}= \hackcenter{\begin{tikzpicture} [xscale=-1,scale=.25]
\draw[line width=0.7mm, -] (4.5,1.7) to (4.5,8.5);
\draw[line width= 0.7mm,
    decoration={markings,mark=at position 0.04 with {\arrow[line width=.3mm]{<}}},
    postaction={decorate},shorten >=.6pt] (-1.5,1.7) to (-1.5, 8.5);
\draw[line width=0.7mm, -] (1.5,1.7) to (1.5, 8.5);
\draw[directed=.4] (4.2, 5.5)..controls ++(0,1) and ++(0,1)..(-.8,5.5);
\draw[fill =gray!60] (-3,4.5) rectangle (-0,5.5);
\draw[fill =gray!20] (0.4,7) rectangle (5.6,8);
\draw[fill =white] (5.6,4.5) rectangle (3.3,5.5);
\draw[fill =gray!20] (0.4,4) rectangle (5.6,3);
\node at (-1.5,5) [scale=.5]{$n-b+1$};
\node at (3,3.5) [scale=.5]{$(n,n)$};
\node at (4.5,5) [scale=.5]{$n$};
\begin{scope}[shift={(0,4)}]
\draw[line width= 0.7mm,
    decoration={markings,mark=at position 1 with {\arrow[line width=.3mm]{>}}},
    postaction={decorate},shorten >=1pt] (4.5,2) to (4.5,8.9);
\draw[line width=0.7mm, -] (-1.5,2) to (-1.5, 8.9);
\draw[line width= 0.7mm,
    decoration={markings,mark=at position 1 with {\arrow[line width=.3mm]{>}}},
    postaction={decorate},shorten >=1pt] (1.5,2) to (1.5, 8.9);
\draw[directed=.4] (4.2, 5.5)..controls ++(0,1) and ++(0,1)..(-.8,5.5);
\draw[fill =gray!60] (-3,4.5) rectangle (-0,5.5);
\draw[fill =gray!20] (0.4,7) rectangle (5.6,8);
\draw[fill =white] (5.6,4.5) rectangle (3.3,5.5);
\draw[fill =gray!20] (0.4,4) rectangle (5.6,3);
\node at (-1.5,5) [scale=.5]{$n-b$};
\node at (3,3.5) [scale=.5]{$(n,n-1)$};
\node at (4.5,5) [scale=.5]{$n$};
\node at (3,7.5) [scale=.5]{$(n-1,n-1)$};
\end{scope}
\end{tikzpicture}}\;\;$ for y=n and zero otherwise. 
\end{lemma}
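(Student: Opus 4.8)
The plan is to prove Lemma~\ref{lem:CP-reduction} by the same strategy as Lemma~\ref{lem:QC-reduction}, exploiting the duality between $\B_a$ and $\B_a^*$ (or, equivalently, by applying the contravariant duality on $\H$ that swaps $\P$ and $\Q$ and reverses orientations). Since $\B_{b-1} = \{\bigoplus_{x-y=b-1} \P^{(x)}\Q^{(y)^t}[y], \mathsf{d}_{b-1}\}$, tensoring on the right with $\P^{(n)}$ gives the bi-complex $\{\P^{(x)}\Q^{(y)^t}\P^{(n)}[y], \mathsf{d}_{b-1}\otimes\1_n\}$ whose only nontrivial differential (for fixed reindexing) comes from $\mathsf{d}_{b-1}$. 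First I would apply Proposition~\ref{prop:Q*Pswap} (the analogue $\Q^{(y)^t}\P^{(n)}\cong \P^{(n)}\Q^{(y)^t}\oplus \P^{(n-1)}\Q^{(y-1)^t}$) to each chain group, so that each $\P^{(x)}\Q^{(y)^t}\P^{(n)}[y]$ becomes $\P^{(x)}\P^{(n)}\Q^{(y)^t}[y] \oplus \P^{(x)}\P^{(n-1)}\Q^{(y-1)^t}[y]$, and then Proposition~\ref{prop:PPmerge} to decompose $\P^{(x)}\P^{(n)} \cong \bigoplus_{s=0}^{\min(x,n)} \P^{(x+n-s,s)}$ and similarly for $\P^{(x)}\P^{(n-1)}$.

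Next I would compute the induced differential on the decomposed complex, summand by summand, exactly mirroring the matrix~\eqref{eq:s-differential} in the proof of Lemma~\ref{lem:QC-reduction}. The key local computation is that the component of the differential landing in $\P^{(x)}\P^{(n-1)}\Q^{(y-1)^t}$ coming from the $\P^{(x)}\P^{(n-1)}\Q^{(y-1)^t}$ summand one degree up — i.e. the composite $\rho^{s'}\circ\iota_s$ through the $\P^{(n-1)}$-merging maps of Proposition~\ref{prop:PPmerge} — is an isomorphism precisely when $s=s'$ and zero otherwise. This lets me invoke Lemma~\ref{lem:gaussian-elimination} (Gaussian elimination) in each homological degree: for $y$ with $y-b+1 > n$, i.e. $y \geq n$ (dually to the range $x\geq n$ before), all but the top summand $\P^{(y,n)}\Q^{(y-b+1)^t}$ cancel, while for $y-b+1 < n$ all but $\P^{(n-1,y)}\Q^{(y-b)^t}$ cancel. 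In the single ``crossover'' degree $y=n$ the source and target of the relevant piece of the differential have matching sets of summands, so Gaussian elimination there does not kill everything but instead modifies the surviving differential $\P^{(n,n)}\Q^{(n-b+1)^t}[n-b] \to \P^{(n-1,n-1)}\Q^{(n-b)^t}[n-b+1]$ into the composite $\mathsf{D}$ displayed in the statement. I would then use Proposition~\ref{prop:SimultSimp} to perform all these Gaussian eliminations simultaneously (the complex $\B_{b-1}\P^{(n)}$ is bounded below, so part~(i) of that proposition applies).

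Assembling the two halves, the complex $\B_{b-1}\P^{(n)}$ is homotopy equivalent to a complex consisting of $\{\P^{(y,n)}\Q^{(y-b+1)^t}[y-b]\}_{y\geq(n,0,b-1)}$ in the ``upper'' range, $\{\P^{(n-1,y)}\Q^{(y-b)^t}[y-b+1]\}_{n-1\geq y\geq(0,b)}$ in the ``lower'' range, joined at $y=n$ by the single nonzero component $\mathsf{D}_n$, and otherwise with differentials $\mathsf{d}_y$ of the stated form. By the standard identification of a two-term-joined complex with a mapping cone (the definition of $\mathrm{Cone}$ recalled in Section~\ref{sec:HomAlgebra}), this is exactly $\mathrm{Cone}(\mathsf{D})$ for the chain map $\mathsf{D}$ with $\mathsf{D}_n$ as above and $\mathsf{D}_y=0$ otherwise — one checks that $\mathsf{D}$ is a chain map, i.e. $\mathsf{d}\circ\mathsf{D} = \mathsf{D}\circ\mathsf{d}$, which reduces to a diagrammatic identity provable by the absorption relations \eqref{eq:sym-anti-absorbsion}, \eqref{eq:sym-anti-cross-absorbsion} and \eqref{eq:whiteblackbox}.

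I expect the main obstacle to be the bookkeeping in the crossover degree $y=n$: one must verify carefully that the Gaussian elimination there produces precisely the displayed diagram for $\mathsf{D}$ (including the correct scalar, absorbed into the thick-strand notation) rather than something that differs by a sign or a nonzero scalar, and that the surviving arrows $\P^{(y,n)}\Q^{(y-b+1)^t}[y-b]\to\P^{(y-1,n)}\Q^{(y-b)^t}[y-b-1]$ in the upper range and the arrows in the lower range are genuinely left untouched by the eliminations in adjacent degrees (this is the point where one uses that the relevant off-diagonal maps vanish, as in the ``$\mathsf{D}_x=0$ for $x\neq n$'' observation of Lemma~\ref{lem:QC-reduction}). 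The orientation-reversal/duality argument handles most of this automatically, but I would still want to spell out the diagrammatic manipulations at $y=n$ in full, since that is where the two boundary behaviors of $\B_{b-1}\P^{(n)}$ meet and where all the genuine content of the lemma is concentrated.
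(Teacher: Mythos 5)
Your proposal is correct and follows essentially the same route as the paper, whose proof of Lemma~\ref{lem:CP-reduction} is literally ``identical to Lemma~\ref{lem:QC-reduction}'': you carry out precisely that dual argument (swap via Proposition~\ref{prop:Q*Pswap}, merge via Proposition~\ref{prop:PPmerge}, Gaussian elimination with the crossover producing $\mathsf{D}$ at the single degree where the summand counts jump, then the cone identification). Only a cosmetic index slip: in the statement's indexing the dividing condition is simply $y\geq n$ versus $y\leq n-1$ (comparing the two symmetrizer thicknesses), not ``$y-b+1>n$''; your surviving summands, crossover degree, and the displayed map $\mathsf{D}$ are nonetheless the right ones.
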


\begin{proof}
The proof is identical to Lemma \ref{lem:QC-reduction}.
\end{proof}

Since $\B_{b-1} \otimes \B_a$ is the total complex of the bi-complex with columns indexed by the homological degrees of $\B_a$ and whose rows are indexed by the homological degrees of $\B_{b-1}$, then combining  \eqref{eq:Ca} and Definition \ref{def:ChainTensorProd} we have: 
\begin{equation*}\label{eq:Cb+1Ca-original}
\B_{b-1}\otimes\B_a = \text{Tot}^{\oplus}\left\lbrace \B_{b-1} \P^{(x)}\Q^{(x-a)^t} , \d_{b-1}\otimes \1_x,(-1)^x\1_{\B_{b-1}} \otimes \d_a \right\rbrace_{x\geq (a,0)}.
\end{equation*}
Since the indexing set of the bi-complex is $I=\lbrace x\geq$ max$(0,a)\rbrace$ which is bounded below, by Proposition \ref{prop:SimultSimp} we can apply Lemma \ref{lem:CP-reduction} to each column in $\B_{b-1}\otimes \B_a$ and simultaneously reduce all the columns of the bi-complex. The simultaneous chain homotopies, however, alter the differentials along the rows in a nontrivial manner which Lemma \ref{lem:CP-reduction} does not address. In the following lemma we investigate the bi-complex further. 
 
\begin{proposition}\label{prop:F} Given any $a,b \in \Z$, the chain complex $\B_{b-1}\otimes\B_a  \in \K(\H)$ is homotopy equivalent to \emph{Tot}$^\oplus\left\lbrace\bigoplus_{y\geq (0,b-1)}\mathcal{F}_{x,y}^{\mathbb{I}} \oplus \mathcal{F}_{x,y}^{\mathbb{II}}, \mathsf{d}_y,\mathsf{d}^x\right\rbrace_{x\geq (a,0)}$ defined by:
\[
\mathcal{F}_{x,y}^{\mathbb{I}} := \bigoplus_{s=0}^{(y,x)} \bigoplus_{r=0}^{(y-b+1,x-a)} \P^{(x+y-s,s)}\Q^{(x+y+1-a-b-r,r)^t}[x+y-a-b+1]
\]
\[
\mathcal{F}_{x,y}^{\mathbb{II}} := \bigoplus_{s=0}^{(y,x-1)} \bigoplus_{r=0}^{(y-b,x-a)} \P^{(x+y-1-s,s)}\Q^{(x+y-a-b-r,r)^t}[x+y-a-b+1]
\]
where $\mathsf{d}_y:\mathcal{F}_{x,y}^{\mathbb{I}} \oplus \mathcal{F}_{x,y}^{\mathbb{II}} \to \mathcal{F}_{x,y-1}^{\mathbb{I}} \oplus \mathcal{F}_{x,y-1}^{\mathbb{II}}$ and  $\mathsf{d}^x:\mathcal{F}_{x,y}^{\mathbb{I}} \oplus \mathcal{F}_{x,y}^{\mathbb{II}} \to \mathcal{F}_{x-1,y}^{\mathbb{I}} \oplus \mathcal{F}_{x-1,y}^{\mathbb{II}}$ on each summand $s,r \mapsto s',r'$ is given by:
\begin{equation*}
{\mathsf{d}_{y}|_{s,r}} =  
\left(\begin{matrix}
\hackcenter{\begin{tikzpicture} [scale=.35, xscale=.7, yscale=1]
\draw[line width= 0.7mm,
    decoration={markings,mark=at position 1 with {\arrow[line width=.3mm]{>}}},
    postaction={decorate},shorten >=1pt] (-4.5,2.5) to (-4.5,8);
\draw[line width= 0.7mm,
    decoration={markings,mark=at position 0.06 with {\arrow[line width=.3mm]{<}}},
    postaction={decorate},shorten >=.6pt] (4.5,2.5) to (4.5,8);
\draw[line width= 0.7mm,
    decoration={markings,mark=at position 1 with {\arrow[line width=.3mm]{>}}},
    postaction={decorate},shorten >=1pt] (-1.5,2.5) to (-1.5, 8);
\draw[line width= 0.7mm,
    decoration={markings,mark=at position 0.06 with {\arrow[line width=.3mm]{<}}},
    postaction={decorate},shorten >=.6pt] (1.5,2.5) to (1.5, 8);
\draw[directed=.8] (-3.8,5.5)..controls ++(0,1) and ++(0,1)..(1.2, 5.5);
\draw[fill =gray!20] (-5.8,6.5) rectangle (-0.4,7.5);
\draw[fill =gray!20] (-5.8,3) rectangle (-0.4,4);
\draw[fill =gray!20] (5.8,6.5) rectangle (0.4,7.5);
\draw[fill =gray!20] (5.8,3) rectangle (0.4,4);
\draw[fill =white] (-5.8,4.5) rectangle (-3.3,5.5);
\draw[fill =white] (-2.7,4.5) rectangle (-0.4,5.5);
\draw[fill =gray!60] (5.8,4.5) rectangle (3.2,5.5);
\draw[fill =gray!60] (2.9,4.5) rectangle (0,5.5);
\node at (-3,7) [scale=.75]{$(\cdots,s')$};
\node at (-3,3.5) [scale=.75]{$(\cdots,s)$};
\node at (-4.5,5) [scale=.75]{$y$};
\node at (-1.5,5) [scale=.75]{$x$};
\node at (3,7) [scale=.75]{$(\cdots,r')$};
\node at (3,3.5) [scale=.75]{$(\cdots,r)$};
\node at (1.5,5)[scale=.5] {$y-b+1$};
\node at (4.5,5) [scale=.75]{$x-a$};
\end{tikzpicture}} 
 & 
\delta_{s,s'}\delta_{r,r'} \1
 \\
0
&
\hackcenter{\begin{tikzpicture} [scale=.35, xscale=.7, yscale=1]
\draw[line width= 0.7mm,
    decoration={markings,mark=at position 1 with {\arrow[line width=.3mm]{>}}},
    postaction={decorate},shorten >=1pt] (-4.5,2.5) to (-4.5,8);
\draw[line width= 0.7mm,
    decoration={markings,mark=at position 0.06 with {\arrow[line width=.3mm]{<}}},
    postaction={decorate},shorten >=.6pt] (4.5,2.5) to (4.5,8);
\draw[line width= 0.7mm,
    decoration={markings,mark=at position 1 with {\arrow[line width=.3mm]{>}}},
    postaction={decorate},shorten >=1pt] (-1.5,2.5) to (-1.5, 8);
\draw[line width= 0.7mm,
    decoration={markings,mark=at position 0.06 with {\arrow[line width=.3mm]{<}}},
    postaction={decorate},shorten >=.6pt] (1.5,2.5) to (1.5, 8);
\draw[directed=.8] (-3.8,5.5)..controls ++(0,1) and ++(0,1)..(1.2, 5.5);
\draw[fill =gray!20] (-5.8,6.5) rectangle (-0.4,7.5);
\draw[fill =gray!20] (-5.8,3) rectangle (-0.4,4);
\draw[fill =gray!20] (5.8,6.5) rectangle (0.4,7.5);
\draw[fill =gray!20] (5.8,3) rectangle (0.4,4);
\draw[fill =white] (-5.8,4.5) rectangle (-3.3,5.5);
\draw[fill =white] (-2.7,4.5) rectangle (-0.4,5.5);
\draw[fill =gray!60] (5.8,4.5) rectangle (3.2,5.5);
\draw[fill =gray!60] (2.9,4.5) rectangle (0,5.5);
\node at (-3,7) [scale=.75]{$(\cdots,s')$};
\node at (-3,3.5) [scale=.75]{$(\cdots,s)$};
\node at (-4.5,5) [scale=.75]{$y$};
\node at (-1.5,5) [scale=.65]{$x-1$};
\node at (3,7) [scale=.75]{$(\cdots,r')$};
\node at (3,3.5) [scale=.75]{$(\cdots,r)$};
\node at (1.5,5)[scale=.75] {$y-b$};
\node at (4.5,5) [scale=.75]{$x-a$};
\end{tikzpicture}}
\\
\end{matrix}\right)\;\;
{\mathsf{d}^{x}|_{s,r}} = 
\begin{pmatrix}
\;\;
\hackcenter{\begin{tikzpicture} [scale=.35, xscale=.7]
\draw[line width= 0.7mm,
    decoration={markings,mark=at position 1 with {\arrow[line width=.3mm]{>}}},
    postaction={decorate},shorten >=1pt] (-4.5,2.5) to (-4.5,8);
\draw[line width= 0.7mm,
    decoration={markings,mark=at position 0.06 with {\arrow[line width=.3mm]{<}}},
    postaction={decorate},shorten >=.6pt] (4.5,2.5) to (4.5,8);
\draw[line width= 0.7mm,
    decoration={markings,mark=at position 1 with {\arrow[line width=.3mm]{>}}},
    postaction={decorate},shorten >=1pt] (-1.5,2.5) to (-1.5, 8);
\draw[line width= 0.7mm,
    decoration={markings,mark=at position 0.06 with {\arrow[line width=.3mm]{<}}},
    postaction={decorate},shorten >=.6pt] (1.5,2.5) to (1.5, 8);
\draw[directed=.6] (-.8,5.5)..controls ++(0,1) and ++(0,1)..(4.2, 5.5);
\draw[fill =gray!20] (-5.8,6.5) rectangle (-0.4,7.5);
\draw[fill =gray!20] (-5.8,3) rectangle (-0.4,4);
\draw[fill =gray!20] (5.8,6.5) rectangle (0.4,7.5);
\draw[fill =gray!20] (5.8,3) rectangle (0.4,4);
\draw[fill =white] (-5.8,4.5) rectangle (-3.3,5.5);
\draw[fill =white] (-2.7,4.5) rectangle (-0.4,5.5);
\draw[fill =gray!60] (5.8,4.5) rectangle (3.2,5.5);
\draw[fill =gray!60] (2.9,4.5) rectangle (0,5.5);
\node at (-3,7) [scale=.75]{$(\cdots,s')$};
\node at (-3,3.5) [scale=.75]{$(\cdots,s)$};
\node at (-4.5,5) [scale=.75]{$y$};
\node at (-1.5,5) [scale=.75]{$x$};
\node at (3,7) [scale=.75]{$(\cdots,r')$};
\node at (3,3.5) [scale=.75]{$(\cdots,r)$};
\node at (1.5,5)[scale=.5] {$y-b+1$};
\node at (4.5,5) [scale=.75]{$x-a$};
\end{tikzpicture}}
 & 
\delta_{s,s'}\delta_{r,r'} \1 \\
0
&
\hackcenter{\begin{tikzpicture} [scale=.35, xscale=.7, yscale=1]
\draw[line width= 0.7mm,
    decoration={markings,mark=at position 1 with {\arrow[line width=.3mm]{>}}},
    postaction={decorate},shorten >=1pt] (-4.5,2.5) to (-4.5,8);
\draw[line width= 0.7mm,
    decoration={markings,mark=at position 0.06 with {\arrow[line width=.3mm]{<}}},
    postaction={decorate},shorten >=.6pt] (4.5,2.5) to (4.5,8);
\draw[line width= 0.7mm,
    decoration={markings,mark=at position 1 with {\arrow[line width=.3mm]{>}}},
    postaction={decorate},shorten >=1pt] (-1.5,2.5) to (-1.5, 8);
\draw[line width= 0.7mm,
    decoration={markings,mark=at position 0.06 with {\arrow[line width=.3mm]{<}}},
    postaction={decorate},shorten >=.6pt] (1.5,2.5) to (1.5, 8);
\draw[directed=.6] (-.8,5.5)..controls ++(0,1) and ++(0,1)..(4.2, 5.5);
\draw[fill =gray!20] (-5.8,6.5) rectangle (-0.4,7.5);
\draw[fill =gray!20] (-5.8,3) rectangle (-0.4,4);
\draw[fill =gray!20] (5.8,6.5) rectangle (0.4,7.5);
\draw[fill =gray!20] (5.8,3) rectangle (0.4,4);
\draw[fill =white] (-5.8,4.5) rectangle (-3.3,5.5);
\draw[fill =white] (-2.7,4.5) rectangle (-0.4,5.5);
\draw[fill =gray!60] (5.8,4.5) rectangle (3.2,5.5);
\draw[fill =gray!60] (2.9,4.5) rectangle (0,5.5);
\node at (-3,7) [scale=.75]{$(\cdots,s')$};
\node at (-3,3.5) [scale=.75]{$(\cdots,s)$};
\node at (-4.5,5) [scale=.75]{$y$};
\node at (-1.5,5) [scale=.65]{$x-1$};
\node at (3,7) [scale=.75]{$(\cdots,r')$};
\node at (3,3.5) [scale=.75]{$(\cdots,r)$};
\node at (1.5,5)[scale=.75] {$y-b$};
\node at (4.5,5) [scale=.75]{$x-a$};
\end{tikzpicture}}
\\
\end{pmatrix}.
\end{equation*}
In particular, the restriction $\mathsf{d}^x: \mathcal{F}_{x,y}^{\mathbb{II}} {\color{blue}\to} \mathcal{F}_{x-1,y}^{\mathbb{I}}$ is injective for $y-b+1<x-a$, surjective for $y-b+1>x-a$, and bijective for $y-b+1=x-a$. Likewise, the restriction $\mathsf{d}_y:\mathcal{F}_{x,y}^{\mathbb{II}} {\color{black!40!green}\to} \mathcal{F}_{x,y-1}^{\mathbb{I}}$ is surjective for $x>y$, injective for $x<y$, and bijective for $x=y$. 
\end{proposition}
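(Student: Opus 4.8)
The plan is to prove Proposition~\ref{prop:F} by combining the column-reduction of Lemma~\ref{lem:CP-reduction} with a careful bookkeeping of how the simultaneous chain homotopies deform the differentials along the rows, and then expanding the resulting complex using the Littlewood-Richardson isomorphisms of Section~\ref{subsec:LRBranchingIsos}. First I would recall from Definition~\ref{def:ChainTensorProd} that $\B_{b-1}\otimes\B_a = \text{Tot}^{\oplus}\{\B_{b-1}\P^{(x)}\Q^{(x-a)^t}, \d_{b-1}\otimes\1_x, (-1)^x\1\otimes\d_a\}_{x\geq(a,0)}$, so that the $x$-th column is $\B_{b-1}\P^{(x)}\Q^{(x-a)^t}$. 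Since $\Q^{(x-a)^t}$ commutes past the whole picture (it is a single-term complex tensored on the right), each column is $(\{\B_{b-1}\P^{(x)},\d_{b-1}\otimes\1_x\})\Q^{(x-a)^t}$, and Lemma~\ref{lem:CP-reduction} identifies this with $\text{Cone}(\mathsf{D}_x)\Q^{(x-a)^t}$, where $\text{Cone}(\mathsf{D}_x)$ interpolates between $\{\P^{(y,x)}\Q^{(y-b+1)^t}[y-b]\}_{y\geq(x,0,b-1)}$ and $\{\P^{(x-1,y)}\Q^{(y-b)^t}[y-b+1]\}^{x-1}_{y\geq(0,b)}$. Because the indexing set $I=\{x\geq\max(0,a)\}$ is bounded below, Proposition~\ref{prop:SimultSimp}(i) lets me perform all these column reductions simultaneously; the price is that the horizontal differential $\1_{\B_{b-1}}\otimes\d_a$ must be transported through the homotopy, which by the standard formula for transport of structure along a homotopy equivalence produces a (possibly) modified horizontal map together with correction terms of the form $\mathsf{H}\circ(\text{old differential})\circ\iota$ etc.

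Next I would expand the two single-column pieces using the branching isomorphisms: apply Proposition~\ref{prop:PPmerge} (or rather the analogue for $\P^{(y)}\P^{(x)}$, i.e. $\P^{(m)}\P^{(n)}\cong\bigoplus_{s}\P^{(m+n-s,s)}$) to $\P^{(y,x)}$ wait --- more precisely $\P^{(y)}\P^{(x)}\cong\bigoplus_{s=0}^{\min(x,y)}\P^{(x+y-s,s)}$, and similarly expand $\P^{(x-1)}\P^{(y)}$, while for the $\Q$-factors I would use Proposition~\ref{prop:QPswap} to write $\Q^{(y-b+1)^t}\Q^{(x-a)^t}\cong\bigoplus_r \Q^{(x+y+1-a-b-r,r)^t}$ and $\Q^{(y-b)^t}\Q^{(x-a)^t}\cong\bigoplus_r\Q^{(x+y-a-b-r,r)^t}$ (using the transposed versions flagged in the commented remarks after those propositions). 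Matching homological degrees --- $[y-b]$ for $\P^{(y)}\P^{(x)}$ tensored with $\Q^{(x-a)^t}$ in column $x$ which sits in degree... I need to track that $\P^{(x)}$ in $\B_a$ lives in degree $x-a$ shifted appropriately; carrying the arithmetic through gives the total degree $x+y-a-b+1$ appearing in $\mathcal{F}^{\mathbb{I}}_{x,y}$ and $\mathcal{F}^{\mathbb{II}}_{x,y}$. This produces exactly the claimed summands, with $\mathcal{F}^{\mathbb{I}}$ coming from the ``$\P^{(y,n)}\Q$'' half of the cone and $\mathcal{F}^{\mathbb{II}}$ from the ``$\P^{(n-1,y)}\Q$'' half.

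Then I would compute the vertical and horizontal differentials on the expanded complex. The vertical differential $\mathsf{d}_y$ within a fixed column is just the differential of $\text{Cone}(\mathsf{D})$ from Lemma~\ref{lem:CP-reduction} conjugated by the branching isomorphisms, which gives the lower-triangular $2\times2$ matrix with the adjunction-type diagram in the $(\mathbb{I},\mathbb{I})$ and $(\mathbb{II},\mathbb{II})$ slots and an identity-component $\delta_{s,s'}\delta_{r,r'}\1$ in the off-diagonal slot coming from the cone map $\mathsf{D}$ (which at $y=n$ is an identity-type merge after expansion, zero otherwise, but the cone structure makes the composite differential carry the identity on the matching idempotent summand). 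The horizontal differential $\mathsf{d}^x$ is the transported $\1\otimes\d_a$; here I must verify that the correction terms coming from transport either vanish (by the box relations \eqref{eq:whiteblackbox}, \eqref{eq:sym-anti-absorbsion}) or reassemble into the claimed diagrammatic matrix, again lower-triangular with an identity off-diagonal component. Finally I would verify the injectivity/surjectivity/bijectivity claims for the restricted maps $\mathsf{d}^x:\mathcal{F}^{\mathbb{II}}_{x,y}\to\mathcal{F}^{\mathbb{I}}_{x-1,y}$ and $\mathsf{d}_y:\mathcal{F}^{\mathbb{II}}_{x,y}\to\mathcal{F}^{\mathbb{I}}_{x,y-1}$: these follow by comparing the ranges of the summation indices $s,r,s',r'$ --- e.g. for $\mathsf{d}^x$ one compares $\min(y,x-1),\min(y-b,x-a)$ (source ranges) against $\min(y,x-1),\min(y-b+1,x-1-a)$ (target ranges), and counting shows the index-preserving identity component is injective exactly when $y-b+1<x-a$, etc.

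The main obstacle I expect is the bookkeeping of the transported horizontal differential: when one performs the simultaneous Gaussian eliminations/cone reductions of Lemma~\ref{lem:CP-reduction} column by column, the homotopy $\mathsf{H}$ used in each column interacts with the horizontal map $\1_{\B_{b-1}}\otimes\d_a$ to produce a genuinely new horizontal differential, and one must show that after expanding via the branching isomorphisms this new map is still ``upper/lower triangular with identity off-diagonal piece'' and in particular that the potentially dangerous composite correction terms $\mathsf{d}^{\mathbb{I}\mathbb{II}}$ (a map $\mathcal{F}^{\mathbb{I}}_{x,y}\to\mathcal{F}^{\mathbb{II}}_{x-1,y}$) vanish --- this is what makes the $2\times2$ matrix for $\mathsf{d}^x$ strictly lower triangular and is essential for the later Gaussian-elimination argument that finishes Theorem~\ref{thm:CatBernstein1}. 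Establishing that vanishing requires a diagrammatic argument: the composite would route strands through both a symmetrizer of the ``$(x)$-type'' box and an antisymmetrizer created by the cone map in a way that forces more than one strand between a white and a gray box, hence is zero by \eqref{eq:whiteblackbox}. Getting all the signs and degree shifts consistent (the $(-1)^x$ in the definition of the tensor product, the $[-1]$ inside the cone, the shifts $[y]$ and $[-x]$ in $\B_a,\B_a^*$) is tedious but routine once the diagrammatic skeleton is pinned down.
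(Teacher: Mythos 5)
Your strategy reduces too early, and as a result it does not prove the statement as written. Proposition \ref{prop:F} asserts a homotopy equivalence with a bicomplex whose chain groups retain \emph{every} summand $0\le s\le \min(y,x)$, $0\le r\le\min(y-b+1,x-a)$; in the paper it is obtained purely by conjugating the bicomplex $\lbrace \B_{b-1}\P^{(x)}\Q^{(x-a)^t}\rbrace$ by genuine object-level isomorphisms in $\H$: first the swap $\Q^{(y-b+1)^t}\P^{(x)}\cong\P^{(x)}\Q^{(y-b+1)^t}\oplus\P^{(x-1)}\Q^{(y-b)^t}$ (Proposition \ref{prop:Q*Pswap}), which produces the $\tilde{\mathcal F}^{\mathbb{I}}/\tilde{\mathcal F}^{\mathbb{II}}$ splitting, and then $\P^{(y)}\P^{(x)}\cong\bigoplus_s\P^{(x+y-s,s)}$ and $\Q^{(u)^t}\Q^{(w)^t}\cong\bigoplus_r\Q^{(u+w-r,r)^t}$ from Proposition \ref{prop:PPmerge}. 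Since these are isomorphisms applied to each chain group (not Gaussian eliminations), the differentials are simply $\rho\circ\d'\circ\iota$; there is no homotopy to transport, no correction terms to kill, and the injectivity/surjectivity claims at the end follow by comparing the $(s,r)$-index ranges against the index-preserving component $\delta_{s,s'}\delta_{r,r'}\1$, much as you indicate.

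By contrast, if you first apply Lemma \ref{lem:CP-reduction} column by column via Proposition \ref{prop:SimultSimp}, each column collapses to a cone built from the single two-row idempotents $\P^{(y,x)}\Q^{(y-b+1)^t}$ and $\P^{(x-1,y)}\Q^{(y-b)^t}$, i.e.\ only one value of $s$ survives in each homological degree. You cannot then recover $\mathcal F^{\mathbb{I}}_{x,y}$ and $\mathcal F^{\mathbb{II}}_{x,y}$ by ``expanding'' $\P^{(y,x)}$ with Proposition \ref{prop:PPmerge}: $\P^{(y,x)}$ is a single summand of $\P^{(y)}\P^{(x)}$, not the product, and your hesitation at exactly that step reflects the confusion. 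Your route therefore lands on a strictly smaller complex --- essentially the restricted pieces $\mathcal F^{\mathbb{I}}_{x,y}|_{s=x}$ and $\mathcal F^{\mathbb{II}}_{x,y}|_{s=y}$ that appear inside the proof of Theorem \ref{thm:BBa<b} --- rather than the one claimed here. The genuinely delicate issues you flag (how the simultaneous reductions deform the row differentials, and why no new $\mathcal F^{\mathbb{I}}\to\mathcal F^{\mathbb{II}}$ arrows appear) are real, but they belong to that later theorem, where the paper handles them with explicit zig-zag compositions rather than a generic transport-of-structure formula. To prove Proposition \ref{prop:F} itself, drop Lemma \ref{lem:CP-reduction} entirely and just perform the two termwise isomorphisms and conjugate the differentials.
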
 

\begin{proof}
Applying the chain homotopy given by the isomorphism $\Q^{(y-b+1)^t}\P^{(x)} \simeq \P^{(x)}\Q^{(y-b+1)^t} \oplus \P^{(x-1)}\Q^{(y-b)^t} $ from Proposition \ref{prop:QPswap} to $\B_{b-1}\otimes\B_a $, 
\begin{align*}
\B_{b-1}\otimes\B_a &= \text{Tot}^{\oplus}\left\lbrace \bigoplus_{y\geq (0,b-1)} \B_{b-1}\P^{(x)}\Q^{(x-a)^t} , \d_{b-1}\otimes \1_x,(-1)^x\1_{\B_{b-1}} \otimes \d_a \right\rbrace_{x\geq (a,0)}\\
&\simeq \text{Tot}^{\oplus}\left\lbrace \bigoplus_{y\geq (0,b-1)} \mathcal{\tilde{F}}_{x,y}^{\mathbb{I}} \oplus  \mathcal{\tilde{F}}_{x,y}^{\mathbb{II}}, \d_y,(-1)^x\d^x \right\rbrace_{x\geq (a,0)}
\end{align*}
with chain groups and differentials $\mathsf{d}_y': \mathcal{\tilde{F}}_{x,y}^{\mathbb{I}} \oplus \mathcal{\tilde{F}}_{x,y}^{\mathbb{II}} \to \mathcal{\tilde{F}}_{x,y-1}^{\mathbb{I}} \oplus \mathcal{\tilde{F}}_{x,y-1}^{\mathbb{II}}$ and $\mathsf{d}'^x: \mathcal{\tilde{F}}_{x,y}^{\mathbb{I}} \oplus \mathcal{\tilde{F}}_{x,y}^{\mathbb{II}} \to \mathcal{\tilde{F}}_{x-1,y}^{\mathbb{I}} \oplus \mathcal{\tilde{F}}_{x-1,y}^{\mathbb{II}}$ given by,
\[
\mathcal{\tilde{F}}_{x,y}^{\mathbb{I}} := \P^{(y)}\P^{(x)}\Q^{(y-b+1)^t}\Q^{(x-a)^t}[x+y-a-b+1]
\]
\[
\mathcal{\tilde{F}}_{x,y}^{\mathbb{II}} := \P^{(y)}\P^{(x-1)}\Q^{(y-b)^t}\Q^{(x-a)^t}[x+y-a-b+1] \]

\[{\mathsf{d}_{y}'} =
\hackcenter{\begin{pmatrix}
\begin{tikzpicture} [scale=.35, xscale=.7]
\draw[line width= 0.7mm,
    decoration={markings,mark=at position 1 with {\arrow[line width=.3mm]{>}}},
    postaction={decorate},shorten >=1pt] (-4.5,3.5) to (-4.5,7);
\draw[line width= 0.7mm,
    decoration={markings,mark=at position 0.06 with {\arrow[line width=.3mm]{<}}},
    postaction={decorate},shorten >=.6pt] (4.5,3.5) to (4.5,7);
\draw[line width= 0.7mm,
    decoration={markings,mark=at position 1 with {\arrow[line width=.3mm]{>}}},
    postaction={decorate},shorten >=1pt] (-1.5,3.5) to (-1.5, 7);
\draw[line width= 0.7mm,
    decoration={markings,mark=at position 0.06 with {\arrow[line width=.3mm]{<}}},
    postaction={decorate},shorten >=.6pt] (1.5,3.5) to (1.5, 7);
\draw[directed=.8] (-3.8,5.5)..controls ++(0,1) and ++(0,1)..(1.2, 5.5);
\draw[fill =white] (-5.8,4.5) rectangle (-3.3,5.5);
\draw[fill =white] (-2.7,4.5) rectangle (-0.4,5.5);
\draw[fill =gray!60] (5.8,4.5) rectangle (3.2,5.5);
\draw[fill =gray!60] (2.9,4.5) rectangle (0,5.5);
\node at (-4.5,5) [scale=.75]{$y$};
\node at (-1.5,5) [scale=.75]{$x$};
\node at (1.5,5)[scale=.5] {$y-b+1$};
\node at (4.5,5) [scale=.75]{$x-a$};
\end{tikzpicture} 
 & 
\begin{tikzpicture} [scale=.35, xscale=.7, yscale=.7]
\draw[line width= 0.7mm,
    decoration={markings,mark=at position 0.06 with {\arrow[line width=.3mm]{<}}},
    postaction={decorate},shorten >=.6pt] (4.5,3.5) to (4.5,8.5);
\draw[line width= 0.7mm,
    decoration={markings,mark=at position 1 with {\arrow[line width=.3mm]{>}}},
    postaction={decorate},shorten >=1pt] (-4.5,3.5) to (-4.5,8.5);
\draw[line width= 0.7mm,
    decoration={markings,mark=at position 0.06 with {\arrow[line width=.3mm]{<}}},
    postaction={decorate},shorten >=.6pt] (1.5,3.5) to (1.5, 8.5);
\draw[line width= 0.7mm,
    decoration={markings,mark=at position 1 with {\arrow[line width=.3mm]{>}}},
    postaction={decorate},shorten >=1pt] (-1.5,3.5) to (-1.5, 8.5);
\draw[directed= .8]  (-4.2, 5.5) to (-1.8,6.5);
\draw[fill =gray!60] (5.8,4.5) rectangle (3.3,5.5);
\draw[fill =gray!60] (3,4.5) rectangle (0.2,5.5);
\draw[fill =white] (-2.7,6.5) rectangle (-0.4,7.5);
\draw[fill =white] (-5.8,4.5) rectangle (-3.1,5.5);
\node at (4.5,5)[scale=.75]  {$x-a$};
\node at (1.5,5) [scale=.75] {$y-b$};
\node at (-4.5,5)[scale=.75]  {$y$};
\node at (-1.5,7)[scale=.75]  {$x$};
\end{tikzpicture} \\
0
&
\hackcenter{\begin{tikzpicture} [scale=.35, xscale=.7, yscale=.8]
\draw[line width= 0.7mm,
    decoration={markings,mark=at position 1 with {\arrow[line width=.3mm]{>}}},
    postaction={decorate},shorten >=1pt] (-4.5,3.5) to (-4.5,7);
\draw[line width= 0.7mm,
    decoration={markings,mark=at position 0.06 with {\arrow[line width=.3mm]{<}}},
    postaction={decorate},shorten >=.6pt] (4.5,3.5) to (4.5,7);
\draw[line width= 0.7mm,
    decoration={markings,mark=at position 1 with {\arrow[line width=.3mm]{>}}},
    postaction={decorate},shorten >=1pt] (-1.5,3.5) to (-1.5, 7);
\draw[line width= 0.7mm,
    decoration={markings,mark=at position 0.06 with {\arrow[line width=.3mm]{<}}},
    postaction={decorate},shorten >=.6pt] (1.5,3.5) to (1.5, 7);
\draw[directed=.8] (-3.8,5.5)..controls ++(0,1) and ++(0,1)..(1.2, 5.5);
\draw[fill =white] (-5.8,4.5) rectangle (-3.3,5.5);
\draw[fill =white] (-2.9,4.5) rectangle (-0.2,5.5);
\draw[fill =gray!60] (5.8,4.5) rectangle (3.2,5.5);
\draw[fill =gray!60] (2.7,4.5) rectangle (0.2,5.5);
\node at (-4.5,5) [scale=.75]{$y$};
\node at (-1.5,5) [scale=.75]{$x-1$};
\node at (1.5,5)[scale=.75] {$y-b$};
\node at (4.5,5) [scale=.75]{$x-a$};
\end{tikzpicture}}
\end{pmatrix}}
\;\;
{\mathsf{d}'^{x}} =  \hackcenter{
\begin{pmatrix}
\begin{tikzpicture} [scale=.35, xscale=.7]
\draw[line width= 0.7mm,
    decoration={markings,mark=at position 1 with {\arrow[line width=.3mm]{>}}},
    postaction={decorate},shorten >=1pt] (-4.5,3.5) to (-4.5,7);
\draw[line width= 0.7mm,
    decoration={markings,mark=at position 0.06 with {\arrow[line width=.3mm]{<}}},
    postaction={decorate},shorten >=.6pt] (4.5,3.5) to (4.5,7);
\draw[line width= 0.7mm,
    decoration={markings,mark=at position 1 with {\arrow[line width=.3mm]{>}}},
    postaction={decorate},shorten >=1pt] (-1.5,3.5) to (-1.5, 7);
\draw[line width= 0.7mm,
    decoration={markings,mark=at position 0.06 with {\arrow[line width=.3mm]{<}}},
    postaction={decorate},shorten >=.6pt] (1.5,3.5) to (1.5, 7);
\draw[directed=.6] (-.8,5.5)..controls ++(0,1) and ++(0,1)..(4.2, 5.5);
\draw[fill =white] (-5.8,4.5) rectangle (-3.3,5.5);
\draw[fill =white] (-2.7,4.5) rectangle (-0.4,5.5);
\draw[fill =gray!60] (5.8,4.5) rectangle (3.2,5.5);
\draw[fill =gray!60] (2.9,4.5) rectangle (0,5.5);
\node at (-4.5,5) [scale=.75]{$y$};
\node at (-1.5,5) [scale=.75]{$x$};
\node at (1.5,5)[scale=.5] {$y-b+1$};
\node at (4.5,5) [scale=.75]{$x-a$};
\end{tikzpicture} 
 & 
\begin{tikzpicture} [scale=.35, xscale = .7, yscale=.8]
\draw[line width= 0.7mm,
    decoration={markings,mark=at position 1 with {\arrow[line width=.3mm]{>}}},
    postaction={decorate},shorten >=1pt] (-4.5,3.5) to (-4.5,8.5);
\draw[line width= 0.7mm,
    decoration={markings,mark=at position 0.06 with {\arrow[line width=.3mm]{<}}},
    postaction={decorate},shorten >=.6pt] (4.5,3.5) to (4.5,8.5);
\draw[line width= 0.7mm,
    decoration={markings,mark=at position 1 with {\arrow[line width=.3mm]{>}}},
    postaction={decorate},shorten >=1pt] (-1.5,3.5) to (-1.5, 8.5);
\draw[line width= 0.7mm,
    decoration={markings,mark=at position 0.06 with {\arrow[line width=.3mm]{<}}},
    postaction={decorate},shorten >=.6pt] (1.5,3.5) to (1.5, 8.5);
\draw[directed= .6]  (1.8,6.5) to (4.2, 5.5);
\draw[fill =white] (-5.8,4.5) rectangle (-3.3,5.5);
\draw[fill =white] (-3,4.5) rectangle (-0.2,5.5);
\draw[fill =gray!60] (3,6.5) rectangle (0,7.5);
\draw[fill =gray!60] (5.8,4.5) rectangle (3.1,5.5);
\node at (-4.5,5)[scale=.75]  {$y$};
\node at (-1.5,5) [scale=.75] {$x-1$};
\node at (4.5,5)[scale=.75]  {$x-a$};
\node at (1.5,7)[scale=.5]  {$y-b+1$};
\end{tikzpicture} \\
0
&
\hackcenter{\begin{tikzpicture} [scale=.35, xscale=.7, yscale=.8]
\draw[line width= 0.7mm,
    decoration={markings,mark=at position 1 with {\arrow[line width=.3mm]{>}}},
    postaction={decorate},shorten >=1pt] (-4.5,3.5) to (-4.5,7);
\draw[line width= 0.7mm,
    decoration={markings,mark=at position 0.06 with {\arrow[line width=.3mm]{<}}},
    postaction={decorate},shorten >=.6pt] (4.5,3.5) to (4.5,7);
\draw[line width= 0.7mm,
    decoration={markings,mark=at position 1 with {\arrow[line width=.3mm]{>}}},
    postaction={decorate},shorten >=1pt] (-1.5,3.5) to (-1.5, 7);
\draw[line width= 0.7mm,
    decoration={markings,mark=at position 0.06 with {\arrow[line width=.3mm]{<}}},
    postaction={decorate},shorten >=.6pt] (1.5,3.5) to (1.5, 7);
\draw[directed=.6] (-.8,5.5)..controls ++(0,1) and ++(0,1)..(4.2, 5.5);
\draw[fill =white] (-5.8,4.5) rectangle (-3.3,5.5);
\draw[fill =white] (-2.9,4.5) rectangle (-0.2,5.5);
\draw[fill =gray!60] (5.8,4.5) rectangle (3.2,5.5);
\draw[fill =gray!60] (2.7,4.5) rectangle (0.2,5.5);
\node at (-4.5,5) [scale=.75]{$y$};
\node at (-1.5,5) [scale=.75]{$x-1$};
\node at (1.5,5)[scale=.75] {$y-b$};
\node at (4.5,5) [scale=.75]{$x-a$};
\end{tikzpicture}}
\end{pmatrix}}.\]

Thus, if we apply the isomorphisms $\P^{(y)}\P^{(x)} \simeq \bigoplus_{s=0}^{(x,y)} \P^{(y+x-s,s)}$ and $\Q^{(u)^t}\Q^{(w)^t} \simeq \bigoplus_{r=0}^{(u,w)} \Q^{(u+w-r,r)^t}$ from Proposition \ref{prop:PPmerge} to each summand $\tilde{\mathcal{F}}^{\mathbb{I}}_{x,y}$ and $\tilde{\mathcal{F}}^{\mathbb{II}}_{x,y}$ and differential $\d_y'$ and $\d_x'$ then the result follows by denoting the resulting summands by $\mathcal{F}^{\mathbb{I}}_{x,y}$ and $\mathcal{F}^{\mathbb{II}}_{x,y}$ and setting:
\begin{align*}
\d_y&:= (\rho_{y-1}^s\otimes\rho_{x-a}^r)\circ \d_y'\circ (\iota_{s'}^y \otimes \iota_{r'}^{x-a}) &
\d^x&:= (\rho_{y-1}^s\otimes\rho_{x-a}^r)\circ \d'^x\circ (\iota_{s'}^y \otimes \iota_{r'}^{x-a}).
\end{align*}
\end{proof}

\begin{figure}[h]
\begin{tikzpicture}[xscale=1.5,yscale=1, every node/.style={scale=1}]

\node at (-5.2,2){$\B_{b-1}\P^{(x)}\Q^{(x-a)^t} \simeq$};

\node at (-2.5,2.35){$\d_y$};
\node at (-1.5,2.35){$\d_y$};
\node at (-1,3){y=x};
\node at (-0.5,2.35){$\mathsf{D}$};
\node at (0,3){y=x-1};
\node at (0.5,2.35){$\d_y$};
\node at (1.5,2.35){$\d_y$};

\draw[->](-1,2.7)--(-1,2.3);
\draw[->](0,2.7)--(0,2.3);

\node at (1.4,1.5){$\underbrace{\hspace{40mm}}$};
\node at (-2.4,1.5){$\underbrace{\hspace{40mm}}$};

\node at (1.4,1){$\bigoplus_{y}\mathcal{F}_{x,y}^{\mathbb{II}}$};
\node at (-2.4,1){$\bigoplus_y\mathcal{F}_{x,y}^{\mathbb{I}}$};

\node at (-1,2) {$\bullet $ };

\node at (-2,2) {$\bullet $ };

\node at (-3,2) {$\bullet $ };

\node at (0,2) {$\bullet$ };

\node at (1,2) {$\bullet$ };

\node at (2,2) {$\bullet $ };



\draw[black!40!green, dotted, directed =.6] (-4,2) to (-3,2);

\draw[black!40!green,  directed =.6] (-3,2) to (-2,2);

\draw[black!40!green,  directed =.6] (-2,2) to (-1,2);

\draw[black!60!green, very thick, directed =.6] (-1,2) to (0,2);

\draw[black!40!green,  directed =.6] (0,2) to (1,2);

\draw[black!40!green,  directed =.6] (1,2) to (2,2);

\draw[black!40!green,  dotted, directed =.6] (2,2) to (3,2);

\end{tikzpicture}
\caption{The $x^{th}$ row of the bi-complex  $\B_{b-1} \otimes \B_a$ with $a,b \in \Z$ and $x\geq$ max$(0,a)$.}\label{fig:ConeD}
\end{figure}
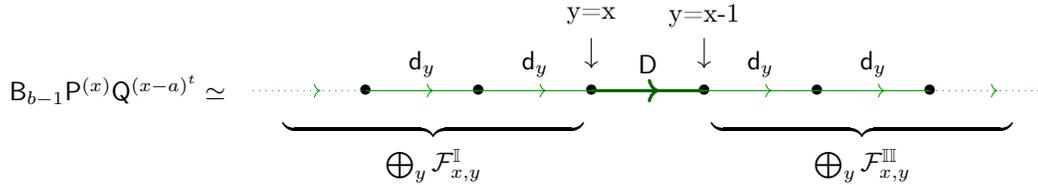

A first glance, it may seem like the chain complex has become more complicated after applying these isomorphisms. However, the bijections in Proposition \ref{prop:F} between summands the of $\mathcal{F}_{x,y}^{\mathbb{II}}$ and $\mathcal{F}_{x,y}^{\mathbb{I}}$ is what will allow the bi-complex of $\B_{b-1}\otimes\B_a$ to be further reduced. 

\begin{theorem}\label{thm:BBa<b} Let $a,b \in \Z$ with $a<b$ and for each integer $k\geq 2-b-a$ define $\mathcal{G}_{k}$ as the following object in $\H$:
\begin{equation}\label{eq:Gk} \mathcal{G}_{k}:= \bigoplus_{\substack{x+y=k+a+b\\x > y\geq (b,0)}}\P^{(x-1,y)} \left(\bigoplus_{r=y-b+1}^{(x-b,y-a)}\Q^{(x+y-a-b-r,r)^t}\right)
\oplus \bigoplus_{\substack{x+y=k+a+b\\x \geq b>y\geq (a,0)}} \P^{(x-1,y)}\Q^{(x-b)^t}\Q^{(y-a)^t} 
\end{equation}
and let $\mathsf{d}_{k}$ be given by:
\begin{align}
\d_k:=&\sum_{\substack{x+y=k+a+b\\x>y\geq (b,0)}}\sum_{r={y-b+1}}^{(x-b,y-a)} \hackcenter{\begin{tikzpicture} [scale=.35, xscale=.7, yscale=1]
\draw[line width= 0.7mm,
    decoration={markings,mark=at position 1 with {\arrow[line width=.3mm]{>}}},
    postaction={decorate},shorten >=1pt] (-4.5,2.5) to (-4.5,8);
\draw[line width= 0.7mm,
    decoration={markings,mark=at position 0.06 with {\arrow[line width=.3mm]{<}}},
    postaction={decorate},shorten >=.6pt] (4.5,2.5) to (4.5,8);
\draw[line width= 0.7mm,
    decoration={markings,mark=at position 1 with {\arrow[line width=.3mm]{>}}},
    postaction={decorate},shorten >=1pt] (-1.5,2.5) to (-1.5, 8);
\draw[line width= 0.7mm,
    decoration={markings,mark=at position 0.06 with {\arrow[line width=.3mm]{<}}},
    postaction={decorate},shorten >=.6pt] (1.5,2.5) to (1.5, 8);
\draw[directed=.8] (-3.8,5.5)..controls ++(0,1) and ++(0,1)..(1.2, 5.5);
\draw[fill =gray!20] (-5.8,6.5) rectangle (-0.4,7.5);
\draw[fill =gray!20] (-5.8,3) rectangle (-0.4,4);
\draw[fill =gray!20] (5.8,6.5) rectangle (0.4,7.5);
\draw[fill =gray!20] (5.8,3) rectangle (0.4,4);
\draw[fill =white] (-5.8,4.5) rectangle (-3.3,5.5);
\draw[fill =white] (-2.7,4.5) rectangle (-0.4,5.5);
\draw[fill =gray!60] (5.8,4.5) rectangle (3.2,5.5);
\draw[fill =gray!60] (2.9,4.5) rectangle (0,5.5);
\node at (-3,7) [scale=.75]{$(x-2,y)$};
\node at (-3,3.5) [scale=.75]{$(x-1,y)$};
\node at (-4.5,5) [scale=.75]{$x-1$};
\node at (-1.5,5) [scale=.75]{$y$};
\node at (3,7) [scale=.75]{$(\cdots,r')$};
\node at (3,3.5) [scale=.75]{$(\cdots,r)$};
\node at (1.5,5)[scale=.75] {$x-b$};
\node at (4.5,5) [scale=.75]{$y-a$};
\end{tikzpicture}}\;\;+ \;\;(-1)^y\;\;
\hackcenter{\begin{tikzpicture} [scale=.35, xscale=.7]
\draw[line width= 0.7mm,
    decoration={markings,mark=at position 1 with {\arrow[line width=.3mm]{>}}},
    postaction={decorate},shorten >=1pt] (-4.5,2.5) to (-4.5,8);
\draw[line width= 0.7mm,
    decoration={markings,mark=at position 0.06 with {\arrow[line width=.3mm]{<}}},
    postaction={decorate},shorten >=.6pt] (4.5,2.5) to (4.5,8);
\draw[line width= 0.7mm,
    decoration={markings,mark=at position 1 with {\arrow[line width=.3mm]{>}}},
    postaction={decorate},shorten >=1pt] (-1.5,2.5) to (-1.5, 8);
\draw[line width= 0.7mm,
    decoration={markings,mark=at position 0.06 with {\arrow[line width=.3mm]{<}}},
    postaction={decorate},shorten >=.6pt] (1.5,2.5) to (1.5, 8);
\draw[directed=.6] (-.8,5.5)..controls ++(0,1) and ++(0,1)..(4.2, 5.5);
\draw[fill =gray!20] (-5.8,6.5) rectangle (-0.4,7.5);
\draw[fill =gray!20] (-5.8,3) rectangle (-0.4,4);
\draw[fill =gray!20] (5.8,6.5) rectangle (0.4,7.5);
\draw[fill =gray!20] (5.8,3) rectangle (0.4,4);
\draw[fill =white] (-5.8,4.5) rectangle (-3.3,5.5);
\draw[fill =white] (-2.7,4.5) rectangle (-0.4,5.5);
\draw[fill =gray!60] (5.8,4.5) rectangle (3.2,5.5);
\draw[fill =gray!60] (2.9,4.5) rectangle (0,5.5);
\node at (-3,7) [scale=.65]{$(x-1,y-1)$};
\node at (-3,3.5) [scale=.75]{$(x-1,y)$};
\node at (-4.5,5) [scale=.75]{$x-1$};
\node at (-1.5,5) [scale=.75]{$y$};
\node at (3,7) [scale=.75]{$(\cdots,r')$};
\node at (3,3.5) [scale=.75]{$(\cdots,r)$};
\node at (1.5,5)[scale=.75] {$x-b$};
\node at (4.5,5) [scale=.75]{$y-a$};
\end{tikzpicture}}
\\
&+
\sum_{\substack{x+y=k+a+b\\ x\geq b>y\geq(0,a)}}\hackcenter{\begin{tikzpicture} [scale=.35, xscale=.7, yscale=1]
\draw[line width= 0.7mm,
    decoration={markings,mark=at position 1 with {\arrow[line width=.3mm]{>}}},
    postaction={decorate},shorten >=1pt] (-4.5,2.5) to (-4.5,8);
\draw[line width= 0.7mm,
    decoration={markings,mark=at position 0.06 with {\arrow[line width=.3mm]{<}}},
    postaction={decorate},shorten >=.6pt] (4.5,2.5) to (4.5,8);
\draw[line width= 0.7mm,
    decoration={markings,mark=at position 1 with {\arrow[line width=.3mm]{>}}},
    postaction={decorate},shorten >=1pt] (-1.5,2.5) to (-1.5, 8);
\draw[line width= 0.7mm,
    decoration={markings,mark=at position 0.06 with {\arrow[line width=.3mm]{<}}},
    postaction={decorate},shorten >=.6pt] (1.5,2.5) to (1.5, 8);
\draw[directed=.8] (-3.8,5.5)..controls ++(0,1) and ++(0,1)..(1.2, 5.5);
\draw[fill =gray!20] (-5.8,6.5) rectangle (-0.4,7.5);
\draw[fill =gray!20] (-5.8,3) rectangle (-0.4,4);
\draw[fill =white] (-5.8,4.5) rectangle (-3.3,5.5);
\draw[fill =white] (-2.7,4.5) rectangle (-0.4,5.5);
\draw[fill =gray!60] (5.8,4.5) rectangle (3.2,5.5);
\draw[fill =gray!60] (2.9,4.5) rectangle (0,5.5);
\node at (-3,7) [scale=.75]{$(x-2,y)$};
\node at (-3,3.5) [scale=.75]{$(x-1,y)$};
\node at (-4.5,5) [scale=.75]{$x-1$};
\node at (-1.5,5) [scale=.75]{$y$};
\node at (1.5,5)[scale=.75] {$x-b$};
\node at (4.5,5) [scale=.75]{$y-a$};
\end{tikzpicture}}  
\;\;+\;\;(-1)^y\;\;
\hackcenter{\begin{tikzpicture} [scale=.35, xscale=.7]
\draw[line width= 0.7mm,
    decoration={markings,mark=at position 1 with {\arrow[line width=.3mm]{>}}},
    postaction={decorate},shorten >=1pt] (-4.5,2.5) to (-4.5,8);
\draw[line width= 0.7mm,
    decoration={markings,mark=at position 0.06 with {\arrow[line width=.3mm]{<}}},
    postaction={decorate},shorten >=.6pt] (4.5,2.5) to (4.5,8);
\draw[line width= 0.7mm,
    decoration={markings,mark=at position 1 with {\arrow[line width=.3mm]{>}}},
    postaction={decorate},shorten >=1pt] (-1.5,2.5) to (-1.5, 8);
\draw[line width= 0.7mm,
    decoration={markings,mark=at position 0.06 with {\arrow[line width=.3mm]{<}}},
    postaction={decorate},shorten >=.6pt] (1.5,2.5) to (1.5, 8);
\draw[directed=.6] (-.8,5.5)..controls ++(0,1) and ++(0,1)..(4.2, 5.5);
\draw[fill =gray!20] (-5.8,6.5) rectangle (-0.4,7.5);
\draw[fill =gray!20] (-5.8,3) rectangle (-0.4,4);
\draw[fill =white] (-5.8,4.5) rectangle (-3.3,5.5);
\draw[fill =white] (-2.7,4.5) rectangle (-0.4,5.5);
\draw[fill =gray!60] (5.8,4.5) rectangle (3.2,5.5);
\draw[fill =gray!60] (2.9,4.5) rectangle (0,5.5);
\node at (-3,7) [scale=.65]{$(x-1,y-1)$};
\node at (-3,3.5) [scale=.75]{$(x-1,y)$};
\node at (-4.5,5) [scale=.75]{$x-1$};
\node at (-1.5,5) [scale=.75]{$y$};
\node at (1.5,5)[scale=.75] {$x-b$};
\node at (4.5,5) [scale=.75]{$y-a$};
\end{tikzpicture}}.
\end{align}
Then, the following homotopy equivalence holds in $\K(\H)$:
\begin{equation}\label{Cb+1Ca-reduced-case1}
\B_{b-1}\otimes \B_a \simeq \dots \xrightarrow{\mathsf{d}_{k+2}} \mathcal{G}_{k+1}[k+1]\xrightarrow{\mathsf{d}_{k+1}} \mathcal{G}_{k}[k]\xrightarrow{\mathsf{d}_{k-1}} \dots
\end{equation} 
\end{theorem}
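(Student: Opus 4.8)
\textbf{Proof strategy for Theorem \ref{thm:BBa<b}.}

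The plan is to start from the description of $\B_{b-1}\otimes\B_a$ given in Proposition \ref{prop:F} as the total complex $\mathrm{Tot}^\oplus\{\mathcal{F}^{\mathbb{I}}_{x,y}\oplus\mathcal{F}^{\mathbb{II}}_{x,y},\mathsf{d}_y,\mathsf{d}^x\}_{x\ge(a,0)}$, and to kill all of the $\mathcal{F}^{\mathbb{II}}$ summands against pieces of the $\mathcal{F}^{\mathbb{I}}$ summands via a systematic sequence of Gaussian eliminations (Lemma \ref{lem:gaussian-elimination}), performed simultaneously using Proposition \ref{prop:SimultSimp}. The key structural inputs are the three ``bijectivity'' statements at the end of Proposition \ref{prop:F}: the restriction $\mathsf{d}^x\colon\mathcal{F}^{\mathbb{II}}_{x,y}\to\mathcal{F}^{\mathbb{I}}_{x-1,y}$ is an isomorphism on the ``overlap'' when $y-b+1=x-a$, injective when $y-b+1<x-a$, and surjective when $y-b+1>x-a$; and the restriction $\mathsf{d}_y\colon\mathcal{F}^{\mathbb{II}}_{x,y}\to\mathcal{F}^{\mathbb{I}}_{x,y-1}$ is bijective when $x=y$, injective when $x<y$, surjective when $x>y$. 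Since $a<b$, the line $x=y$ lies strictly inside the quadrant $x\ge b>y$ is impossible; rather one checks that for each fixed $k=x+y-a-b$ the relevant terms split into the range $x>y$ (where the $\mathcal{F}^{\mathbb{II}}$ piece should be eliminated using the horizontal differential $\mathsf{d}^x$, because there $y-b+1<x-a$ fails/holds in a controlled way) and the range $x\le y$ — but $a<b$ forces the boundary cases to line up so that every $\mathcal{F}^{\mathbb{II}}_{x,y}$ summand is matched isomorphically with a subobject of some $\mathcal{F}^{\mathbb{I}}$ neighbor. I would organize the elimination along anti-diagonals $x+y=\text{const}$, so that the total-degree grading $k+a+b = x+y$ is respected and the surviving complex is naturally indexed by $k$.

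Concretely, first I would fix $k$ and enumerate the summands of $\mathcal{F}^{\mathbb{I}}_{x,y}$ and $\mathcal{F}^{\mathbb{II}}_{x,y}$ with $x+y-a-b+1 = k+1$ (homological degree) — these are exactly the objects appearing in one column of the total complex. Then I would run Gaussian elimination to cancel $\mathcal{F}^{\mathbb{II}}_{x,y}$ against the image/coimage it maps isomorphically onto: for the terms with $y\ge b$ and $x>y$ one uses the component $\mathsf{d}^x\colon\mathcal{F}^{\mathbb{II}}_{x,y}\to\mathcal{F}^{\mathbb{I}}_{x-1,y}$ together with the identity entries $\delta_{s,s'}\delta_{r,r'}\1$ in the matrix for $\mathsf{d}^x$ (these identity entries are what make the required block of the differential an isomorphism, so Lemma \ref{lem:gaussian-elimination} applies); for the boundary terms with $x\ge b>y$ one handles the remaining $\mathcal{F}^{\mathbb{II}}$ pieces similarly. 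After cancellation, the objects that survive are precisely the $\mathcal{F}^{\mathbb{I}}_{x,y}$ summands that were \emph{not} hit, and tracking indices shows these assemble into $\mathcal{G}_k$ as written in \eqref{eq:Gk}: the first sum over $x>y\ge(b,0)$ with the $\Q$-index $r$ running from $y-b+1$ up to $\min(x-b,y-a)$, and the second sum over $x\ge b>y\ge(a,0)$ with the un-merged $\Q^{(x-b)^t}\Q^{(y-a)^t}$. The shift $\P^{(x-1,y)}$ (rather than $\P^{(x+y-s,s)}$) reflects that the surviving $s$-index is forced to the value making the first part equal $x-1$. Finally, I would compute the induced differential $\mathsf{d}_k$: by the Gaussian elimination formula it is $A - BD^{-1}C$, where $A$ is the original vertical differential $\mathsf{d}_y$ restricted to surviving terms, and $BD^{-1}C$ is the correction coming from detouring through the eliminated $\mathcal{F}^{\mathbb{II}}$; one has to verify this reduces to the explicit diagrammatic sum displayed in the statement, including the sign $(-1)^y$ on the ``column-lowering'' pieces (which is just the sign already attached to $\1_{\B_{b-1}}\otimes\d_a$ in Definition \ref{def:ChainTensorProd}, propagated through the reductions).

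The main obstacle, and the step I expect to consume most of the real work, is the bookkeeping of \emph{which} $\mathcal{F}^{\mathbb{I}}$ summands survive and the verification that the correction term $BD^{-1}C$ in each Gaussian elimination does not create unexpected new arrows — in particular that it does not connect surviving terms in different anti-diagonals, and that it combines with $A$ to give exactly the two-term expression for $\mathsf{d}_k$. This is delicate because a single $\mathcal{F}^{\mathbb{I}}$ object can in principle receive corrections from several eliminated $\mathcal{F}^{\mathbb{II}}$ neighbors, so one must check these contributions either vanish (by the white-box/black-box relation \eqref{eq:whiteblackbox} or by \eqref{heis: right twist curl}) or combine cleanly. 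The boundary between the region $y\ge b$ and the region $y<b$ (where the $\Q$-part stops being merged) is where the index inequalities in \eqref{eq:Gk} are most fragile, so I would treat $y=b-1$, $y=b$, and the extreme values $y=0$, $x=b$ as separate cases and confirm the ranges of $r$ match up. Once the surviving objects and the differential are pinned down for every $k$, Proposition \ref{prop:SimultSimp}(i) (applicable since the anti-diagonal index $k\ge 2-a-b$ is bounded below) upgrades the pointwise homotopy equivalences to the homotopy equivalence \eqref{Cb+1Ca-reduced-case1} of total complexes, completing the proof.
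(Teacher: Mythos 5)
Your general outline — start from Proposition \ref{prop:F}, kill $\mathcal{F}^{\mathbb{II}}$ summands against $\mathcal{F}^{\mathbb{I}}$ summands via Gaussian elimination, and upgrade to the total complex using Proposition \ref{prop:SimultSimp} since the indexing set is bounded below — is the right skeleton, but the specific cancellation pattern you describe does not produce $\mathcal{G}_k$, and the missing piece is exactly the hard part. You propose to cancel $\mathcal{F}^{\mathbb{II}}_{x,y}$ directly against its nearest horizontal neighbor $\mathcal{F}^{\mathbb{I}}_{x-1,y}$ via the identity block of $\mathsf{d}^x$. But that single round of elimination leaves too much behind: for instance, with $a=0$, $b=2$, $k=2$, direct computation of the degree-$2$ survivors after this nearest-neighbor cancellation gives $2\cdot\P^{(3)}\Q^{(2)^t}\oplus\P^{(2,1)}\Q^{(2)^t}\oplus\P^{(3)}\Q^{(1,1)^t}\oplus\P^{(2,1)}\Q^{(1,1)^t}$, whereas $\mathcal{G}_2\cong\P^{(3)}\Q^{(2)^t}\oplus\P^{(2,1)}\Q^{(2)^t}\oplus\P^{(2,1)}\Q^{(1,1)^t}$. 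So the claim that ``tracking indices shows these assemble into $\mathcal{G}_k$'' is simply false for your scheme; further cancellation is required, and it is driven by the correction terms $BD^{-1}C$ that you explicitly decline to compute.

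The paper's proof is organized precisely so as to make these iterated corrections manageable. It first applies Lemma \ref{lem:CP-reduction} to each \emph{column} of the bi-complex — that is, it contracts along the vertical differential $\mathsf{d}_y$ within each fixed $x$, replacing the $x$-th column with a cone $\mathcal{F}_x^{\mathbb{I}}\to\mathcal{F}_x^{\mathbb{II}}$ involving only the summands $\mathcal{F}^{\mathbb{I}}_{x,y}|_{s=x}$ ($y\geq x$) and $\mathcal{F}^{\mathbb{II}}_{x,y}|_{s=y}$ ($x>y$). Because the rows of the bi-complex carry the horizontal differential $\mathsf{d}^x$, and the column reduction composes and inverts alternating $\mathsf{d}^{x-j}$ and $\mathsf{d}_{y+j}$ components, the corrections produced are the long zigzag maps $\mathsf{d}_{x-y-1}:\mathcal{F}^{\mathbb{II}}_{x,y}|_{s=y}\to\mathcal{F}^{\mathbb{I}}_{y,x-1}|_{s=y}$ of \eqref{eq:djRED} — these jump $x-y-1$ columns to the left, not one. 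Only after that second set of maps is in hand does the hypothesis $a<b$ enter: it guarantees these zigzags are injective (equation \eqref{eq:d-red}), so a single further Gaussian elimination deletes all $\mathcal{F}^{\mathbb{II}}$ pieces together with their images, and the complement in $\mathcal{F}^{\mathbb{I}}_{y,x-1}|_{s=y}$ is exactly the $r$-range $y-b+1\leq r\leq\min(x-b,y-a)$ of $\mathcal{G}_k$. Your nearest-neighbor scheme, by contrast, produces a survivor with $r=y-b+1$ only and a residual $s$-sum, which is why the objects fail to match. In short: the gap is that you treat the correction terms as a verification to be deferred, when in fact the long zigzag corrections \emph{are} the mechanism by which the index ranges in $\mathcal{G}_k$ arise, and without reorganizing the elimination as in Lemma \ref{lem:CP-reduction} they are intractable to control.
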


\begin{proof}
 Let $x>y$ and $\mathcal{F}_{x,y}^{\mathbb{I}}$, $\mathcal{F}_{x,y}^{\mathbb{II}}$, $\mathsf{d}_y:\mathcal{F}_{x,y}^{\mathbb{II}} \to \mathcal{F}_{x,y-1}^{\mathbb{I}}$,and $\mathsf{d}^x:\mathcal{F}_{x,y}^{\mathbb{II}} \to \mathcal{F}_{x-1,y}^{\mathbb{I}}$ be defined as in Proposition \ref{prop:F}. Then, for any integer $1 \leq j <x-y-a+b$ consider the following restrictions of the differentials
 \begin{align*}
 \mathsf{d}_{y+j}:& \mathcal{F}_{x-j,y+j}^{\mathbb{II}} {\color{black!40!green}\to} \mathcal{F}_{x-j,y-1+j}^{\mathbb{I}} &
 \mathsf{d}^{x-j}:& \mathcal{F}_{x-j,y+j}^{\mathbb{II}} {\color{blue}\to} \mathcal{F}_{x-1-j,y-j}^{\mathbb{I}}.
 \end{align*} 
By Proposition \ref{prop:F}, $\mathsf{d}_{y+j}$ is surjective for $1\leq j \leq \frac{x-y}{2}$ and injective for $\frac{x-y}{2}\leq j< x-y$.  Consequently, if we restrict $\d_{y+j}$  to the summands corresponding to $0\leq s \leq y$ for each $j$, then $\mathsf{d}_{y+j}: \mathcal{F}_{x-j,y+j}^{\mathbb{II}} {\color{black!40!green}\to} \mathcal{F}_{x-j,y+j-1}^{\mathbb{I}}$ is an isomorphism for all $1 \leq j <x-y$.  Likewise, $\mathsf{d}^{x-j}$ is injective for $0\leq j \leq \frac{x-y-a+b}{2}$ and surjective for $\frac{x-y-a+b}{2}\leq j< x-y-a+b$. Thus, if we restrict $\d^{x-j}$ to the summands with $0\leq r \leq y-b$ for each $j$,  then the map $\mathsf{d}^{x-j}: \mathcal{F}_{x-j,y+j}^{\mathbb{II}} {\color{blue}\to} \mathcal{F}_{x-1-j,y+j}^{\mathbb{I}}$ is an isomorphism for all $0\leq j <x-y-a+b$. 

Now for each $1 \leq j <x-y$ and $0\leq s \leq y$, the map $\mathsf{d}_{y+j}: \mathcal{F}_{x-j,y+j}^{\mathbb{II}} {\color{black!40!green}\to} \mathcal{F}_{x-j,y+j-1}^{\mathbb{I}}$ is invertible. Moreover since $x-y < x-y-a+b$, then $\mathsf{d}^{x-j}: \mathcal{F}_{x-j,y+j}^{\mathbb{II}} {\color{blue}\to} \mathcal{F}_{x-1-j,y+j}^{\mathbb{I}}$ is a bijection for all $0\leq r \leq y-b$ and $1 \leq j <x-y$. Thus, the composition:
\begin{equation}\label{eq:djRED}
\d_j:=\d^{x-j}\circ (\d_{y+j-1})^{-1} \circ \d^{x-j-1} \cdots (\d_{y+1})^{-1}\circ \d^{x}
\end{equation}
is well defined and exists for all $1\leq j < x-y$ and $0\leq s \leq y$. Graphically, these maps correspond to following the darker blue and green lines in an \emph{upward} zigzag in Figure \ref{fig:F+Fbicomplex}.

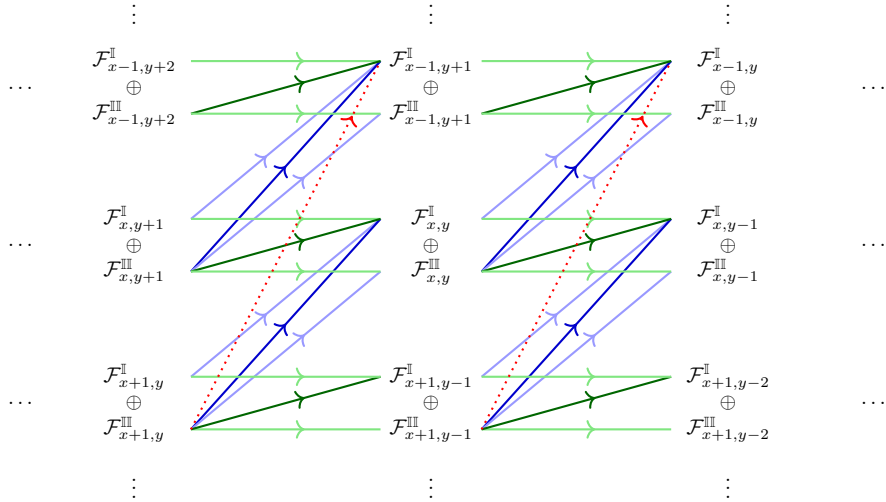
\begin{figure}[ht]
\begin{tikzpicture}[xscale=1.2, scale=.7, every node/.style={scale=0.8}]

\node at (0,4.5) {$\vdots$};

\node at (0,3.5) {$\mathcal{F}_{x-1,y+1}^{\mathbb{I}}$ };
\node at (0,3) {$\oplus$ };
\node at (0,2.5) {$\mathcal{F}_{x-1,y+1}^{\mathbb{II}}$ };

\node at (0,.5) {$\mathcal{F}_{x,y}^{\mathbb{I}}$ };
\node at (0,0) {$\oplus$ };
\node at (0,-.5) {$\mathcal{F}_{x,y}^{\mathbb{II}}$ };

\node at (0,-3.5) {$\mathcal{F}_{x+1,y-1}^{\mathbb{II}}$ };
\node at (0,-3) {$\oplus$ };
\node at (0,-2.5) {$\mathcal{F}_{x+1,y-1}^{\mathbb{I}}$ };

\node at (0,-4.5) {$\vdots$};

\node at (4.7,4.5) {$\vdots$};

\node at (4.7,3.5) {$\mathcal{F}_{x-1,y}^{\mathbb{I}}$ };
\node at (4.7,3) {$\oplus$ };
\node at (4.7,2.5) {$\mathcal{F}_{x-1,y}^{\mathbb{II}}$ };

\node at (4.7,.5) {$\mathcal{F}_{x,y-1}^{\mathbb{I}}$ };
\node at (4.7,0) {$\oplus$ };
\node at (4.7,-.5) {$\mathcal{F}_{x,y-1}^{\mathbb{II}}$ };

\node at (4.7,-3.5) {$\mathcal{F}_{x+1,y-2}^{\mathbb{II}}$ };
\node at (4.7,-3) {$\oplus$ };
\node at (4.7,-2.5) {$\mathcal{F}_{x+1,y-2}^{\mathbb{I}}$ };

\node at (4.7,-4.5) {$\vdots$};

\node at (7,3) {$\hdots$};
\node at (7,0) {$\hdots$};
\node at (7,-3) {$\hdots$};


\draw[blue!40, thick, directed = .4] (0.8,0.5) to (3.8,3.5);
\draw[black!20!blue, thick, directed = .5] (0.8,-0.5) to (3.8,3.5);
\draw[blue!40, thick, directed = .6] (0.8,-0.5) to (3.8,2.5);

\draw[blue!40, thick, directed = .4] (0.8,-2.5) to (3.8,0.5);
\draw[black!20!blue, thick, directed = .5] (0.8,-3.5) to (3.8,0.5);
\draw[blue!40, thick, directed = .6] (0.8,-3.5) to (3.8,-.5);


\draw[black!20!green!50, thick, directed =.6] (0.8,3.5) to (3.8,3.5);
\draw[black!60!green, thick, directed =.6] (0.8,2.5) to (3.8,3.5);
\draw[black!20!green!50, thick, directed =.6] (0.8,2.5) to (3.8,2.5);

\draw[black!20!green!50, thick, directed =.6] (0.8,0.5) to (3.8,0.5);
\draw[black!60!green, thick, directed =.6] (0.8,-0.5) to (3.8,0.5);
\draw[black!20!green!50, thick, directed =.6] (0.8,-0.5) to (3.8,-0.5);

\draw[black!20!green!50, thick, directed =.6] (0.8,-3.5) to (3.8,-3.5);
\draw[black!60!green, thick, directed =.6] (0.8,-3.5) to (3.8,-2.5);
\draw[black!20!green!50, thick, directed =.6] (0.8,-2.5) to (3.8,-2.5);


\draw[red, thick, dotted, directed=.85](.8,-3.5) to (3.8,3.5);

\node at (-4.7,4.5) {$\vdots$};

\node at (-4.7,3.5) {$\mathcal{F}_{x-1,y+2}^{\mathbb{I}}$ };
\node at (-4.7,3) {$\oplus$ };
\node at (-4.7,2.5) {$\mathcal{F}_{x-1,y+2}^{\mathbb{II}}$ };

\node at (-4.7,.5) {$\mathcal{F}_{x,y+1}^{\mathbb{I}}$ };
\node at (-4.7,0) {$\oplus$ };
\node at (-4.7,-.5) {$\mathcal{F}_{x,y+1}^{\mathbb{II}}$ };

\node at (-4.7,-3.5) {$\mathcal{F}_{x+1,y}^{\mathbb{II}}$ };
\node at (-4.7,-3) {$\oplus$ };
\node at (-4.7,-2.5) {$\mathcal{F}_{x+1,y}^{\mathbb{I}}$ };

\node at (-4.7,-4.5) {$\vdots$};


\node at (-6.5,3) {$ \hdots$};
\node at (-6.5,0) {$\hdots$};
\node at (-6.5,-3) {$ \hdots$};



\draw[blue!40, thick, directed = .4] (-3.8,0.5) to (-0.8,3.5);
\draw[black!20!blue, thick, directed = .5] (-3.8,-0.5) to (-0.8,3.5);
\draw[blue!40, thick, directed = .6] (-3.8,-0.5) to (-0.8,2.5);

\draw[blue!40, thick, directed = .4] (-3.8,-2.5) to (-0.8,0.5);
\draw[black!20!blue, thick, directed = .5] (-3.8,-3.5) to (-0.8,0.5);
\draw[blue!40, thick, directed = .6] (-3.8,-3.5) to (-0.8,-.5);


\draw[black!20!green!50, thick, directed =.6] (-3.8,3.5) to (-.8,3.5);
\draw[black!60!green, thick, directed =.6] (-3.8,2.5) to (-.8,3.5);
\draw[black!20!green!50, thick, directed =.6] (-3.8,2.5) to (-.8,2.5);

\draw[black!20!green!50, thick, directed =.6] (-3.8,0.5) to (-.8,0.5);
\draw[black!60!green, thick, directed =.6] (-3.8,-0.5) to (-.8,0.5);
\draw[black!20!green!50, thick, directed =.6] (-3.8,-0.5) to (-.8,-0.5);

\draw[black!20!green!50, thick, directed =.6] (-3.8,-3.5) to (-.8,-3.5);
\draw[black!60!green, thick, directed =.6] (-3.8,-3.5) to (-.8,-2.5);
\draw[black!20!green!50, thick, directed =.6] (-3.8,-2.5) to (-.8,-2.5);


\draw[red, thick, dotted, directed=.85](-3.8,-3.5) to (-.8,3.5);
\end{tikzpicture}
\caption{The bi-complex for $\B_{b-1}\otimes\B_a$ with $\d_y$ denoted in green and $\d^x$ denoted in blue. Terms in the same homological degree lie in the same column.} \label{fig:F+Fbicomplex}
\end{figure}
This map can also be envisioned as the following sequence of compositions:
\[
\begin{tikzpicture}
\node at (0,-1) {${\color{red}\mathsf{d}_{j}}$};
\node at (0,0){$\mathcal{F}_{x,y}^{\mathbb{II}} {\color{blue}{\to}} \mathcal{F}_{x-1,y}^{\mathbb{I}} {\color{black!40!green}\to} \mathcal{F}_{x-1,y+1}^{\mathbb{II}}{\color{blue}\to} \dots {\color{blue}\to} \mathcal{F}_{x-j,y-1+j}^{\mathbb{I}} {\color{black!40!green}\to} \mathcal{F}_{x-j,y+j}^{\mathbb{II}} {\color{blue}\to} \mathcal{F}_{x-(j+1),y+j}^{\mathbb{I}}$};
\begin{scope}[xshift=-10, yshift = -8, xscale=.9, yscale =.2]
\draw (-5,0) arc (180:360:5)[->] [thick, dotted, red];
\end{scope}
\end{tikzpicture}\]
By Proposition \ref{prop:F} we know that $\B_{b-1}\otimes \B_a \simeq$ Tot$^\oplus\left\lbrace\bigoplus_{y\geq (0,b-1)}\mathcal{F}_{x,y}^{\mathbb{I}} \oplus \mathcal{F}_{x,y}^{\mathbb{II}}, \mathsf{d}_y,\mathsf{d}^x\right\rbrace_{x\geq (a,0)}$. Moreover, by Lemma \ref{lem:CP-reduction} each subcomplex $\left\lbrace \bigoplus_{y\geq (0,b-1)}\mathcal{F}_{x,y}^{\mathbb{I}} \oplus \mathcal{F}_{x,y}^{\mathbb{II}}, \mathsf{d}_y\right\rbrace $ is homotopy equivalent to Cone$(\mathsf{D}\otimes \1)$ where $\mathsf{D}\otimes \1:\mathcal{F}_{x}^{\mathbb{I}} \to \mathcal{F}_{x}^{\mathbb{II}}$ is the chain map between the chain complexes below with $\mathsf{D}$ defined as in Lemma \ref{lem:CP-reduction}.
 \begin{align*}
\mathcal{F}_{x}^{\mathbb{I}}&:= \left\lbrace \bigoplus_{y \geq (0,b-1,x)}\P^{(y,x)} \Q^{(y+1-b)^t} \Q^{(x-a)^t}[x+y-a-b],\hackcenter{\begin{tikzpicture} [scale=.3, xscale=.7, yscale=1,every node/.style={scale=.8}]
\draw[line width= 0.7mm,
    decoration={markings,mark=at position 1 with {\arrow[line width=.3mm]{>}}},
    postaction={decorate},shorten >=1pt] (-4.5,2.5) to (-4.5,8);
\draw[line width= 0.7mm,
    decoration={markings,mark=at position 0.06 with {\arrow[line width=.3mm]{<}}},
    postaction={decorate},shorten >=.6pt] (4.5,2.5) to (4.5,8);
\draw[line width= 0.7mm,
    decoration={markings,mark=at position 1 with {\arrow[line width=.3mm]{>}}},
    postaction={decorate},shorten >=1pt] (-1.5,2.5) to (-1.5, 8);
\draw[line width= 0.7mm,
    decoration={markings,mark=at position 0.06 with {\arrow[line width=.3mm]{<}}},
    postaction={decorate},shorten >=.6pt] (1.5,2.5) to (1.5, 8);
\draw[directed=.8] (-3.8,5.5)..controls ++(0,1) and ++(0,1)..(1.2, 5.5);
\draw[fill =gray!20] (-5.8,6.5) rectangle (-0.4,7.5);
\draw[fill =gray!20] (-5.8,3) rectangle (-0.4,4);
\draw[fill =white] (-5.8,4.5) rectangle (-3.3,5.5);
\draw[fill =white] (-2.7,4.5) rectangle (-0.4,5.5);
\draw[fill =gray!60] (5.8,4.5) rectangle (3.2,5.5);
\draw[fill =gray!60] (2.9,4.5) rectangle (0,5.5);
\node at (-3,7) [scale=.75]{$(y-1,x)$};
\node at (-3,3.5) [scale=.75]{$(y,x)$};
\node at (-4.5,5) [scale=.75]{$y$};
\node at (-1.5,5) [scale=.75]{$x$};
\node at (1.5,5)[scale=.5] {$y-b+1$};
\node at (4.5,5) [scale=.75]{$x-a$};
\end{tikzpicture}}  \right\rbrace
&\overset{\ref{prop:PPmerge}}{\simeq}&\left\lbrace \bigoplus_{y \geq (0,b-1,x)} \mathcal{F}_{x,y}^{\mathbb{I}}|_{s=x}[-1],\d_y\right\rbrace\\
\mathcal{F}_{x}^{\mathbb{II}}&:=\left\lbrace\bigoplus_{ y \geq (0,b)}^{x-1} \P^{(x-1,y)} \Q^{(y-b)^t}\Q^{(x-a)^t}[x+y-a-b+1],\hackcenter{\begin{tikzpicture} [scale=.3, xscale=.7, yscale=1,every node/.style={scale=.8}]
\draw[line width= 0.7mm,
    decoration={markings,mark=at position 1 with {\arrow[line width=.3mm]{>}}},
    postaction={decorate},shorten >=1pt] (-4.5,2.5) to (-4.5,8);
\draw[line width= 0.7mm,
    decoration={markings,mark=at position 0.06 with {\arrow[line width=.3mm]{<}}},
    postaction={decorate},shorten >=.6pt] (4.5,2.5) to (4.5,8);
\draw[line width= 0.7mm,
    decoration={markings,mark=at position 1 with {\arrow[line width=.3mm]{>}}},
    postaction={decorate},shorten >=1pt] (-1.5,2.5) to (-1.5, 8);
\draw[line width= 0.7mm,
    decoration={markings,mark=at position 0.06 with {\arrow[line width=.3mm]{<}}},
    postaction={decorate},shorten >=.6pt] (1.5,2.5) to (1.5, 8);
\draw[directed=.8] (-3.8,5.5)..controls ++(0,1) and ++(0,1)..(1.2, 5.5);
\draw[fill =gray!20] (-5.8,6.5) rectangle (-0.4,7.5);
\draw[fill =gray!20] (-5.8,3) rectangle (-0.4,4);
\draw[fill =white] (-5.8,4.5) rectangle (-3.3,5.5);
\draw[fill =white] (-2.7,4.5) rectangle (-0.4,5.5);
\draw[fill =gray!60] (5.8,4.5) rectangle (3.2,5.5);
\draw[fill =gray!60] (2.9,4.5) rectangle (0,5.5);
\node at (-3,7) [scale=.65]{$(x-1,y-1)$};
\node at (-3,3.5) [scale=.75]{$(x-1,y)$};
\node at (-4.5,5) [scale=.75]{$y$};
\node at (-1.5,5) [scale=.65]{$x-1$};
\node at (1.5,5)[scale=.75] {$y-b$};
\node at (4.5,5) [scale=.75]{$x-a$};
\end{tikzpicture}}  \right\rbrace&\overset{\ref{prop:PPmerge}}{\simeq}&\left\lbrace \bigoplus_{ y \geq (0,b)}^{x-1}   \mathcal{F}_{x,y}^{\mathbb{II}}|_{s=y},\d_y\right\rbrace
\end{align*}
These homotopies affect the existing differentials $\d^x$ along the rows of the bi-complex in the following way: 
\begin{itemize}
\item If $y\geq x$ then $\d^x:\mathcal{F}_x^{\mathbb{I}} \to \mathcal{F}_{x-1}^{\mathbb{I}}$ is given by $\d^x:\mathcal{F}_{x,y}^{\mathbb{I}} \to \mathcal{F}_{x-1,y}^{\mathbb{I}}$ defined in Proposition \ref{prop:F}. Since $\mathcal{F}_x^{\mathbb{II}}$ does not exist for $y\geq x$ then all the previous arrows $\d^x:\mathcal{F}_{x,y}^{\mathbb{II}} \to \mathcal{F}_{x-1,y}^{\mathbb{I}}\oplus \mathcal{F}_{x-1,y}^{\mathbb{II}}$ disappear. 
\item If $x>y$ then the differentials $\d^x:\mathcal{F}_x^{\mathbb{II}} \to \mathcal{F}_{x-1}^{\mathbb{II}}$ are given by $\d^x:\mathcal{F}_{x,y}^{\mathbb{II}}\to \mathcal{F}_{x-1,y}^{\mathbb{II}}$ from Proposition \ref{prop:F}. If also $y\neq x-1$ then the terms contained in the image of $\d^x:\mathcal{F}_{x,y}^{\mathbb{II}}|_{s=y} \to \mathcal{F}_{x-1,y}^{\mathbb{I}}$ are all canceled under Lemma \ref{lem:CP-reduction}. Hence, the image of $\mathcal{F}_x^{\mathbb{II}}$ in $\mathcal{F}_{x-1}^{\mathbb{I}}$ under $\d^x$ is zero except for when $y=x-1$, in which case by Proposition \ref{prop:F} $\d^x: \P^{(x-1,x-1)}\Q^{(x-a)^t}\Q^{(x-1-b)^t} \to \P^{(x-1,x-1)}\Q^{(x-1-a)^t}\Q^{(x-b)^t}$ is an isomorphism for all but one summand in the source. 
\end{itemize}
When $x>y$ the process also creates certain new arrows which we now describe (see Figure \ref{fig:F+Fbicomplex}). Since the only value of $j$ for which $\mathcal{F}_{x-j-1,y+j}^{\mathbb{I}}|_{s=x-j-1}$ is not eliminated by the homotopies in Lemma \ref{lem:CP-reduction} is $j=x-y-1$, then the simultaneous reductions generate the maps ${\color{red}\mathsf{d}_{x-y-1}}:\mathcal{F}_{x,y}^{\mathbb{II}}|_{s=y} {\color{red}\to} \mathcal{F}_{y,x-1}^{\mathbb{I}}|_{s=y}$ given by the iterated compositions of $\d^{x-j}$ and $(\d_{y+j})^{-1}$ in \eqref{eq:djRED} (see Figure \ref{fig:FF-reduced}). In particular, each ${\color{red}\mathsf{d}_{x-y-1}}$ is the composition of maps which are either isomorphisms or zero. Hence, for all $x\geq$ max(1,b+1) and $x\geq y \geq$ max$(0,a)$ there is an isomorphism between $\mathcal{F}_{x,y}^{\mathbb{II}}|_{s=y}$ and $\mathcal{F}_{x',y'}^{\mathbb{I}}|_{s=x'}$ by sending $x'\mapsto y$ and $y'\mapsto x-1$.

Taking the total complex of the bi-complex we see that, for each integer $k \geq 2-a-b$, the complex $\B_{b-1}\otimes \B_{a}$ is homotopy equivalent to a complex whose chain group in homological degree $k$ is given by
\begin{align*}
(\B_{b-1}\otimes\B_a)_{k}\;\;&\simeq
\bigoplus_{\substack{x'+y'=k+a+b-1\\ x'\geq (0,a)\\ y'\geq (0,b-1,x')}} \mathcal{F}_{x',y'}^{\mathbb{I}}|_{s=x'}
\oplus
\bigoplus_{\substack{x+y=k+a+b-1\\ x\geq (1,b+1)\\x-1\geq y\geq (0,b)}}  \mathcal{F}_{x,y}^{\mathbb{II}}|_{s=y} \\
&\simeq
\bigoplus_{\substack{x+y=k+a+b\\ y\geq (0,a)\\ x\geq (1,b,y+1)}} \mathcal{F}_{y,x-1}^{\mathbb{I}}|_{s=y}
\oplus
\bigoplus_{\substack{x+y=k+a+b-1\\ x\geq (1,b+1)\\x-1\geq y\geq (0,b)}}  \mathcal{F}_{x,y}^{\mathbb{II}}|_{s=y} .
\end{align*}
In particular, for any fixed $k$ such that $x+y=k+a+b-1$ and $x>y$ the new arrows from \eqref{eq:djRED} are morphisms from
\begin{equation}\label{eq:d-red}
\bigoplus_{\substack{ x\geq (1,b+1)\\ x>y\geq (0,b)}} \mathcal{F}_{x,y}^{\mathbb{II}}|_{s=y}[k]\;\; {\color{red}\xrightarrow{[\d_{x-y-1}]}} \bigoplus_{\substack{x\geq (1,b)\\ x>y\geq (0,a)}} \mathcal{F}_{y,x-1}^{\mathbb{I}}|_{s=y}[k-1]
\end{equation}
which are either multiples of the identity or zero.  Since $b>a$, these maps are injective. Performing Gaussian elimination on these isomorphisms $\mathcal{F}_{x}^{\mathbb{II}}$ is eliminated for all $x$. Moreover, since the source of these morphisms consists only of summands of $\mathcal{F}_x^{\mathbb{II}}$ and the target only of summands of $\mathcal{F}_y^\mathbb{I}$, then these reductions can be applied iteratively along increasing homological degrees. Therefore, the only terms that are left are certain summands of $\mathcal{F}_y^{\mathbb{I}}$ for $x>y$ described below. By comparing them with the source and target of \eqref{eq:d-red} it is straight forward to see that these homotopies do not alter the differentials $\d^{x'}\mapsto\d^y$ and $\d_{y'}\mapsto\d_{x-1}$ on $\mathcal{F}_{y,x-1}^{\mathbb{I}}$ for $x>y$. Hence, each chain group
\begin{align*}
\bigoplus_{\substack{x+y=k+a+b\\y\geq (0,a)\\x\geq(1,b,y+1)}}\mathcal{F}_{y,x-1}^{\mathbb{I}}|_{s=y}[k] \oplus \bigoplus_{\substack{x+y=k+a+b-1\\x\geq (1,b+1)\\x-1\geq y\geq(0,b)}}\mathcal{F}_{x,y}^{\mathbb{II}}|_{s=y}[k]
\end{align*}
can be reduced to the following object in $\H$
\begin{align*}
\mathcal{G}_k:=\bigoplus_{\substack{x+y=k+a+b\\x > y\geq (b,0)}}\P^{(x-1,y)} \left(\bigoplus_{r=y-b+1}^{(x-b,y-a)}\Q^{(x+y-a-b-r,r)^t}\right)[k]\;\;
\oplus \bigoplus_{\substack{x+y=k+a+b\\x \geq b>y\geq (a,0)}} \P^{(x-1,y)}\Q^{(x-b)^t}\Q^{(y-a)^t}[k]
\end{align*}
with differential $\mathsf{d}_{k}:=\sum_{x+y=k+a+b}\mathsf{d}_{x-1} + (-1)^y \mathsf{d}^y$ given by
\begin{align}
\d_{x-1}= \sum_{r={y-b+1}}^{(x-b,y-a)} \hackcenter{\begin{tikzpicture} [scale=.35, xscale=.7, yscale=1]
\draw[line width= 0.7mm,
    decoration={markings,mark=at position 1 with {\arrow[line width=.3mm]{>}}},
    postaction={decorate},shorten >=1pt] (-4.5,2.5) to (-4.5,8);
\draw[line width= 0.7mm,
    decoration={markings,mark=at position 0.06 with {\arrow[line width=.3mm]{<}}},
    postaction={decorate},shorten >=.6pt] (4.5,2.5) to (4.5,8);
\draw[line width= 0.7mm,
    decoration={markings,mark=at position 1 with {\arrow[line width=.3mm]{>}}},
    postaction={decorate},shorten >=1pt] (-1.5,2.5) to (-1.5, 8);
\draw[line width= 0.7mm,
    decoration={markings,mark=at position 0.06 with {\arrow[line width=.3mm]{<}}},
    postaction={decorate},shorten >=.6pt] (1.5,2.5) to (1.5, 8);
\draw[directed=.8] (-3.8,5.5)..controls ++(0,1) and ++(0,1)..(1.2, 5.5);
\draw[fill =gray!20] (-5.8,6.5) rectangle (-0.4,7.5);
\draw[fill =gray!20] (-5.8,3) rectangle (-0.4,4);
\draw[fill =gray!20] (5.8,6.5) rectangle (0.4,7.5);
\draw[fill =gray!20] (5.8,3) rectangle (0.4,4);
\draw[fill =white] (-5.8,4.5) rectangle (-3.3,5.5);
\draw[fill =white] (-2.7,4.5) rectangle (-0.4,5.5);
\draw[fill =gray!60] (5.8,4.5) rectangle (3.2,5.5);
\draw[fill =gray!60] (2.9,4.5) rectangle (0,5.5);
\node at (-3,7) [scale=.75]{$(x-2,y)$};
\node at (-3,3.5) [scale=.75]{$(x-1,y)$};
\node at (-4.5,5) [scale=.75]{$x-1$};
\node at (-1.5,5) [scale=.75]{$y$};
\node at (3,7) [scale=.75]{$(\cdots,r')$};
\node at (3,3.5) [scale=.75]{$(\cdots,r)$};
\node at (1.5,5)[scale=.75] {$x-b$};
\node at (4.5,5) [scale=.75]{$y-a$};
\end{tikzpicture}} \qquad \text{and} \qquad
\d^{y}=
\hackcenter{\begin{tikzpicture} [scale=.35, xscale=.7]
\draw[line width= 0.7mm,
    decoration={markings,mark=at position 1 with {\arrow[line width=.3mm]{>}}},
    postaction={decorate},shorten >=1pt] (-4.5,2.5) to (-4.5,8);
\draw[line width= 0.7mm,
    decoration={markings,mark=at position 0.06 with {\arrow[line width=.3mm]{<}}},
    postaction={decorate},shorten >=.6pt] (4.5,2.5) to (4.5,8);
\draw[line width= 0.7mm,
    decoration={markings,mark=at position 1 with {\arrow[line width=.3mm]{>}}},
    postaction={decorate},shorten >=1pt] (-1.5,2.5) to (-1.5, 8);
\draw[line width= 0.7mm,
    decoration={markings,mark=at position 0.06 with {\arrow[line width=.3mm]{<}}},
    postaction={decorate},shorten >=.6pt] (1.5,2.5) to (1.5, 8);
\draw[directed=.6] (-.8,5.5)..controls ++(0,1) and ++(0,1)..(4.2, 5.5);
\draw[fill =gray!20] (-5.8,6.5) rectangle (-0.4,7.5);
\draw[fill =gray!20] (-5.8,3) rectangle (-0.4,4);
\draw[fill =gray!20] (5.8,6.5) rectangle (0.4,7.5);
\draw[fill =gray!20] (5.8,3) rectangle (0.4,4);
\draw[fill =white] (-5.8,4.5) rectangle (-3.3,5.5);
\draw[fill =white] (-2.7,4.5) rectangle (-0.4,5.5);
\draw[fill =gray!60] (5.8,4.5) rectangle (3.2,5.5);
\draw[fill =gray!60] (2.9,4.5) rectangle (0,5.5);
\node at (-3,7) [scale=.65]{$(x-1,y-1)$};
\node at (-3,3.5) [scale=.75]{$(x-1,y)$};
\node at (-4.5,5) [scale=.75]{$x-1$};
\node at (-1.5,5) [scale=.75]{$y$};
\node at (3,7) [scale=.75]{$(\cdots,r')$};
\node at (3,3.5) [scale=.75]{$(\cdots,r)$};
\node at (1.5,5)[scale=.75] {$x-b$};
\node at (4.5,5) [scale=.75]{$y-a$};
\end{tikzpicture}}
\qquad \text{for}\;\; x>y\geq(b,0)
\end{align}
\begin{align}
\d_{x-1}=
\hackcenter{\begin{tikzpicture} [scale=.35, xscale=.7, yscale=1]
\draw[line width= 0.7mm,
    decoration={markings,mark=at position 1 with {\arrow[line width=.3mm]{>}}},
    postaction={decorate},shorten >=1pt] (-4.5,2.5) to (-4.5,8);
\draw[line width= 0.7mm,
    decoration={markings,mark=at position 0.06 with {\arrow[line width=.3mm]{<}}},
    postaction={decorate},shorten >=.6pt] (4.5,2.5) to (4.5,8);
\draw[line width= 0.7mm,
    decoration={markings,mark=at position 1 with {\arrow[line width=.3mm]{>}}},
    postaction={decorate},shorten >=1pt] (-1.5,2.5) to (-1.5, 8);
\draw[line width= 0.7mm,
    decoration={markings,mark=at position 0.06 with {\arrow[line width=.3mm]{<}}},
    postaction={decorate},shorten >=.6pt] (1.5,2.5) to (1.5, 8);
\draw[directed=.8] (-3.8,5.5)..controls ++(0,1) and ++(0,1)..(1.2, 5.5);
\draw[fill =gray!20] (-5.8,6.5) rectangle (-0.4,7.5);
\draw[fill =gray!20] (-5.8,3) rectangle (-0.4,4);
\draw[fill =white] (-5.8,4.5) rectangle (-3.3,5.5);
\draw[fill =white] (-2.7,4.5) rectangle (-0.4,5.5);
\draw[fill =gray!60] (5.8,4.5) rectangle (3.2,5.5);
\draw[fill =gray!60] (2.9,4.5) rectangle (0,5.5);
\node at (-3,7) [scale=.75]{$(x-2,y)$};
\node at (-3,3.5) [scale=.75]{$(x-1,y)$};
\node at (-4.5,5) [scale=.75]{$x-1$};
\node at (-1.5,5) [scale=.75]{$y$};
\node at (1.5,5)[scale=.75] {$x-b$};
\node at (4.5,5) [scale=.75]{$y-a$};
\end{tikzpicture}}
\qquad \text{and}\qquad
\d^y=
\hackcenter{\begin{tikzpicture} [scale=.35, xscale=.7]
\draw[line width= 0.7mm,
    decoration={markings,mark=at position 1 with {\arrow[line width=.3mm]{>}}},
    postaction={decorate},shorten >=1pt] (-4.5,2.5) to (-4.5,8);
\draw[line width= 0.7mm,
    decoration={markings,mark=at position 0.06 with {\arrow[line width=.3mm]{<}}},
    postaction={decorate},shorten >=.6pt] (4.5,2.5) to (4.5,8);
\draw[line width= 0.7mm,
    decoration={markings,mark=at position 1 with {\arrow[line width=.3mm]{>}}},
    postaction={decorate},shorten >=1pt] (-1.5,2.5) to (-1.5, 8);
\draw[line width= 0.7mm,
    decoration={markings,mark=at position 0.06 with {\arrow[line width=.3mm]{<}}},
    postaction={decorate},shorten >=.6pt] (1.5,2.5) to (1.5, 8);
\draw[directed=.6] (-.8,5.5)..controls ++(0,1) and ++(0,1)..(4.2, 5.5);
\draw[fill =gray!20] (-5.8,6.5) rectangle (-0.4,7.5);
\draw[fill =gray!20] (-5.8,3) rectangle (-0.4,4);
\draw[fill =white] (-5.8,4.5) rectangle (-3.3,5.5);
\draw[fill =white] (-2.7,4.5) rectangle (-0.4,5.5);
\draw[fill =gray!60] (5.8,4.5) rectangle (3.2,5.5);
\draw[fill =gray!60] (2.9,4.5) rectangle (0,5.5);
\node at (-3,7) [scale=.65]{$(x-1,y-1)$};
\node at (-3,3.5) [scale=.75]{$(x-1,y)$};
\node at (-4.5,5) [scale=.75]{$x-1$};
\node at (-1.5,5) [scale=.75]{$y$};
\node at (1.5,5)[scale=.75] {$x-b$};
\node at (4.5,5) [scale=.75]{$y-a$};
\end{tikzpicture}}
\qquad \text{for}\;\; x\geq b>y\geq(0,a).
\end{align}
Thus,  $\B_{b-1}\otimes \B_a $ is homotopy equivalent to the following chain complex:
\begin{equation}
\B_{b-1}\otimes \B_a \simeq \dots \xrightarrow{\mathsf{d}_{k+2}} \mathcal{G}_{k+1}[k+1]\xrightarrow{\mathsf{d}_{k+1}} \mathcal{G}_{k}[k]\xrightarrow{\mathsf{d}_{k-1}} \dots
\end{equation}
\end{proof}

\begin{figure}[ht]
\begin{tikzpicture}[xscale=.8,yscale=.8, every node/.style={scale=.8}]

\node at (-3,2.5){6};
\node at (-2,2.5){5};
\node at (-1,2.5){4};
\node at (0,2.5){3};
\node at (1,2.5){2};
\node at (2,2.5){1};
\node at (3,2.5){0};

\node at (-4.8,2) {$x=0$};
\node at (-4.8,1) {$x=1$};
\node at (-4.8,0) {$x=2$};
\node at (-4.8,-1) {$x=3$};
\node at (-4.8,-2) {$x=4$};

\node at (-1,2) {$\bullet $ };
\node at (-1,1) {$\bullet $ };
\node at (-1,0) {$\bullet $ };
\node at (-1,-1) {$\bullet $ };
\node at (-1,-2) {$\bullet $ };

\node at (-2,2) {$\bullet $ };
\node at (-2,1) {$\bullet $ };
\node at (-2,0) {$\bullet $ };
\node at (-2,-1) {$\bullet $ };
\node at (-2,-2) {$\bullet $ };

\node at (-3,2) {$\bullet $ };
\node at (-3,1) {$\bullet $ };
\node at (-3,0) {$\bullet $ };
\node at (-3,-1) {$\bullet $ };
\node at (-3,-2) {$\bullet $ };

\node at (0,2) {$\bullet$ };
\node at (0,1) {$\bullet$ };
\node at (0,0) {$\bullet $ };
\node at (0,-1) {$\bullet$ };

\node at (1,2) {$\bullet$ };
\node at (1,1) {$\bullet$ };
\node at (1,0) {$\bullet $ };

\node at (2,2) {$\bullet $ };
\node at (2,1) {$\bullet $ };

\node at (3,2) {$\bullet$ };


\draw[black!40!green, dotted,  directed =.6] (-4,-2) to (-3,-2);
\draw[black!40!green, dotted, directed =.6] (-4,-1) to (-3,-1);
\draw[black!40!green, dotted, directed =.6] (-4,0) to (-3,0);
\draw[black!40!green, dotted, directed =.6] (-4,1) to (-3,1);
\draw[black!40!green, dotted, directed =.6] (-4,2) to (-3,2);

\draw[black!40!green,  directed =.6] (-3,-2) to (-2,-2);
\draw[black!60!green, very thick,  directed =.6] (-3,-1) to (-2,-1);
\draw[black!40!green,  directed =.6] (-3,0) to (-2,0);
\draw[black!40!green,  directed =.6] (-3,1) to (-2,1);
\draw[black!40!green,  directed =.6] (-3,2) to (-2,2);

\draw[black!40!green,  directed =.6] (-2,-2) to (-1,-2);
\draw[black!40!green,  directed =.6] (-2,-1) to (-1,-1);
\draw[black!40!green,  directed =.6] (-2,-0) to (-1,-0);
\draw[black!40!green,  directed =.6] (-2,1) to (-1,1);
\draw[black!40!green,  directed =.6] (-2,2) to (-1,2);

\draw[black!40!green,  directed =.6] (-1,-1) to (0,-1);
\draw[black!60!green, very thick,  directed =.6] (-1,-0) to (0,-0);
\draw[black!40!green,  directed =.6] (-1,1) to (0,1);
\draw[black!40!green,  directed =.6] (-1,2) to (0,2);

\draw[black!40!green,  directed =.6] (0,-0) to (1,-0);
\draw[black!40!green,  directed =.6] (0,1) to (1,1);
\draw[black!40!green,  directed =.6] (0,2) to (1,2);

\draw[black!60!green, very thick, directed =.6] (1,1) to (2,1);
\draw[black!40!green,  directed =.6] (1,2) to (2,2);

\draw[black!40!green,  directed =.6] (2,2) to (3,2);



\draw[blue!60, thick,dotted,  directed =.6] (-4,-3) to (-3,-3+1);
\draw[blue!60, thick, dotted,directed =.6] (-4,-2) to (-3,-2+1);
\draw[blue!60, thick, dotted,directed =.6] (-4,-1) to (-3,0);
\draw[blue!60, thick, dotted, directed =.6] (-4,0) to (-3,-0+1);
\draw[blue!60, thick, dotted,directed =.6] (-4,1) to (-3,1+1);

\draw[blue!60, thick,dotted,  directed =.6] (-3,-3) to (-2,-3+1);
\draw[blue!60, thick, directed =.6] (-3,-2) to (-2,-2+1);
\draw[blue!60, thick, directed =.6] (-3,-1) to (-2,0);
\draw[blue!60, thick, directed =.6] (-3,0) to (-2,1);
\draw[blue!60, thick, directed =.6] (-3,1) to (-2,1+1);

\draw[blue!60, thick,dotted,  directed =.6] (-2,-3) to (-1,-3+1);
\draw[blue!60, thick, directed =.6] (-2,-2) to (-1,-2+1);
\draw[blue!60, thick, directed =.6] (-2,-1) to (-1,0);
\draw[blue!60, thick, directed =.6] (-2,0) to (-1,1);
\draw[blue!60, thick, directed =.6] (-2,1) to (-1,1+1);

\draw[blue!60, thick, directed =.6] (-1,-2) to (0,-2+1);
\draw[blue!60, thick, directed =.6] (-1,-1) to (0,0);
\draw[blue!60, thick, directed =.6] (-1,0) to (0,1);
\draw[blue!60, thick, directed =.6] (-1,1) to (0,1+1);

\draw[blue!60, thick, directed =.6] (0,-1) to (1,0);
\draw[blue!60, thick, directed =.6] (0,0) to (1,1);
\draw[blue!60, thick, directed =.6] (0,1) to (1,1+1);

\draw[blue!60, thick, directed =.6] (1,0) to (2,1);
\draw[blue!60, thick, directed =.6] (1,1) to (2,1+1);

\draw[blue!60, thick, directed =.6] (2,1) to (3,1+1);





\draw[red!60, very thick, directed =.6](2,1) to (3,2);
\draw[red!60, very thick, directed =.75](1,0) to (2,2);
\draw[red!60, very thick, directed =.8](0,-1) to (1,2);
\draw[red!60, very thick, directed =.85](-1,-2) to (0,2);

\draw[red!60, very thick, directed =.87](-1.8,-2) to (-1,2);
\draw[red!60, very thick, dotted](-2,-3) to (-1.8,-2);

\draw[red!60, very thick, directed =.89](-2.68,-2) to (-2,2);
\draw[red!60, very thick,dotted ](-2.85,-3) to (-2.68,-2);

\draw[red!60, very thick,dotted, directed =.91](-3.57,-2) to (-3,2);
\draw[red!60, very thick, dotted](-3.7,-3) to (-3.57,-2);

\end{tikzpicture}
\begin{tikzpicture}[xscale=.8,yscale=.8, every node/.style={scale=.8}]

\node at (-3,2.5){6};
\node at (-2,2.5){5};
\node at (-1,2.5){4};
\node at (0,2.5){3};
\node at (1,2.5){2};
\node at (2,2.5){1};
\node at (3,2.5){0};

\node at (-1,2) {$\bullet $ };
\node at (-1,1) {$\bullet $ };
\node at (-1,0) {$\bullet $ };
\node at (-1,-1) {$\bullet $ };
\node at (-1,-2) {$\bullet $ };

\node at (-2,2) {$\bullet $ };
\node at (-2,1) {$\bullet $ };
\node at (-2,0) {$\bullet $ };
\node at (-2,-1) {$\bullet $ };
\node at (-2,-2) {$\bullet $ };

\node at (-3,2) {$\bullet $ };
\node at (-3,1) {$\bullet $ };
\node at (-3,0) {$\bullet $ };
\node at (-3,-1) {$\bullet $ };
\node at (-3,-2) {$\bullet $ };

\node at (0,2) {$\bullet$ };
\node at (0,1) {$\bullet$ };
\node at (0,0) {$\bullet $ };
\node at (0,-1) {$\bullet$ };

\node at (1,2) {$\bullet$ };
\node at (1,1) {$\bullet$ };
\node at (1,0) {$\bullet $ };

\node at (2,2) {$\bullet $ };
\node at (2,1) {$\bullet $ };

\node at (3,2) {$\bullet$ };


\draw[black!40!green, dotted,  directed =.6] (-4,-2) to (-3,-2);
\draw[black!40!green, dotted, directed =.6] (-4,-1) to (-3,-1);
\draw[black!40!green, dotted, directed =.6] (-4,0) to (-3,0);
\draw[black!40!green, dotted, directed =.6] (-4,1) to (-3,1);
\draw[black!40!green, dotted, directed =.6] (-4,2) to (-3,2);

\draw[black!40!green,  directed =.6] (-3,-2) to (-2,-2);
\draw[black!60!green, very thick,  directed =.6] (-3,-1) to (-2,-1);
\draw[black!40!green,  directed =.6] (-3,0) to (-2,0);
\draw[black!40!green,  directed =.6] (-3,1) to (-2,1);
\draw[black!40!green,  directed =.6] (-3,2) to (-2,2);

\draw[black!40!green,  directed =.6] (-2,-2) to (-1,-2);
\draw[black!40!green,  directed =.6] (-2,-1) to (-1,-1);
\draw[black!40!green,  directed =.6] (-2,-0) to (-1,-0);
\draw[black!40!green,  directed =.6] (-2,1) to (-1,1);
\draw[black!40!green,  directed =.6] (-2,2) to (-1,2);

\draw[black!40!green,  directed =.6] (-1,-1) to (0,-1);
\draw[black!60!green, very thick,  directed =.6] (-1,-0) to (0,-0);
\draw[black!40!green,  directed =.6] (-1,1) to (0,1);
\draw[black!40!green,  directed =.6] (-1,2) to (0,2);

\draw[black!40!green,  directed =.6] (0,-0) to (1,-0);
\draw[black!40!green,  directed =.6] (0,1) to (1,1);
\draw[black!40!green,  directed =.6] (0,2) to (1,2);

\draw[black!60!green, very thick, directed =.6] (1,1) to (2,1);
\draw[black!40!green,  directed =.6] (1,2) to (2,2);

\draw[black!40!green,  directed =.6] (2,2) to (3,2);



\draw[blue!60, thick,dotted,  directed =.6] (-4,-3) to (-3,-3+1);
\draw[blue!60, thick, dotted,directed =.6] (-4,-2) to (-3,-2+1);
\draw[blue!60, thick, dotted,directed =.6] (-4,-1) to (-3,0);
\draw[blue!60, thick, dotted, directed =.6] (-4,0) to (-3,-0+1);
\draw[blue!60, thick, dotted,directed =.6] (-4,1) to (-3,1+1);

\draw[blue!60, thick,dotted,  directed =.6] (-3,-3) to (-2,-3+1);
\draw[blue!60, thick, directed =.6] (-3,-2) to (-2,-2+1);
\draw[blue!60, thick, directed =.6] (-3,-1) to (-2,0);
\draw[blue!60, thick, directed =.6] (-3,0) to (-2,1);
\draw[blue!60, thick, directed =.6] (-3,1) to (-2,1+1);

\draw[blue!60, thick,dotted,  directed =.6] (-2,-3) to (-1,-3+1);
\draw[blue!60, thick, directed =.6] (-2,-2) to (-1,-2+1);
\draw[blue!60, thick, directed =.6] (-2,-1) to (-1,0);
\draw[blue!60, thick, directed =.6] (-2,0) to (-1,1);
\draw[blue!60, thick, directed =.6] (-2,1) to (-1,1+1);

\draw[blue!60, thick, directed =.6] (-1,-2) to (0,-2+1);
\draw[blue!60, thick, directed =.6] (-1,-1) to (0,0);
\draw[blue!60, thick, directed =.6] (-1,0) to (0,1);
\draw[blue!60, thick, directed =.6] (-1,1) to (0,1+1);

\draw[blue!60, thick, directed =.6] (0,-1) to (1,0);
\draw[blue!60, thick, directed =.6] (0,0) to (1,1);
\draw[blue!60, thick, directed =.6] (0,1) to (1,1+1);

\draw[blue!60, thick, directed =.6] (1,0) to (2,1);
\draw[blue!60, thick, directed =.6] (1,1) to (2,1+1);

\draw[blue!60, thick, directed =.6] (2,1) to (3,1+1);





\draw[red!60, very thick, directed =.6](0,0) to (1,1);
\draw[red!60, very thick, directed =.75](-1,-1) to (0,1);
\draw[red!60, very thick, directed =.85](-2,-2) to (-1,1);


\draw[red!60, very thick, directed =.87](-2.75,-2) to (-2,1);
\draw[red!60, very thick, dotted](-3,-3) to (-2.75,-2);

\draw[red!60, very thick, ,dotted,directed =.89](-3.85,-3) to (-3,1);


\end{tikzpicture}
\caption{The bi-complex $\B_{b-1}\otimes\B_a$ for $a<b \leq 0$. The differentials ${\color{red}\d_{x-y}}$ for $y=0$ (\emph{left}) and $y=1$ (\emph{right}) are presented in upward red arrows. The differential $\d_x$ are the diagonal blue arrows, $\d_y$  the thin horizontal green arrows, and $\mathsf{D}$ the thick dark green arrows. Terms in the same column have equal homological degree, which is denoted by the number above shifted by $a+b-1$.}\label{fig:FF-reduced}
\end{figure}
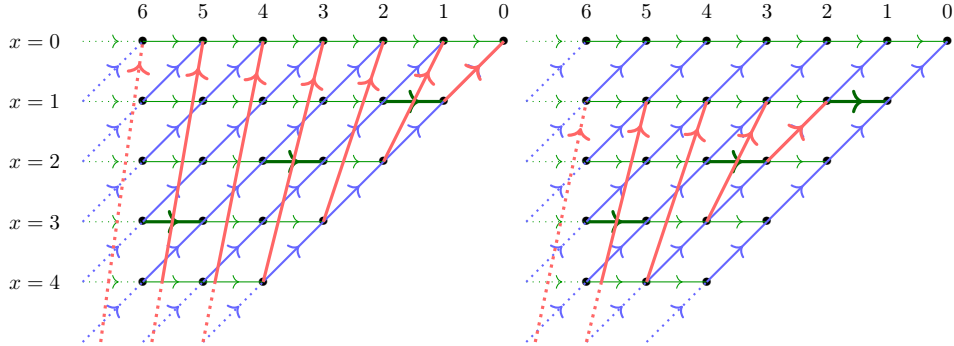

\begin{theorem}\label{thm:BBa>b} Let $a,b \in \Z$ with $a>b$ and for each integer $k \geq 1-a-b$ define $\mathcal{G}_{k}' \in \H$,
\begin{equation}\label{eq:Gk'} \mathcal{G}_{k}':= \bigoplus_{\substack{x+y=k+a+b\\y > x\geq (a,0)}}\P^{(y-1,x)} \left(\bigoplus_{r=x-a+1}^{(x-b,y-a)}\Q^{(x+y-a-b-r,r)^t}\right)
\oplus \bigoplus_{\substack{x+y=k+a+b\\y \geq a>x\geq (b,0)}} \P^{(y-1,x)}\Q^{(x-b)^t}\Q^{(y-a)^t}
\end{equation}
and let $\mathsf{d}'_{k}$ be given by:
\begin{align}
\d'_k:=&\sum_{\substack{x+y=k+a+b\\y>x\geq (a,0)}}\sum_{r={x-a+1}}^{(x-b,y-a)} (-1)^x 
\hackcenter{\begin{tikzpicture} [scale=.35, xscale=.7]
\draw[line width= 0.7mm,
    decoration={markings,mark=at position 1 with {\arrow[line width=.3mm]{>}}},
    postaction={decorate},shorten >=1pt] (-4.5,2.5) to (-4.5,8);
\draw[line width= 0.7mm,
    decoration={markings,mark=at position 0.06 with {\arrow[line width=.3mm]{<}}},
    postaction={decorate},shorten >=.6pt] (4.5,2.5) to (4.5,8);
\draw[line width= 0.7mm,
    decoration={markings,mark=at position 1 with {\arrow[line width=.3mm]{>}}},
    postaction={decorate},shorten >=1pt] (-1.5,2.5) to (-1.5, 8);
\draw[line width= 0.7mm,
    decoration={markings,mark=at position 0.06 with {\arrow[line width=.3mm]{<}}},
    postaction={decorate},shorten >=.6pt] (1.5,2.5) to (1.5, 8);
\draw[directed=.6] (-.8,5.5)..controls ++(0,1) and ++(0,1)..(4.2, 5.5);
\draw[fill =gray!20] (-5.8,6.5) rectangle (-0.4,7.5);
\draw[fill =gray!20] (-5.8,3) rectangle (-0.4,4);
\draw[fill =gray!20] (5.8,6.5) rectangle (0.4,7.5);
\draw[fill =gray!20] (5.8,3) rectangle (0.4,4);
\draw[fill =white] (-5.8,4.5) rectangle (-3.3,5.5);
\draw[fill =white] (-2.7,4.5) rectangle (-0.4,5.5);
\draw[fill =gray!60] (5.8,4.5) rectangle (3.2,5.5);
\draw[fill =gray!60] (2.9,4.5) rectangle (0,5.5);
\node at (-3,7) [scale=.65]{$(y-1,x-1)$};
\node at (-3,3.5) [scale=.75]{$(y-1,x)$};
\node at (-4.5,5) [scale=.75]{$y-1$};
\node at (-1.5,5) [scale=.75]{$x$};
\node at (3,7) [scale=.75]{$(\cdots,r')$};
\node at (3,3.5) [scale=.75]{$(\cdots,r)$};
\node at (1.5,5)[scale=.75] {$y-a$};
\node at (4.5,5) [scale=.75]{$x-b$};
\end{tikzpicture}}\;\;+\;\;
\hackcenter{\begin{tikzpicture} [scale=.35, xscale=.7, yscale=1]
\draw[line width= 0.7mm,
    decoration={markings,mark=at position 1 with {\arrow[line width=.3mm]{>}}},
    postaction={decorate},shorten >=1pt] (-4.5,2.5) to (-4.5,8);
\draw[line width= 0.7mm,
    decoration={markings,mark=at position 0.06 with {\arrow[line width=.3mm]{<}}},
    postaction={decorate},shorten >=.6pt] (4.5,2.5) to (4.5,8);
\draw[line width= 0.7mm,
    decoration={markings,mark=at position 1 with {\arrow[line width=.3mm]{>}}},
    postaction={decorate},shorten >=1pt] (-1.5,2.5) to (-1.5, 8);
\draw[line width= 0.7mm,
    decoration={markings,mark=at position 0.06 with {\arrow[line width=.3mm]{<}}},
    postaction={decorate},shorten >=.6pt] (1.5,2.5) to (1.5, 8);
\draw[directed=.8] (-3.8,5.5)..controls ++(0,1) and ++(0,1)..(1.2, 5.5);
\draw[fill =gray!20] (-5.8,6.5) rectangle (-0.4,7.5);
\draw[fill =gray!20] (-5.8,3) rectangle (-0.4,4);
\draw[fill =gray!20] (5.8,6.5) rectangle (0.4,7.5);
\draw[fill =gray!20] (5.8,3) rectangle (0.4,4);
\draw[fill =white] (-5.8,4.5) rectangle (-3.3,5.5);
\draw[fill =white] (-2.7,4.5) rectangle (-0.4,5.5);
\draw[fill =gray!60] (5.8,4.5) rectangle (3.2,5.5);
\draw[fill =gray!60] (2.9,4.5) rectangle (0,5.5);
\node at (-3,7) [scale=.75]{$(y-2,x)$};
\node at (-3,3.5) [scale=.75]{$(y-1,x)$};
\node at (-4.5,5) [scale=.75]{$y-1$};
\node at (-1.5,5) [scale=.75]{$x$};
\node at (3,7) [scale=.75]{$(\cdots,r')$};
\node at (3,3.5) [scale=.75]{$(\cdots,r)$};
\node at (1.5,5)[scale=.75] {$y-a$};
\node at (4.5,5) [scale=.75]{$x-b$};
\end{tikzpicture}}
\\
&+ 
\sum_{\substack{x+y=k+a+b\\ y\geq a>x\geq(0,b)}} (-1)^x 
\hackcenter{\begin{tikzpicture} [scale=.35, xscale=.7]
\draw[line width= 0.7mm,
    decoration={markings,mark=at position 1 with {\arrow[line width=.3mm]{>}}},
    postaction={decorate},shorten >=1pt] (-4.5,2.5) to (-4.5,8);
\draw[line width= 0.7mm,
    decoration={markings,mark=at position 0.06 with {\arrow[line width=.3mm]{<}}},
    postaction={decorate},shorten >=.6pt] (4.5,2.5) to (4.5,8);
\draw[line width= 0.7mm,
    decoration={markings,mark=at position 1 with {\arrow[line width=.3mm]{>}}},
    postaction={decorate},shorten >=1pt] (-1.5,2.5) to (-1.5, 8);
\draw[line width= 0.7mm,
    decoration={markings,mark=at position 0.06 with {\arrow[line width=.3mm]{<}}},
    postaction={decorate},shorten >=.6pt] (1.5,2.5) to (1.5, 8);
\draw[directed=.6] (-.8,5.5)..controls ++(0,1) and ++(0,1)..(4.2, 5.5);
\draw[fill =gray!20] (-5.8,6.5) rectangle (-0.4,7.5);
\draw[fill =gray!20] (-5.8,3) rectangle (-0.4,4);
\draw[fill =white] (-5.8,4.5) rectangle (-3.3,5.5);
\draw[fill =white] (-2.7,4.5) rectangle (-0.4,5.5);
\draw[fill =gray!60] (5.8,4.5) rectangle (3.2,5.5);
\draw[fill =gray!60] (2.9,4.5) rectangle (0,5.5);
\node at (-3,7) [scale=.65]{$(y-1,x-1)$};
\node at (-3,3.5) [scale=.75]{$(y-1,x)$};
\node at (-4.5,5) [scale=.75]{$y-1$};
\node at (-1.5,5) [scale=.75]{$x$};
\node at (1.5,5)[scale=.75] {$y-a$};
\node at (4.5,5) [scale=.75]{$x-b$};
\end{tikzpicture}}\;\;+\;\;
\hackcenter{\begin{tikzpicture} [scale=.35, xscale=.7, yscale=1]
\draw[line width= 0.7mm,
    decoration={markings,mark=at position 1 with {\arrow[line width=.3mm]{>}}},
    postaction={decorate},shorten >=1pt] (-4.5,2.5) to (-4.5,8);
\draw[line width= 0.7mm,
    decoration={markings,mark=at position 0.06 with {\arrow[line width=.3mm]{<}}},
    postaction={decorate},shorten >=.6pt] (4.5,2.5) to (4.5,8);
\draw[line width= 0.7mm,
    decoration={markings,mark=at position 1 with {\arrow[line width=.3mm]{>}}},
    postaction={decorate},shorten >=1pt] (-1.5,2.5) to (-1.5, 8);
\draw[line width= 0.7mm,
    decoration={markings,mark=at position 0.06 with {\arrow[line width=.3mm]{<}}},
    postaction={decorate},shorten >=.6pt] (1.5,2.5) to (1.5, 8);
\draw[directed=.8] (-3.8,5.5)..controls ++(0,1) and ++(0,1)..(1.2, 5.5);
\draw[fill =gray!20] (-5.8,6.5) rectangle (-0.4,7.5);
\draw[fill =gray!20] (-5.8,3) rectangle (-0.4,4);
\draw[fill =white] (-5.8,4.5) rectangle (-3.3,5.5);
\draw[fill =white] (-2.7,4.5) rectangle (-0.4,5.5);
\draw[fill =gray!60] (5.8,4.5) rectangle (3.2,5.5);
\draw[fill =gray!60] (2.9,4.5) rectangle (0,5.5);
\node at (-3,7) [scale=.75]{$(y-2,x)$};
\node at (-3,3.5) [scale=.75]{$(y-1,x)$};
\node at (-4.5,5) [scale=.75]{$y-1$};
\node at (-1.5,5) [scale=.75]{$x$};
\node at (1.5,5)[scale=.75] {$y-a$};
\node at (4.5,5) [scale=.75]{$x-b$};
\end{tikzpicture}}.
\end{align}
Then the following homotopy equivalence holds in $\K(\H)$:
\begin{equation}\label{eq:Cb+1Ca-reduced-case2}
\B_{b-1}\otimes \B_a \simeq \dots \xrightarrow{\mathsf{d}_{k+1}'} \mathcal{G}_{k}'[k+1] \xrightarrow{\mathsf{d}_k'} \mathcal{G}_{k-1}'[k] \xrightarrow{\mathsf{d}_{k-1}'} \dots
\end{equation}
\end{theorem}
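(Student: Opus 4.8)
The plan is to run the argument of Theorem~\ref{thm:BBa<b} with the two directions of the bi-complex interchanged, the point being that the hypothesis $a>b$ (rather than $a<b$) reverses precisely the inequalities that controlled which restrictions of the differentials are isomorphisms. I would again begin from Proposition~\ref{prop:F}, which presents $\B_{b-1}\otimes\B_a$ as $\mathrm{Tot}^{\oplus}\{\bigoplus_{y}\mathcal{F}_{x,y}^{\mathbb{I}}\oplus\mathcal{F}_{x,y}^{\mathbb{II}},\mathsf{d}_y,\mathsf{d}^x\}_x$ together with the trichotomy recorded there: $\mathsf{d}^x\colon\mathcal{F}_{x,y}^{\mathbb{II}}\to\mathcal{F}_{x-1,y}^{\mathbb{I}}$ is injective/surjective/bijective according to the sign of $(y-b+1)-(x-a)$, and $\mathsf{d}_y\colon\mathcal{F}_{x,y}^{\mathbb{II}}\to\mathcal{F}_{x,y-1}^{\mathbb{I}}$ according to the sign of $x-y$. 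When $a>b$, on the range $y>x$ one has $(y-b+1)-(x-a)=(y-x)+(a-b)+1>0$, so after restricting the summation indices $s$ and $r$ to suitable sub-ranges both $\mathsf{d}^x$ and $\mathsf{d}_y$ acquire invertible restrictions; the point is that it is now the $\mathsf{d}^x$ (the $\B_a$-direction) that one reduces away first, in mirror image to the $a<b$ case, where the reduction was carried out in the $\B_{b-1}$-direction.

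Next I would reduce the bi-complex one row at a time, i.e.\ for each fixed $y$ reduce along $x$ using Lemma~\ref{lem:QC-reduction} (the $\B_a$-direction counterpart of the $\B_{b-1}$-direction reduction by Lemma~\ref{lem:CP-reduction} used in Theorem~\ref{thm:BBa<b}); this is legitimate because the $y$-indexing set is bounded below, so Proposition~\ref{prop:SimultSimp}(i) lets all these homotopies be performed simultaneously. As in Theorem~\ref{thm:BBa<b}, these simultaneous reductions collapse most of the summands of each $\mathcal{F}_{x,y}^{\mathbb{I}}$ and $\mathcal{F}_{x,y}^{\mathbb{II}}$ (now pinning down the $r$-summations rather than the $s$-summations, together with one boundary term coming from the map $\mathsf{D}$), and they create new ``zigzag'' differentials obtained as iterated composites $\mathsf{d}_{\bullet}\circ(\mathsf{d}^{\bullet})^{-1}\circ\mathsf{d}_{\bullet}\circ\cdots$ of these invertible restrictions --- the transpose of the upward zigzags of Figures~\ref{fig:F+Fbicomplex} and~\ref{fig:FF-reduced}. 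Each such zigzag is a composite of maps that are either isomorphisms or zero, and because $a>b$ these composites are injective; a further round of Gaussian elimination --- applied coherently along increasing homological degree, using that the source of each zigzag consists only of (surviving) $\mathcal{F}^{\mathbb{I}}$-summands while its target consists only of $\mathcal{F}^{\mathbb{II}}$-summands, so that the eliminations do not interfere --- then removes every $\mathcal{F}^{\mathbb{I}}$ contribution. What is left are precisely the surviving $\mathcal{F}_{x,y}^{\mathbb{II}}$-summands; re-indexing $(x,y)$ and grouping by homological degree assembles them into $\mathcal{G}_k'$ of \eqref{eq:Gk'}, and tracing the surviving pieces of $\mathsf{d}^x$, $\mathsf{d}_y$ and the zigzag through the re-indexing, including the Koszul sign $(-1)^x$ carried by the $\B_a$-differential, yields exactly $\mathsf{d}_k'$ and hence the homotopy equivalence \eqref{eq:Cb+1Ca-reduced-case2}.

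The main obstacle I anticipate is bookkeeping rather than conceptual. One must track carefully which summands --- indexed by $s$ and $r$ --- of each $\mathcal{F}_{x,y}^{\mathbb{I}}$ and $\mathcal{F}_{x,y}^{\mathbb{II}}$ lie in the source or target of which invertible restriction, so that the simultaneous Gaussian eliminations can be ordered consistently and no arrow is destroyed before it is used; this is the delicate step, and it is where the transposed picture differs most visibly from Theorem~\ref{thm:BBa<b}. One must also pin down the signs: the surviving differential $\mathsf{d}_k'$ now carries $(-1)^x$ factors coming from the $\B_a$-side, where the $a<b$ computation carried $(-1)^y$ factors from the $\B_{b-1}$-side, and the two boundary regimes ($y>x\ge(a,0)$ versus $y\ge a>x\ge(b,0)$, which are exactly the two summation ranges appearing in $\mathsf{d}_k'$) must be checked separately. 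Once the transposed zigzag picture is correctly set up, the remaining verifications reduce to the same local diagrammatic identities (using \eqref{eq:whiteblackbox}, \eqref{eq:sym-anti-absorbsion}, Proposition~\ref{prop:PPmerge} and Theorem~\ref{thm:CompleteIdemp}) already performed in the proof of Theorem~\ref{thm:BBa<b}.
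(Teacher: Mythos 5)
Your proposal is built on a genuinely different route from the paper's, and that route will not produce the statement as written. The paper's proof of Theorem~\ref{thm:BBa>b} applies \emph{the same} column-wise reduction used in Theorem~\ref{thm:BBa<b} --- Lemma~\ref{lem:CP-reduction}, applied simultaneously to the $\B_{b-1}$-indexed subcomplexes $\{\bigoplus_y\mathcal{F}_{x,y}^{\mathbb{I}}\oplus\mathcal{F}_{x,y}^{\mathbb{II}},\mathsf{d}_y\}$ for each fixed $x$. The \emph{only} thing that changes when $a>b$ rather than $a<b$ is the character of the composite zigzag morphisms: they remain maps of the shape $\mathcal{F}_{y,x}^{\mathbb{II}}|_{s=x}\to\mathcal{F}_{x,y-1}^{\mathbb{I}}|_{s=x}$ (analogous to \eqref{eq:d-red}, now arising at $j=y-x$ rather than $j=x-y-1$), but now they are \emph{surjective} rather than injective. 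Gaussian elimination therefore wipes out every $\mathcal{F}^{\mathbb{I}}$ and the matching pieces of $\mathcal{F}^{\mathbb{II}}$, leaving the remaining $\mathcal{F}_{y,x}^{\mathbb{II}}|_{s=x}$-summands, which have $\P^{(y-1,x)}$ pinned and $\Q$-parts ranging over $r$ --- i.e.\ exactly $\mathcal{G}'_k$. You have the right intuition that the asymmetry is in the direction/surjectivity of the zigzags, but you have instead proposed to swap reduction lemmas, and you also describe your zigzags as going $\mathcal{F}^{\mathbb{I}}\to\mathcal{F}^{\mathbb{II}}$ and being injective, which is not what the paper's Figure~\ref{fig:F+Fbicomplex} picture gives in the $a>b$ regime.

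The concrete problem with the row-wise reduction via Lemma~\ref{lem:QC-reduction} is that it pins the wrong index. The $\B_{b-1}$-side reduction via Lemma~\ref{lem:CP-reduction} pins the $s$-index of Proposition~\ref{prop:F} (so the $\P$-factor becomes a single fixed two-row partition) and leaves $r$ free (so the $\Q$-factor is a direct sum over $r$), which is precisely the shape of $\mathcal{G}'_k$. The $\B_a$-side reduction via Lemma~\ref{lem:QC-reduction} does the mirror thing: it pins $r$ and leaves $s$ free, yielding chain groups in which the $\Q^{(\cdots)^t}$-factor is fixed and the $\P$-factor ranges over two-row partitions indexed by $s$. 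Because $\H$ is not Krull-Schmidt (see the remark immediately after Theorem~\ref{thm:BernsteinOps1}, where the authors explicitly flag that the two lemmas yield distinct, only-homotopy-equivalent presentations), your route at best proves that $\B_{b-1}\otimes\B_a$ is homotopy equivalent to \emph{some} complex, and the further claim in your proposal that ``re-indexing $(x,y)$ and grouping by homological degree assembles them into $\mathcal{G}'_k$'' is not correct --- no re-indexing turns a sum over $s$ of $\P$'s tensored with one fixed $\Q$ into the fixed $\P^{(y-1,x)}$ tensored with a sum over $r$ of $\Q$'s. An independent argument comparing the two presentations would be needed, which you do not supply, so there is a genuine gap. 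The fix is simple: keep the column reduction exactly as in Theorem~\ref{thm:BBa<b} and let the sign of $a-b$ do the work in the final Gaussian-elimination step.
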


\begin{proof}
This proof is analogous to Theorem \ref{thm:BBa<b} with some minor differences which we now sketch. Let $x < y$ and for integers $1< j \leq y-x-b+a$ and $\mathcal{F}_{x,y}^{\mathbb{I}}$,$\mathcal{F}_{x,y}^{\mathbb{II}}$, $\d_y$, and $\d^x$ defined as in Proposition \ref{prop:F} consider the restrictions of the maps
\[\mathsf{d}_{y-j}:\mathcal{F}_{x+j,y-j}^{\mathbb{II}} {\color{black!40!green}\to} \mathcal{F}_{x+j,y-j-1}^{\mathbb{I}}\qquad \text{and}\qquad\mathsf{d}^{x+j}: \mathcal{F}_{x+j,y-j}^{\mathbb{II}} {\color{blue}\to}\mathcal{F}_{x+j-1,y-j}^{\mathbb{I}}.\]
As before, restricting the summands to $0\leq s\leq x$ and $0\leq r <y-x+b-a$, we obtain that $\mathsf{d}_y$ and $\mathsf{d}^x$ are bijections for $1< j \leq y-x$. Since $b<a$ then $y-x<y-x-b+a$. Thus, the composition
\begin{equation}\label{eq:djRED2}
\d_j:=\d^{x-j}\circ (\d_{y+j-1})^{-1} \circ \d^{x-j-1} \cdots (\d_{y+1})^{-1}\circ \d^{x}
\end{equation}
is well defined and exists for all exists for all $1< j \leq y-x$ and $0\leq s \leq x$.  These maps correspond to following the darker blue and green lines in an downward zigzag in Figure \ref{fig:F+Fbicomplex}. Equivalently, we can envision them as the following sequence of compositions:
\begin{align*}
\begin{tikzpicture}
\node at (0,-1) {${\color{red}\mathsf{d}_{j}}$};
\node at (0,0){$\mathcal{F}_{x+j,y-j}^{\mathbb{II}} {\color{blue}{\to}} \mathcal{F}_{x+j-1,y-j}^{\mathbb{I}} {\color{black!40!green}\to} \mathcal{F}_{x+j-1,y-j+1}^{\mathbb{II}}{\color{blue}\to} \dots {\color{blue}\to} \mathcal{F}_{x+1,y-2}^{\mathbb{I}} {\color{black!40!green}\to} \mathcal{F}_{x+1,y-1}^{\mathbb{II}} {\color{blue}\to} \mathcal{F}_{x,y-1}^{\mathbb{I}}$};
\begin{scope}[xshift=-10, yshift = -8, xscale=1.1, yscale =.2]
\draw (-5,0) arc (180:360:5)[->] [thick, dotted, red];
\end{scope}
\end{tikzpicture}
\end{align*}
Once again by Proposition \ref{prop:F} $\B_{b-1}\otimes \B_a \simeq$ Tot$^\oplus\left\lbrace\bigoplus_{y\geq (0,b-1)}\mathcal{F}_{x,y}^{\mathbb{I}} \oplus \mathcal{F}_{x,y}^{\mathbb{II}}, \mathsf{d}_y,\mathsf{d}^x\right\rbrace_{x\geq (a,0)}$ with each subcomplex $\left\lbrace \bigoplus_{y\geq (0,b-1)}\mathcal{F}_{x,y}^{\mathbb{I}} \oplus \mathcal{F}_{x,y}^{\mathbb{II}}, \mathsf{d}_y\right\rbrace $ is homotopy equivalent to the mapping cone of the chain map $\mathsf{D}\otimes \1:\mathcal{F}_{x}^{\mathbb{I}} \to \mathcal{F}_{x}^{\mathbb{II}}$ where, as before:
\begin{align*}
&\mathcal{F}_{x}^{\mathbb{I}}\;\;{\simeq}\;\;\left\lbrace \bigoplus_{y \geq (0,b-1,x)} \mathcal{F}_{x,y}^{\mathbb{I}}|_{s=x}[-1],\d_y\right\rbrace &\text{and}&&
\mathcal{F}_{x}^{\mathbb{II}}\;\;{\simeq}\;\;\left\lbrace \bigoplus_{ y \geq (0,b)}^{x-1}   \mathcal{F}_{x,y}^{\mathbb{II}}|_{s=y},\d_y\right\rbrace
\end{align*}
Since $a>b$, this time if we apply Lemma \ref{lem:CP-reduction} the new arrows arise for $j=y-x$. Hence, for all $x\geq$ max(1,b+1) and $x\geq y \geq$ max$(0,a)$ there is an isomorphism between $\mathcal{F}_{x',y'}^{\mathbb{II}}|_{s=y'}$ and $\mathcal{F}_{x,y-1}^{\mathbb{I}}|_{s=x}$ by sending $x'\mapsto y$ and $y'\mapsto x$.

Thus, if we take the total complex of the reduced bi-complex then for each integer $k \geq 2-a-b$, the complex $\B_{b-1}\otimes \B_{a}$ is homotopy equivalent to a complex whose chain group in homological degree $k$ is
\begin{align*}
(\B_{b-1}\otimes\B_a)_{k}\;\;&\simeq
\bigoplus_{\substack{x+y=k+a+b-1\\ x\geq (0,a)\\ y\geq (0,b-1,x)}} \mathcal{F}_{x,y}^{\mathbb{I}}|_{s=x}
\oplus
\bigoplus_{\substack{x'+y'=k+a+b-1\\ x'\geq (1,b+1)\\x'-1\geq y'\geq (0,b)}}  \mathcal{F}_{x',y'}^{\mathbb{II}}|_{s=y'} \\
&\simeq
\bigoplus_{\substack{x+y=k+a+b-1\\ x\geq (0,a)\\ y\geq (0,b-1,x)}} \mathcal{F}_{x,y}^{\mathbb{I}}|_{s=x}
\oplus
\bigoplus_{\substack{y+x=k+a+b-1\\ y\geq (1,b+1)\\y-1\geq x\geq (0,b)}}  \mathcal{F}_{y,x}^{\mathbb{II}}|_{s=x}.
\end{align*}
In particular, for any fixed $k$ such that $x+y=k+a+b-1$ and $y>x$ the new arrows from \eqref{eq:djRED2} analogous to the morphism from \eqref{eq:d-red} are given by:
\begin{equation}\label{eq:d-red2}
\bigoplus_{\substack{y\geq (1,a)\\y-1\geq x\geq (0,b)}} \mathcal{F}_{y,x}^{\mathbb{II}}|_{s=x} {\color{red}\xrightarrow{[\d_{y-x}]}} \bigoplus_{\substack{y\geq (1,a+1)\\ y-1\geq x\geq (0,a)}} \mathcal{F}_{x,y-1}^{\mathbb{I}}|_{s=x}
\end{equation}
Since $a>b$ these maps are surjective. Performing Gaussian elimination across all homological degrees $\mathcal{F}_{x,y}^{\mathbb{I}}$ is eliminated for all $y\geq(0,b-1)$ with $y>x\geq (0,a)$. Hence, each chain group
\[
\bigoplus_{\substack{x+y=k+a+b-1\\ x\geq (0,a)\\ y\geq (0,b-1,x)}} \mathcal{F}_{x,y}^{\mathbb{I}}|_{s=x}[k]
\oplus
\bigoplus_{\substack{y+x=k+a+b-1\\ y\geq (1,b+1)\\y-1\geq x\geq (0,b)}}  \mathcal{F}_{y,x}^{\mathbb{II}}|_{s=x}[k]
\]
can be reduced to the following
\[ \mathcal{G}_{k-1}':= \bigoplus_{\substack{x+y=k-1+a+b\\y > x\geq (a,0)}}\P^{(y-1,x)} \left(\bigoplus_{r=x-a+1}^{(x-b,y-a)}\Q^{(x+y-a-b-r,r)^t}\right)[k]
\oplus \bigoplus_{\substack{x+y=k-1+a+b\\y \geq a>x\geq (b,0)}} \P^{(y-1,x)}\Q^{(x-b)^t}\Q^{(y-a)^t}[k].
\]
Thus, we have the following homotopy equivalence
\begin{equation}
\B_{b-1}\otimes \B_a \simeq \dots \xrightarrow{\mathsf{d}_{k+1}'} \mathcal{G}_{k}'[k+1] \xrightarrow{\mathsf{d}_{k}'} \mathcal{G}_{k-1}'[k] \xrightarrow{\mathsf{d}_{k-1}'} \dots
\end{equation}
with $\mathsf{d}'_k = \sum_{x+y=k+a+b} (-1)^x  \mathsf{d}^x + \mathsf{d}_y$ defined analogously as for Theorem \ref{thm:BBa<b}. 
\end{proof}

Combining the last two theorems we prove the first of the categorical Bernstein operator relations. 

\begin{theorem}\label{thm:BernsteinOps1}
For any integers $a,b \in \Z$, the categorical Bernstein operators satisfy the following homotopy relations in $\K(\H)$:
\[\B_{a-1}\otimes \B_a \simeq 0 \qquad\text{and}\qquad \B_{a-1}\otimes \B_b \simeq \begin{cases}
\B_{b-1}\otimes \B_a[+1] & a<b\\
\B_{b-1}\otimes \B_a[-1] & a>b. \\
\end{cases}\]
\end{theorem}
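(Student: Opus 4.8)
The plan is to deduce Theorem~\ref{thm:BernsteinOps1} directly from Theorems~\ref{thm:BBa<b} and~\ref{thm:BBa>b}, which have already computed $\B_{b-1}\otimes\B_a$ as an explicit reduced complex $\{\mathcal{G}_k,\mathsf{d}_k\}$ (when $a<b$) or $\{\mathcal{G}'_k,\mathsf{d}'_k\}$ (when $a>b$). The key observation is that the two reduced complexes appearing in those theorems are related to each other by interchanging the roles of $a$ and $b$, up to a single homological shift, and that when $a=b$ the reduced complex is empty. So the proof is essentially a bookkeeping exercise comparing the defining formulas \eqref{eq:Gk} and \eqref{eq:Gk'}.

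First I would treat the case $a=b$. Applying Theorem~\ref{thm:BBa<b} is not directly available since it assumes $a<b$, so instead I would revisit the bi-complex of Proposition~\ref{prop:F} with $a=b$: there the map $\mathsf{d}^x\colon\mathcal{F}^{\mathbb{II}}_{x,y}\to\mathcal{F}^{\mathbb{I}}_{x-1,y}$ is bijective exactly when $y-b+1=x-a$, i.e. on the diagonal $x=y+1$, and $\mathsf{d}_y\colon\mathcal{F}^{\mathbb{II}}_{x,y}\to\mathcal{F}^{\mathbb{I}}_{x,y-1}$ is bijective exactly when $x=y$. Running the same zigzag Gaussian elimination as in the proof of Theorem~\ref{thm:BBa<b}, every summand gets cancelled: the $\mathcal{F}^{\mathbb{I}}$ pieces with $x>y$ pair off against $\mathcal{F}^{\mathbb{II}}$ pieces via the composed maps $\mathsf{d}_{x-y-1}$ which are now isomorphisms (the factor $b>a$ that made these injective but not surjective is gone, so they are bijective), and the pieces with $x\le y$ are killed by $\mathsf{D}$ and $\mathsf{d}_y$ as in Lemma~\ref{lem:CP-reduction}. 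Hence $\B_{a-1}\otimes\B_a\simeq 0$. Alternatively, and more cleanly, I would note that $\mathcal{G}_k$ in \eqref{eq:Gk} with $b=a$ forces $x>y\ge(b,0)$ together with $r$ ranging over the empty set $y-b+1\le r\le\min(x-b,y-a)$ in the first summand (since $x-b>y-a$ would be needed but we also need $r\le y-a$ and $r\ge y-b+1=y-a+1>y-a$), and similarly the second summand is empty; so $\mathcal{G}_k=0$ for all $k$, giving the result.

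Next, for $a<b$: Theorem~\ref{thm:BBa<b} gives $\B_{b-1}\otimes\B_a\simeq\{\mathcal{G}_k[k],\mathsf{d}_k\}$, while for the swapped product $\B_{a-1}\otimes\B_b$ I apply Theorem~\ref{thm:BBa>b} (with the roles $a\leftrightarrow b$, so now the ``$a$'' of that theorem is our $b$ and its ``$b$'' is our $a$, and $b>a$ holds as required). That yields $\B_{a-1}\otimes\B_b\simeq\{\mathcal{G}'_{k-1}[k],\mathsf{d}'_k\}$ where $\mathcal{G}'_k$ from \eqref{eq:Gk'} is built from exactly the same data as $\mathcal{G}_k$ from \eqref{eq:Gk} after relabelling $x\leftrightarrow y$: comparing \eqref{eq:Gk} with $a\leftrightarrow b$ swapped and then $x\leftrightarrow y$ relabelled against \eqref{eq:Gk'}, the underlying objects $\P^{(x-1,y)}\Q^{\cdots}$ match term-by-term, and the differentials $\mathsf{d}_k$ and $\mathsf{d}'_k$ agree up to the sign conventions $(-1)^y$ versus $(-1)^x$, which are absorbed by the isomorphism of complexes. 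Tracking the homological degrees: $\mathcal{G}_k$ sits in degree $k$ in $\B_{b-1}\otimes\B_a$, while the corresponding $\mathcal{G}'$-term sits in degree $k+1$ in $\B_{a-1}\otimes\B_b$. Hence $\B_{a-1}\otimes\B_b\simeq(\B_{b-1}\otimes\B_a)[+1]$, as claimed. The case $a>b$ follows by the symmetric argument, or simply by substituting $a\leftrightarrow b$ into the $a<b$ case (noting $[+1]$ becomes $[-1]$).

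The main obstacle is the precise matching of the differentials and signs between the two reduced complexes --- the formulas for $\mathsf{d}_k$ in Theorem~\ref{thm:BBa<b} and $\mathsf{d}'_k$ in Theorem~\ref{thm:BBa>b} carry relative signs $(-1)^y$ and $(-1)^x$ attached to different halves of the differential, and one must check that the relabelling $x\leftrightarrow y$ intertwines these, possibly after rescaling the summands by signs, so that the identification is genuinely a chain isomorphism and not merely an isomorphism of the underlying graded objects. This is a finite, if fiddly, sign-chase; once it is done the statement drops out immediately. I would also double-check the index ranges (the ``$x>y\ge(b,0)$'' versus ``$y>x\ge(a,0)$'' conditions and the range of $r$) line up correctly under the swap, which is where the asymmetry between $a<b$ and $a>b$ genuinely lives.
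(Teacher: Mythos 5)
Your proposal is correct and follows essentially the same route as the paper: the paper also proves $\B_{a-1}\otimes\B_a\simeq 0$ by setting $a=b$ in the maps of equation \eqref{eq:d-red} inside the proof of Theorem \ref{thm:BBa<b} (so they become isomorphisms and all chain groups cancel), and obtains the $a<b$ and $a>b$ cases by applying Theorems \ref{thm:BBa<b} and \ref{thm:BBa>b} to the two products, relabelling $x\leftrightarrow y$ to identify $\mathcal{G}'_k$ with $\mathcal{G}_k$ and their differentials, and reading off the homological shift $[\pm 1]$. The sign/index matching you flag is exactly the check the paper performs (asserting $\mathsf{d}'_k\mapsto\mathsf{d}_k$ under the relabelling), so there is no substantive difference.
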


\begin{proof}
The first homotopy, $\B_{a-1}\otimes \B_a \simeq 0$ follows from setting $a=b$ in equation \eqref{eq:d-red} within the proof of Theorem \ref{thm:BBa<b}. That is, the isomorphisms between homological degrees are given by
\begin{equation}
\bigoplus_{\substack{ x\geq (1,a+1)\\ x>y\geq (0,a)}} \mathcal{F}_{x,y}^{\mathbb{II}}|_{s=y}[k]\;\; {\color{red}\xrightarrow{[\d_{x-y-1}]}} \bigoplus_{\substack{x\geq (1,a+1)\\ x>y\geq (0,a)}} \mathcal{F}_{y,x-1}^{\mathbb{I}}|_{s=y}[k-1]
\end{equation}
Since $\Q^{\nu}\otimes\Q^{\lambda} \simeq \Q^{\lambda}\otimes \Q^{\nu}$ for any partitions $\nu, \lambda$, performing Gaussian eliminations across all homological degrees, all the chain groups cancel one another. Thus, $\B_{a-1} \otimes \B_a$ is contractible.

Now, suppose $a<b$. Then by Theorem \ref{thm:BBa>b}, $\B_{a-1}\otimes \B_b \simeq \left\lbrace \bigoplus_{k\in\Z}  \mathcal{G}_{k}'[k+1], \d_k \right\rbrace$. Relabeling, so that $x\mapsto y$ and $y\mapsto x$ in \eqref{eq:Gk'} and comparing with \ref{eq:Gk} we immediately see $\mathcal{G}_{k}'\mapsto \mathcal{G}_{k}$ and
\[\mathsf{d}'_k = \sum_{x+y=k+a+b}(-1)^x \mathsf{d}^x + \mathsf{d}_{y-1} \mapsto \sum_{x+y=k+a+b} (-1)^y \mathsf{d}^y + \mathsf{d}_{x-1}=\mathsf{d}_{k}.\]
Since by Theorem \ref{thm:BBa<b} we know $\B_{b-1}\otimes \B_a \simeq \left\lbrace \bigoplus_{k\in\Z}  \mathcal{G}_{k}[k], \d_k \right\rbrace$, it immediately follows that $\B_{b-1}\otimes \B_a[+1] \simeq \B_{a-1}\otimes \B_b$. 

If instead $a>b$, then in an identical manner by exchanging $x,y$ in equation \eqref{eq:Gk} of Theorem \ref{thm:BBa<b} we see that $\B_{a-1}\otimes \B_b \simeq \left\lbrace \bigoplus_{k\in\Z}  \mathcal{G}_{k}[k], \d_k \right\rbrace$. Since by Theorem \ref{thm:BBa>b}, we have $\B_{b-1}\otimes \B_a \simeq \left\lbrace \bigoplus_{k\in\Z}  \mathcal{G}_{k}'[k+1], \d'_k \right\rbrace$  then again we obtain the homotopy equivalence $\B_{b-1}\otimes \B_a[-1] \simeq \B_{a-1}\otimes \B_b$.
\end{proof}

\begin{remark}
Since $\H$ is not Krull-Schmidt none of the presentations above are necessarily minimal. In particular, via an analogous argument using Lemma \ref{lem:QC-reduction} instead of Lemma \ref{lem:CP-reduction}, one can reduce the bi-complex and reprove Theorem \ref{thm:BernsteinOps1} using completely different but still homotopy equivalent presentations of the chain complex $\B_{b-1} \otimes \B_a$. 
\end{remark}

In an identical manner, we also show the analogous statements for all the Lemma, Propositions and Theorems for the adjoint categorical Bernstein operators. In particular, we can derive the following theorem. 

\begin{theorem}\label{thm:BernsteinOps2}
For any integers $a,b \in \Z$, the following categorical Bernstein relations hold in $\K(\H)$.
\[\B^*_{a+1} \otimes \B^*_a \simeq 0 \qquad \text{and} \qquad 
\B^*_{a+1}\otimes \B^*_b \simeq \begin{cases}
\B^*_{b+1}\otimes \B^*_a[+1] & a<b\\
\B^*_{b+1}\otimes \B^*_a[-1] & a>b. \\
\end{cases}\]
\end{theorem}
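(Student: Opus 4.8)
The plan is to mirror, verbatim in structure, the proof of Theorem~\ref{thm:BernsteinOps1}, transporting every intermediate step to the adjoint setting via biadjunction in $\H$. First I would record the adjoint analogues of the two reduction lemmas: a lemma asserting that $\left\lbrace \B^*_{b+1}\P^{(n)^t}, \1 \otimes \d^*_{b+1}\right\rbrace$ (and, symmetrically, $\left\lbrace \Q^{(n)}\B^*_a, \d^*_a \otimes \1\right\rbrace$) is homotopy equivalent to a mapping cone of an explicitly described chain map, proved exactly as Lemma~\ref{lem:CP-reduction} and Lemma~\ref{lem:QC-reduction} but using the branching isomorphisms $\Q^{(m)}\P^{(n)^t}\cong \P^{(n)^t}\Q^{(m)}\oplus \P^{(n-1)^t}\Q^{(m-1)}$ (the stated analogue of Proposition~\ref{prop:Q*Pswap}), $\P^{(m)^t}\P^{(n)^t}\cong \bigoplus_s \P^{(m+n-s,s)^t}$ and $\Q^{(m)}\Q^{(n)}\cong\bigoplus_s \Q^{(m+n-s,s)}$ (the transposed/reversed forms of Propositions~\ref{prop:PPmerge} and~\ref{prop:QPswap}, noted in the commented remarks). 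These isomorphisms all hold in $\H$ by applying the Karoubi-envelope duality $\P\leftrightarrow\Q$, transpose of partitions, and reversal of arrows to the propositions already proved.

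Next I would produce the adjoint counterpart of Proposition~\ref{prop:F}: writing $\B^*_{a+1}\otimes\B^*_b$ as the total complex of a bicomplex with columns indexed by the homological degrees of $\B^*_b$ and rows by those of $\B^*_{a+1}$, and applying the branching isomorphisms termwise, one obtains a bicomplex built from summands $\mathcal{F}^{*,\mathbb{I}}_{x,y}$ and $\mathcal{F}^{*,\mathbb{II}}_{x,y}$ with the same bijection structure between $\mathbb{II}$ and $\mathbb{I}$ summands. Because $\B^*_a\in\K^+(\H)$, the relevant index set is now \emph{bounded above} rather than below, so I would invoke Proposition~\ref{prop:SimultSimp}(ii) (simultaneous simplifications for $\K^+$) in place of part (i), and use $\operatorname{Tot}^{\Pi}$ where the original proof used $\operatorname{Tot}^{\oplus}$; Remark~\ref{rem:totalcomsym} and the locally-finite discussion guarantee this is harmless since each subcomplex lies in $\K^+(\H)$. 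Then the zigzag argument of Theorems~\ref{thm:BBa<b} and~\ref{thm:BBa>b} runs identically: for $a<b$ the zigzag composition $\d_j$ is injective and Gaussian elimination (Lemma~\ref{lem:gaussian-elimination}) kills all $\mathcal{F}^{*,\mathbb{II}}$ terms, leaving reduced objects $\mathcal{G}^*_k$; for $a>b$ it is surjective and kills the $\mathcal{F}^{*,\mathbb{I}}$ terms, leaving $\mathcal{G}^{*\prime}_k$; for $a=b$ the map becomes an isomorphism between consecutive homological degrees and everything cancels, giving $\B^*_{a+1}\otimes\B^*_a\simeq 0$. Finally, relabeling $x\leftrightarrow y$ and comparing signs exactly as in the proof of Theorem~\ref{thm:BernsteinOps1} identifies $\mathcal{G}^{*\prime}_k$ with $\mathcal{G}^*_k$ up to the shift $[\pm1]$, yielding $\B^*_{a+1}\otimes\B^*_b\simeq \B^*_{b+1}\otimes\B^*_a[+1]$ for $a<b$ and $[-1]$ for $a>b$.

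The cleanest route, and the one I would actually write, is to avoid redoing the bookkeeping at all: observe that $\B_a$ and $\B^*_a$ are biadjoint in $\K(\H)$ (stated right after Definition~\ref{def:CatBernsteinOps}), that biadjunction is a contravariant monoidal equivalence on the relevant full subcategories sending $\otimes$ to $\otimes$ with order reversed, and that it sends a homotopy equivalence with a shift $[\pm1]$ to one with the same shift. Applying the adjunction functor to the three relations of Theorem~\ref{thm:BernsteinOps1} sends $\B_{a-1}\otimes\B_a\simeq 0$ to $\B^*_a\otimes\B^*_{a-1}\simeq 0$, i.e.\ (reindexing $a\mapsto a+1$) $\B^*_{a+1}\otimes\B^*_a\simeq 0$; and sends $\B_{a-1}\otimes\B_b\simeq\B_{b-1}\otimes\B_a[\pm1]$ to $\B^*_b\otimes\B^*_{a-1}\simeq\B^*_a\otimes\B^*_{b-1}[\pm1]$, which after reindexing $a\mapsto a+1$, $b\mapsto b+1$ is exactly $\B^*_{b+1}\otimes\B^*_a\simeq\B^*_{a+1}\otimes\B^*_b[\pm1]$, with the case split $a<b$ versus $a>b$ preserved.

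The main obstacle is making the biadjunction argument rigorous in $\K(\H)$ rather than merely in $\H$: one must check that the biadjoint correspondence extends to an honest (contravariant) functor on the homotopy category that is compatible with the two tensor products of Definition~\ref{def:ChainTensorProd} and with the boundedness $\B_a\in\K^-(\H)$, $\B^*_a\in\K^+(\H)$ (the functor must interchange $\K^-$ and $\K^+$, so it carries $\operatorname{Tot}^{\oplus}$ for $\B_{b-1}\otimes\B_a$ to $\operatorname{Tot}^{\Pi}$ for the dual, which is why the dual statement lives in $\K(\H)$ with no finiteness hypothesis, just as the original does). If verifying this functoriality at the chain-complex level turns out to be delicate, the fallback is the direct repetition of the zigzag proof sketched in the previous paragraph, which is routine but long; I would present the biadjunction argument as the proof and remark that the direct computation is available as an alternative.
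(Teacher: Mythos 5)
Your fallback route is, in fact, the paper's entire proof: the paper disposes of Theorem~\ref{thm:BernsteinOps2} in one sentence, saying to repeat the argument of Theorem~\ref{thm:BernsteinOps1} with symmetrizers and antisymmetrizers exchanged and the dual branching isomorphisms substituted. Your first paragraph fleshes this out correctly, including the right adjustments for boundedness (the homological index set is now bounded above, so one uses $\text{Tot}^{\Pi}$ and Proposition~\ref{prop:SimultSimp}(ii); since both factors lie in $\K^+(\H)$ the bicomplex is homologically locally finite and this agrees with the $\text{Tot}^{\oplus}$ in Definition~\ref{def:ChainTensorProd}).

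The duality argument you say you would actually write, however, contains a concrete error in the shift bookkeeping. The duality $D$ sending each object of $\H$ to its biadjoint is contravariant on morphisms, so when extended degreewise to complexes it reverses the direction of the differentials; to stay within chain complexes one must negate the homological grading, and then $D(\mathcal{A}[s])\cong D(\mathcal{A})[-s]$. It does carry $\B_a$ to $\B^*_a$ on the nose (the term $\P^{(x)}\Q^{(y)^t}$ in degree $y$ goes to $\P^{(y)^t}\Q^{(x)}$ in degree $-y$), but your claim that it sends a homotopy equivalence with shift $[\pm1]$ to one with the \emph{same} shift is false. With that claim, your reindexed conclusion reads $\B^*_{a+1}\otimes\B^*_b\simeq\B^*_{b+1}\otimes\B^*_a[-1]$ for $a<b$, which is the opposite of the theorem, even though you assert it ``is exactly'' the statement. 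The repair is simply to use the correct rule: the dual of $\B_{a-1}\otimes\B_b\simeq\B_{b-1}\otimes\B_a[\pm1]$ is $\B^*_b\otimes\B^*_{a-1}\simeq\B^*_a\otimes\B^*_{b-1}[\mp1]$, and after your substitution $a\mapsto a+1$, $b\mapsto b+1$ this is precisely $\B^*_{a+1}\otimes\B^*_b\simeq\B^*_{b+1}\otimes\B^*_a[+1]$ for $a<b$ and $[-1]$ for $a>b$, as required (the $a=b$ case is unaffected, since no shift is involved). Beyond that fix, note that the paper never constructs such a duality on $\K(\H)$, so you would have to supply it: define $D$ on $\H$ via biadjoints (anti-monoidal, $D(X\otimes Y)\cong D(Y)\otimes D(X)$), extend degreewise with negated grading, and observe it preserves chain maps, homotopies, and the finite per-degree direct sums occurring here. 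That is routine but is genuine content the paper avoids by taking the direct route you describe first.
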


\begin{proof}
The result follows from an identical proof  to that of Theorem \ref{thm:BernsteinOps1} where symmetrizers and antisymmetrizers are exchanged and the dual statements of the isomorphisms are applied instead. 
\end{proof}

\subsection{Proof of Theorem \ref{thm:CatBernstein2}}\hfill
\medskip

In this section we present the proofs for the remaining two relations for the categorical Bernstein operators. We consider the bi-complexes for the various tensor products of $\B_a$ and $\B_b^*$. This case differs vastly from Theorem \ref{thm:BernsteinOps1} in that $\B_a \in \K^-(\H)$ whereas $\B_b^* \in \K^+(\H)$. Thus, when considering the corresponding bi-complexes we will see that they are not $\K(\H)$-locally finite. In particular, we will show that $\B_{a+1}\otimes \B_{b+1}^*$ and $\B_{b}\tilde{\otimes} \B_a^*$ arise as the total complex and the completed total complex of the same bi-complex. Unfortunately, the bi-complex is not homologically locally-finite and thus Proposition \ref{prop:locallyfinite} does not hold. By setting certain additional finiteness conditions, however, the tensor product and the completed tensor product do agree and the desired result is obtained.  

We begin by considering the two complexes $\Q^{(m)^t}\B_b^*$ and $\Q^{(n)}\B_a$ for integers $m,n \in \N$.

\begin{proposition} \label{lem:q-reduction}
Given any $k \in \N$, the chain complex $\left\lbrace \bigoplus_{x=A}^B \Q^{(k-x)^t} \Q^{(x)}[-x],
\mathsf{d}=\hackcenter{
\begin{tikzpicture} [xscale=-1,scale=.35]
\draw[line width=0.7mm, decoration={markings,mark=at position .03 with {\arrow[line width=.3mm]{<}}},
    postaction={decorate},shorten >=0.6pt] (4.5,3.5) to (4.5,9.5);
\draw[line width=0.7mm,decoration={markings,mark=at position .03 with {\arrow[line width=.3mm]{<}}},
    postaction={decorate},shorten >=0.6pt] (1.5,3.5) to (1.5, 9.5);
\draw[directed= .6]  (1.8,7.5) to (4.2, 5.5);
\draw[fill =gray!60] (5.8,7.5) rectangle (3.1,8.5);
\draw[fill =white] (2.5,7.5) rectangle (0.4,8.5);
\draw[fill =gray!60] (5.6,4.5) rectangle (3.3,5.5);
\draw[fill =white] (2.5,4.5) rectangle (0.4,5.5);
\node at (1.5,5) [scale=.75]{$x$};
\node at (4.5,5) [scale=.75] {$k-x$};
\node at (1.5,8)  [scale=.75]{$x+1$};
\node at (4.5,8) [scale=.5]{$k-x-1$};
\end{tikzpicture}} \right\rbrace
$ is homotopy equivalent to one of the following single term complexes, 
\[\bigoplus_{x=A}^B \Q^{(k-x)^t} \Q^{(x)}[-x]\simeq
\begin{cases}
\Q^{(A,1^{k-A})}[-A] \oplus \Q^{(B+1, 1^{k-B-1})}[-B] &, 0<A \leq B<k\\
\Q^{(A,1^{k-A})}[-A] &, 0<A<B=k\\
\Q^{(B+1, 1^{k-B-1})}[-B] &, 0=A<B<k\\
0 &, 0=A<B=k.
\end{cases}\]
\end{proposition}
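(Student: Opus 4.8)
The complex in question is a finite chain complex supported in homological degrees $-B$ through $-A$, with chain group $\Q^{(k-x)^t}\Q^{(x)}$ in degree $-x$ and differential built from the adjunction (cup) map contracting one strand between the antisymmetrizer on $k-x$ strands and the symmetrizer on $x$ strands. The plan is to run a Gaussian elimination (Lemma~\ref{lem:gaussian-elimination}) degree by degree, using the merge isomorphisms for $\Q$'s established in Section~\ref{subsec:LRBranchingIsos}. The transposed/orientation-reversed analogue of Proposition~\ref{prop:PP*merge} gives, for each $x$ with $0<x<k$, a decomposition $\Q^{(k-x)^t}\Q^{(x)}\cong \Q^{(x,1^{k-x})}\oplus\Q^{(x+1,1^{k-x-1})}$ (for $x=0$ only the first summand survives, for $x=k$ only a single $\Q^{(1^k)}$ appears and the ``$(x+1,\dots)$'' summand is absent), together with explicit $\iota,\rho$ maps. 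So after applying these isomorphisms to every chain group the complex becomes, in degree $-x$, the direct sum $\Q^{(x,1^{k-x})}\oplus\Q^{(x+1,1^{k-x-1})}$ (with the boundary caveats at $x=0,k$).

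First I would compute the differential in this new basis. The key diagrammatic fact — which is exactly the content of the (omitted) Proposition around equation~\eqref{eq:QQdiffrl} — is that the composite $\rho^{\mathrm{merge}}\circ\mathsf{d}\circ\iota^{\mathrm{merge}}$ is the identity on the shared constituent and zero otherwise: concretely, the map $\mathsf{d}:\Q^{(k-x-1)^t}\Q^{(x+1)}[-x-1]\to\Q^{(k-x)^t}\Q^{(x)}[-x]$ sends the summand $\Q^{(x+1,1^{k-x-1})}$ in degree $-x-1$ isomorphically onto the summand $\Q^{(x+1,1^{k-x-1})}$ in degree $-x$, and kills (and receives nothing into) the summand $\Q^{(x+1,1^{k-x})}$... more precisely: in degree $-x-1$ the two constituents are $\Q^{(x+1,1^{k-x-1})}$ and $\Q^{(x+2,1^{k-x-2})}$, and in degree $-x$ they are $\Q^{(x,1^{k-x})}$ and $\Q^{(x+1,1^{k-x-1})}$; the differential is the ``identity'' matching $\Q^{(x+1,1^{k-x-1})}\to\Q^{(x+1,1^{k-x-1})}$ and zero on all other components. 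This is verified by composing the cup map with the merge projections/inclusions and using the white-box/black-box vanishing \eqref{eq:whiteblackbox} together with the absorption relations \eqref{eq:sym-anti-absorbsion}, exactly as in the proof of Proposition~\ref{prop:PP*merge}.

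Granting this description of the differential, the complex is (up to homological shifts) a ``staircase'': each $\Q^{(x+1,1^{k-x-1})}$ appearing in degree $-x-1$ is matched isomorphically with the copy of the same object in degree $-x$. I would then cancel all these matched pairs via Lemma~\ref{lem:gaussian-elimination} (applied finitely many times, since $A\le x\le B$ is a bounded range). After cancellation the only surviving chain groups are: the ``top'' object $\Q^{(A,1^{k-A})}$ in degree $-A$, which has no partner below it to be killed by — this survives precisely when $A>0$ (when $A=0$ this object does not exist, since the $x=0$ chain group is only $\Q^{(1^k)}=\Q^{(0+1,1^{k-1})}$, which does get cancelled against degree $-1$); and the ``bottom'' object $\Q^{(B+1,1^{k-B-1})}$ coming from degree $-B$, which has no partner above it — this survives precisely when $B<k$ (when $B=k$ the chain group in degree $-k$ is only $\Q^{(1^k)}=\Q^{(k,1^0)}$ — wait, I must be careful with the boundary bookkeeping). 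Assembling the four cases according to whether $A=0$ or $A>0$ and whether $B=k$ or $B<k$ yields exactly the stated answer.

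\textbf{Main obstacle.} The genuinely delicate part is the careful boundary bookkeeping at $x=0$ and $x=k$: the chain groups there have only one constituent rather than two, so the ``staircase'' is truncated asymmetrically, and one must check precisely which uncancelled object is left over at each end (including getting the homological shifts and the partition shapes right, e.g.\ distinguishing $\Q^{(A,1^{k-A})}$ from $\Q^{(A+1,1^{k-A-1})}$). A secondary technical point is confirming that the Gaussian eliminations can be performed simultaneously/iteratively without the cancellation of one pair reintroducing a nonzero component elsewhere — this follows because, in the merge basis, the differential is block-diagonal in the right sense (the identity-matched constituent maps only to its own copy), so each elimination leaves the remaining differential untouched, but this should be stated explicitly. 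Everything else is a routine application of the already-established Heisenberg-category branching isomorphisms.
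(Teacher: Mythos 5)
Your proposal is correct and follows essentially the same route as the paper: decompose each chain group via the $\Q$-analogue of Proposition~\ref{prop:PP*merge}, observe that in that basis the differential is the identity on the shared constituent $\Q^{(x+1,1^{k-x-1})}$ and zero on all other components, and then cancel the matched pairs by Lemma~\ref{lem:gaussian-elimination}, the endpoint terms at $x=0$ or $x=k$ (which do not decompose) cancelling completely. Just fix the small boundary slips --- the $x=k$ chain group is $\Q^{(k)}=\Q^{(k,1^0)}$, not $\Q^{(1^k)}$, and the differential runs from homological degree $-x$ to $-x-1$ --- neither of which affects the cancellation pattern or the final case analysis.
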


\begin{proof}
Applying Proposition \ref{prop:PP*merge} to each chain group will force the differentials of the resulting chain complex to be either isomorphisms or zero. In particular, the composition of the map $\d:\Q^{(x)^t}\Q^{(y-1)}\to \Q^{(x-1)^t}\Q^{(y)}$ defined as above with the projection and injection morphisms from Proposition \ref{prop:PP*merge} satisfy
\[\rho_{y,x-1}^{x-1}\circ \d \circ \iota_{x-1}^{y-1,x}=\1_{\Q^{(y,1^x-1)}}, \rho_{y,x-1}^{x-2}\circ \d \circ \iota_{x-1}^{y-1,x}=0, \rho_{y,x-1}^{x-1}\circ \d \circ \iota_{x}^{y-1,x}=0,\text{ and }\rho_{y,x-1}^{x-2}\circ \d \circ \iota_{x}^{y-1,x}=0.\]

Since the complex is finite we can use Lemma \ref{lem:gaussian-elimination} across each homological degree so that only terms on the endpoints of the bi-complex are left. When $A=0$ or $B=k$ the terms at $x=A$ and $x=B$, respectively, do not decompose further. Hence, these terms cancel completely when applying Gaussian elimination. Consequently, the complex is contractible unless $A>0$ or $B<K$ in which case only the point with $x=A$ or $x=B$ remain. 
\end{proof}

\noindent \textbf{Notation:} For any set $J \subset \Z$ let $\mathsf{1}^{z}_{J}:= \begin{cases}1 &; z\in J\\0&; $ else.$\end{cases}$ denote its characteristic function and $[0,N]$ denote the set of all integers $0\leq z \leq N$. 

\begin{lemma} \label{lem:QC-reductions}
For any $n,m \in \N$ let $\mathsf{D}_m:=\hackcenter{
\begin{tikzpicture} [scale=0.6, yscale=-1, xscale=-.7]
 \draw[line width=0.7mm,decoration={markings,mark=at position .06 with {\arrow[line width=.3mm]{<}}},
    postaction={decorate},shorten >=0.6pt] (1,-1) -- (1,1);
  \draw[line width=.7mm] (-1,-1) -- (-1,0);
    \draw[line width=.7mm,decoration={markings,mark=at position 0.6 with {\arrow[line width=.3mm]{>}}},
    postaction={decorate},shorten >=0.6pt]  (-1,.3) .. controls ++(0,.5) and ++(0,.5) .. (.5,.3);
        \draw[fill =gray!60] (-1.5,-.3) rectangle (-.2,.3);
        \draw[fill =gray!60] (.2,-.3) rectangle (1.5,.3);
  \node at (.85,-0) [scale=.5]{$-b+1$};
  \node at (-.85,0) [scale=.5]{$m+1$};
  \end{tikzpicture}}$ and $\mathsf{D}_n:=\hackcenter{
\begin{tikzpicture} [scale=0.6, yscale=1, xscale=-.7]
  \draw[->, line width=.7mm] (1,-1) -- (1,0);
 \draw[line width=0.7mm,decoration={markings,mark=at position 1 with {\arrow[line width=.3mm]{>}}},
    postaction={decorate},shorten >=0.6pt] (1,-1) -- (1,1);
    postaction={decorate},shorten >=0.6pt] (-1,-1) -- (-1,1);
  \draw[line width=.7mm] (-1,-1) -- (-1,0);
    \draw[line width=0.7mm,decoration={markings,mark=at position .6 with {\arrow[line width=.3mm]{>}}},
    postaction={decorate},shorten >=0.6pt]  (.5,.3) .. controls ++(0,.5) and ++(0,.5) .. (-1,.3);
        \draw[fill =white] (-1.5,-.3) rectangle (-.2,.3);
        \draw[fill =white] (.2,-.3) rectangle (1.5,.3);
  \node at (.85,-0) [scale=.5]{$a+1$};
  \node at (-.85,0) [scale=.5]{$n+1$};
  \end{tikzpicture}}$. Then, the following chain homotopies hold in $\K(\H)$:
\begin{align*}
&\left\lbrace \Q^{(m)^t}\B_b^*, \1_m \otimes \mathsf{d^*_b} \right\rbrace \simeq \emph{\text{Cone}}\left(
 \mathsf{1}^m_{[0,-b]}\P^{(-b-m)^t}[b-1] \xrightarrow{\mathsf{D}_m} \left\lbrace\bigoplus_{R\geq(-b+1,0)} \P^{(R)^t}\Q^{(R+b,1^m)}[-R],\mathsf{d}=\hackcenter{\begin{tikzpicture} [yscale=-1.2,scale=.25]
\draw[line width=0.7mm,decoration={markings,mark=at position 1 with {\arrow[line width=.3mm]{>}}},
    postaction={decorate},shorten >=0.6pt] (4.5,2) to (4.5,8.5);
\draw[line width=0.7mm,decoration={markings,mark=at position .03 with {\arrow[line width=.3mm]{<}}},
    postaction={decorate},shorten >=0.6pt] (-1.5,2) to (-1.5, 8.5);
\draw[line width=0.7mm,decoration={markings,mark=at position 1 with {\arrow[line width=.3mm]{>}}},
    postaction={decorate},shorten >=0.6pt] (1.5,2) to (1.5, 8.5);
\draw[directed=.8] (4.2,5.5)..controls ++(0,1) and ++(0,1)..(-.8, 5.5);
\draw[fill =gray!60] (-2.7,4.5) rectangle (-0.4,5.5);
\draw[fill =gray!20] (0.4,6.7) rectangle (5.6,7.7);
\draw[fill =white] (6,4.5) rectangle (3,5.5);
\draw[fill =gray!20] (0.4,4) rectangle (5.6,3);
\end{tikzpicture}}
\right\rbrace \right)
\\
&\left\lbrace \Q^{(n)}\B_{a}, \1_n \otimes \mathsf{d}_{a}
\right\rbrace \simeq \emph{\text{Cone}}\left(\left\lbrace  \bigoplus_{R\geq (a+1,0)}\P^{(R)}\Q^{(n+1,1^{R-a-1})}[R-a-1], \mathsf{d}=\hackcenter{\begin{tikzpicture} [yscale=1.2,scale=.25]
\draw[line width=0.7mm,decoration={markings,mark=at position .03 with {\arrow[line width=.3mm]{<}}},
    postaction={decorate},shorten >=0.6pt] (4.5,2) to (4.5,8.5);
\draw[line width=0.7mm,decoration={markings,mark=at position 1 with {\arrow[line width=.3mm]{>}}},
    postaction={decorate},shorten >=0.6pt] (-1.5,2) to (-1.5, 8.5);
\draw[line width=0.7mm,decoration={markings,mark=at position .03 with {\arrow[line width=.3mm]{<}}},
    postaction={decorate},shorten >=0.6pt] (1.5,2) to (1.5, 8.5);
\draw[directed=.8] (-.8,5.5)..controls ++(0,1) and ++(0,1)..(4.2, 5.5);
\draw[fill =white] (-2.7,4.5) rectangle (-0.4,5.5);
\draw[fill =gray!20] (0.4,6.7) rectangle (5.6,7.7);
\draw[fill =gray!60] (5.6,4.5) rectangle (3.3,5.5);
\draw[fill =gray!20] (0.4,4) rectangle (5.6,3);
\end{tikzpicture}}
\right\rbrace \xrightarrow{\mathsf{D}_n} \mathsf{1}_{[0,a]}^{n} \P^{(a-n)}[0]\right).
\end{align*}
\end{lemma}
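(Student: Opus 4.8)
\textbf{Proof plan for Lemma \ref{lem:QC-reductions}.}

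The plan is to mimic the argument of Lemma \ref{lem:QC-reduction} verbatim, with the orientations of all strands reversed and the boundedness now occurring at the opposite end of the complex. I will carry out the argument for $\left\lbrace \Q^{(n)}\B_a, \1_n\otimes\mathsf{d}_a\right\rbrace$ in detail and then indicate that the case of $\Q^{(m)^t}\B_b^*$ is obtained by applying the duality (transposing all idempotents and reversing arrows). First I would apply the isomorphism of Proposition \ref{prop:Q*Pswap} (in its analogous form $\Q^{(n)}\P^{(m)^t}\cong\P^{(m)^t}\Q^{(n)}\oplus\P^{(m-1)^t}\Q^{(n-1)}$, stated just after that proposition) together with Proposition \ref{prop:PPmerge} (applied to the two downward idempotents) to each chain group of $\Q^{(n)}\otimes\B_a$. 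Since $\B_a=\lbrace\bigoplus_{x-y=a}\P^{(x)}\Q^{(y)^t}[y],\mathsf{d}_a\rbrace$, in homological degree $y$ this produces a direct sum over $s$ of terms $\P^{(x)}\Q^{(n+y-s,s)^t}[y]\oplus\P^{(x-1)}\Q^{(n-1+y-s,s)^t}[y]$ (with $x=y+a$), exactly the mirror of the decomposition in Lemma \ref{lem:QC-reduction}.

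Next, I would examine the differential on these summands. Pre- and post-composing the differential $\1_n\otimes\mathsf{d}_a$ with the projections and inclusions of Propositions \ref{prop:Q*Pswap} and \ref{prop:PPmerge} yields, on each $s\mapsto s'$ summand, an upper-triangular $2\times 2$ matrix whose bottom-right entry is a composite $\rho\circ\iota$ of the type controlled by Proposition \ref{prop:PPmerge}, hence an isomorphism precisely when $s=s'$ and zero otherwise. This forces a diagonal bijection between the "lower" summands $0\le s\le n-1$ in homological degree $y$ and the "lower" summands in degree $y-1$, for every $y$ with $n>y$ — wait, the relevant inequality here is the reflected one; I would track carefully that for $y$ large (now: $R=x=y+a$ large) all but the single summand $\P^{(R)}\Q^{(n+1,1^{R-a-1})^t}$ survives, while the boundary case occurs at $y$ small, i.e. $R=a$, where source and target have the same number of summands and Gaussian elimination (Lemma \ref{lem:gaussian-elimination}) instead modifies one differential, producing the connecting map $\mathsf{D}_n$. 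The characteristic function $\mathsf{1}^n_{[0,a]}$ enters exactly because the single-term complex $\P^{(a-n)}$ only exists when $0\le a-n$, i.e. $a\ge n$, equivalently $n\in[0,a]$; when $a<n$ this term is absent and the reduced complex is the honest cone over the zero map, i.e.\ just the $\lbrace\P^{(R)}\Q^{(n+1,1^{R-a-1})^t}\rbrace$ complex shifted. After the simultaneous Gaussian eliminations (legitimate here since $\B_a\in\K^-(\H)$ is bounded below, so Proposition \ref{prop:SimultSimp}(i) applies to the bounded-below indexing set), I would read off that $\Q^{(n)}\B_a$ is homotopy equivalent to $\mathrm{Cone}(\mathsf{D}_n)$ with $\mathsf{D}_n$ the stated diagram; the explicit diagram for $\mathsf{D}_n$ is obtained from the $D^{-1}C$ correction term of Lemma \ref{lem:gaussian-elimination} at $R=a$, and I would verify it coincides with the displayed picture by a short diagrammatic computation using \eqref{eq:sym-exploded} and \eqref{heis:up down}.

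The main obstacle will be bookkeeping the boundary behaviour at the \emph{bounded} end of the complex — in Lemma \ref{lem:QC-reduction} the complex $\B_a$ was attached so that the special homological degree was $x=n$; here the roles of "create" and "annihilate" are swapped, so the special degree is at the \emph{bottom} ($R=a$ for $\B_a$, and $R=-b$ for $\B_b^*$), and one must check that the Gaussian eliminations cascade in the correct direction and that no term that ought to survive gets cancelled. In particular I would need to confirm that the surviving single term is $\P^{(a-n)}$ and not some near-miss like $\P^{(a-n)}\Q^{(1)^t}$, by tracking precisely which summand of the $s$-decomposition is \emph{not} hit by the diagonal isomorphism in the critical degree — this is the same phenomenon as in Lemma \ref{lem:QC-reduction} where $\P^{(x+a-1)}\Q^{(n-1,x)^t}[x]$ was the lone survivor, and it will follow from the same count of summands together with the $\lambda_1=n$ or $n+1$ dichotomy in the proof of Proposition \ref{prop:PP*merge}. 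Once that is pinned down, the dual statement for $\Q^{(m)^t}\B_b^*$ follows formally by transposing every partition and reversing every arrow, swapping symmetrizers with antisymmetrizers, which interchanges the roles of $\P^{(n)}$ and $\P^{(1^n)}$ and turns $\B_a$ into $\B_{-a}^*$ up to the conventions fixed in Definition \ref{def:CatBernsteinOps}.
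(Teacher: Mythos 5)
There is a genuine gap, and it occurs at the very first step: you have applied the wrong branching isomorphisms, essentially treating $\Q^{(n)}\B_a$ as if it were the complex $\Q^{(n)^t}\B_a$ of Lemma \ref{lem:QC-reduction}. In $\Q^{(n)}\B_a$ the chain groups are $\Q^{(n)}\P^{(y+a)}\Q^{(y)^t}$, so the extra factor $\Q^{(n)}$ is a \emph{symmetrizer} and the internal downward idempotent $\Q^{(y)^t}$ an \emph{antisymmetrizer}. Hence the swap past $\P^{(y+a)}$ is the multi-term isomorphism of Proposition \ref{prop:QPswap} (not the two-term $\Q^{(n)}\P^{(m)^t}$ form of Proposition \ref{prop:Q*Pswap}, which requires opposite types), and the subsequent merge of the two downward idempotents $\Q^{(n-s)}$ and $\Q^{(y)^t}$ is the two-term hook decomposition of Proposition \ref{prop:PP*merge} (not Proposition \ref{prop:PPmerge}). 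Your claimed intermediate summands $\P^{(x)}\Q^{(n+y-s,s)^t}\oplus\P^{(x-1)}\Q^{(n-1+y-s,s)^t}$ are verbatim those of Lemma \ref{lem:QC-reduction}; they are two-column shapes and can never reduce to the hook-shaped survivors $\P^{(R)}\Q^{(n+1,1^{R-a-1})}$ (resp.\ $\P^{(R)^t}\Q^{(R+b,1^m)}$) appearing in the statement. The same type mismatch occurs for $\Q^{(m)^t}\B_b^*$, whose chain groups are $\Q^{(m)^t}\P^{(w)^t}\Q^{(v)}$: here too the pattern is ``multi-term swap, two-term hook merge,'' opposite to Lemma \ref{lem:QC-reduction}.

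Because the decomposition is wrong, the single-pass ``diagonal bijection'' cancellation you import from Lemma \ref{lem:QC-reduction} does not carry over, and the paper's proof is organized quite differently: one fixes the residual thickness $R$ of the surviving $\P$-factor and splits the complex into bounded rows $\mathcal{A}_R$, each of which is reduced by the telescoping endpoint Lemma \ref{lem:q-reduction} (which you never invoke, and which is precisely where the hooks and the characteristic functions $\mathsf{1}^m_{[0,-b]}$, $\mathsf{1}^n_{[0,a]}$ come from); local finiteness of each row plus Propositions \ref{prop:locallyfinite} and \ref{prop:SimultSimp} then justify the simultaneous simplification, and the connecting map $\mathsf{D}$ is produced by tracking an \emph{iterated} zigzag of cancelled differentials and checking it terminates, not from a single $D^{-1}C$ correction of Lemma \ref{lem:gaussian-elimination} in one critical degree. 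Finally, your closing claim that the $\Q^{(m)^t}\B_b^*$ statement follows ``formally by transposing'' is in the right spirit (the paper likewise proves one case and asserts the other is similar), but it cannot rescue the argument, since the case you work out in detail already rests on the incorrect decomposition.
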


\begin{proof}
Since both claims are proved similarly we provide details only for the first homotopy equivalence. Given $m \in \N$ we apply Proposition (\ref{prop:PP*merge}) to $\Q^{(m)^t}\B_b$. At the level of chain groups we have the isomorphisms
\[
\Q^{(m)^t}\B_b^* \simeq \left\lbrace \bigoplus_{s=0}^{(m,w)} \P^{(w-s)^t}\Q^{(m-s)^t}\Q^{(v)} [-w],\1\otimes\d_b^*\right\rbrace_{v-w=b}
.\]
Letting $R:=w-s$ we have
\begin{align*}
\Q^{(m)^t}\B_b^* &\simeq \left\lbrace \bigoplus_{R\geq (0,w-m)}^{w} \P^{(R)^t}\Q^{(m+R-w)^t}\Q^{(b+w)} [-w], 1\otimes \d_b^* \right\rbrace_{w \geq (0,-b)}.
\end{align*}
In particular, the differentials are now given by:
\begin{align*}
\d_w&:  \P^{(R)^t}\Q^{(m+R-w)^t}\Q^{(b+w)} [-w]\to  \P^{(R)^t}\Q^{(m+R-w-1)^t}\Q^{(b+w+1)} [-w-1]\\ 
\d^R&:  \P^{(R)^t}\Q^{(m+R-w)^t}\Q^{(b+w)} [-w] \to  \P^{(R+1)^t}\Q^{(m+R-w)^t}\Q^{(b+w+1)} [-w-1]
\end{align*}
\begin{equation} \label{eq:ARdiff}
\mathsf{d}_w=\;-\;\hackcenter{
\begin{tikzpicture} [xscale = -1, scale=.35]
\draw[line width=.7mm,decoration={markings,mark=at position 1 with {\arrow[line width=.3mm]{>}}},
    postaction={decorate},shorten >=0.6pt] (7,3.5) to (7, 8);
\draw[line width=.7mm,decoration={markings,mark=at position 0.04 with {\arrow[line width=.3mm]{<}}},
    postaction={decorate},shorten >=0.6pt] (4.5,3.5) to (4.5,8);
\draw[line width=.7mm,decoration={markings,mark=at position 0.04 with {\arrow[line width=.3mm]{<}}},
    postaction={decorate},shorten >=0.6pt] (1.5,3.5) to (1.5, 8);
\draw[directed= .6]  (2,6.5) to (3.8, 5);
\draw[fill =white] (2.8,6.5) rectangle (0.2,7.5);
\draw[fill =gray!60] (5.8,4) rectangle (3.1,5);
\draw[fill =gray!60] (8.1,5.5) rectangle (5.8,6.5);
\node at (4.5,4.5) [scale=.5] {$m+R-w$};
\node at (1.5,7)  [scale=.5]{$b+w+1$};
\node at (7,6) [scale=.75] {$R$};
\end{tikzpicture}} 
\qquad\qquad\qquad
\mathsf{d}^R=\hackcenter{
\begin{tikzpicture} [xscale = -1, scale=.35]
\draw[line width=.7mm,decoration={markings,mark=at position 1 with {\arrow[line width=.3mm]{>}}},
    postaction={decorate},shorten >=0.6pt] (7.5,5) to (7.5, 9.5);
\draw[line width=.7mm,decoration={markings,mark=at position 0.04 with {\arrow[line width=.3mm]{<}}},
    postaction={decorate},shorten >=0.6pt] (4.5,5) to (4.5,9.5);
\draw[line width=.7mm,decoration={markings,mark=at position 0.04 with {\arrow[line width=.3mm]{<}}},
    postaction={decorate},shorten >=0.6pt] (1.5,5) to (1.5, 9.5);
\draw[ directed=.6]  (1.8,7.5) .. controls ++(0,-1.5) and ++(0,-1.5) .. (7.2,7.5);
\draw[fill =gray!60] (5.8,7.5) rectangle (3.2,8.5);
\draw[fill =white] (2.8,7.5) rectangle (0.2,8.5);
\draw[fill =gray!60] (8.6,7.5) rectangle (6.3,8.5);
\node at (1.5,8)  [scale=.5]{$b+w+1$};
\node at (4.5,8) [scale=.5]{$m+R-w$};
\node at (7.5,8) [scale=.75] {$R+1$};
\end{tikzpicture}}. 
\end{equation}
Thus for each fixed $R \geq (-m-b,0)$ we may identify subcomplexes $\lbrace \mathcal{A}_{R}(m),\mathsf{d}_w \rbrace \in \K^b(\H)$ defined by:
\[\mathcal{A}_{R}(m) := \bigoplus_{w\geq (-b,R)}^{R+m} \P^{(R)^t} \Q^{(m+R-w)^t} \Q^{(b+w)}[-w].\]
Since $\d^R: \mathcal{A}_R(m)[1] \to \mathcal{A}_{R+1}(m)$ is a chain map such that $(\d^R)^2=0$, then $\Q^{(m)}\B_b^*$ is the total complex of bi-complex:
\begin{align}\label{eq:Tot}
\Q^{(m)^t}\B_b^* \simeq \text{Tot}^{\oplus} \lbrace \mathcal{A}_{R}(m),\mathsf{d}_R, \mathsf{d}^R\rbrace_{R\geq(0-m-b)}
\end{align}
Since each $\mathcal{A}_R(m)$ is finite we can apply Proposition \ref{lem:q-reduction} to each row of the bi-complex. Specifically, $\Q^{(m+R-w)^t}\Q^{(b+w)}\simeq \Q^{(b+w,1^{m+R-w})}\oplus \Q^{(b+w+1,1^{m+R-w-1})}$, then if we take $A=$max$(-b,R)$ and $B=k=R+m$ in Proposition \ref{lem:q-reduction}, each $\mathcal{A}_R(m)$ will be contractible whenever $-b\geq R >-b-m$ and isomorphic to a single term with $w=R$ whenever $R>-b$ and $w=-b$ when $R=-b-m$. Because $R=w-s$ these terms correspond to $s=0$ for $R>-b$ and $s=m$ when $R=-b-m$. Consequently, each subcomplex $\mathcal{A}_R(m)$ is homotopy equivalent to one of the following:
\begin{equation}\label{eq:Ar-reduced}
\mathcal{A}_{R}(m) \simeq \begin{cases}
\P^{(R)^t}\Q^{(R+b,1^m)}[-R] &, R>-b \;(w=R) \\\P^{(-b-m)^t}[b] &, R=-b-m \;(w=-b)\\
0 &, -b\geq R>-b-m.
\end{cases}
\end{equation}
Since $\mathcal{A}_R(m)$ is bounded for all $R,m$ then the bi-complex is homologically locally finite and by Proposition \ref{prop:locallyfinite}:
\[\Q^{(m)^t}\B_b^* \simeq \text{Tot}^{\oplus} \lbrace \mathcal{A}_{R}(m),\mathsf{d}_R, \mathsf{d}^R\rbrace_{R\geq(0,-m-b)}\simeq\text{Tot}^{\Pi} \lbrace \mathcal{A}_{R}(m),\mathsf{d}_R, \mathsf{d}^R\rbrace_{R\geq(0,-m-b)}.\]
Since $R \geq (0,-m-b)$, then the indexing set of the homological degrees is bounded above, so by Proposition \ref{prop:SimultSimp} we can simultaneously simplify each row in $\text{Tot}^{\Pi} \lbrace \mathcal{A}_{R}(m),\mathsf{d}_R, \mathsf{d}^R\rbrace_{R\geq(0,-m-b)}$. 

The simultaneous reductions will affect the affect the arrows in the following manner. If $R=-b-m\geq0$ then $\mathcal{A}_{-b-m}(m)$ will consist of a single term $\P^{(-b-m)^t}[b]$, denoted by the top most node in Figure \ref{fig:QCb}, whose new differential is obtained by following the zigzag down $\mathsf{d}^R$ (vertical blue arrows) and back along $\mathsf{d}_R$ (horizontal red arrows) until reaching the circled node in $\mathcal{A}_{R+1}(m)$. Following the diagram in Figure \ref{fig:QCb}, we can see that since any new morphism must point downward and to the left, this zig-zag will eventually terminate and thus no other arrows are created.
\begin{figure}[ht]
\begin{tikzpicture}[scale=.8, every node/.style={scale=.7}]
\begin{scope}[shift={(0,5)}, yscale=-1]
\node at (-4.3,0) {$\vdots$};
\node at (-4.3,-1) {$\mathcal{A}_{-b-m}(m)$};
%
\begin{scope}[shift={(4,0)}]
\node at (-3,1) {$\bullet$};
%
\node at (-2,0) {$\bullet$};
\node at (-2,1) {$\bullet$};
%
\node at (-1,-1) {$\bullet$};
\node at (-1,0) {$\bullet$};
\node at (-1,1) {$\bullet$};
\draw (-1,-1)[black!20!green] circle (2mm);
\draw[red,->] (-2.8,1)--(-2.2,1);
%
\draw[red,->] (-1.8,1)--(-1.2,1);
\draw[red,->] (-1.8,0)--(-1.2,0);
%
\draw [blue,->] (-2,.2)--(-2,.8);
%
\draw [blue,->] (-1,.2)--(-1,.8);
\draw [blue,->] (-1,-.8)--(-1,-.2);
\end{scope}
\end{scope}
\begin{scope}[shift={(0,2)}, yscale=-1]
\node at (-4.3,-1) {$\vdots$};
\node at (-4.3,0) {$\mathcal{A}_{R-1}(m)$};
\node at (-4.3,1) {$\mathcal{A}_R(m)$};
%
\node at (-3,1) {$\bullet$};
%
\node at (-2,0) {$\bullet$};
\node at (-2,1) {$\bullet$};
%
\node at (-1,-1) {$\bullet$};
\node at (-1,0) {$\bullet$};
\node at (-1,1) {$\bullet$};
%
\node at (0,-1) {$\cdots$};
\node at (0,0) {$\cdots$};
\node at (0,1) {$\cdots$};
\node at (3,-1.5) {$\vdots$};
\node at (3,-1) {$\bullet$};
\node at (3,0) {$\bullet$};
\node at (3,1) {$\bullet$};
%
\node at (2,-1.5) {$\vdots$};
\node at (2,-1) {$\bullet$};
\node at (2,0) {$\bullet$};
\node at (2,1) {$\bullet$};
%
\node at (1,-1.5) {$\vdots$};
\node at (1,-1) {$\bullet$};
\node at (1,0) {$\bullet$};
\node at (1,1) {$\bullet$};
\draw[red,->] (-2.8,1)--(-2.2,1);
%
\draw[red,->] (-1.8,1)--(-1.2,1);
\draw[red,->] (-1.8,0)--(-1.2,0);
%
\draw[red,<-] (2.8,1)--(2.2,1);
\draw[red,<-] (2.8,0)--(2.2,0);
\draw[red,<-] (2.8,-1)--(2.2,-1);
\draw[red,<-] (1.8,1)--(1.2,1);
\draw[red,<-] (1.8,0)--(1.2,0);
\draw[red,<-] (1.8,-1)--(1.2,-1);
\begin{scope}[yscale=1]
%
\draw [blue,->] (-2,.2)--(-2,.8);
%
\draw [blue,->] (-1,.2)--(-1,.8);
\draw [blue,->] (-1,-.8)--(-1,-.2);
\draw [blue,->] (1,.2)--(1,.8);
\draw [blue,->] (1,-.8)--(1,-.2);
\draw [blue,->] (2,.2)--(2,.8);
\draw [blue,->] (2,-.8)--(2,-.2);
\draw [blue,->] (3,.2)--(3,.8);
\draw [blue,->] (3,-.8)--(3,-.2);
\end{scope}
\draw [thick,decorate,decoration={brace,amplitude=5pt}] (3.5,1.3) -- (3.5,3.5);
\node at (4.7,2.4){$R>-b>0$};
\draw [thick,decorate,decoration={brace,amplitude=5pt}] (3.5,-4) -- (3.5,.7);
\node at (4.7,-1.6){$0 \leq R<-b$};
\node at (4.7,1){$R=\text{max}(-b,0)$};
\end{scope}
\begin{scope}[yscale=-1]
\node at (-4.3,0) {$\mathcal{A}_{R+1}(m)$};
\node at (-4.3,1) {$\vdots$};
%
\draw (-3,0)[black!20!green] circle (2mm);
\draw (-3,1)[black!20!green] circle (2mm);
\node at (-3,-1) {$\bullet$};
\node at (-3,0) {$\bullet$};
\node at (-3,1) {$\bullet$};
\node at (-3,1.5) {$\vdots$};
%
\node at (-2,-1) {$\bullet$};
\node at (-2,0) {$\bullet$};
\node at (-2,1) {$\bullet$};
\node at (-2,1.5) {$\vdots$};
%
\node at (-1,-1) {$\bullet$};
\node at (-1,0) {$\bullet$};
\node at (-1,1) {$\bullet$};
\node at (-1,1.5) {$\vdots$};
\node at (0,-1) {$\cdots$};
\node at (0,0) {$\cdots$};
\node at (0,1) {$\cdots$};
%
\node at (3,-1) {$\bullet$};
\node at (3,0) {$\bullet$};
\node at (3,1) {$\bullet$};
\node at (3,1.5) {$\vdots$};
%
\node at (2,-1) {$\bullet$};
\node at (2,0) {$\bullet$};
\node at (2,1) {$\bullet$};
\node at (2,1.5) {$\vdots$};
%
\node at (1,-1) {$\bullet$};
\node at (1,0) {$\bullet$};
\node at (1,1) {$\bullet$};
\node at (1,1.5) {$\vdots$};
\draw[red,->] (-2.8,1)--(-2.2,1);
\draw[red,->] (-2.8,0)--(-2.2,0);
\draw[red,->] (-2.8,-1)--(-2.2,-1);
\draw[red,->] (-1.8,1)--(-1.2,1);
\draw[red,->] (-1.8,0)--(-1.2,0);
\draw[red,->] (-1.8,-1)--(-1.2,-1);
\draw[red,<-] (2.8,1)--(2.2,1);
\draw[red,<-] (2.8,0)--(2.2,0);
\draw[red,<-] (2.8,-1)--(2.2,-1);
\draw[red,<-] (1.8,1)--(1.2,1);
\draw[red,<-] (1.8,0)--(1.2,0);
\draw[red,<-] (1.8,-1)--(1.2,-1);
\draw [blue,->] (-3,.2)--(-3,.8);
\draw [blue,->] (-3,-.8)--(-3,-.2);
\draw [blue,->] (-2,.2)--(-2,.8);
\draw [blue,->] (-2,-.8)--(-2,-.2);
\draw [blue,->] (-1,.2)--(-1,.8);
\draw [blue,->] (-1,-.8)--(-1,-.2);
\draw [blue,->] (1,.2)--(1,.8);
\draw [blue,->] (1,-.8)--(1,-.2);
\draw [blue,->] (2,.2)--(2,.8);
\draw [blue,->] (2,-.8)--(2,-.2);
\draw [blue,->] (3,.2)--(3,.8);
\draw [blue,->] (3,-.8)--(3,-.2);
\end{scope}
\end{tikzpicture}
\caption{
}\label{fig:QCb} 
\end{figure}
Consequently, the differential $\d^R$ is not modified but simply restricted to the terms that do not cancel under the Gaussian elimination. When $0\leq R<-b$ we also have a new map from $\mathcal{A}_{-b-m}(m) \to \mathcal{A}_{R+1}(m)$ corresponding to $\mathsf{D}: \P^{(-m-b)^t}[b] \to \P^{(-b+1)^t}\Q^{(m+1)^t}[b-1]$ and given by
$
\mathsf{D}=\hackcenter{
\begin{tikzpicture} [scale=0.6, yscale=-1, xscale=-.7]
 \draw[line width=0.7mm,decoration={markings,mark=at position .06 with {\arrow[line width=.3mm]{<}}},
    postaction={decorate},shorten >=0.6pt] (1,-1) -- (1,1);
  \draw[line width=.7mm] (-1,-1) -- (-1,0);
    \draw[line width=.7mm,decoration={markings,mark=at position 0.6 with {\arrow[line width=.3mm]{>}}},
    postaction={decorate},shorten >=0.6pt]  (-1,.3) .. controls ++(0,.5) and ++(0,.5) .. (.5,.3);
        \draw[fill =gray!60] (-1.5,-.3) rectangle (-.2,.3);
        \draw[fill =gray!60] (.2,-.3) rectangle (1.5,.3);
  \node at (.85,-0) [scale=.5]{$-b+1$};
  \node at (-.85,0) [scale=.5]{$m+1$};
  \end{tikzpicture}}$.
Therefore, we have the equivalences
\begin{align*}
\Q^{(m)^t}\B_b^* &= \text{Tot}^{\oplus} \lbrace \mathcal{A}_{R}(m), \mathsf{d}_w, \mathsf{d}^R \rbrace \\
&\simeq \text{Cone}\left(
 \mathsf{1}^m_{[0,-b]}\P^{(-b-m)^t}[b-1] \xrightarrow{\mathsf{D}} \left\lbrace\bigoplus_{R \geq (-b+1,0)} \P^{(R)^t}\Q^{(R+b,1^m)}[-R], \hackcenter{\begin{tikzpicture} [yscale=-1.2,scale=.25]
\draw[line width=0.7mm,decoration={markings,mark=at position 1 with {\arrow[line width=.3mm]{>}}},
    postaction={decorate},shorten >=0.6pt] (4.5,2) to (4.5,8.5);
\draw[line width=0.7mm,decoration={markings,mark=at position .03 with {\arrow[line width=.3mm]{<}}},
    postaction={decorate},shorten >=0.6pt] (-1.5,2) to (-1.5, 8.5);
\draw[line width=0.7mm,decoration={markings,mark=at position 1 with {\arrow[line width=.3mm]{>}}},
    postaction={decorate},shorten >=0.6pt] (1.5,2) to (1.5, 8.5);
\draw[directed=.8] (4.2,5.5)..controls ++(0,1) and ++(0,1)..(-.8, 5.5);
\draw[fill =gray!60] (-2.7,4.5) rectangle (-0.4,5.5);
\draw[fill =gray!20] (0.4,6.7) rectangle (5.6,7.7);
\draw[fill =white] (6,4.5) rectangle (3,5.5);
\draw[fill =gray!20] (0.4,4) rectangle (5.6,3);
\node at (-1.5,5) [scale=.5]{$R+1$};
\node at (4.5,5) [scale=.5]{$R+b+1$};
\end{tikzpicture}} \right\rbrace\right).
\end{align*}
\end{proof}

\begin{theorem}\label{thm:CC*directsum}
For any $a,b \in \Z$ the homotopy equivalence
$\B_{a+1} \otimes \B_{b+1}^* \simeq \emph{Tot}^{\oplus} \left\lbrace \mathcal{A}^y, \mathsf{d}_y,\mathsf{d}^y \right\rbrace_{y\geq(0,-a-1)} 
$ holds in $\K(\H)$ where the chain complex $\left\lbrace\mathcal{A}^y,\mathsf{d}_y \right\rbrace \simeq $ \emph{Cone}$(\mathsf{D_y})$ with
\begin{equation*} 
 \mathsf{1}^y_{[0,-b-1]}\P^{(y+a+1)}\P^{(-b-y-1)^t}[b+y+1] \xrightarrow{\mathsf{D}_y} \left\lbrace\bigoplus_{R\geq(-b,0)} \P^{(y+a+1)}\P^{(R)^t}\Q^{(R+b,1^y)}[y-R],\mathsf{d}_y
\right\rbrace \end{equation*}
with   $\mathsf{D}_y=\hackcenter{
\begin{tikzpicture} [scale=0.6, yscale=-1, xscale=-.7]
 \draw[line width=0.7mm,decoration={markings,mark=at position .06 with {\arrow[line width=.3mm]{<}}},
    postaction={decorate},shorten >=0.6pt] (1,-1) -- (1,1);
 \draw[line width=0.7mm,decoration={markings,mark=at position .06 with {\arrow[line width=.3mm]{<}}},
    postaction={decorate},shorten >=0.6pt] (2.5,-1) -- (2.5,1);
  \draw[line width=.7mm] (-1,-1) -- (-1,0);
    \draw[line width=.7mm,decoration={markings,mark=at position 0.6 with {\arrow[line width=.3mm]{>}}},
    postaction={decorate},shorten >=0.6pt]  (-1,.3) .. controls ++(0,.5) and ++(0,.5) .. (.5,.3);
      \draw[fill =white] (3.2,-.3) rectangle (1.9,.3);
        \draw[fill =gray!60] (-1.5,-.3) rectangle (-.2,.3);
        \draw[fill =gray!60] (.2,-.3) rectangle (1.5,.3);
  \end{tikzpicture}}$, 
  $\mathsf{d}_y= \hackcenter{\begin{tikzpicture} [yscale=-1.2,scale=.25]
\draw[line width=0.7mm,decoration={markings,mark=at position 1 with {\arrow[line width=.3mm]{>}}},
    postaction={decorate},shorten >=0.6pt] (4.5,2) to (4.5,8.5);
\draw[line width=0.7mm,decoration={markings,mark=at position .03 with {\arrow[line width=.3mm]{<}}},
    postaction={decorate},shorten >=0.6pt] (-1.5,2) to (-1.5, 8.5);
\draw[line width=0.7mm,decoration={markings,mark=at position .03 with {\arrow[line width=.3mm]{<}}},
    postaction={decorate},shorten >=0.6pt] (-4.5,2) to (-4.5, 8.5);
\draw[line width=0.7mm,decoration={markings,mark=at position 1 with {\arrow[line width=.3mm]{>}}},
    postaction={decorate},shorten >=0.6pt] (1.5,2) to (1.5, 8.5);
\draw[directed=.8] (4.2,5.5)..controls ++(0,1) and ++(0,1)..(-.8, 5.5);
\draw[fill =gray!60] (-2.7,4.5) rectangle (-0.4,5.5);
\draw[fill =white] (-5.7,4.5) rectangle (-3.4,5.5);
\draw[fill =gray!20] (0.4,6.7) rectangle (5.6,7.7);
\draw[fill =white] (5.6,4.5) rectangle (3.3,5.5);
\draw[fill =gray!20] (0.4,4) rectangle (5.6,3);
\end{tikzpicture}}$, and  $\mathsf{d}^y =(-1)^R \hackcenter{\begin{tikzpicture} [yscale=1.2,xscale=-1,scale=.25]
\draw[line width=0.7mm,decoration={markings,mark=at position 1 with {\arrow[line width=.3mm]{>}}},
    postaction={decorate},shorten >=0.6pt] (4.5,2) to (4.5,8.5);
\draw[line width=0.7mm,decoration={markings,mark=at position .03 with {\arrow[line width=.3mm]{<}}},
    postaction={decorate},shorten >=0.6pt] (-1.5,2) to (-1.5, 8.5);
\draw[line width=0.7mm,decoration={markings,mark=at position .03 with {\arrow[line width=.3mm]{<}}},
    postaction={decorate},shorten >=0.6pt] (-4.5,2) to (-4.5, 8.5);
\draw[line width=0.7mm,decoration={markings,mark=at position 1 with {\arrow[line width=.3mm]{>}}},
    postaction={decorate},shorten >=0.6pt] (1.5,2) to (1.5, 8.5);
\draw[directed=.8] (4.2,5.5)..controls ++(0,1) and ++(0,1)..(-.8, 5.5);
\draw[fill =gray!60] (-2.7,4.5) rectangle (-0.4,5.5);
\draw[fill =gray!60] (2.7,4.5) rectangle (0.4,5.5);
\draw[fill =gray!20] (-0.4,6.7) rectangle (-5.6,7.7);
\draw[fill =white] (5.6,4.5) rectangle (3.3,5.5);
\draw[fill =gray!20] (-0.4,4) rectangle (-5.6,3);
\end{tikzpicture}}
$.
\end{theorem}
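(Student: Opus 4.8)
The plan is to compute $\B_{a+1}\otimes\B_{b+1}^*$ directly from Definition \ref{def:ChainTensorProd}, realizing it as the total complex of a bi-complex whose columns are indexed by the homological degrees of $\B_{a+1}$ (the single-term complexes $\P^{(x)}\Q^{(x-a-1)^t}$) and then simplifying each such column using Lemma \ref{lem:QC-reductions}. Concretely, by \eqref{eq:Ca} we have
\[
\B_{a+1}\otimes\B_{b+1}^* = \text{Tot}^\oplus\left\lbrace \P^{(x)}\Q^{(x-a-1)^t}\B_{b+1}^*, \d_{a+1}\otimes\1, (-1)^x\1\otimes\d_{b+1}^* \right\rbrace_{x\geq(a+1,0)},
\]
so the first step is to feed $\Q^{(m)^t}\B_{b+1}^*$ (with $m=x-a-1$) into the first homotopy equivalence of Lemma \ref{lem:QC-reductions}, which identifies it with $\text{Cone}(\mathsf{D}_m)$. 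Reindexing by $y := x-a-1$ so that $\P^{(x)} = \P^{(y+a+1)}$ and $m=y$ gives, column by column, precisely the complex $\left\lbrace \mathcal{A}^y,\d_y\right\rbrace \simeq \text{Cone}(\mathsf{D}_y)$ displayed in the statement, with the morphism $\mathsf{D}_y$ obtained from $\mathsf{D}_m$ by tensoring the white box for $\P^{(y+a+1)}$ on the left, and with $\d_y$ the image of the differential \eqref{eq:ARdiff} after the reductions of Proposition \ref{lem:q-reduction}.

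The second step is to justify performing these reductions simultaneously across all columns. The indexing set of the columns is $\lbrace x \geq \max(0,a+1)\rbrace$, which is bounded below, so Proposition \ref{prop:SimultSimp}(i) applies: since for each fixed $y$ the complex $\Q^{(y)^t}\B_{b+1}^* \simeq \text{Cone}(\mathsf{D}_y)$ by a sequence of Gaussian eliminations (inside $\K^+(\H)$ after the intermediate passage through a homologically locally finite bi-complex, exactly as in the proof of Lemma \ref{lem:QC-reductions}), all these simplifications can be carried out at once. This gives $\B_{a+1}\otimes\B_{b+1}^* \simeq \text{Tot}^\oplus\lbrace \mathcal{A}^y, \d_y, \d^y\rbrace_{y\geq(0,-a-1)}$, where the vertical differentials $\d^y$ are the residue of $(-1)^x\1\otimes\d_{b+1}^*$ after the column reductions.

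The third and most delicate step — the one I expect to be the main obstacle — is to pin down the vertical differential $\d^y$, i.e. to track how the simultaneous Gaussian eliminations of Proposition \ref{prop:SimultSimp} alter the original maps $\1_{\P^{(x)}}\otimes\d_{b+1}^*$ between adjacent columns. Because $\B_{a+1}\in\K^-(\H)$ while $\B_{b+1}^*\in\K^+(\H)$, the relevant bi-complex is \emph{not} $\K(\H)$-locally finite (cf. Remark \ref{rem:locallyfinite}), so one must be careful that the zig-zag compositions producing the reduced $\d^y$ actually terminate; concretely, each surviving term of $\mathcal{A}^y$ (a summand $\P^{(y+a+1)}\P^{(R)^t}\Q^{(R+b,1^y)}$, or the exceptional $\P^{(y+a+1)}\P^{(-b-y-1)^t}$ term) must be chased through the web of isomorphisms and zero maps coming from the reductions, using relations \eqref{eq:whiteblackbox}, \eqref{eq:sym-anti-absorbsion} and Proposition \ref{prop:PP*merge}, to extract the diagram $\d^y = (-1)^R(\cdots)$ in the statement. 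Since $\d^{y}$ originally came with the sign $(-1)^x$ and the reduction shifts $\P^{(x)}\Q^{(x-a-1)^t}\mapsto \P^{(R)^t}\Q^{(R+b,1^y)}[y-R]$, the claimed sign $(-1)^R$ follows from bookkeeping of the homological shifts along the zig-zag, which one must do carefully but which presents no conceptual difficulty. Finally, the conditions $(\d_y)^2=0$, $(\d^y)^2=0$ and $\d_y\d^y + \d^y\d_y = 0$ hold automatically since they are inherited from the original bi-complex, and the computation of $\mathsf{D}_y$ from $\mathsf{D}_m$ is immediate from Lemma \ref{lem:QC-reductions} together with the observation that $\P^{(y+a+1)}$ is simply carried along by all the manipulations. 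This establishes the stated homotopy equivalence.
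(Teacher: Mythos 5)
Your overall setup is correct and mirrors the paper's strategy: express $\B_{a+1}\otimes\B_{b+1}^*$ as the total complex of a bi-complex with rows $\P^{(y+a+1)}\Q^{(y)^t}\B_{b+1}^*$, apply Lemma \ref{lem:QC-reductions} to each row to get $\mathrm{Cone}(\mathsf{D}_y)$, and invoke Proposition \ref{prop:SimultSimp} on the bounded-below indexing set. However, the "third and most delicate step," which you acknowledge is the crux, is left as an assertion rather than an argument, and your description of how it should go does not match what actually has to be done.

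Concretely, you frame the problem as having to verify that the zig-zag compositions produced by the simultaneous Gaussian eliminations terminate, and you attribute the sign $(-1)^R$ to "bookkeeping of the homological shifts along the zig-zag." But the content of the paper's proof at this point is stronger and more structural: the simultaneous reductions generate \emph{no} new arrows at all, so $\d^y$ is literally the restriction of the original cross-differential $(-1)^R\d_{a+1}\otimes\1_{b+1}$ to the surviving summands (the sign is already present in the original bi-complex, not produced by tracking shifts). The reason no new arrows appear is a specific diagrammatic fact you would need to prove: for any surviving term $\P^{(y+a+1)}\P^{(R)^t}\Q^{(R+b+1,1^y)}$, the composite of the inclusion from Proposition \ref{prop:QPswap}/\ref{prop:PP*merge}, the map $\d_{a+1}\otimes\1_{b+1}$, and the projection back is nonzero only in the $s=0$ component, and the isomorphisms used in Lemma \ref{lem:QC-reductions} connect the $s$-component to the $(s-1)$-component of the previous column, so there is no $s'=s-1\geq 0$ when $s=0$. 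In other words, the first step of any potential zig-zag through the eliminated terms already has no isomorphism to pull back through, so there is nothing to chase. Your proposal identifies the right place where work has to happen, but you assert it presents "no conceptual difficulty" without supplying this argument, and the mechanism you suggest (tracing zig-zags to termination, signs accumulating along the way) is not the one that closes the gap.
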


\begin{proof}
Once again, we consider $\B_{a+1} \otimes \B_b^*$ as a bi-complex so that, 
\[
\B_{a+1}\otimes \B_{b+1}^*=\text{Tot}^{\oplus} \left\lbrace \P^{(y+a+1)} \Q^{(y)^t}\B_{b+1}^*, \1_y \otimes \mathsf{d}_{b+1}^*, (-1)^R\mathsf{d}_{a+1} \otimes \1_{b+1} \right\rbrace_{y\geq \text{max}(0,-a-1)}\]
By tensoring $\Q^{(y)^t}\B_{b+1}^*$ with $\P^{(y+a+1)}$ on the right we can apply Lemma \ref{lem:QC-reductions} to each row of the bi-complex and obtain: $\P^{(y+a+1)}\Q^{(y)^t}\B_{b+1}^* \simeq$ Cone$(\mathsf{D}_y)$, where $\mathsf{D}_y$ is the chain map:
\[\mathsf{1}^y_{[0,-b-1]}\P^{(y+a+1)}\P^{(-b-y-1)^t}[b+y] \xrightarrow{\mathsf{D}_y} \left\lbrace \bigoplus_{R\geq(-b,0)} \P^{(y+a+1)}\P^{(R)^t}\Q^{(R+b+1,1^y)}[y-R], \mathsf{d}_y=\1_{\P^{(y+a+1)}} \otimes \mathsf{d}\right\rbrace
\]
where $\mathsf{d}$ is defined as in Lemma \ref{lem:QC-reductions}. 
\medskip

As always, we must address the effect of the simultaneous homotopies on $\d^y$. We claim that $\mathsf{d}^y$ is given by $(-1)^R\mathsf{d}_{a+1}\otimes 1_{b+1}$ restricted to the terms in the reduced complexes. This is equivalent to saying that the reductions do not interfere with the existing differentials. Indeed, if such a map were generated it would necessarily originate in a term that is not canceled under Lemma \ref{lem:QC-reductions}. Thus, it must have the form $\P^{(y+a+1)}\P^{(R)^t}\Q^{(R+b+1,1^y)}$ for $R>$max$(0,-b-1)$. We will show that the set of all its images under $\d_{a+1}\otimes \1_{b+1}$ have no preimages under the isomorphisms in $\1_y\otimes \d_{b+1}$. Since any new morphism would necessarily have to trace back through these isomorphisms, the previous statement would prove that zigzags originating in $\P^{(y+a+1)}\P^{(R)^t}\Q^{(R+b+1,1^y)}$ cannot exists. 

\medskip

Since $\P^{(y+a+1)}\P^{(R)^t}\Q^{(R+b+1,1^y)} \hookrightarrow \P^{(y+a+1)}\P^{(R)^t}\Q^{(y)^t}\Q^{(R+b+1)}$, consider the compositions:
\[
\begin{tikzcd}
\P^{(y+a+1)}\P^{(R)^t}\Q^{(y)^t}\Q^{(R+b+1)} \arrow[d,hook,"\iota"]&\P^{(y+a)}\P^{(R-s)^t}\Q^{(y-1-s)^t}\Q^{(R+b+1)}\\
\P^{(y+a+1)}\Q^{(y)^t}\P^{(R)^t}\Q^{(R+b+1)} \arrow[r,"\mathsf{d}_{a+1}\otimes \1_{b+1}"]& \P^{(y+a)}\Q^{(y-1)^t}\P^{(R)^t}\Q^{(R+b+1)}\arrow[u,two heads, "\rho"]
\end{tikzcd}
\]
Given diagrammatically by, 
\[\hackcenter{\begin{tikzpicture} [scale=0.5,yscale=1, every node/.style={scale=0.5}]
\draw [line width=.7mm,decoration={markings,mark=at position 0.02 with {\arrow[line width=.3mm]{<}}},
    postaction={decorate},shorten >=0.6pt] (-3.5,3.3) to (-3.5,-7.3);
\draw[line width=.7mm,decoration={markings,mark=at position 0.5 with {\arrow[line width=.3mm]{>}}},
    postaction={decorate},shorten >=0.6pt] (-1,-7.3)to (-1,-6.2).. controls ++(0,.5) and ++(0,-1.5).. (1.5,-3.8) to (1.5,-.2) .. controls ++(0,1.5) and ++(0,-.5) .. (-1,2.2) to (-1,3.3);
\begin{scope}[shift={(-.75,0)}]
\draw[line width=.7mm,decoration={markings,mark=at position 0.5 with {\arrow[line width=.3mm]{>}}},
    postaction={decorate},shorten >=0.6pt] (-1,-7.3)to (-1,-6.2).. controls ++(0,.5) and ++(0,-1.5).. (1.5,-3.8) to (1.5,-.2);
\end{scope}
\begin{scope}[xscale=-1,yscale=-1, shift={(-0.75,4)}]
\draw[line width=.7mm,decoration={markings,mark=at position 0.5 with {\arrow[line width=.3mm]{>}}},
    postaction={decorate},shorten >=0.6pt](1.5,-3.8) to (1.5,-.2) .. controls ++(0,1.5) and ++(0,-.5) .. (-1,2.2) to (-1,3.3);
\end{scope}
\begin{scope}[xscale=-1,yscale=-1, shift={(0,4)}]
\draw[line width=.7mm,decoration={markings,mark=at position 0.5 with {\arrow[line width=.3mm]{>}}},
    postaction={decorate},shorten >=0.6pt] (-1,-7.3)to (-1,-6.2).. controls ++(0,.5) and ++(0,-1.5).. (1.5,-3.8) to (1.5,-.2) .. controls ++(0,1.5) and ++(0,-.5) .. (-1,2.2) to (-1,3.3);
\end{scope}
\draw[line width=.7mm,decoration={markings,mark=at position 0.55 with {\arrow[line width=.3mm]{>}}},
    postaction={decorate},shorten >=0.6pt](.75,-1.3) to (.75,-.2).. controls ++(0,1) and ++(0,1) .. (-.75,-.2) to (-.75,-1.3) ;
\begin{scope}[shift={(-2.5,-3)}]
\draw[ directed=.5] (-.75,-.2).. controls ++(0,1) and ++(0,1) .. (.75,-.2)  ;
\end{scope}
\draw [line width=.7mm,decoration={markings,mark=at position 1 with {\arrow[line width=.3mm]{>}}},
    postaction={decorate},shorten >=0.6pt] (3.5,3.3) to (3.5,-7.3);
   \draw[fill =gray!60] (-2.2,2.2) rectangle (-0.2,2.8);
  \draw[fill =gray!60] (-2.2,-.8) rectangle (-.2,-.2);
  \draw[fill =gray!60] (.2,2.2) rectangle (2.2,2.8);
  \draw[fill =gray!60] (.2,-.8) rectangle (2.2,-.2);
  \draw[fill =white] (-4.6,2.2) rectangle (-2.6,2.8);
   \draw[fill =white] (4.6,2.2) rectangle (2.6,2.8);
  \begin{scope}[shift={(0,-6)}]
  \draw[fill =gray!60] (-2.2,2.2) rectangle (-0.2,2.8);
  \draw[fill =gray!60] (-2.2,-.8) rectangle (-.2,-.2);
  \draw[fill =gray!60] (.2,2.2) rectangle (2.2,2.8);
  \draw[fill =gray!60] (.2,-.8) rectangle (2.2,-.2);
  \draw[fill =white] (-4.6,2.2) rectangle (-2.6,2.8);
  \draw[fill =white] (-4.6,-.8) rectangle (-2.6,-.2);
  \draw[fill =white] (4.6,-.8) rectangle (2.6,-.2);
  \end{scope}
  \node at (1.2,-.5) {$R$};
  \node at (1.2,2.5) {$y-1-s$};
  \node at (-1.2,-.5) {$y-1$};
  \node at (-1.2,2.5) {$R-s$};
  \node at (3.7,2.5) {$R+b+1$};
  \node at (-3.7,2.5) {$y+a$};
  \begin{scope}[shift={(0,-6)}]
  \node at (1.2,-.5) {$y$};
  \node at (1.2,2.5) {$R$};
  \node at (-1.2,-.5) {$R$};
  \node at (-1.2,2.5) {$y$};
  \node at (3.7,-.5) {$R+b+1$};
   \node at (-3.7,-.5) {$y+a+1$};
  \node at (-3.7,2.5) {$y+a+1$};
  \end{scope}
\end{tikzpicture}}
\;\;=\;\;
\hackcenter{\begin{tikzpicture} [scale=0.5,yscale=1, every node/.style={scale=0.5}]
\draw [line width=.7mm,decoration={markings,mark=at position 0.03 with {\arrow[line width=.3mm]{<}}},
    postaction={decorate},shorten >=0.6pt] (-3.5,3.3) to (-3.5,-3.9);
\draw[line width=.7mm,decoration={markings,mark=at position 0.5 with {\arrow[line width=.3mm]{>}}},
    postaction={decorate},shorten >=0.6pt] (-1,-3.9)to (-1,-2.8).. controls ++(0,.5) and ++(0,-1.5).. (1.5,-.4) to (1.5,-.2) .. controls ++(0,1.5) and ++(0,-.5) .. (-1,2.2) to (-1,3.3);
\begin{scope}[shift={(-.75,0)}]
\draw[line width=.7mm] (-1,-3.9)to (-1,-2.8).. controls ++(0,.5) and ++(0,-1.5).. (1.5,-.4) to (1.5,-.2);
\end{scope}
\begin{scope}[xscale=-1,yscale=-1, shift={(-0.75,.6)}]
\draw[line width=.7mm](1.5,-.4) to (1.5,-.2) .. controls ++(0,1.5) and ++(0,-.5) .. (-1,2.2) to (-1,3.3);
\end{scope}
\begin{scope}[xscale=-1,yscale=-1, shift={(0,.6)}]
\draw[line width=.7mm,decoration={markings,mark=at position 0.5 with {\arrow[line width=.3mm]{<}}},
    postaction={decorate},shorten >=0.6pt] (-1,-3.9)to (-1,-2.8).. controls ++(0,.5) and ++(0,-1.5).. (1.5,-.4) to (1.5,-.2) .. controls ++(0,1.5) and ++(0,-.5) .. (-1,2.2) to (-1,3.3);
\end{scope}
\draw[line width=.7mm,decoration={markings,mark=at position 0.5 with {\arrow[line width=.3mm]{>}}},
    postaction={decorate}](.75,-.2).. controls ++(0,1) and ++(0,1) .. (-.75,-.2)  ;
\begin{scope}[shift={(-2.5,-2.8)}]
\draw[ directed=.3] (-.75,-.2).. controls ++(0,1) and ++(0,1) .. (3.2,-.2)  ;
\end{scope}
\draw [line width=.7mm,decoration={markings,mark=at position 1 with {\arrow[line width=.3mm]{>}}},
    postaction={decorate},shorten >=0.6pt] (3.5,3.3) to (3.5,-3.9);
   \draw[fill =gray!60] (-2.2,2.2) rectangle (-0.2,2.8);
  \draw[fill =gray!60] (.2,2.2) rectangle (2.2,2.8);
  \draw[fill =white] (-4.6,2.2) rectangle (-2.6,2.8);
   \draw[fill =white] (4.6,2.2) rectangle (2.6,2.8);
  \begin{scope}[shift={(0,-2.6)}]
  \draw[fill =gray!60] (-2.2,-.8) rectangle (-.2,-.2);
  \draw[fill =gray!60] (.2,-.8) rectangle (2.2,-.2);
  \draw[fill =white] (-4.6,-.8) rectangle (-2.6,-.2);
  \draw[fill =white] (4.6,-.8) rectangle (2.6,-.2);
  \end{scope}
  \node at (1.2,2.5) {$y-1-s$};
  \node at (-1.2,2.5) {$R-s$};
  \node at (3.7,2.5) {$R+b+1$};
  \node at (-3.7,2.5) {$y+a$};
  \begin{scope}[shift={(0,-2.6)}]
  \node at (1.2,-.5) {$y$};
  \node at (-1.2,-.5) {$R$};
  \node at (3.7,-.5) {$R+b+1$};
   \node at (-3.7,-.5) {$y+a+1$};
  \end{scope}
\end{tikzpicture}}
\;\;=\;\;\delta_{s,0}\;\;
\hackcenter{\begin{tikzpicture} [yscale=1.2,xscale=-1,scale=.25]
\draw[line width=0.7mm,decoration={markings,mark=at position 1 with {\arrow[line width=.3mm]{>}}},
    postaction={decorate},shorten >=0.6pt] (4.5,3) to (4.5,7);
\draw[line width=0.7mm,decoration={markings,mark=at position .06 with {\arrow[line width=.3mm]{<}}},
    postaction={decorate},shorten >=0.6pt] (-1.5,3) to (-1.5, 7);
\draw[line width=0.7mm,decoration={markings,mark=at position .06 with {\arrow[line width=.3mm]{<}}},
    postaction={decorate},shorten >=0.6pt] (-4.5,3) to (-4.5, 7);
\draw[line width=0.7mm,decoration={markings,mark=at position 1 with {\arrow[line width=.3mm]{>}}},
    postaction={decorate},shorten >=0.6pt] (1.5,3) to (1.5, 7);
\draw[directed=.8] (4.2,5.3)..controls ++(0,1) and ++(0,1)..(-.8, 5.3);
\draw[fill =gray!60] (-2.7,4.5) rectangle (-0.4,5.5);
\draw[fill =gray!60] (2.7,4.5) rectangle (0.4,5.5);
\draw[fill =white] (6,4.5) rectangle (3.1,5.5);
\draw[fill =white] (-6,4.5) rectangle (-3.1,5.5);
\node at (-4.5,5) [scale=.5]{$R+b+1$};
\node at (-1.5,5) [scale=.5]{$y$};
\node at (1.5,5)[scale=.5] {$R$};
\node at (4.5,5) [scale=.5]{$y+a+1$};
\end{tikzpicture}}.
\]
Consequently, the map $\P^{(y+a+1)}\P^{(R)^t}\Q^{(R+b+1,1^y)} \to \P^{(y+a)}\P^{(R-s)^t}\Q^{(y-1-s)^t}\Q^{(R+b+1)}$ is nonzero if and only if $s=0$. From Proposition \ref{lem:q-reduction} we know that the composition below is an isomorphism if and only if $s'=s-1$, but since $s'\geq 0$ then when $s=0$ no such $s'$ exists. 
\[
\begin{tikzcd}
\P^{(y+a)}\P^{(R-1-s')^t}\Q^{(y-1-s')^t}\Q^{(R+b-1)} \arrow[d,hook,"\iota"]&\P^{(y+a)}\P^{(R-s)^t}\Q^{(y-1-s)^t}\Q^{(R+b+1)}\\
\P^{(y+a)}\Q^{(y-1)^t}\P^{(R-1)^t}\Q^{(R+b)} \arrow[r,"\1_y \otimes \mathsf{d}_{b+1}^*"]& \P^{(y+a)}\Q^{(y-1)^t}\P^{(R)^t}\Q^{(R+b+1)}\arrow[u,two heads, "\rho"]
\end{tikzcd}
\]
This is precisely saying that image of $\P^{(y+a+1)}\P^{(R)^t}\Q^{(R+b+1,1^y)}$ in $\P^{(y+a)}\Q^{(y-1)^t}\P^{(R)^t}\Q^{(R+b+1)}$ under $\mathsf{d}_{a+1} \otimes \1_b$ cannot be pulled back isomorphically onto any summand of $\P^{(y+a)}\Q^{(y-1)^t}\P^{(R-1)^t}\Q^{(R+b)}$ under $\1_y \otimes \mathsf{d}_{b+1}^*$, i.e. no new arrows are created under the simultaneous simplification of Lemma \ref{lem:QC-reductions}. 

Consequently, the differential $\mathsf{d}^y$ is $\mathsf{d}_{a+1}\otimes \1_{b+1}$ composed and precomposed with the isomorphisms that exchange $\P^t$ and $\Q^t$ (Proposition \ref{prop:QPswap}) and merge the $\Q$'s (Proposition \ref{prop:PP*merge}), then restricted to the remaining terms under the reductions from Lemma \ref{lem:QC-reductions}. After some diagrammatic computations this differential simplified to the desired diagram $\d^y$ given in the theorem. 
\end{proof}

\begin{theorem} \label{thm:C*Cdirectprod}
For any $a,b \in \Z$ the homotopy equivalence
$\B_{b}^* \tilde{\otimes} \B_{a} \simeq \emph{Tot}^{\Pi} \left\lbrace \mathcal{A}^v, \mathsf{d}_v,\mathsf{d}^v \right\rbrace_{v\geq(0,b)} 
$ holds in $\K(\H)$ where the chain complex $\left\lbrace \mathcal{A}^v, \mathsf{d}_v\right\rbrace \simeq$ \emph{Cone}$(\mathsf{D}_v)$ with $\mathsf{D}_v$ the chain map: 
\begin{equation*}
\left\lbrace  \bigoplus_{R\geq(0,a+1)}\P^{(v-b)^t}\P^{(R)}\Q^{(v+1,1^{R-a-1})}[R-a+b-v-1],
\mathsf{d}_v
\right\rbrace \xrightarrow{\mathsf{D}_v} \mathsf{1}_{[0,a]}^{v} \P^{(v-b)^t} \P^{(a-v)}[b-v]
\end{equation*}with  $\mathsf{D}_v=\hackcenter{
\begin{tikzpicture} [scale=0.6, yscale=1, xscale=-.7]
 \draw[line width=0.7mm,decoration={markings,mark=at position 1 with {\arrow[line width=.3mm]{>}}},
    postaction={decorate},shorten >=0.6pt] (1,-1) -- (1,1);
    postaction={decorate},shorten >=0.6pt] (-1,-1) -- (-1,1);
  \draw[line width=.7mm] (-1,-1) -- (-1,0);
    \draw[line width=.7mm,decoration={markings,mark=at position 0.8 with {\arrow[line width=.3mm]{>}}},
    postaction={decorate},shorten >=0.6pt]  (.5,.3) .. controls ++(0,.5) and ++(0,.5) .. (-1,.3);
     \draw[line width=0.7mm,decoration={markings,mark=at position 1 with {\arrow[line width=.3mm]{>}}},
    postaction={decorate},shorten >=0.6pt] (2.5,-1) -- (2.5,1);
  \draw[fill =gray!60] (3.2,-.3) rectangle (1.9,.3);
        \draw[fill =white] (-1.5,-.3) rectangle (-.2,.3);
        \draw[fill =white] (.2,-.3) rectangle (1.5,.3);
  \end{tikzpicture}}$, $\mathsf{d}_v=(-1)^{b-v}\hackcenter{\begin{tikzpicture} [yscale=1.2,xscale=1,scale=.25]
\draw[line width=0.7mm,decoration={markings,mark=at position .03 with {\arrow[line width=.3mm]{<}}},
    postaction={decorate},shorten >=0.6pt] (4.5,2) to (4.5,8.5);
\draw[line width=0.7mm,decoration={markings,mark=at position 1 with {\arrow[line width=.3mm]{>}}},
    postaction={decorate},shorten >=0.6pt] (-1.5,2) to (-1.5, 8.5);
\draw[line width=0.7mm,decoration={markings,mark=at position 1 with {\arrow[line width=.3mm]{>}}},
    postaction={decorate},shorten >=0.6pt] (-4.5,2) to (-4.5, 8.5);
\draw[line width=0.7mm,decoration={markings,mark=at position .03 with {\arrow[line width=.3mm]{<}}},
    postaction={decorate},shorten >=0.6pt] (1.5,2) to (1.5, 8.5);
\draw[directed=.8] (-.8,5.5)..controls ++(0,1) and ++(0,1)..(4.2, 5.5);
\draw[fill =white] (-2.7,4.5) rectangle (-0.4,5.5);
\draw[fill =gray!60] (-5.7,4.5) rectangle (-3.4,5.5);
\draw[fill =gray!20] (0.4,6.7) rectangle (5.6,7.7);
\draw[fill =gray!60] (5.6,4.5) rectangle (3.3,5.5);
\draw[fill =gray!20] (0.4,4) rectangle (5.6,3);
\end{tikzpicture}}$, 
 and $\mathsf{d}^v = \hackcenter{\begin{tikzpicture} [yscale=-1.2,xscale=-1,scale=.25]
\draw[line width=0.7mm,decoration={markings,mark=at position .03 with {\arrow[line width=.3mm]{<}}},
    postaction={decorate},shorten >=0.6pt] (4.5,2) to (4.5,8.5);
\draw[line width=0.7mm,decoration={markings,mark=at position 1 with {\arrow[line width=.3mm]{>}}},
    postaction={decorate},shorten >=0.6pt] (-1.5,2) to (-1.5, 8.5);
\draw[line width=0.7mm,decoration={markings,mark=at position 1 with {\arrow[line width=.3mm]{>}}},
    postaction={decorate},shorten >=0.6pt] (-4.5,2) to (-4.5, 8.5);
\draw[line width=0.7mm,decoration={markings,mark=at position .03 with {\arrow[line width=.3mm]{<}}},
    postaction={decorate},shorten >=0.6pt] (1.5,2) to (1.5, 8.5);
\draw[directed=.8] (-.8,5.5)..controls ++(0,1) and ++(0,1)..(4.2, 5.5);
\draw[fill =white] (-2.7,4.5) rectangle (-0.4,5.5);
\draw[fill =white] (2.7,4.5) rectangle (0.4,5.5);
\draw[fill =gray!20] (-0.4,6.7) rectangle (-5.6,7.7);
\draw[fill =gray!60] (5.6,4.5) rectangle (3.3,5.5);
\draw[fill =gray!20] (-0.4,4) rectangle (-5.6,3);
\end{tikzpicture}}
$.
\end{theorem}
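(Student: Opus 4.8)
The plan is to run the argument in close parallel with the proof of Theorem~\ref{thm:CC*directsum}, but with direct sums systematically replaced by direct products. The reason the two theorems land on $\text{Tot}^{\oplus}$ and $\text{Tot}^{\Pi}$ respectively is that the relevant bi-complex is unbounded in homological degree in both directions and fails to be homologically locally finite (so Proposition~\ref{prop:locallyfinite} does not apply), while the factor supplying the ``outer'' differential here, $\B_b^* \in \K^+(\H)$, is supported in non-positive homological degrees; hence the row index will be bounded \emph{above}, and it is precisely part (ii) of Proposition~\ref{prop:SimultSimp}, together with the completed tensor product, that can be applied.

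First I would realize $\B_b^*\tilde{\otimes}\B_a$ as a completed total complex whose rows are indexed by the homological degree of $\B_b^*$. Writing $\B_b^* = \{\bigoplus_{v\geq(0,b)}\P^{(v-b)^t}\Q^{(v)}[b-v],\ \d_b^*\}$, Definition~\ref{def:ChainTensorProd} together with the symmetric description in Remark~\ref{rem:totalcomsym} gives
\[
\B_b^*\tilde{\otimes}\B_a \;=\; \text{Tot}^{\Pi}\!\left\{\, \P^{(v-b)^t}\bigl(\Q^{(v)}\otimes\B_a\bigr)[b-v],\ \1\otimes\d_a,\ \d_b^*\otimes\1_{\B_a} \,\right\}_{v\geq(0,b)},
\]
where for fixed $v$ the $v$-th row is the one-term complex $\P^{(v-b)^t}\Q^{(v)}$ tensored on the left with $\B_a$ and shifted by $[b-v]$, and the between-row differential carries the usual Koszul sign $(-1)^{b-v}$. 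Since $v\geq\max(0,b)$, the row index $i=b-v$ ranges over a set bounded above.

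Next I would reduce each row. Tensoring the second homotopy equivalence of Lemma~\ref{lem:QC-reductions} (for $\Q^{(v)}\B_a$) on the left by $\P^{(v-b)^t}$ — which preserves homotopy equivalences, $\P^{(v-b)^t}$ being a one-term complex — and absorbing the shift $[b-v]$, presents the $v$-th row as $\text{Cone}(\mathsf{D}_v)$ with $\mathsf{D}_v$ and $\d_v$ exactly as displayed in the statement: the $R$-summand $\P^{(v-b)^t}\P^{(R)}\Q^{(v+1,1^{R-a-1})}$ sits in homological degree $R-a-1+(b-v)$, and the one-term target $\mathsf{1}_{[0,a]}^{v}\P^{(v-b)^t}\P^{(a-v)}$ in degree $b-v$. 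Because the row index is bounded above, Proposition~\ref{prop:SimultSimp}(ii) lets all of these reductions be carried out simultaneously inside $\text{Tot}^{\Pi}$, yielding $\B_b^*\tilde{\otimes}\B_a \simeq \text{Tot}^{\Pi}\{\text{Cone}(\mathsf{D}_v),\ \d_v,\ \d^v\}_{v\geq(0,b)}$.

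The final and most delicate step, exactly as in Theorem~\ref{thm:CC*directsum}, is to pin down the surviving between-row differential $\d^v$, i.e.\ to show the simultaneous Gaussian eliminations of Lemma~\ref{lem:QC-reductions} create no new ``zigzag'' arrows. Any such arrow would have to begin at a surviving summand $\P^{(v-b)^t}\P^{(R)}\Q^{(v+1,1^{R-a-1})}$ with $R>\max(0,a)$; embedding it into $\P^{(v-b)^t}\Q^{(v)}\P^{(R)}\Q^{(R-a)^t}$, applying $\d_b^*\otimes\1$ and the splitting and merging isomorphisms of Propositions~\ref{prop:QPswap} and~\ref{prop:PP*merge}, I would check — via Proposition~\ref{lem:q-reduction} and relations~\eqref{eq:whiteblackbox} and~\eqref{eq:sym-anti-absorbsion} — that its image admits no isomorphic preimage along the reductions, so that the relevant composite $\rho\circ(\d_b^*\otimes\1)\circ\iota$ vanishes. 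Consequently $\d^v$ is just $\d_b^*\otimes\1_{\B_a}$ conjugated by the splitting and merging isomorphisms and restricted to the surviving terms, and a short diagrammatic computation with isotopy invariance then rewrites it in the closed form in the statement. I expect this bookkeeping — tracking precisely which summands survive and the induced signs — to be the main obstacle, just as it is in the companion Theorem~\ref{thm:CC*directsum}.
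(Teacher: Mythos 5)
Your proposal matches the paper's own proof, which is in fact just a short paragraph saying the argument is identical to Theorem~\ref{thm:CC*directsum} with the roles of tensor product and completed tensor product swapped, and Proposition~\ref{prop:SimultSimp}(ii) applied because the row index $v\geq\max(0,b)$ makes the bi-complex bounded above in homological degree. The extra detail you supply — presenting $\B_b^*\tilde{\otimes}\B_a$ as $\text{Tot}^{\Pi}$ of rows $\P^{(v-b)^t}\Q^{(v)}\B_a[b-v]$, applying the $\Q^{(n)}\B_a$ half of Lemma~\ref{lem:QC-reductions} tensored by a one-term complex, and verifying via the zigzag analysis that the simultaneous Gaussian eliminations create no new differentials — is precisely the content the paper compresses into the phrase ``identical argument with minor variations.''
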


\begin{proof}
As before, we write $\B_{b}^* \tilde{\otimes} \B_{a} = \text{Tot}^{\Pi} \left\lbrace \P^{(v-b)^t}\Q^{(v)}\B_{a},(-1)^{b-v} \1_v \otimes \mathsf{d}_{a}, \mathsf{d}^*_{b}\otimes \1 \right\rbrace_{v\geq(0,b)}$. The result follows from an identical argument to Theorem \ref{thm:CC*directsum} with the following minor variations. Since $\P^{(v-b)^t}\Q^{(v)}$ sits in homological degree $b-v$ an increase in $v$ corresponds to a decrease in the homological degree of the term. Consequently, $v$ is bounded below by max$(0,b)$ and hence the bi-complex is bounded above. Since we are dealing with the \emph{completed} tensor product, by Proposition \ref{prop:SimultSimp} we can simultaneously apply Lemma \ref{lem:QC-reductions} to each subcomplex $\P^{(v-b)^t}\Q^{(v)}\B_{a}$ and arrive at the desired homotopy equivalence. 
\end{proof}

The previous result would not hold if we swapped the completed tensor product with the usual tensor product. This is because the usual total complex of this bi-complex would not be well defined after performing the simultaneous reductions on each subcomplex. 

\begin{remark}
Via identical arguments, the dual statements for Lemma \ref{lem:QC-reductions} and Theorems \ref{thm:C*Cdirectprod} and \ref{thm:CC*directsum} also hold. 
\end{remark}

Now, as in Remark \ref{rem:locallyfinite}, when considering tensor products of complexes in $\K^-(\H)$ and $\K^+(\H)$, the bi-complexes that arise are generally not $\K(\H)$-locally finite. To remedy this we shift our paradigm from $\K(\H)$ to $\K(\H_n)$ for $n \in \N$ arbitrary. Recall that $\H_n$ is equivalent to $\H$ quotiented by the maximal ideal generated by $\mathsf{R}\Q^{\otimes n}$ for $\mathsf{R} \in \H$. This additional condition has two immediate effects:
\begin{enumerate}
\item The chain complexes $\B_a$ and $\B_b^*$ become finite over $\K(\H_n)$ and are consequently contained in $\K^b(\H)$. 
\item The bi-complexes are bounded and thus homologically locally finite so Proposition \ref{prop:locallyfinite} applies and the usual tensor product and completed tensor product are the same. 
\end{enumerate}
Thus, we have the following theorem. 

\begin{theorem}\label{thm:CC*=C*C}
For any $n \in \N$, the following homotopy equivalences hold in $\K(\H_n)$. 
\[ \B_{a+1}\otimes\B_{b+1}^* \simeq \begin{cases}
\B_{b}^*\otimes\B_{a}[-1] &a<b \\
\B_{b}^*\otimes\B_{a}[+1]&a>b.
\end{cases}\]
\end{theorem}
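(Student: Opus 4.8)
The plan is to carry out the whole argument inside $\K(\H_n)$ and to compare the two sides through the explicit bicomplex descriptions of Theorems \ref{thm:CC*directsum} and \ref{thm:C*Cdirectprod}. First I would record what passing to the quotient $\H_n$ buys us: by integrability of the $\H$-action (so $\Q^{\otimes N}$ acts by $0$ for $N$ large) together with the fact that $\H_n$ kills everything factoring through $\mathsf{R}\Q^{\otimes n}$, both $\B_a$ and $\B_b^*$ lie in $\K^b(\H)$, and hence every bicomplex obtained by tensoring them has only finitely many nonzero entries. Such bicomplexes are therefore homologically locally finite, so by Proposition \ref{prop:locallyfinite} their total and completed total complexes agree; in particular $\B_b^*\otimes\B_a\simeq\B_b^*\tilde\otimes\B_a$, and every Gaussian elimination and simultaneous simplification used below is legitimate with no convergence issue. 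Applying the quotient functor $\K(\H)\to\K(\H_n)$ to Theorems \ref{thm:CC*directsum} and \ref{thm:C*Cdirectprod} then presents $\B_{a+1}\otimes\B_{b+1}^*$ and $\B_b^*\otimes\B_a$ as (now bounded) total complexes of two explicit bicomplexes whose rows are the mapping cones $\mathcal{A}^y\simeq\mathrm{Cone}(\mathsf{D}_y)$ and $\mathcal{A}^v\simeq\mathrm{Cone}(\mathsf{D}_v)$ of those theorems.

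Next I would reduce and compare. In each $\mathcal{A}^y\simeq\mathrm{Cone}(\mathsf{D}_y)$ the source of $\mathsf{D}_y$ is a single object, and $\mathsf{D}_y$ (being assembled from the $\rho,\iota$ of Proposition \ref{prop:PP*merge}) is an isomorphism onto one summand of its target complex, so Gaussian elimination (Lemma \ref{lem:gaussian-elimination}) deletes the correction term together with that summand, at the cost of new ``zigzag'' differentials between rows — exactly the mechanism of the proofs of Theorems \ref{thm:BBa<b} and \ref{thm:BernsteinOps1}. The same applies to each $\mathcal{A}^v\simeq\mathrm{Cone}(\mathsf{D}_v)$. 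After these reductions, and after using $\P^{(n)}\P^{(m)^t}\cong\P^{(m)^t}\P^{(n)}$ (Proposition \ref{prop:PP*merge}) to put every object in the uniform shape $\P^{(\ast)^t}\P^{(\ast)}\Q^{(\ast,1^\ast)}[\ast]$, both $\B_{a+1}\otimes\B_{b+1}^*$ and $\B_b^*\otimes\B_a$ become bounded complexes of such objects. I would then exhibit the reindexing $R'=y+a+1$, $v=R+b$ matching the surviving objects of the two complexes, check that it is a bijection of the indexing sets, and verify that it carries the (reduced) differentials to one another: the defining string diagrams for the $\mathsf{d}_\bullet$, $\mathsf{d}^\bullet$ and for the zigzag maps literally coincide after reading the pictures the right way up, modulo the signs already recorded in the theorems. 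This identification shifts homological degree by a fixed amount, giving a homotopy equivalence $\B_{a+1}\otimes\B_{b+1}^*\simeq\B_b^*\otimes\B_a[\pm 1]$.

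The main obstacle is pinning down that the shift is $[-1]$ exactly when $a<b$ and $[+1]$ exactly when $a>b$. This is the same dichotomy as in Theorem \ref{thm:BernsteinOps1}: the correction terms are nonempty only for $b\le -1$ on one side and only for $a\ge 0$ on the other, so which summands survive the Gaussian eliminations — and how the induced zigzag arrows point — depends on the ordering of $a$ and $b$, and in the end only $\mathrm{sgn}(a-b)$ appears. A clean way to halve the work: since $\B_a^*\dashv\B_a$ (biadjointness of $\B_a,\B_a^*$, which descends to $\K(\H_n)$), the $180^\circ$-rotation duality on diagrams induces a homotopy-equivalence-preserving contravariant functor $(-)^\vee$ on $\K(\H_n)$ with $(\B_c)^\vee\simeq\B_c^*$, $(\mathsf{X}\otimes\mathsf{Y})^\vee\simeq\mathsf{Y}^\vee\otimes\mathsf{X}^\vee$ and $(\mathsf{X}[s])^\vee\simeq\mathsf{X}^\vee[-s]$; applying it to the relation for $a<b$ and relabeling $(a,b)\mapsto(b,a)$ yields precisely the relation for $a>b$. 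So it suffices to establish the comparison above in the single case $a<b$ and to verify there that the shift is $[-1]$, which is the one genuinely computational point and the place where the combinatorics of the surviving terms must be tracked carefully.
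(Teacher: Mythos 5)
Your overall skeleton is the paper's: pass to $\K(\H_n)$ so that $\B_a$ and $\B_b^*$ become bounded, invoke Proposition \ref{prop:locallyfinite} to identify $\mathrm{Tot}^\oplus$ with $\mathrm{Tot}^{\Pi}$, and compare the bicomplex presentations of Theorems \ref{thm:CC*directsum} and \ref{thm:C*Cdirectprod} under the reindexing $R'\mapsto y+a+1$, $v-b\mapsto R$ (together with $\P\P^t\simeq\P^t\P$ and Remark \ref{rem:totalcomsym}). But the step you use to dispose of the cone correction terms is wrong. You claim that in each row $\mathcal{A}^y\simeq\mathrm{Cone}(\mathsf{D}_y)$ the map $\mathsf{D}_y$ is an isomorphism onto a summand of its target, so that Gaussian elimination (Lemma \ref{lem:gaussian-elimination}) removes the correction term. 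It is not: $\mathsf{D}_y$ is a cup-type morphism $\P^{(y+a+1)}\P^{(-b-y-1)^t}\to\P^{(y+a+1)}\P^{(R)^t}\Q^{(R+b+1,1^y)}$, and its source is not a direct summand of any chain group of the target (objects of the form $\P^{\lambda}\Q^{\mu}$ do not decompose the way $\Q\P$ does), so the cone is genuinely non-split and there is no Gaussian elimination to perform. Moreover, if such an elimination were available it would apply verbatim at $a=b$ and would yield $\B_{a+1}\otimes\B_{a+1}^*\simeq\B_a^*\otimes\B_a[\pm1]$, contradicting Theorem \ref{thm:CC*distingishedTriangle}, where the unit $\1$ survives and the two sides are related only by non-split distinguished triangles. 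The actual mechanism, which your argument never isolates, is that for $a<b$ the correction terms vanish identically: the indicator $\mathsf{1}^y_{[0,-b-1]}$ together with the requirement $y+a+1\geq 0$ forces $-b-1\geq y\geq -a-1$, hence $a\geq b$, and similarly $a\geq v\geq b$ on the other side; so both cones degenerate to their target (resp.\ source) complexes, the two bicomplexes coincide on the nose after reindexing, and the shift $[-1]$ is read off directly from the homological degrees $[y-R]$ versus $[y+1-R]$ rather than from any tracking of zigzag arrows.

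The duality shortcut you propose for the case $a>b$ also needs justification you do not supply: the $180^\circ$ rotation sends $\P\mapsto\Q$ and reverses tensor order, so it carries morphisms factoring through $\mathsf{M}\otimes\Q^{\otimes n}$ to morphisms factoring through $\P^{\otimes n}\otimes\mathsf{M}^\vee$, which are not obviously killed in $\H_n$; hence it is unclear that $(-)^\vee$ descends to $\K(\H_n)$ at all. The paper avoids this by rerunning the dual construction (the dual versions of Lemma \ref{lem:QC-reductions} and Theorems \ref{thm:CC*directsum}, \ref{thm:C*Cdirectprod}) inside $\K(\H_n)$, where the analogous vanishing of correction terms now occurs exactly when $a>b$ and produces the shift $[+1]$. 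To repair your proposal, replace the Gaussian-elimination step by the vanishing argument for the correction terms and treat the case $a>b$ via the dual presentations rather than via an unverified duality functor.
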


\begin{proof}
Suppose $a<b$. Then by Theorem \ref{thm:CC*directsum} and \ref{thm:C*Cdirectprod}:
\begin{align*} 
\mathcal{A}^y&= \mathsf{1}^y_{[0,-b-1]}\P^{(y+a+1)}\P^{(-b-y-1)^t}[b+y+1] \xrightarrow{\mathsf{D_y}} \left\lbrace\bigoplus_{R\geq(-b,0)} \P^{(y+a+1)}\P^{(R)^t}\Q^{(R+b+1,1^y)}[y-R],\mathsf{d}_y
\right\rbrace \\
\mathcal{A}^v&=\left\lbrace  \bigoplus_{R\geq(0,a+1)}\P^{(v-b)^t}\P^{(R)}\Q^{(v+1,1^{R-a-1})}[R-a+b-v],
\mathsf{d}_v
\right\rbrace \xrightarrow{\mathsf{D}_v} \mathsf{1}_{[0,a]}^{v} \P^{(v-b)^t} \P^{(a-v)}[b-v]
\end{align*}

However, $\P^{(y+a+1)}\P^{(-b-y-1)}[b+y+1]$ and $\P^{(v-b)^t}\P^{(a-v)}[b-v]$ are nonzero if and only if $-b-1\geq y \geq -a-1$ and $a\geq v \geq b$, which implies $a\geq b$, but $b>a$ so neither of these terms can exist. Thus, 
\begin{align*}
&\B_{a+1} \otimes \B_{b+1}^* \simeq \text{Tot}^{\oplus} \left\lbrace\bigoplus_{R\geq(-b,0)} \P^{(y+a+1)}\P^{(R)^t}\Q^{(R+b+1,1^y)}[y-R],\mathsf{d}_y, \mathsf{d}^y
\right\rbrace_{y\geq (-a-1,0)} \\
&\B_{b}^* \tilde{\otimes} \B_{a} \simeq \text{Tot}^{\Pi}\left\lbrace  \bigoplus_{R'\geq(0,a+1)}\P^{(v-b)^t}\P^{(R')}\Q^{(v+1,1^{R'-a-1})}[R'-a+b-v],
\mathsf{d}_v, \mathsf{d}^v
\right\rbrace_{v\geq (b,0)}
\end{align*}
where the differentials are given by
$ \mathsf{d}_y=\hackcenter{\begin{tikzpicture} [yscale=-1.2,scale=.25]
\draw[line width=0.7mm,decoration={markings,mark=at position 1 with {\arrow[line width=.3mm]{>}}},
    postaction={decorate},shorten >=0.6pt] (4.5,2) to (4.5,8.5);
\draw[line width=0.7mm,decoration={markings,mark=at position .03 with {\arrow[line width=.3mm]{<}}},
    postaction={decorate},shorten >=0.6pt] (-1.5,2) to (-1.5, 8.5);
\draw[line width=0.7mm,decoration={markings,mark=at position .03 with {\arrow[line width=.3mm]{<}}},
    postaction={decorate},shorten >=0.6pt] (-4.5,2) to (-4.5, 8.5);
\draw[line width=0.7mm,decoration={markings,mark=at position 1 with {\arrow[line width=.3mm]{>}}},
    postaction={decorate},shorten >=0.6pt] (1.5,2) to (1.5, 8.5);
\draw[directed=.8] (4.2,5.5)..controls ++(0,1) and ++(0,1)..(-.8, 5.5);
\draw[fill =gray!60] (-2.7,4.5) rectangle (-0.4,5.5);
\draw[fill =white] (-5.7,4.5) rectangle (-3.4,5.5);
\draw[fill =gray!20] (0.4,6.7) rectangle (5.6,7.7);
\draw[fill =white] (5.6,4.5) rectangle (3.3,5.5);
\draw[fill =gray!20] (0.4,4) rectangle (5.6,3);
\end{tikzpicture}} \;$, $\; \mathsf{d}^y = (-1)^R \hackcenter{\begin{tikzpicture} [yscale=1.2,xscale=-1,scale=.25]
\draw[line width=0.7mm,decoration={markings,mark=at position 1 with {\arrow[line width=.3mm]{>}}},
    postaction={decorate},shorten >=0.6pt] (4.5,2) to (4.5,8.5);
\draw[line width=0.7mm,decoration={markings,mark=at position .03 with {\arrow[line width=.3mm]{<}}},
    postaction={decorate},shorten >=0.6pt] (-1.5,2) to (-1.5, 8.5);
\draw[line width=0.7mm,decoration={markings,mark=at position .03 with {\arrow[line width=.3mm]{<}}},
    postaction={decorate},shorten >=0.6pt] (-4.5,2) to (-4.5, 8.5);
\draw[line width=0.7mm,decoration={markings,mark=at position 1 with {\arrow[line width=.3mm]{>}}},
    postaction={decorate},shorten >=0.6pt] (1.5,2) to (1.5, 8.5);
\draw[directed=.8] (4.2,5.5)..controls ++(0,1) and ++(0,1)..(-.8, 5.5);
\draw[fill =gray!60] (-2.7,4.5) rectangle (-0.4,5.5);
\draw[fill =gray!60] (2.7,4.5) rectangle (0.4,5.5);
\draw[fill =gray!20] (-0.4,6.7) rectangle (-5.6,7.7);
\draw[fill =white] (5.6,4.5) rectangle (3.3,5.5);
\draw[fill =gray!20] (-0.4,4) rectangle (-5.6,3);
\end{tikzpicture}}$,

$\mathsf{d}_v=(-1)^{b-v}\hackcenter{\begin{tikzpicture} [yscale=1.2,xscale=1,scale=.25]
\draw[line width=0.7mm,decoration={markings,mark=at position .03 with {\arrow[line width=.3mm]{<}}},
    postaction={decorate},shorten >=0.6pt] (4.5,2) to (4.5,8.5);
\draw[line width=0.7mm,decoration={markings,mark=at position 1 with {\arrow[line width=.3mm]{>}}},
    postaction={decorate},shorten >=0.6pt] (-1.5,2) to (-1.5, 8.5);
\draw[line width=0.7mm,decoration={markings,mark=at position 1 with {\arrow[line width=.3mm]{>}}},
    postaction={decorate},shorten >=0.6pt] (-4.5,2) to (-4.5, 8.5);
\draw[line width=0.7mm,decoration={markings,mark=at position .03 with {\arrow[line width=.3mm]{<}}},
    postaction={decorate},shorten >=0.6pt] (1.5,2) to (1.5, 8.5);
\draw[directed=.8] (-.8,5.5)..controls ++(0,1) and ++(0,1)..(4.2, 5.5);
\draw[fill =white] (-2.7,4.5) rectangle (-0.4,5.5);
\draw[fill =gray!60] (-5.7,4.5) rectangle (-3.4,5.5);
\draw[fill =gray!20] (0.4,6.7) rectangle (5.6,7.7);
\draw[fill =gray!60] (5.6,4.5) rectangle (3.3,5.5);
\draw[fill =gray!20] (0.4,4) rectangle (5.6,3);
\end{tikzpicture}}\;$,  and  $\;\mathsf{d}^v= \hackcenter{\begin{tikzpicture} [yscale=-1.2,xscale=-1,scale=.25]
\draw[line width=0.7mm,decoration={markings,mark=at position .03 with {\arrow[line width=.3mm]{<}}},
    postaction={decorate},shorten >=0.6pt] (4.5,2) to (4.5,8.5);
\draw[line width=0.7mm,decoration={markings,mark=at position 1 with {\arrow[line width=.3mm]{>}}},
    postaction={decorate},shorten >=0.6pt] (-1.5,2) to (-1.5, 8.5);
\draw[line width=0.7mm,decoration={markings,mark=at position 1 with {\arrow[line width=.3mm]{>}}},
    postaction={decorate},shorten >=0.6pt] (-4.5,2) to (-4.5, 8.5);
\draw[line width=0.7mm,decoration={markings,mark=at position .03 with {\arrow[line width=.3mm]{<}}},
    postaction={decorate},shorten >=0.6pt] (1.5,2) to (1.5, 8.5);
\draw[directed=.8] (-.8,5.5)..controls ++(0,1) and ++(0,1)..(4.2, 5.5);
\draw[fill =white] (-2.7,4.5) rectangle (-0.4,5.5);
\draw[fill =white] (2.7,4.5) rectangle (0.4,5.5);
\draw[fill =gray!20] (-0.4,6.7) rectangle (-5.6,7.7);
\draw[fill =gray!60] (5.6,4.5) rectangle (3.3,5.5);
\draw[fill =gray!20] (-0.4,4) rectangle (-5.6,3);
\end{tikzpicture}}
$.

By Remark \ref{rem:totalcomsym} we can rewrite $\B_{b}^* \tilde{\otimes}\B_{a}$ in term of its rows instead of its columns, so that:
\[\B_{b}^* \tilde{\otimes} \B_{a}\simeq \text{Tot}^{\Pi}\left\lbrace  \bigoplus_{v\geq (b,0)}\P^{(v-b)^t}\P^{(R')}\Q^{(v+1,1^{R'-a-1})}[R'-a+b-v],
\mathsf{d}_{R'}, \mathsf{d}^{R'}
\right\rbrace_{R'\geq (0,a+1)}\]
Where $\mathsf{d}_{R'} = \mathsf{d}^{v}$ and $\mathsf{d}^{R'} = \mathsf{d}_v$. So then, if we send $R'\to y+a+1$ and $v-b \to R$ in $\B_{b}^*\tilde{\otimes}\B_{a}$ and recall that $\P^t\P \simeq \P\P^t$,  we have:
\[\B_{b}^* \tilde{\otimes} \B_{a}\simeq \text{Tot}^{\Pi}\left\lbrace  \bigoplus_{R\geq (0,-b)}\P^{(y+a+1)}\P^{(R)^t}\Q^{(R+b+1,1^{y})}[y+1-R],
\mathsf{d}_{y}, \mathsf{d}^{y}
\right\rbrace_{y\geq (0,-a-1)}
\]
where the differentials are exactly those obtained for $\B_{a+1}\otimes \B_{b+1}^*$. 

In particular, $\B_{a+1}\otimes \B_{b+1}^*$ and $\B_b^* \tilde{\otimes}\B_a$ correspond to the total complex and the completed total complex of the same bi-complex over $\K(\H_n)$. Thus Proposition \ref{prop:locallyfinite} applies and the result follows. 

\medskip

If $a>b$ then through an entirely dual construction we derive the following result. 
\begin{align*}
&\B_{b}^* {\otimes} \B_{a}\simeq \text{Tot}^{\oplus}\left\lbrace \bigoplus_{R'\geq (0,b+1)} \P^{(x+1,1^{R'-b-1})}\Q^{(R')}\Q^{(x-a)^t}[b-R'+x-a], \mathsf{d}_x, \mathsf{d}^x
\right\rbrace_{x\geq(0,a)}\\
&\B_{a+1}\tilde{\otimes}\B_{b+1}^* \simeq \text{Tot}^{\Pi}\left\lbrace \bigoplus_{R\geq (-a,0)}\P^{(R+a+1,1^w)}\Q^{(R)^t}\Q^{(w+b+1)}[R-w]
, \mathsf{d}_w,\mathsf{d}^w
 \right\rbrace_{w\geq(0,-b-1)}
 \end{align*}
Once again, these bi-complexes correspond to the usual and the completed total complex of the same bi-complex. Since we are working over $\K^b(\H)$ then $\B_{a+1}\tilde{\otimes}\B_{b+1}^* = \B_{a+1}{\otimes}\B_{b+1}^*$. Thus, we obtain $\B_{a+1}{\otimes}\B_{b+1}^* \simeq \B_{b}^* {\otimes} \B_{a}[+1]$.
\end{proof}

\begin{theorem}\label{thm:CC*distingishedTriangle}
 For any $n \in \N$, then over $\K(\H_n)$ we have:
\begin{equation}
\B_a^* \otimes \B_a \simeq \emph{Cone}(\B_{a+1}\otimes \B_{a+1}^* \to \1) \qquad \text{and} \qquad \B_{a+1} \otimes \B_{a+1}^*[1] \simeq \emph{Cone}(\1 \to \B^*_{a}\otimes \B_{a})
\end{equation}
\begin{equation}
\B_{a+1}\otimes \B_{a+1}^* \simeq \emph{Cone}(\B_{a}^*\otimes \B_{a} \to \1) \qquad \text{and} \qquad \B_{a}^* \otimes \B_{a}[1] \simeq \emph{Cone}(\1 \to \B_{a+1}\otimes \B_{a+1}^*).
\end{equation}
Thus, $\B_{a+1}\otimes\B_{a+1}^* \rightarrow \1 \rightarrow \B_{a}^*\otimes\B_{a}$ and $\B_{a}^*\otimes\B_{a} \rightarrow \1 \rightarrow \B_{a+1}\otimes\B_{a+1}^*$ are distinguished triangles in $\K(\H_n)$.
\end{theorem}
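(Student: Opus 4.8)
The plan is to extract the two distinguished triangles from the cone descriptions asserted in the four displayed equivalences, using Theorems~\ref{thm:CC*directsum} and \ref{thm:C*Cdirectprod} (together with their dual statements) to identify the relevant chain maps, and then invoking the fact that $\K(\H_n)$ is triangulated (Section~\ref{sec:HomAlgebra}). First I would separate into the two cases $a \geq 0$ and $a < 0$, mirroring the dichotomy already present in Theorem~\ref{thm:CatBernstein2}. In both cases I will show that there is a chain map in one of the two directions between $\B_{a+1}\otimes\B_{a+1}^*$ and $\B_a^*\otimes\B_a$, and that $\1$ appears as the third vertex of the triangle generated by that map.

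The heart of the argument is Theorem~\ref{thm:CC*directsum}, which writes $\B_{a+1}\otimes\B_{a+1}^* \simeq \mathrm{Tot}^\oplus\{\mathcal{A}^y,\mathsf{d}_y,\mathsf{d}^y\}$ where each $\mathcal{A}^y \simeq \mathrm{Cone}(\mathsf{D}_y)$, and the single-object source of $\mathsf{D}_y$ is $\mathsf{1}^y_{[0,-b-1]}\P^{(y+a+1)}\P^{(-b-y-1)^t}[b+y+1]$; for $b = a$ this source is nonzero precisely for $0 \leq y \leq -a-1$, i.e. only when $a < 0$. When $a \geq 0$, the source vanishes for all $y$, so $\mathcal{A}^y$ is honestly just the $\mathsf{d}_y$-complex of $\P$-$\Q$ terms, whereas for $a < 0$ these extra one-object summands assemble — after running the differentials $\mathsf{d}^y$ along the rows — into a subcomplex homotopy equivalent to $\1$. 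I would make this precise by the following steps: (1) by Lemma~\ref{lem:QC-reductions}, identify the image of the $\mathsf{D}_y$-sources under the row differentials $\mathsf{d}^y$ and recognize, via repeated Gaussian elimination (Lemma~\ref{lem:gaussian-elimination}) across homological degrees, that this sub-bi-complex collapses to $\P^{(a+1)}\P^{(-a-1)^t}$ or its analogue, which via the absorption relations \eqref{eq:sym-anti-absorbsion}–\eqref{eq:whiteblackbox} and the identity $\P^{(0)}=\P^{(0)^t}=\1$ reduces to $\1$ up to a shift; (2) the complementary part of the total complex, consisting of the $\mathsf{d}_y$-subcomplexes, is by Theorems~\ref{thm:CC*directsum} and \ref{thm:C*Cdirectprod} (and the locally-finite comparison of $\mathrm{Tot}^\oplus$ and $\mathrm{Tot}^\Pi$ over $\K(\H_n)$ that drove Theorem~\ref{thm:CC*=C*C}) precisely a shift of $\B_a^*\otimes\B_a$; (3) assemble (1) and (2) into a mapping-cone statement, reading off the direction of the chain map from the sign and degree of the connecting differential $\mathsf{d}^y$.

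Concretely, for $a \geq 0$ I expect to get $\B_a^*\otimes\B_a \simeq \mathrm{Cone}(\B_{a+1}\otimes\B_{a+1}^* \to \1)$ and hence, rotating the triangle, $\B_{a+1}\otimes\B_{a+1}^* \to \1 \to \B_a^*\otimes\B_a$ is distinguished; the dual/shifted forms $\B_{a+1}\otimes\B_{a+1}^*[1]\simeq\mathrm{Cone}(\1\to\B_a^*\otimes\B_a)$ etc. follow by the standard rotation axioms for triangulated categories. For $a < 0$ the roles of the two cone presentations swap — this is exactly where the $\mathsf{1}^y_{[0,-b-1]}$ characteristic function is nonzero and contributes the copy of $\1$ to $\B_{a+1}\otimes\B_{a+1}^*$ rather than to $\B_a^*\otimes\B_a$ — giving $\B_{a+1}\otimes\B_{a+1}^*\simeq\mathrm{Cone}(\B_a^*\otimes\B_a\to\1)$ and the second triangle. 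Finally, since a distinguished triangle $X\to Y\to Z$ in any triangulated category yields the rotated triangle $Z[-1]\to X\to Y$ and $Y\to Z\to X[1]$, both asserted triangles follow from either one of them, so it suffices to establish a single cone identification in each of the two cases.

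The main obstacle I anticipate is step (1): carefully tracking the simultaneous Gaussian eliminations along the rows of the bi-complex when the $\mathsf{D}_y$-sources are present, and verifying that the surviving subcomplex is genuinely contractible onto a single copy of $\1$ with the correct shift and sign — in particular that the zigzag compositions of $\mathsf{d}^y$ and $(\mathsf{d}_y)^{-1}$ (analogous to \eqref{eq:djRED} in the proof of Theorem~\ref{thm:BBa<b}) terminate and do not generate spurious arrows, a point that the proof of Theorem~\ref{thm:CC*directsum} already partially addresses by showing no new arrows arise under the reductions of Lemma~\ref{lem:QC-reductions}. The bookkeeping of which homological degree $\1$ lands in (it should be degree $0$, matching $\1\in\H_n$ sitting in degree $0$) is where the $[b+y+1]$, $[b-v]$, and $[y-R]$ shifts all have to conspire correctly, and checking this is the one genuinely computational part of the argument. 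Everything else is formal manipulation in a triangulated category once the two cone presentations are in hand.
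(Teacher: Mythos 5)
Your proposal is correct and follows essentially the same route as the paper: the same case split on the sign of $a$, the same cone presentations of $\B_{a+1}\otimes\B_{a+1}^*$ and $\B_a^*\otimes\B_a$ from Theorems \ref{thm:CC*directsum} and \ref{thm:C*Cdirectprod}, the identification of the complementary terms with the other product (up to shift) via the re-indexing already carried out in the proof of Theorem \ref{thm:CC*=C*C}, and the formal triangulated-category conclusions. One simplification you can make in the $a<0$ case: no Gaussian elimination is needed to manufacture the copy of $\1$, since within the range $0\leq y\leq -a-1$ the only nonzero $\mathsf{D}_y$-source is the single term at $y=-a-1$, which is already $\P^{(0)}\P^{(0)^t}=\1$ in homological degree $0$ and is the source of exactly one arrow (dually, for $a\geq 0$ the unique identity term $\1[0]$ sits in $\B_a^*\otimes\B_a$ at $v=a$ as the target of exactly one arrow), so the cone identification is immediate once the remaining terms are matched.
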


\begin{proof}
Since we are working over $\K(\H_n)$ it follows $\B_a$ and $\B_b^*$ are finite and thus all tensor products and completed tensor products agree. 

Suppose $a\geq 0$. Then by Theorems  \ref{thm:CC*directsum} and \ref{thm:C*Cdirectprod} we have $
\B_{a+1}\otimes \B_{a+1}^*\simeq \text{Tot}^{\oplus} \left\lbrace \text{Cone}(\mathsf{D}_y), \begin{pmatrix}
\d_y&0\\\mathsf{D_y}&\d_y
\end{pmatrix},\d^y\right\rbrace_{y\geq 0}$ and $
\B_{a}^*\otimes \B_{a}\simeq \text{Tot}^{\oplus} \left\lbrace \text{Cone}(\mathsf{D}_v), \begin{pmatrix}
\d_v&0\\\mathsf{D_v}&\d_v
\end{pmatrix},\d^v\right\rbrace_{v\geq a}
.$ In particular, 
\begin{align*}
\text{Cone}(\mathsf{D}_y) &= \left\lbrace\bigoplus_{R\geq0} \P^{(y+a+1)}\P^{(R')^t}\Q^{(R'+a+1,1^y)}[y-R'],\mathsf{d}_y
\right\rbrace \\
\text{Cone}(\mathsf{D}_v)&=
\left\lbrace  \bigoplus_{R\geq a+1}\P^{(v-a)^t}\P^{(R)}\Q^{(v+1,1^{R-a-1})}[R-v],
\mathsf{d}_v
\right\rbrace \xrightarrow{\mathsf{D}_v} \delta_{v,a}\1[0]
\end{align*}
Moreover, $\mathsf{D}_v$ is zero unless $v=a$ and $R=a+1$. Thus the bi-complex for $\B_a^*\otimes\B_a$ has only one arrow going into $\1[0]$ and no arrows coming out of it. Consequently, we can write:
\[\B_a^* \otimes \B_a \simeq \text{Tot}^{\oplus} \left\lbrace \bigoplus_{R\geq a+1} \P^{(v-a)^t}\P^{(R)}\Q^{(v+1,1^{R-a-1})}[R-v],\d_v,\d^v\right\rbrace_{v\geq a} \xrightarrow{\mathsf{D}_v} \;\; \delta_{v,a} \1[0]\]
But then, re-indexing so that $R'\mapsto v-a$ and $y+a+1\mapsto R$ and comparing the differentials we immediately obtain that:
\[
\B_a^*\otimes\B_a \simeq \B_{a+1}\otimes \B_{a+1}^*[1] \xrightarrow{\mathsf{D}_v} \delta_{v,a} \1[0]\]
Likewise, if $a<0$ we find that Cone$(\mathsf{D}_v)$ has no identity terms whereas Cone$(\mathsf{D}_y)$ has a term equal to $\1$ that is the source of one arrow and the target of none. By an identical argument we find that $\B_{a+1}^*\otimes\B_{a+1}[1] \simeq \1 \xrightarrow{\mathsf{D}_y} \B_{a}\otimes\B_a^*\simeq$ Cone$(\mathsf{D}_y)$. Thus, there exists a distinguished triangle
\[\cdots \to \B_{a+1}\otimes\B_{a+1}^* \to \1 \to \B_a^*\otimes \B_a \to \B_{a+1}\otimes \B_{a+1}^* \to \cdots\]
The remaining statements all follow by dual arguments. 
\end{proof}


\section{Fock Space Idempotents}\label{sec:SigmaComplexes}

Cautis, Licata, and Sussan studied certain complexes $\Sigma_i$ in the Heisenberg category from \cite{CL-Heis} and showed they induced categorical braid group actions \cite{CLS-BraidAction}. Motivated by this, Cautis and Sussan introduced analogous chain complexes  $\Sigma^-, \Sigma^+ \in \K(\H)$  and made several conjectures about their properties \cite{CS-BosonFermion}. In this section we prove these conjectures. 

Define the following biadjoint chain complexes in $\K(\H)$:
\begin{equation}\label{eq:sigma-minus}
\Sigma^-:= \left\lbrace \bigoplus_{\substack{\lambda \vdash n}} \P^{\lambda}\Q^{\lambda^t}[n], \mathsf{d}^-\right\rbrace \in \K^-(\H) \; \text{ with }\; \mathsf{d}^-:\P^{\mu(s^+)} \Q^{\mu^t(s^+)} \xrightarrow{\iota_s\otimes \iota_s} \P^{\mu}\P\Q\Q^{\mu^t} \xrightarrow{\1\otimes adj \otimes \1} \P^{\mu}\Q^{\mu^t}
\end{equation} 
\begin{equation} \label{eq:sigma-plus}
\Sigma^+:=\left\lbrace \bigoplus_{\substack{\lambda \vdash n}} \P^{\lambda}\Q^{\lambda^t}[-n], \mathsf{d}^+\right\rbrace \in \K^+(\H)\; \text{ with }\;\mathsf{d}^+:\P^{\mu} \Q^{\mu^t} \xrightarrow{\1\otimes adj \otimes \1} \P^{\mu}\P\Q\Q^{\mu^t} \xrightarrow{\rho_s\otimes \rho_s} \P^{\mu(s^+)}\Q^{\mu^t(s^+)}
\end{equation}
where $\mu$ is any partition of $n-1$, the maps $\iota_s$ and $\rho_s$ defined as in (\ref{eq:PlambdaPmerge}) and $adj$ equal to the adjunction maps given by the cap and cup morphisms. In particular, we have that $\mathsf{d}^-_n = \sum_{\mu \vdash n-1} \sum_{\mu+\square} \1_{\P^{(\mu+\square)}} \otimes \iota_\mu^{(\mu+\square)\otimes 1}$, where $\sum_{\mu+\square}$ is summing all possible $\mu(s+)$ and $\iota_\mu^{(\mu+\square)\otimes 1}$ is given by the morphisms in Proposition \ref{prop:QlambdaPswap}. 

\begin{lemma}\label{lem:sigma-null}
The chain complexes $\Sigma^-\P$ and $\Q \Sigma^-$ are contractible.  
\end{lemma}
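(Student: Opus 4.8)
The plan is to decompose $\Sigma^-\P$ (and, symmetrically, $\Q\Sigma^-$) using the Littlewood--Richardson branching isomorphisms of Section~\ref{subsec:LRBranchingIsos} and then cancel the entire complex by Gaussian elimination.

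First I would rewrite each chain group. Fix $\lambda\vdash n$. By Proposition~\ref{prop:QlambdaPswap}, $\Q^{\lambda^t}\P\cong\P\Q^{\lambda^t}\oplus\bigoplus_{\rho=\lambda^t-\square}\Q^\rho$, and by Proposition~\ref{prop:PlambdaPmerge}, $\P^\lambda\P\cong\bigoplus_{\mu=\lambda+\square}\P^\mu$. Tensoring these gives
\[
\P^\lambda\Q^{\lambda^t}\P\;\cong\;\bigoplus_{\mu=\lambda+\square}\P^\mu\Q^{\lambda^t}\;\oplus\;\bigoplus_{\rho=\lambda^t-\square}\P^\lambda\Q^\rho ,
\]
so that $(\Sigma^-\P)_n\cong A_n\oplus B_n$ with $A_n:=\bigoplus_{\lambda\vdash n}\bigoplus_{\mu=\lambda+\square}\P^\mu\Q^{\lambda^t}$ and $B_n:=\bigoplus_{\lambda\vdash n}\bigoplus_{\rho=\lambda^t-\square}\P^\lambda\Q^\rho$; the $A_n$-summands are indexed by pairs $\lambda\subset\mu$ of partitions differing by a box with $|\lambda|=n$, and the $B_n$-summands by $\lambda$ together with a removable box of $\lambda^t$.

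The key observation is that the differential induces a perfect matching among these summands in adjacent homological degrees. A summand $\P^\lambda\Q^\rho$ of $B_n$ has $\Q$-shape $\rho=\lambda^t-\square$ of size $n-1$; putting $\lambda'=\rho^t$ and $\mu'=\lambda$, one has $\mu'=\lambda'+\square$ and $\lambda'^t=\rho$, so $\P^\lambda\Q^\rho=\P^{\mu'}\Q^{\lambda'^t}$ is exactly one of the summands of $A_{n-1}$, and conversely every $A_{n-1}$-summand arises this way. Thus $B_n$ is identified summand-by-summand with $A_{n-1}$, and the plan is to show that, once $\mathsf{d}^-\otimes\1_\P$ is conjugated by the branching isomorphisms above, the resulting component $B_n\to A_{n-1}$ of the differential restricts to an \emph{isomorphism} on each matched pair. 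In the smallest case this is precisely the snake (zig-zag) relation of $\H$ together with the bubble identity \eqref{eq:heis-bub}; in general it should reduce to the snake relation, the defining identities $\rho_s\circ\iota_t=\delta_{s,t}\1$ from Propositions~\ref{prop:QlambdaPswap} and~\ref{prop:PlambdaPmerge}, the orthogonality relation \eqref{eq:PlambdaOrthogonal}, and the idempotent-absorption relations \eqref{eq:sym-anti-absorbsion}--\eqref{eq:sym-anti-cross-absorbsion}. Carrying this diagrammatic verification out for every matched pair --- i.e.\ pushing the cap appearing in $\mathsf{d}^-$ through the two branching isomorphisms --- is where the real work lies and is the main obstacle.

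Granting the isomorphism claim, every summand of $\Sigma^-\P$ lies in exactly one matched pair: the $A_n$-summands pair upward with the $B_{n+1}$-summands, equivalently the $B_n$-summands pair downward with the $A_{n-1}$-summands. Since $\Sigma^-\P\in\K^-(\H)$ is bounded below, I would realize these matched pairs as a bicomplex whose vertical differential consists of the matched isomorphisms and apply Lemma~\ref{lem:gaussian-elimination} simultaneously, which is legitimate by Proposition~\ref{prop:SimultSimp}(i); all chain groups are then cancelled and $\Sigma^-\P\simeq0$. (Equivalently, one writes down the contracting homotopy directly: send each $A$-summand to its partner $B$-summand in the next degree via the inverse of the matched isomorphism, and send each $B$-summand to zero.) The statement for $\Q\Sigma^-$ follows by the mirror argument: decompose $\Q\P^\lambda\Q^{\lambda^t}$ using the left-handed branching isomorphisms $\Q\P^\lambda\cong\P^\lambda\Q\oplus\bigoplus_{\kappa=\lambda-\square}\P^\kappa$ and $\Q\Q^{\lambda^t}\cong\bigoplus_{\nu=\lambda^t+\square}\Q^\nu$, obtain the analogous perfect matching between the families $\{\P^\kappa\Q^{\lambda^t}\}$ and $\{\P^\lambda\Q^\nu\}$, and finish by the same Gaussian elimination.
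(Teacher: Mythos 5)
Your decomposition and Gaussian-elimination strategy coincide with the paper's proof --- the paper keeps the first piece undecomposed as $\mathcal{D}_n:=\bigoplus_{\lambda\vdash n}\P^\lambda\P\Q^{\lambda^t}$ and views the $n$-th chain group of $\Sigma^-\P$ as $\mathcal{D}_n\oplus\mathcal{D}_{n-1}$, which is exactly your $A_n\oplus B_n$ after applying Proposition~\ref{prop:PlambdaPmerge} to $\P^\lambda\P$ in the first factor. The problem is the step you explicitly defer (``is where the real work lies and is the main obstacle''): showing that the matched block of the conjugated differential is invertible. That verification is the entire content of the lemma, so what you have is a plan rather than a proof.

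Your list of expected ingredients for that verification is also not quite right. The relation that does the work is neither the zig-zag isotopy nor the bubble identity \eqref{eq:heis-bub}, but the right twist curl relation \eqref{heis: right twist curl}. When one pushes the cap from $\mathsf{d}^-$ through the branching maps of Propositions~\ref{prop:QlambdaPswap} and~\ref{prop:PlambdaPmerge}, the composite from the summand $\P^{\mu(s^+)}\Q^{\mu^t}$ in degree $n$ to $\P^\mu\P\Q^{\mu^t}$ in degree $n-1$, refined by the index $r$ of a removable box, produces a diagram containing a right twist curl whenever $r\neq s$; relation \eqref{heis: right twist curl} annihilates those contributions, so only the $r=s$ term survives and it equals a nonzero multiple of $\iota_s\otimes\1_{\Q^{\mu^t}}$. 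This vanishing of off-diagonal components is exactly what makes the matched block well behaved. The paper then identifies the restricted differential as $\mathsf{D}_n=\sum_{\mu\vdash n-1}\iota^{\mu\otimes1}_{\mu+\square}\otimes\1_{\mu^t}$, writes down the candidate inverse $\tilde{\mathsf{D}}_n=\sum_{\mu\vdash n-1}\sum_{\mu+\square}\rho^{\mu+\square}_{\mu\otimes1}\otimes\1_{\mu^t}$, and checks both compositions using $\rho_s\circ\iota_r=\delta_{s,r}\1$ and $\sum_s\iota_s\circ\rho_s=\1$ from Proposition~\ref{prop:PlambdaPmerge}; the contracting homotopy is the two-by-two block matrix with $\tilde{\mathsf{D}}_n$ in the upper-right entry and zeros elsewhere. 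Supply that diagrammatic computation and your outline is complete; the $\Q\Sigma^-$ case then follows, as you say, by the mirror argument, for which the paper observes it suffices to note $(\mu+\square)^t=\mu^t+\square$.
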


\begin{proof}
Tensoring $\Sigma^-$ with $\P$, by Propositions \ref{prop:QlambdaPswap} and \ref{prop:PlambdaPmerge} we have the chain group isomorphisms
\begin{align*}
\bigoplus_{\substack{\lambda \vdash n}}\P^{\lambda}\Q^{\lambda^t}\P
\cong \bigoplus_{\substack{\lambda \vdash n}}\P^{\lambda}\P\Q^{\lambda^t} \oplus \bigoplus_{\substack{\lambda \vdash n \\ \lambda = \mu+\square}} \P^{\lambda}\Q^{\mu^t}
\cong \bigoplus_{\substack{\lambda \vdash n}}\P^{\lambda}\P\Q^{\lambda^t} \oplus \bigoplus_{\mu\vdash n-1}\bigoplus_{\mu(s^+)} \P^{\mu(s^+)}\Q^{\mu^t}.
\end{align*}
Let $\mathcal{D}_n:=  \bigoplus_{\lambda\vdash n} \P^{\lambda}\P\Q^{\lambda^t}\cong\bigoplus_{\lambda\vdash n} \bigoplus_{\lambda(s^+)} \P^{\lambda(s^+)}\Q^{\lambda^t}$. Then, $\Sigma^-  \cong \left\lbrace \bigoplus_n \mathcal{D}_n\oplus \mathcal{D}_{n-1}, \mathsf{D}_n\right\rbrace$ for some differential $\mathsf{D}_n$. In particular, we will show that the differential restricted as follows $\mathsf{D}_{n}:\mathcal{D}_{n-1} \to \mathcal{D}_{n-1}$ is an isomorphism for all $n> 0$. 

First since the differential $\mathsf{d}^-\otimes \1:\P^{\lambda}\Q^{\lambda^t}\P\to \P^{\mu}\Q^{\mu^t}\P$ is zero whenever $\lambda \neq \mu+ \square$, then for any $\mu \vdash n-1$, $\mathsf{D}_{n}: \P^{\mu(s^+)}\Q^{\mu^t} \to \P^{\mu}\P\Q^{\mu^t}$   is given by a nonzero multiple of the map below: 
\[\hackcenter{\begin{tikzpicture}
\draw [line width=0.7mm,decoration={markings,mark=at position 1 with {\arrow[line width=.3mm]{>}}},
    postaction={decorate},shorten >=0.6pt] (-1,-.3) to (-1,2.5);
\draw [line width=0.7mm,decoration={markings,mark=at position 0.02 with {\arrow[line width=.3mm]{<}}},
    postaction={decorate},shorten >=0.6pt] (1,-.3) to (1,2.6)..controls ++(0,.5) and ++(0,-.5) ..(1.8,3.5);
\draw [directed =.5](-.3,-.3) to (-.3,1.3)..controls ++(0,.5) and ++(0,.5) .. (.5,1.3);
\draw [->](1.3,1)..controls ++(0,-.5) and ++(0,-.5) .. (2,1) to (2, 2.3)..controls ++(0,.5) and ++(0,-.5) ..(1,3.5);
\draw [fill=gray!20] (-1.5,0) rectangle (-.2,.4);
\draw [fill=gray!20] (.2,0) rectangle (1.2,.4);
\node at (-.8,.2)[scale=.75] {$\mu(s+)$};
\node at (.7,.2)[scale=.75]{$\mu^t$};
\begin{scope}[shift={(0,1)}]
\draw [fill=gray!20] (-1.5,0) rectangle (-.5,.4);
\draw [fill=gray!20] (.2,0) rectangle (1.5,.4);
\node at (-1,.2)[scale=.5] {$\mu(s+)(r-)$};
\node at (.9,.2)[scale=.75]{$\mu^t(s+)$};
\end{scope}
\begin{scope}[shift={(0,2)}]
\draw [fill=gray!20] (.5,0) rectangle (1.6,.4);
\node at (1.1,.2)[scale=.5]{$\mu^t(s+)(r-)$};
\end{scope}
\end{tikzpicture}}
\;\;\overset{\eqref{heis: right twist curl}}{=}\;\delta_{s,r}\;
\hackcenter{\begin{tikzpicture}
\draw [line width=0.7mm,decoration={markings,mark=at position 1 with {\arrow[line width=.3mm]{>}}},
    postaction={decorate},shorten >=0.6pt] (-1,-.3) to (-1,1.8);
\draw [line width=0.7mm,decoration={markings,mark=at position .04 with {\arrow[line width=.3mm]{<}}},
    postaction={decorate},shorten >=0.6pt] (.7,-.3) to (.7,1.8);
\draw [->](-.3,-.3) to (-.3,1.8);
\draw [fill=gray!20] (-1.5,0) rectangle (-.2,.4);
\draw [fill=gray!20] (.2,0) rectangle (1.2,.4);
\node at (-.8,.2)[scale=.75] {$\mu(s+)$};
\node at (.7,.2)[scale=.75]{$\mu^t$};
\begin{scope}[shift={(0,1)}]
\draw [fill=gray!20] (-1.5,0) rectangle (-.5,.4);
\node at (-1,.2)[scale=.75] {$\mu$};
\end{scope}
\end{tikzpicture}}\;\; = \delta_{s,r}(\iota_{s} \otimes \1_{\mu^t})\]

\noindent This is because if $s \neq r$, the diagram on the left has a right twist curl which by  \eqref{heis: right twist curl} is zero. Consequently, if we denote $\iota_{s^+}$ by $\iota_{\mu+\square}^{\mu \otimes 1}$ then the restriction $\mathsf{D}_n:\mathcal{D}_{n-1} \to \mathcal{D}_{n-1}$ is given by $\mathsf{D}_n = \sum_{\mu \vdash n-1} \iota_{\mu+\square}^{\mu \otimes 1} \otimes \1_{\mu^t}$. Now consider the map
\[ \tilde{\mathsf{D}_n}:=
 \sum_{\mu \vdash n-1} \sum_{\mu+\square} \rho^{\mu+\square}_{\mu \otimes 1} \otimes \1_{\mu^t}:\bigoplus_{\substack{\mu \vdash n-1}}\P^{\mu}\P\Q^{\mu^t} \rightarrow \bigoplus_{\substack{\mu \vdash n-1 \\\mu +\square}} \P^{\mu +\square}\Q^{\mu^t}.\]
Computing its composition with $\mathsf{D}_n$ we find that
\begin{align*}
\mathsf{D}_n\tilde{\mathsf{D}_n} &= \left(\sum_{\mu \vdash n-1} \iota_{\mu+\square}^{\mu \otimes 1} \otimes \1_{\mu^t}\right)\left(\sum_{\mu' \vdash n-1} \sum_{\mu'+\square} \rho^{\mu'+\square}_{\mu' \otimes 1} \otimes \1_{{\mu'}^t}\right)\\
&=\sum_{\substack{\mu \vdash n-1 \\ \mu' \vdash n-1}}\left(\sum_{\mu'+\square}  \iota_{\mu+\square}^{\mu \otimes 1} \circ  \rho^{\mu'+\square}_{\mu' \otimes 1}\right) \otimes \left( \1_{\mu^t} \circ \1_{{\mu'}^t}\right) \\
&=\sum_{\substack{\mu \vdash n-1 \\ \mu' \vdash n-1}}\left( \sum_{\mu'+\square}  \iota_{\mu+\square}^{\mu \otimes 1} \circ  \rho^{\mu'+\square}_{\mu' \otimes 1}\right) \otimes \left( \delta_{\mu,\mu'}\1_{{\mu}^t}\right) \\
&=\sum_{\mu \vdash n-1} \left( \sum_{\mu+\square}  \iota_{\mu+\square}^{\mu \otimes 1} \circ  \rho^{\mu+\square}_{\mu \otimes 1}\right) \otimes \1_{{\mu}^t}\\
&=\sum_{\mu \vdash n-1} \1_{\mu \otimes 1} \otimes \1_{{\mu}^t}\\
&= \1_{\bigoplus_{\substack{\mu \vdash n-1}}\P^{\mu}\P\Q^{\mu^t}}.
\end{align*}
Similarly, we can compute that $\tilde{\mathsf{D}_n}\mathsf{D}_n= \1_{\bigoplus_{\substack{\mu \vdash n-1 \\\mu +\square}} \P^{(\mu + \square)} \Q^{\mu^t}}$.
Thus, $\tilde{\mathsf{D}_n}$ and $\mathsf{D}_n$ are mutual inverses. Defining the nullhomotopy to be $\mathsf{H}_n=\begin{pmatrix}
0 & \tilde{\mathsf{D}_n}\\0&0
\end{pmatrix}$ yields the desired result. 

The proof for $\Q\Sigma^- \cong 0$ follows identically by interchanging the terms in the tensor product and noting that $(\mu +\square)^t = \mu^t + \square$.
\end{proof}

\begin{lemma}\label{cor:sigma-plus-null}
$\Sigma^+\P$ and $\Q \Sigma^+$ are contractible. 
\end{lemma}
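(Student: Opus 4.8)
The plan is to mirror the proof of Lemma~\ref{lem:sigma-null} almost verbatim, exploiting the symmetry between $\Sigma^-$ and $\Sigma^+$. First I would tensor $\Sigma^+$ with $\P$ on the right and apply the branching isomorphisms from Propositions~\ref{prop:QlambdaPswap} and \ref{prop:PlambdaPmerge} to each chain group. Since $\Q^{\lambda^t}\P \cong \P\Q^{\lambda^t} \oplus \bigoplus_{\lambda^t = \nu + \square}\Q^{\nu}$, and using $(\mu + \square)^t = \mu^t + \square$ to relabel, one obtains chain group isomorphisms
\[
\bigoplus_{\lambda \vdash n} \P^\lambda \Q^{\lambda^t}\P \cong \bigoplus_{\lambda \vdash n} \P^\lambda \P \Q^{\lambda^t} \oplus \bigoplus_{\mu \vdash n-1}\bigoplus_{\mu(s^+)} \P^{\mu(s^+)}\Q^{\mu^t}.
\]
Writing $\mathcal{D}_n := \bigoplus_{\lambda \vdash n}\P^\lambda \P \Q^{\lambda^t} \cong \bigoplus_{\lambda\vdash n}\bigoplus_{\lambda(s^+)}\P^{\lambda(s^+)}\Q^{\lambda^t}$, this exhibits $\Sigma^+\P$ as $\left\lbrace \bigoplus_n \mathcal{D}_n \oplus \mathcal{D}_{n-1}, \mathsf{D}_n \right\rbrace$ for a suitable differential, exactly as in the $\Sigma^-$ case but with the homological grading running in the opposite direction (since $\Sigma^+ \in \K^+(\H)$).

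Next I would identify the relevant block of the differential. The map $\mathsf{d}^+\otimes\1 : \P^\mu \Q^{\mu^t}\P \to \P^{\mu(s^+)}\Q^{\mu^t(s^+)}\P$ is built from $\1\otimes adj\otimes\1$ followed by $\rho_s\otimes\rho_s$; composing with the inclusion/projection maps that realize the decomposition above, the key block $\mathsf{D}_n : \mathcal{D}_{n-1}\to\mathcal{D}_{n-1}$ is (up to nonzero scalars) given by $\P^\mu\P\Q^{\mu^t}\to\P^{\mu(s^+)}\Q^{\mu^t}$, where the relevant diagram collapses via the right twist curl relation \eqref{heis: right twist curl} precisely when the box being added on the $\P$ side matches the box being added on the $\Q$ side. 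Concretely this should yield $\mathsf{D}_n = \sum_{\mu\vdash n-1}\sum_{\mu+\square}\rho^{\mu+\square}_{\mu\otimes 1}\otimes\1_{\mu^t}$, which by Proposition~\ref{prop:PlambdaPmerge} (the relations $\rho_s\circ\iota_r = \delta_{s,r}\1$ and $\1 = \sum_s \iota_s\circ\rho_s$) is an isomorphism with explicit inverse $\widetilde{\mathsf{D}_n} = \sum_{\mu\vdash n-1}\iota^{\mu\otimes 1}_{\mu+\square}\otimes\1_{\mu^t}$. Setting the nullhomotopy $\mathsf{H}_n = \begin{pmatrix} 0 & \widetilde{\mathsf{D}_n}\\ 0 & 0\end{pmatrix}$ (appropriately placed to account for the grading direction) then shows $\Sigma^+\P$ is contractible. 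The argument for $\Q\Sigma^+$ is the same after swapping the order of the tensor factors and using $(\mu+\square)^t = \mu^t + \square$.

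The main obstacle, and the only place requiring genuine care beyond bookkeeping, is verifying that the curl-cancellation identity goes through in the $\Sigma^+$ direction. In Lemma~\ref{lem:sigma-null} the differential on $\P^\lambda\Q^{\lambda^t}\P$ uses the merge map $\iota_s\otimes\iota_s$ followed by a cap, whereas here we have a cup followed by the split map $\rho_s\otimes\rho_s$; one must check that after inserting the Propositions~\ref{prop:QlambdaPswap}--\ref{prop:PlambdaPmerge} isomorphisms the composite still reduces — by \eqref{heis: right twist curl} and the absorption/orthogonality relations \eqref{eq:sym-anti-absorbsion}, \eqref{eq:whiteblackbox}, \eqref{eq:PlambdaOrthogonal} — to a Kronecker delta in the two added-box indices, so that $\mathsf{D}_n$ restricted to $\mathcal{D}_{n-1}$ really is the invertible map claimed. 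I also need to confirm that this invertible block does not hit or receive any stray arrows from $\mathcal{D}_n$, so that Gaussian elimination (Lemma~\ref{lem:gaussian-elimination}) legitimately cancels it; this is where the biadjointness of $\P,\Q$ and the structure of the differential $\mathsf{d}^+$ ensure the off-diagonal terms are as in the $\Sigma^-$ case. Once that is in hand, contractibility of both $\Sigma^+\P$ and $\Q\Sigma^+$ follows immediately.
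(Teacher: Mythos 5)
Your proposal is correct and follows essentially the same route as the paper, whose proof of this lemma simply invokes the argument of Lemma~\ref{lem:sigma-null} verbatim; your dualization (replacing the merge-then-cap differential by cup-then-split, so that the invertible block becomes $\sum \rho^{\mu+\square}_{\mu\otimes 1}\otimes \1_{\mu^t}$ with inverse $\sum \iota^{\mu\otimes 1}_{\mu+\square}\otimes \1_{\mu^t}$) is exactly the intended adaptation. The extra care you flag about the curl-cancellation and the placement of the nullhomotopy in $\K^+(\H)$ is appropriate but presents no obstruction, matching the paper's one-line justification.
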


\begin{proof}
The result follows from an identical argument to Lemma \ref{lem:sigma-null}. 
\end{proof}

\begin{theorem}[Conjecture 4.1 in \cite{CS-BosonFermion}] \label{thm:SigmaPlambdaNull}Given any nontrivial partition $\lambda$, the nullhomotopies $\Sigma^-\P^{\lambda} \cong 0 \cong \Sigma^+\P^{\lambda}$ and $\Q^{\lambda}\Sigma^- \cong 0 \cong \Q^{\lambda}\Sigma^+$ hold in $\K(\H)$. Thus, $\Sigma^-$ and $\Sigma^-$ are categorical projectors for $\mathsf{V}_{Fock}$.
\end{theorem}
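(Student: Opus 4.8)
The plan is to deduce the theorem from the two nullhomotopy lemmas (Lemma~\ref{lem:sigma-null} and Lemma~\ref{cor:sigma-plus-null}) by an induction on $|\lambda|$, using the Littlewood--Richardson branching isomorphisms from Section~\ref{subsec:LRBranchingIsos} to peel off one box of $\lambda$ at a time. Concretely, since $\lambda$ is nontrivial we may write $\lambda = \mu + \square$ for some partition $\mu$ (removable corner), and by Proposition~\ref{prop:PlambdaPmerge} we have $\P^{\mu}\P \cong \bigoplus_{\nu = \mu + \square}\P^{\nu}$, so $\P^{\lambda}$ is a direct summand of $\P^{\mu}\P$. Hence $\Sigma^{-}\P^{\lambda}$ is a direct summand (as a complex, via the idempotent $\iota_{\lambda}\circ\rho_{\lambda}$ acting on the right) of $\Sigma^{-}\P^{\mu}\P \cong (\Sigma^{-}\P^{\mu})\otimes \P$. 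If $\mu$ is nontrivial, the inductive hypothesis gives $\Sigma^{-}\P^{\mu}\cong 0$, hence $\Sigma^{-}\P^{\mu}\P\cong 0$, and a direct summand of a contractible complex is contractible, so $\Sigma^{-}\P^{\lambda}\cong 0$. The base case $|\lambda| = 1$, i.e.\ $\lambda = (1)$ so $\P^{\lambda} = \P$, is exactly Lemma~\ref{lem:sigma-null}. The same argument with $\Sigma^{+}$ in place of $\Sigma^{-}$ and Lemma~\ref{cor:sigma-plus-null} as the base case handles $\Sigma^{+}\P^{\lambda}\cong 0$.

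For the statements $\Q^{\lambda}\Sigma^{-}\cong 0$ and $\Q^{\lambda}\Sigma^{+}\cong 0$ I would run the mirror-image induction: write $\lambda = \mu + \square$, use that $\Q^{\lambda}$ is a direct summand of $\Q\Q^{\mu}$ (the analogue of Proposition~\ref{prop:PlambdaPmerge} for $\Q$, obtained by reversing orientations, together with $\Q^{\lambda}\Q^{\mu} \cong \Q^{\mu}\Q^{\lambda}$ so that one-box peeling works on either side), so $\Q^{\lambda}\Sigma^{-}$ is a direct summand of $\Q\otimes(\Q^{\mu}\Sigma^{-})$, which is contractible by induction; the base case $\Q^{(1)}\Sigma^{-} = \Q\Sigma^{-}\cong 0$ is again Lemma~\ref{lem:sigma-null}. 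Alternatively, because $\Sigma^{-}$ is biadjoint (it is built from biadjoint $\P$ and $\Q$, and $(\Sigma^{-})^{*}$ is again of the same shape up to reindexing $\lambda\leftrightarrow\lambda^{t}$), one can simply observe that $\Q^{\lambda}\Sigma^{-}$ is obtained from $\Sigma^{-}\P^{\lambda^{t}}$ by applying the contravariant biadjunction functor, and a functor sends nullhomotopic complexes to nullhomotopic complexes; I would use whichever formulation is cleanest given the conventions already fixed in the excerpt. The final sentence, that $\Sigma^{\pm}$ are categorical projectors for $\mathsf{V}_{Fock}$, then follows because $\mathsf{V}_{Fock} = \mathrm{Mat}_{\Z\times 1}(\H_{0})$ and every object of $\H_{0}$ is (a finite sum of) $\P^{\lambda}(\k)$ for nontrivial $\lambda$ together with $\k$ itself, so $\Sigma^{\pm}$ acts by zero on the image of any nontrivial $\P^{\lambda}$ while fixing the "top" component; more precisely one checks that $\Sigma^{\pm}$ is idempotent up to homotopy and that its essential image inside $\mathsf{V}_{Fock}$ is the summand spanned by highest-weight vectors, exactly as in the decategorified picture recalled in the introduction.

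The main obstacle I anticipate is bookkeeping with the direct-summand argument at the level of \emph{complexes} rather than objects: one must check that the idempotent $\iota_{\lambda}\circ\rho_{\lambda}$ (projecting $\P^{\mu}\P$ onto its $\P^{\lambda}$ summand) is a \emph{chain map} on $(\Sigma^{-}\P^{\mu})\otimes\P$, i.e.\ that it commutes with the differential $\mathsf{d}^{-}\otimes\1_{\P}$; this is immediate since $\iota_{\lambda}$ and $\rho_{\lambda}$ act only on the rightmost $\P$ while $\mathsf{d}^{-}$ acts on the $\Sigma^{-}$ factor, so the two operations are performed in disjoint regions of the diagram and hence strictly commute. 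Granting that, the splitting $(\Sigma^{-}\P^{\mu})\otimes\P \simeq \bigoplus_{\nu = \mu+\square}(\Sigma^{-}\P^{\nu})$ is an isomorphism of complexes, and contractibility of the left side forces contractibility of each summand on the right. A second, more cosmetic point is the precise identification of $(\Sigma^{-})^{*}$ if one goes the biadjointness route for the $\Q^{\lambda}$ statements; I would rather avoid this by using the direct mirror induction, which needs only the $\Q$-analogues of Propositions~\ref{prop:PlambdaPmerge} and the commutativity $\Q^{\lambda}\Q^{\mu}\cong\Q^{\mu}\Q^{\lambda}$, both already available. Everything else is a routine application of the fact that an additive functor (here, tensoring with $\P$ or $\Q$) preserves nullhomotopies, combined with the two lemmas.
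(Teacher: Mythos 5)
Your proposal is correct and is essentially the paper's argument: both reduce to Lemmas \ref{lem:sigma-null} and \ref{cor:sigma-plus-null} by realizing $\P^{\lambda}$ (resp.\ $\Q^{\lambda}$) as a direct summand of a product with a single extra strand via the branching isomorphisms of Proposition \ref{prop:PlambdaPmerge}, and then using that a direct summand of a contractible complex is contractible. The only difference is cosmetic: the paper peels the box on the side adjacent to the idempotent complex, giving the one-step embeddings $\Sigma^-\P^{\lambda}\hookrightarrow\Sigma^-\P\P^{\mu}\cong 0$ and $\Q^{\lambda}\Sigma^-\hookrightarrow\Q^{\mu}\Q\Sigma^-\cong 0$, so the induction on $|\lambda|$ in your version (forced by placing the lone $\P$ or $\Q$ on the far side) is unnecessary, though harmless.
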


\begin{proof}
Since for any pair of nontrivial partitions $\mu$ and $\lambda = \mu+\square$ there are unique maps $\P^{\lambda} \hookrightarrow \P\P^{\mu}$ and $\Q^{\lambda} \hookrightarrow \Q^{\mu}\Q$ given by $\rho$ in Proposition \ref{prop:PlambdaPmerge}. Thus by Lemma \ref{lem:sigma-null}, it follows that $\Sigma^- \P^{\lambda} \hookrightarrow\Sigma^-\P \P^{\mu} \cong 0$. The remaining homotopies follow identically from Lemma \ref{cor:sigma-plus-null}. 
\end{proof}

\begin{corollary}\label{cor:SigmaCreduction}
Given complexes:
\[
\C^-\simeq \dots \to \C_k\to \C_{k-1} \to \dots \to \C_1\to\1 \qquad \in \K^-(\H) \]
\[
\C^+ \simeq \1 \to \C_{-1} \to \dots \to \C_{k-1} \to \C_{k} \to \dots \qquad \in \K^+(\H)
\]
Such that for all $k>0$ we have $\C_r = \bigoplus_{i\in I} \P^{\lambda^i} \mathsf{X} \Q^{\mu^i}$ where $I \subset \N$ is some finite indexing set, $\lambda^i$ and $\mu^i$ are nontrivial partitions, and $\mathsf{X} \in \H$. Then the following homotopy equivalences hold in $\K(\H)$:
\begin{align*}
&\bullet\Sigma^-\otimes \C^- \cong \Sigma^- \cong \C^-\otimes \Sigma^-  && \bullet\Sigma^+\tilde{\otimes} \C^+ \cong \Sigma^+ \cong \C^+\tilde{\otimes} \Sigma^+\\
&\bullet\Sigma^- \tilde{\otimes} \C^+ \cong \Sigma^- \cong \C^+ \tilde{\otimes} \Sigma^- &&\bullet\Sigma^+ \otimes \C^- \cong \Sigma^+ \cong \C^- \otimes \Sigma^+.
\end{align*}
In particular, $\Sigma^- \otimes \Sigma^- \cong \Sigma^-$ and $\Sigma^+ \tilde{\otimes} \Sigma^+ \cong \Sigma^+$, and thus $\Sigma^\pm$ are idempotents. 
\end{corollary}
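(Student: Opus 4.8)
\textbf{Proof proposal for Corollary \ref{cor:SigmaCreduction}.}

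The plan is to reduce everything to the two nullhomotopy lemmas (Lemmas \ref{lem:sigma-null} and \ref{cor:sigma-plus-null}, together with their extension Theorem \ref{thm:SigmaPlambdaNull}) by an inductive ``Gaussian elimination up the complex'' argument, keeping careful track of which tensor product (direct sum vs.\ completed) is needed for convergence. First I would observe that each $\C^\pm$ is, by hypothesis, a cone-iterate built out of $\1$ in homological degree $0$ and terms $\C_k = \bigoplus_{i\in I}\P^{\lambda^i}\mathsf{X}\Q^{\mu^i}$ with $\lambda^i,\mu^i$ nontrivial in all other degrees. Tensoring with $\Sigma^-$ on the left, Proposition \ref{prop:SimultSimp} (simultaneous simplifications) lets me work degree-by-degree: in each positive homological degree $k$ the term $\Sigma^-\otimes \C_k = \bigoplus_i \Sigma^-\P^{\lambda^i}\mathsf{X}\Q^{\mu^i}$ is contractible by Theorem \ref{thm:SigmaPlambdaNull}, since $\lambda^i$ is nontrivial. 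Then, just as in the proof of Theorem \ref{thm:SigmaPlambdaNull}, a contractible term can be stripped out of a bounded-above bi-complex via Lemma \ref{lem:gaussian-elimination}; applying this simultaneously across all $k>0$ (legitimate for $\K^-(\H)$ because the indexing set is bounded below, or for $\K^b(\H_n)$ because it is finite) collapses $\Sigma^-\otimes\C^-$ onto $\Sigma^-\otimes \1 = \Sigma^-$. The same argument with the terms tensored on the other side gives $\C^-\otimes\Sigma^- \cong \Sigma^-$, using $\Q^{\mu^i}\Sigma^-\cong 0$ for $\C_k$ on the right.

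Next I would do the $\C^+$ cases. Here $\C^+ \in \K^+(\H)$ is bounded below, so to form $\Sigma^+\tilde\otimes\C^+$ and still apply Proposition \ref{prop:SimultSimp} I must use the \emph{completed} tensor product $\tilde\otimes$ (the indexing set of homological degrees is now bounded above in the sense required by part (ii) of that proposition). The same term-by-term contractibility — $\Sigma^+\P^{\lambda^i}\cong 0$ by Theorem \ref{thm:SigmaPlambdaNull} — lets me strip out every positive-degree term, leaving $\Sigma^+\tilde\otimes\1 = \Sigma^+$. The mixed cases $\Sigma^-\tilde\otimes\C^+$ and $\Sigma^+\otimes\C^-$ are the delicate ones: a complex in $\K^-(\H)$ tensored with one in $\K^+(\H)$ is exactly the situation of Remark \ref{rem:locallyfinite} where $\otimes$ and $\tilde\otimes$ may disagree or fail to exist. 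I would handle $\Sigma^-\tilde\otimes\C^+$ by noting that $\C^+$ is bounded below, so the bi-complex $\{\Sigma^-\otimes\C^+_j\}$ has columns indexed over a set bounded below; the completed total complex is then well-defined and Proposition \ref{prop:SimultSimp}(i) (after reindexing so the surviving rows are bounded below) still permits the simultaneous Gaussian eliminations of the contractible terms $\Sigma^-\P^{\lambda^i}\mathsf{X}\Q^{\mu^i}$. Dually $\Sigma^+\otimes\C^-$ uses that $\Sigma^+\in\K^+(\H)$ and $\C^-$ is bounded above, making the ordinary total complex the correct one. In each case only $\Sigma^\pm\otimes\1$ survives.

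Finally, the idempotence statements $\Sigma^-\otimes\Sigma^-\cong\Sigma^-$ and $\Sigma^+\tilde\otimes\Sigma^+\cong\Sigma^+$ follow by taking $\C^- = \Sigma^-$ and $\C^+ = \Sigma^+$ in the general statement: indeed $\Sigma^-$ has $\1$ in homological degree $0$ (the $n=0$ summand, $\P^{\emptyset}\Q^{\emptyset} = \1$) and in every degree $n>0$ its term $\bigoplus_{\lambda\vdash n}\P^\lambda\Q^{\lambda^t}$ is exactly of the required form with $\lambda,\lambda^t$ nontrivial and $\mathsf{X} = \1$; likewise for $\Sigma^+$. The main obstacle I anticipate is not the algebra of Gaussian elimination but the bookkeeping of \emph{which} tensor product converges in the mixed cases $\Sigma^-\tilde\otimes\C^+$ and $\Sigma^+\otimes\C^-$: one must verify that after stripping the contractible terms the residual bi-complex is genuinely $\mathcal{T}$-locally finite (via Proposition \ref{prop:locallyfinite}) so that the simultaneous homotopies assemble into an honest homotopy equivalence of the total complexes rather than merely a degreewise statement. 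Once that is in place the rest is a routine application of the lemmas already established.
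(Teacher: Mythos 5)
Your proposal is correct and takes essentially the same approach as the paper, which proves the corollary in a single line by combining Theorem~\ref{thm:SigmaPlambdaNull} (contractibility of $\Sigma^\pm\P^{\lambda}$ and $\Q^{\mu}\Sigma^\pm$ for nontrivial $\lambda,\mu$) with Proposition~\ref{prop:SimultSimp} (simultaneous simplifications). One small slip: you have the boundedness conventions reversed in the mixed cases — in this paper $\K^+$ means bounded \emph{above} and $\K^-$ bounded \emph{below}, so for $\Sigma^-\tilde\otimes\C^+$ the index set is bounded above and Proposition~\ref{prop:SimultSimp}(ii) (the $\mathrm{Tot}^\Pi$ case) is what applies, not part (i) after a reindexing.
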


\begin{proof}
The homotopies follow directly from Theorem \ref{thm:SigmaPlambdaNull} and Proposition \ref{prop:SimultSimp}. 
\end{proof}

\begin{theorem}[Conjecture 4.2 in \cite{CS-BosonFermion}] \label{thm:SigmaISO} Given any $n \in \N$, the chain complexes $\Sigma^+$ and $\Sigma^-$ are homotopy equivalent in $\K(\H_n)$. 
\end{theorem}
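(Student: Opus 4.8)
The plan is to show that $\Sigma^-$ and $\Sigma^+$ become homotopy equivalent once we pass to $\K(\H_n)$, where the quotient by the ideal generated by $\mathsf{R}\Q^{\otimes n}$ makes both complexes finite. The key observation is that $\Sigma^-$ is bounded below with a copy of $\1$ in homological degree $0$ (coming from $\lambda = \emptyset$) sitting at the bottom of the complex, while $\Sigma^+$ is bounded above with a copy of $\1$ in degree $0$ at the top. Over $\K(\H_n)$, the terms $\P^\lambda \Q^{\lambda^t}$ vanish whenever $\lambda^t$ has more than $n$ rows, i.e. whenever $\lambda_1 > n$; more importantly, the action of $\Q^{\lambda^t}$ composed with anything factoring through $\Q^{\otimes n}$ is killed, which truncates both complexes to finite length. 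First I would make precise the truncation: in $\H_n$, $\Sigma^-$ is homotopy equivalent to a bounded complex $\dots \to \mathcal{D}_2 \to \mathcal{D}_1 \to \1$ and $\Sigma^+$ to a bounded complex $\1 \to \mathcal{D}_{-1} \to \dots$, where the higher terms involve only $\P^\lambda \Q^{\lambda^t}$ with $\lambda$ nontrivial.

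Next I would exploit the idempotent and absorption properties established in Corollary \ref{cor:SigmaCreduction}. The strategy is to build an explicit homotopy equivalence by producing chain maps $f: \Sigma^- \to \Sigma^+$ and $g: \Sigma^+ \to \Sigma^-$ and showing $f\circ g \simeq \1_{\Sigma^+}$ and $g \circ f \simeq \1_{\Sigma^-}$. The natural candidates for $f$ and $g$ are built from the identity morphism on the degree-$0$ term $\1$, extended by zero, together with correction terms forced by the requirement that these be chain maps. Concretely, both $\Sigma^-$ and $\Sigma^+$ have the form of a ``resolution of $\1$'' — by Corollary \ref{cor:SigmaCreduction} applied with $\C^\pm = \Sigma^\mp$ (noting that $\Sigma^-$ has the shape $\dots \to \C_1 \to \1$ with each $\C_r$ of the required form $\bigoplus \P^{\lambda^i} \mathsf{X} \Q^{\mu^i}$ with nontrivial partitions, since $\lambda\vdash n$ with $n \ge 1$ forces $\lambda$ nontrivial), we get $\Sigma^+ \otimes \Sigma^- \cong \Sigma^+$ and $\Sigma^- \otimes \Sigma^+ \cong \Sigma^-$. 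I would then argue that in $\K(\H_n)$ both $\Sigma^+$ and $\Sigma^-$ are in fact homotopy equivalent to $\1$ itself: each term $\P^\lambda \Q^{\lambda^t}$ with $\lambda$ nontrivial, when restricted to the categorical Fock space $\H_n$, admits a contracting homotopy because $\Q^{\lambda^t}$ acts on the relevant modules with a boundedness that lets Lemmas \ref{lem:sigma-null} and \ref{cor:sigma-plus-null} be iterated to full contractibility. Alternatively, and more robustly, I would show directly that the augmentation maps $\Sigma^- \to \1$ and $\1 \to \Sigma^+$ are homotopy equivalences in $\K(\H_n)$.

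To make the last point rigorous, I would proceed by showing $\Sigma^- \simeq \1$ in $\K(\H_n)$ as follows. The complex $\Sigma^-$ truncated to $\H_n$ is $\dots \to \mathcal{D}_2 \to \mathcal{D}_1 \to \1 \to 0$; I claim its ``positive part'' $\dots \to \mathcal{D}_2 \to \mathcal{D}_1$ is contractible. Here I would invoke the analysis in the proof of Lemma \ref{lem:sigma-null}: tensoring with $\P$ (or rather, using the structure of the differential $\mathsf{d}^-$ restricted to degrees $\geq 1$) one decomposes $\mathcal{D}_k \cong \mathcal{D}'_k \oplus \mathcal{D}'_{k-1}$ in a way that the differential is an isomorphism between the two copies of $\mathcal{D}'$, giving Gaussian-elimination cancellations across all homological degrees via Proposition \ref{prop:SimultSimp} (the indexing set is bounded below, so simultaneous simplification applies). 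Since in $\H_n$ the complex is bounded, there are no convergence issues and the full cancellation goes through, leaving only $\1$. The same argument with arrows reversed, using Lemma \ref{cor:sigma-plus-null} and the bounded-above simultaneous-simplification clause of Proposition \ref{prop:SimultSimp}, gives $\Sigma^+ \simeq \1$ in $\K(\H_n)$. Combining, $\Sigma^- \simeq \1 \simeq \Sigma^+$.

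The main obstacle I anticipate is justifying the truncation and the full Gaussian elimination carefully: over $\K(\H)$ the complexes $\Sigma^\pm$ are genuinely unbounded and the cancellation of $\mathcal{D}'_k$ against $\mathcal{D}'_{k-1}$ only ``converges'' because we are in $\H_n$, where $\Q^{\otimes n}$-factoring morphisms die and hence only finitely many $\mathcal{D}_k$ survive. I would need to verify precisely which terms $\P^\lambda \Q^{\lambda^t}$ survive the quotient — the point being that the right action of $\Q^{\lambda^t}$ on the trivial bimodule structure of $\H_n$ is zero once $|\lambda^t|$ is large enough relative to $n$ — and then confirm that the differential's block-isomorphism structure is preserved under this truncation so that Proposition \ref{prop:SimultSimp} can legitimately be applied to the finite complex. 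Handling the signs in the differentials $\mathsf{d}^\pm$ and matching them up through the Gaussian eliminations is routine but must be done with care; I would organize it exactly as in the proofs of Lemma \ref{lem:sigma-null} and Corollary \ref{cor:SigmaCreduction} to keep the bookkeeping manageable.
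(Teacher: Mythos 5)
The paper's proof is a one-liner that you half-assembled but then abandoned: since $\Sigma^{\pm}$ become bounded in $\K(\H_n)$, Proposition \ref{prop:locallyfinite} identifies $\otimes$ with $\tilde\otimes$, and then Corollary \ref{cor:SigmaCreduction} gives the chain
$\Sigma^+ \simeq \Sigma^- \otimes \Sigma^+ \simeq \Sigma^- \tilde\otimes \Sigma^+ \simeq \Sigma^-$
(the first by the fourth bullet with $\C^-=\Sigma^-$, the last by the third bullet with $\C^+=\Sigma^+$). You actually wrote down both absorption facts $\Sigma^+\otimes\Sigma^-\simeq\Sigma^+$ and $\Sigma^-\otimes\Sigma^+\simeq\Sigma^-$ and observed the finiteness, but didn't notice that these two statements, once the two tensor products are identified via Proposition \ref{prop:locallyfinite}, already force $\Sigma^+\simeq\Sigma^-$; instead you pivoted to a different claim.

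The pivot is where the gap is. You assert $\Sigma^{\pm}\simeq\1$ in $\K(\H_n)$ and justify it by "iterating the Gaussian elimination from Lemma \ref{lem:sigma-null}." That lemma proves $\Sigma^-\P\simeq 0$, and its cancellation mechanism depends entirely on the extra $\P$: tensoring each term $\P^{\lambda}\Q^{\lambda^t}$ with $\P$ and applying Proposition \ref{prop:QlambdaPswap} produces, besides $\P^{\lambda}\P\Q^{\lambda^t}$, the spare summands $\P^{\mu(s^+)}\Q^{\mu^t}$; these are the copies $\mathcal{D}_{n-1}$ that pair off isomorphically against the previous degree. Strip away the $\P$ and those spare summands simply aren't there — the differential $\mathsf{d}^-\colon\P^{\mu(s^+)}\Q^{\mu^t(s^+)}\to\P^{\mu}\Q^{\mu^t}$ is an adjunction/projection map between genuinely different objects, not a matrix entry admitting a block isomorphism you can eliminate. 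So the cited argument does not transfer to $\Sigma^-$ itself, and you would need an entirely new argument to establish $\Sigma^{\pm}\simeq\1$ (a statement the paper never makes, and which, if it were a consequence of what is already proven, would render the absorption machinery of Corollary \ref{cor:SigmaCreduction} pointless). I'd recommend discarding the "$\simeq\1$" route and just finishing the argument you already started with the corollary: record $\Sigma^-\otimes\Sigma^+\simeq\Sigma^+$ from the $\C^-\otimes\Sigma^+$ absorption, record $\Sigma^-\tilde\otimes\Sigma^+\simeq\Sigma^-$ from the $\Sigma^-\tilde\otimes\C^+$ absorption, and invoke Proposition \ref{prop:locallyfinite} (boundedness in $\K(\H_n)$) to identify the left sides.
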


\begin{proof}
Since $\Sigma^\pm$ become finite on $\K(\H_n)$ they are homologically locally finite and thus  by Proposition \ref{prop:locallyfinite} and Corollary \ref{cor:SigmaCreduction} it immediately follows that $\Sigma^+ \cong \Sigma^- \otimes \Sigma^+ \cong \Sigma^- \tilde{\otimes} \Sigma^+ \cong \Sigma^-$. 
\end{proof}

Since the action of $\Sigma^+$ and $\Sigma^-$ is integrable on categorical Fock space, then  Theorems \ref{thm:SigmaPlambdaNull} and \ref{thm:SigmaISO} imply that both $\Sigma^+$ and $\Sigma^-$ can be used to project onto $\H_0 = \bigoplus \k[S_n]-$mod. This is in line with the decategorified picture where any $\mathfrak{h}$ module generated by highest weight vectors decomposes into a direct sum of the trivial module, i.e. Fock Space.


\nocite{*}

\end{document}